\def\@settitle{\begin{center}%
    \bfseries
 \normalfont\LARGE\@title
  \end{center}%
}
\def\@setauthors{\begin{center}%
 \normalsize\@author
  \end{center}%
}
\numberwithin{equation}{section}
\renewcommand{\cal}{\mathcal}
\newcommand\cB{{\mathcal B}}
\newcommand\cJ{{\mathcal J}}
\newcommand{\cC}{{\cal C}}
\newcommand{\cD}{{\cal D}}
\newcommand{\cE}{{\cal E}}
\newcommand{\cF}{{\cal F}}
\newcommand{\cG}{{\cal G}}
\newcommand{\cK}{{\cal K}}
\newcommand{\cM}{{\cal M}}
\newcommand{\cS}{{\mathcal S}}
\newcommand{\cV}{{\mathcal V}}
\newcommand{\cX}{{\mathcal X}}
\newcommand{\cY}{{\mathcal Y}}
\newcommand{\sfL}{{\mathsf L}}
\newcommand{\sfA}{{\mathsf A}}
\newcommand{\sfD}{{\mathsf D}}
\newcommand{\bfS}{{\bf S}}
\newcommand{\fa}{{\mathfrak a}}
\newcommand{\fb}{{\mathfrak b}}
\newcommand{\fc}{{\mathfrak c}}
\newcommand{\fd}{{\mathfrak d}}
\newcommand{\fr}{{\mathfrak r}}
\newcommand{\bmu}{{\bm{u}}}
\newcommand{\bms}{{\bm s}}
\newcommand{\bfi}{{\bf i}}
\newcommand{\fC}{{\mathfrak C}}
\newcommand{\be}{\begin{equation}}
\newcommand{\ee}{\end{equation}}
\newcommand{\rd}{{\rm d}}
\newcommand{\ri}{\mathrm{i}}
\newcommand{\bC}{{\mathbb C}}
\newcommand{\bE}{\mathbb{E}}
\newcommand{\bP}{\mathbb{P}}
\newcommand{\bR}{{\mathbb R}}
\newcommand{\bX}{{\mathbb X}}
\newcommand{\bT}{\mathbb T}
\newcommand{\bV}{\mathbb V}
\newcommand{\bW}{\mathbb W}
\newcommand{\bF}{\mathbb F}
\newcommand{\bZ}{\mathbb{Z}}
\newcommand{\al}{\alpha}
\newcommand{\la}{\lambda}
\DeclareMathOperator{\dist}{dist}
\DeclareMathOperator{\diag}{diag}
\DeclareMathOperator{\OO}{O}
\DeclareMathOperator{\oo}{o}
\DeclareMathOperator{\spec}{spec}
\DeclareMathOperator{\poly}{poly}
\renewcommand{\Im}{\mathop{\mathrm{Im}}}
\newcommand{\ft}{\mathfrak t}
\newcommand{\deq}{\mathrel{\mathop:}=} 
\renewcommand{\leq}{\leqslant}
\renewcommand{\geq}{\geqslant}
\newcommand{\del}{\partial}
\newcommand{\wh}{\widehat}
\newcommand{\wt}{\widetilde}
\newcommand{\qq}[1]{[\![{#1}]\!]}
\newcommand{\beq}{\begin{equation}}
\newcommand{\eeq}{\end{equation}}
\theoremstyle{plain} 
\newtheorem{theorem}{Theorem}[section]
\newtheorem*{theorem*}{Theorem}
\newtheorem{lemma}[theorem]{Lemma}
\newtheorem*{lemma*}{Lemma}
\newtheorem{corollary}[theorem]{Corollary}
\newtheorem*{corollary*}{Corollary}
\newtheorem{proposition}[theorem]{Proposition}
\newtheorem*{proposition*}{Proposition}
\newtheorem*{assumption*}{Assumption}
\newtheorem{definition}[theorem]{Definition}
\newtheorem*{definition*}{Definition}
\newtheorem*{example*}{Example}
\newtheorem*{remark*}{Remark}
\newtheorem*{remarks*}{Remarks}
\newcommand{\col}{\vcentcolon}
\newcommand{\msc}{m_{\rm sc}}
\newcommand{\md}{m_d}
\newcommand{\sfS}{{\sf S}}
\newcommand{\tP}{{\widetilde P}}
\newcommand{\sfF}{{\sf F}}
\def\author#1{\par
    {\centering{\authorfont#1}\par\vspace*{0.05in}}
}
\def\titlefont{\fontsize{13}{15}\bfseries\boldmath\selectfont\centering{}}
\def\authorfont{\fontsize{13}{15}}
\let\affiliationfont\rhfont
\def\address#1{\par
    {\centering{\affiliationfont#1\par}}\par\vspace*{11pt}
}
\def\body{
\setcounter{footnote}{0}
\def\thefootnote{\alph{footnote}}
\def\@makefnmark{{$^{\rm \@thefnmark}$}}
}
\def\title#1{
    \thispagestyle{plain}
    \vspace*{-14pt}
    \vskip 79pt
    {\centering{\titlefont #1\par}}%
    \vskip 1em
}
\newcommand{\cT}{{\mathcal T}}
\definecolor{forestgreen}{RGB}{34, 139, 34}
\newcommand{\GG}{{\mathcal G}}
\newcommand{\fR}{{\mathfrak R}}
\newcommand{\tG}{{\widetilde G}}
\newcommand{\tcG}{{\widetilde \cG}}
\newcommand{\oOmega}{\overline{\Omega}}
\begin{document}

\title{Optimal Eigenvalue Rigidity of Random Regular Graphs}

\vspace{1.2cm}

\noindent \begin{minipage}[c]{0.32\textwidth}
 \author{Jiaoyang Huang}
\address{University of Pennsylvania\\
   huangjy@wharton.upenn.edu}
 \end{minipage}
  \begin{minipage}[c]{0.32\textwidth}
 \author{Theo McKenzie}
\address{Stanford University\\
   theom@stanford.edu }
 \end{minipage}
\begin{minipage}[c]{0.32\textwidth}
 \author{Horng-Tzer Yau}
\address{Harvard University \\
   htyau@math.harvard.edu}

 \end{minipage}

\begin{abstract}
 Consider the normalized adjacency matrices of random $d$-regular graphs on $N$ vertices with fixed degree $d\geq 3$, and denote the eigenvalues as $\lambda_1=d/\sqrt{d-1}\geq \la_2\geq\la_3\cdots\geq \la_N$. We prove that the optimal  (up to an extra $N^{\oo_N(1)}$ factor, where $\oo_N(1)$ can be arbitrarily small) eigenvalue rigidity holds. More precisely, denote $\gamma_i$ as the classical location of the $i$-th  eigenvalue under the Kesten-Mckay law in decreasing order. Then with probability $1-N^{-1+\oo_N(1)}$,
    \begin{align*}
        |\lambda_i-\gamma_i|\leq \frac{N^{\oo_N(1)}}{N^{2/3} (\min\{i,N-i+1\})^{1/3}},\quad \text{ for all } i\in \{2,3,\cdots,N\}.
    \end{align*}
   In particular, the fluctuations of extreme eigenvalues are bounded by  $N^{-2/3+\oo_N(1)}$. This gives the same order of fluctuation as for the eigenvalues of matrices from the Gaussian Orthogonal Ensemble. 
\end{abstract}

\bigskip
{
\hypersetup{linkcolor=black}
\setcounter{tocdepth}{1}
\tableofcontents
}

\newpage
\section{Introduction}\label{s:intro}

A random $d$-regular graph on $N$ vertices is chosen uniformly from the set of all $N$-vertex $d$-regular graphs. These graphs serve as ubiquitous models in various fields such as computer science, number theory, and statistical physics. Particularly, they find applications in constructing optimally expanding networks \cite{hoory2006expander}, analyzing graph $\zeta$-functions \cite{terras2010zeta}, and studying quantum chaos \cite{smilansky2013discrete}.

A fundamental problem in the study of random regular graphs is the analysis of the spectrum of the adjacency matrix, or equivalently, the normalized adjacency matrix denoted as $H$
\begin{equation} \label{def_H}
H\deq \frac{A}{\sqrt{d-1}}.
\end{equation}
Normalizing in this manner facilitates comparisons across different values of $d$ and with other matrix ensembles. The eigenvalues of $H$ are denoted as $\lambda_1 \geq \lambda_2 \geq \cdots \geq \lambda_N$. According to the Perron-Frobenius theorem, since all vertices have degree $d$, $\lambda_1=d/\sqrt{d-1}$ is always the largest eigenvalue in modulus. The second largest eigenvalue is particularly significant as it determines the spectral gap. 

The Alon-Boppana bound \cite{nilli1991second} asserts that the second largest eigenvalue $\lambda_2$ of \emph{any} family of $d$-regular graphs with growing diameter satisfies $\lambda_2\geq 2-\oo_N(1)$, where $2$ is the spectral radius of the adjacency operator on the infinite tree; see \cite{kesten1959symmetric}. Alon in \cite{alon1986eigenvalues} conjectured that this bound is tight for random regular graphs, proposing that $\lambda_2\leq 2+o_N(1)$ with high probability. This conjecture was proven in  \cite{friedman2008proof} by Friedman, and later an alternative proof was given by Bordenave \cite{Bordenave2015ANP}. Their proofs are based on the moment methods,  involving classifying walks of different lengths and cycle types, and the error term
in eigenvalue  is of order $\OO((\log \log N/ \log N)^2)$.
In \cite{huang2024spectrum}, the first and third authors of the present paper, building upon joint work \cite{bauerschmidt2019local} with Bauerschmidt, presented a new proof of Alon’s conjecture with a polynomially small error of $\OO(N^{-c})$ (for some small $c>0$). This proof is based on a careful analysis of the Green's function. Moreover, this approach also establishes the concentration of all eigenvalues, not just $\lambda_2$.

The precise location of the second eigenvalue is a crucial problem with applications in computer science and number theory \cite{hoory2006expander}. Graphs for which $\max\{\lambda_2,|\lambda_N|\}\leq 2$ are known as \emph{Ramanujan graphs}, introduced by Lubotzky, Phillips, and Sarnak in \cite{sarnak2004expander}. They are the best possible spectral expander graphs. A major open question in combinatorics and spectral graph theory is whether there exist infinite families of Ramanujan graphs for every $d\geq 3$.  Algebraic constructions exist when $d-1$ is a prime power \cite{lubotzky1988ramanujan,margulis1988explicit,morgenstern1994existence}. Moreover, there are infinite families of \emph{bipartite} Ramanujan graphs, for which $\lambda_1=-\lambda_N=d/\sqrt{d-1}$, and $\lambda_2,|\lambda_{N-1}|\leq 2$ for any $n$ and $d$, constructed through interlacing families of the expected characteristic polynomial \cite{marcus2013interlacing,marcus2018interlacing}.
     It was conjectured based on numerical simulations that the distribution of the second largest eigenvalue of random $d$-regular graphs after normalizing by $N^{2/3}$ is the
    same as that of the largest eigenvalue of the Gaussian Orthogonal Ensemble \cite{miller2008distribution,sarnak2004expander}. Namely,  there is a constant $c_{N, d}$ so that  $ N^{2/3} (\lambda_2- 2)- c_{N, d} $ has the Tracy-Widom distribution \cite{tracy1996orthogonal}, and similar statement holds for the smallest eigenvalue. 
     This would imply that the fluctuations of extreme eigenvalues are of order $\OO(N^{-2/3})$ if $c_{N, d} $ is of order one. 
 If $c_{N, d}=0 $ (which seems to be the most probable scenario), then it would imply that  slightly more than half of  all $d$-regular graphs are Ramanujan graphs.  

In this work, we take a major step towards this universality conjecture. We derive an optimal bound (up to an extra $N^{\oo(1)}$ factor)  on the fluctuations of extreme eigenvalues, i.e. the fluctuations are bounded by  $N^{-2/3+\oo(1)}$.

The empirical eigenvalue density of random $d$-regular graphs converges to that of the infinite $d$-regular tree, which is known as the Kesten-McKay distribution; see \cite{kesten1959symmetric,mckay1981expected}. This density is given by 
\[
\varrho_d(x):=\mathbf1_{x\in [-2,2]} \left(1+\frac1{d-1}-\frac {x^2}d\right)^{-1}\frac{\sqrt{4-x^2}}{2\pi}.
\]
For $2\leq i \leq N$, we expect the $i$-th largest eigenvalue of the normalized adjacency matrix $H$ to be closed to the classical eigenvalue locations $\gamma_i$, where $\gamma_i$ satisfies
\be\label{eq:gammadef}
\int_{\gamma_i}^2 \varrho_d(x)\rd x=\frac{i-1/2}{N-1},\quad 2\leq i\leq N.
\ee
Our main results given optimal concentration for each eigenvalues of the random $d$-regular graphs.

\begin{theorem}\label{thm:eigrigidity}
Fix $d\geq 3$. There is a positive integer $\omega_d\geq 1$ depending only on $d$, such that with probability $1-N^{(1-\oo(1))\omega_d}$, the eigenvalues $\{\lambda_i\}_{i\in \qq{N}}$ of the normalized adjacency matrix of a $d$-regular random regular graph satisfy
\begin{align}\label{e:optimal_rigidity}
|\lambda_i-\gamma_i|\leq N^{-2/3+\oo(1)}(\min\{i,N-i+1\})^{-1/3}
\end{align}
for every $2\leq i\leq N$ and $\gamma_i$ are the classical eigenvalue locations, as defined in \eqref{eq:gammadef}.
\end{theorem}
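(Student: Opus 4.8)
I would deduce \eqref{e:optimal_rigidity} from an \emph{optimal local Kesten--McKay law}: with the probability stated in the theorem, uniformly for $z=E+\i\eta$ with $E$ in a fixed neighborhood of $[-2,2]$ and $\eta$ down to $N^{-1+\oo(1)}$ (with the admissible range of $\eta$ depending on $\kappa\deq\bigl|\,|E|-2\,\bigr|$), the Stieltjes transform $m_N(z)=N^{-1}\sum_i(\lambda_i-z)^{-1}$ and the Green's function $G(z)=(H-z)^{-1}$ satisfy
\[
|m_N(z)-m_d(z)|\le\frac{N^{\oo(1)}}{N\eta},\qquad \max_i|G_{ii}(z)-m_d(z)|\le N^{\oo(1)}\Bigl(\sqrt{\tfrac{\Im m_d(z)}{N\eta}}+\tfrac{1}{N\eta}\Bigr),
\]
where $m_d$ is the Stieltjes transform of $\varrho_d$. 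A standard argument converting the first bound into a counting-function bound (e.g.\ via Helffer--Sjöstrand applied to a smoothed $\mathbf 1_{[E,\infty)}$) gives $\bigl|\#\{i\ge2:\lambda_i\ge E\}-(N-1)\int_E^2\varrho_d\bigr|\le N^{\oo(1)}$ uniformly in $E$; since $2-\gamma_i\sim(i/N)^{2/3}$ and $\varrho_d(\gamma_i)\sim(i/N)^{1/3}$ near the edge, inverting this yields $|\lambda_i-\gamma_i|\le N^{\oo(1)}/(N^{2/3}\hat\imath^{\,1/3})$ with $\hat\imath=\min\{i,N-i+1\}$, which is \eqref{e:optimal_rigidity}. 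For the smallest indices $\hat\imath=O(N^{\oo(1)})$ this has to be supplemented by the statement that, with the stated probability, $H$ has no eigenvalue besides $\lambda_1$ in $[2+N^{-2/3+\oo(1)},\infty)$ (and symmetrically near $-2$); I address this in the edge analysis.

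\textbf{A priori input and the self-consistent equation.} I would start from the sub-optimal local law and eigenvalue concentration of \cite{bauerschmidt2019local,huang2024spectrum}, which provide $|m_N(z)-m_d(z)|\le N^{-c}$ and $|G_{ij}(z)-\delta_{ij}m_d(z)|\le N^{-c}$ on the relevant domain for some fixed $c>0$, together with the fact that a random $d$-regular graph is tree-like in balls of radius $R\sim c'\log_{d-1}N$: all but $N^{\oo(1)}$ vertices have a cycle-free $R$-ball (this tree-likeness is the source of the exponent $\omega_d$ in the probability). Iterating the Schur-complement identity for $G_{ii}$ through the depth-$R$ tree around $i$ then expresses $G_{ii}(z)$ as the Green's function of that tree with random boundary values, up to errors localized on the exceptional (cyclic) vertices; averaging over $i$ and using the tree recursion, $m_N$ and the $G_{ii}$ approximately solve the fixed-point equation characterizing $m_d$, with an error term $\cE(z)$ that is an average of weakly correlated, approximately centred fluctuations plus cyclic-vertex contributions.

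\textbf{Local resampling, fluctuation averaging, and the bootstrap.} Unlike for Wigner-type matrices, the entries of $H$ are not independent, so $\cE(z)$ is not manifestly centred and the standard fluctuation-averaging machinery does not apply directly. The device I would use is a \emph{local resampling (switching)}: conditioning on the graph outside the $R$-ball of a vertex and re-randomizing the edges inside it by a bounded number of edge switchings keeps the graph $d$-regular, perturbs $G$ by an essentially finite-rank, small-norm amount, and after averaging over switchings annihilates the leading-order contribution of that ball to $\cE(z)$. Combined with a second-moment estimate for the residual fluctuation — exploiting that contributions from well-separated balls are nearly independent — this yields a \emph{self-improving} inequality: if $\Lambda(z)\deq|m_N(z)-m_d(z)|\le\delta$ on the domain, then in fact $\Lambda(z)\le N^{\oo(1)}\bigl(\tfrac1{N\eta}+\sqrt{\tfrac{\delta}{N\eta}}\bigr)$, and similarly for $\max_i|G_{ii}-m_d|$ with the $\sqrt{\Im m_d/N\eta}$ profile. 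Iterating from $\delta=N^{-c}$ contracts $\delta$ down to $N^{\oo(1)}/(N\eta)$, establishing the optimal local law on the bulk domain.

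\textbf{Edge stability and the main obstacle.} Near $|E|=2$ the fixed-point map for $m_d$ degenerates — its linearized stability is $\sim(\kappa+\eta)^{-1/2}$ — so the steps above must be re-run tracking the $\kappa$-dependence, producing the edge-sensitive error bounds stated in the first paragraph. Crucially, the self-consistent error is controlled by $\Im$-type quantities ($\Im m_d+\Lambda$), so for $E$ outside $[-2,2]$, where $\Im m_d(E+\i\eta)\sim\eta/\sqrt\kappa$ is tiny, the bootstrap forces $\Im m_N(E+\i\eta)\ll(N\eta)^{-1}$ once $\kappa\ge N^{-2/3+\oo(1)}$ and $\eta$ is chosen optimally; since $N\eta\,\Im m_N(E+\i\eta)$ bounds the number of eigenvalues within $\eta$ of $E$, this excludes eigenvalues other than $\lambda_1$ beyond $2+N^{-2/3+\oo(1)}$, completing the reduction. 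I expect the genuinely hard part to be the resampling-plus-fluctuation-averaging step: one must design switchings that supply real independence while keeping the induced change in $G$ below the $N^{\oo(1)}/(N\eta)$ target, control the $N^{\oo(1)}$ cyclic vertices whose contributions are not centred, and make the decorrelation estimate robust down to $\eta\sim N^{-1+\oo(1)}$ and uniformly up to the edge where stability is weakest — the last point being exactly what forces the careful $\kappa$-bookkeeping and produces the $\hat\imath^{-1/3}$ factor in \eqref{e:optimal_rigidity}.
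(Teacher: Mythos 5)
Your plan matches the paper's route: the optimal local Kesten--McKay law (Theorem~\ref{t:rigidity}) is obtained from a high-moment estimate on the self-consistent error $Q(z)-Y_\ell(Q(z))$ (Theorem~\ref{t:recursion}), proved by iterated local resampling with edge-sensitive $\kappa$-bookkeeping, after which \eqref{e:optimal_rigidity} follows by exactly the counting argument and the $\Im m$-based edge exclusion you describe (Section~\ref{s:proofrigidity}). The obstacle you flag but leave open --- keeping the change in $G$ under a switching below the target when $d$ is fixed --- is precisely where the paper's main technical novelty lies: the naive resolvent expansion in the low-rank perturbation $\xi=\widetilde H-H$ only decays like $1/\sqrt{d}$ per order, which is useless for fixed $d$, so the paper replaces it with a Woodbury-formula expansion around the tree-extension Green's function $P$ (Lemma~\ref{lem:woodbury}), whose terms decay in powers of $N^{-\delta}$.
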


The aforementioned theorem implies that $\lambda_2,|\lambda_N|=2+N^{-2/3+\oo(1)}$. In the bulk, for fixed $\delta>0$ and $\delta N\leq i\leq (1-\delta)N$, the fluctuation of $\lambda_i$ is of order $N^{-1+\oo(1)}$. This level of fluctuation aligns with that observed for the eigenvalues of matrices from the Gaussian Orthogonal Ensemble, and more broadly, Wigner matrices; see \cite{erdHos2012spectral}. While our discussion thus far has centered on $\lambda_2$, it's worth noting that the spectral statistics of every eigenvalue are of interest from a statistical physics perspective, and have been subjects of study since the early works of Wigner and Dyson \cite{wigner1955characteristic,dyson1962brownian}.

We prove \Cref{thm:eigrigidity}  through a careful analysis of the Green's function and their variations, which has proven a highly successful way to analyze spectral information of random matrices (see \cite{erdHos2017dynamical} for an overview of this approach). In the rest of this section, we introduce the notations and state our main result on the optimal estimates of the Green's function. 

We define the \emph{Green's function} of the normalized adjacency matrix $H$ by
\begin{equation*}
  G(z) \deq  (H-z)^{-1}=\sum_{i=1}^N \frac{\bmu_i \bmu_i^\top}{\la_i-z},\quad z\in \{z\in \bC \col \Im[z]>0\}.
\end{equation*}
We denote the normalized trace of $G$, which is also the Stieltjes transform of the empirical eigenvalue density of $H$, by
\begin{equation} \label{e:m}
  m(z) \deq \frac{1}{N} \sum_i G_{ii}(z)=\frac1N\sum_{i=1}^N\frac{1}{\la_i-z}.
\end{equation}
 We refer to $m(z)$ simply as the \emph{Stieltjes transform}. 
The \emph{Ward identity} states that the Green's function $G$ satisfies
\begin{equation} \label{e:WdI}
  \frac{1}{N} \sum_{j} |G_{ij}(z)|^2 = \frac{\Im[G_{ii}(z)]}{N\Im[z]},
  \qquad
  \frac{1}{N} \sum_{ij} |G_{ij}(z)|^2 = \frac{\Im[m(z)]}{\Im[z]}.
\end{equation}
Our goal is to approximate $m(z)$ by $\md(z)$,
the Stieltjes transform of the Kesten--McKay law
\begin{align*}
    \md(z)=\int_\bR \frac{\varrho_d(x)\rd x}{x-z},\quad z\in \{z\in \bC \col \Im[z]>0\}.
\end{align*}
We recall the semi-circle distribution $\varrho_{\rm sc}(x)$, its Stieltjes transform $\msc(z)$, and the quadratic equation satisfied by $\msc(z)$,
\begin{align*}
 \varrho_{\rm sc}(x)=\bm1_{x\in[-2,2]}\frac{\sqrt{4-x^2}}{2\pi},
 \quad 
  \msc(z)=\int_\bR \frac{\varrho_{\rm sc}(x)\rd x}{x-z}=\frac{-z+\sqrt{z^2-4}}{2},\quad 
  \msc(z)^2+z\msc(z)+1=0.
\end{align*}
Explicitly the Stieltjes transform of the Kesten--McKay law $\md(z)$ can be expressed in terms of the Stieltjes transform $\msc(z)$,
\begin{align}
    \md(z)=\frac{1}{-z-\frac{d}{d-1}\msc(z)}.
\end{align}

The following Theorem states that $m(z)$ can be approximated by $\md(z)$
for $z$ belonging to the following spectral domain
\begin{equation} \label{e:D}
  \mathbf D \deq \{z=E+\ri \eta\col |E| \leq 2+\fa, 0< \eta \leq 1/\fa, N\eta\sqrt{\min\{|E-2|, |E+2|\}+\eta}\geq N^{\fa}\},
\end{equation}
where $0<\fa<1$ can be arbitrarily small. Theorem \ref{thm:eigrigidity} is a consequence of the following theorem.

\begin{theorem}\label{t:rigidity}

For every $d\geq 3$, there is a positive integer $\omega_d\geq 1$ such that for sufficiently small $0<\fa<1$, and $N$ large enough, with probability $1-\OO(N^{-(1-\oo(1))\omega_d})$, the following is true for every $z=E+\ri\eta\in \mathbf D$ (recall from \eqref{e:D}),
\begin{align}\label{e:mbond}
|m(z)-m_d(z)|\leq N^{\oo(1)} 
\left\{\begin{array}{cc}
 \frac{1}{N\eta}, & -2\leq E\leq 2,\\
 \frac{1}{\sqrt{\kappa+\eta}}\left(\frac{1}{N\eta^{1/2}}+\frac{1}{(N\eta)^2}\right), & |E|\geq 2,
\end{array}
\right.
\end{align}
where $\kappa=\min\{|E-2|, |E+2|\}$.
\end{theorem}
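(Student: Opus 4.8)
The plan is to derive a self-consistent (loop) equation for $m(z)$ from edge switchings, stabilise it against the Kesten--McKay fixed point, and then bootstrap the resulting self-consistent inequality downwards in $\eta$ along $\mathbf D$; the square-root degeneracy of the equation at the spectral edges $\pm 2$ is precisely what produces the $(\kappa+\eta)^{-1/2}$ prefactor in \eqref{e:mbond}. To set up the a priori input, work on the ``typical'' event $\cT$ that every vertex has a radius-$R$ neighbourhood, $R\asymp c_d\log N$, containing at most $\omega_d$ cycles (equivalently at most $\omega_d$ excess edges); a union bound over the $N$ vertices gives $\pp(\cT^c)=\OO(N^{-(1-\oo(1))\omega_d})$, which is the source of the probability in the statement, and $\omega_d$ is chosen large (depending only on $d$) so that the short-cycle error terms below do not spoil the final rate. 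On $\cT$ we may input the weak local law of \cite{bauerschmidt2019local,huang2024spectrum}: for $z\in\mathbf D$, $|m(z)-\md(z)|\prec N^{-c}$ for some $c>0$, together with the entrywise bound $\max_{ij}|G_{ij}(z)|\prec 1$ and the Ward identity \eqref{e:WdI}; these form the base case of the bootstrap.

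\emph{Switching and the self-consistent equation.} Fix $z\in\mathbf D$ and an oriented edge $(i,j)$. Remove $\{i,j\}$ together with a bounded (slowly growing) number $\ell$ of auxiliary uniformly random edges and reconnect the endpoints to fresh vertices; the new graph is again uniformly $d$-regular up to negligible corrections, so its Green's function $\widetilde G$ satisfies $\widetilde G\overset{d}{=}G$. Expanding $G$ in $\widetilde G$ and the rank-$\OO(\ell)$ switching perturbation via the resolvent identity, and averaging the auxiliary choices (which converts sums $\tfrac1{Nd}\sum_{(k,l):A_{kl}=1}$ into entries of $HG=I+zG$, hence into $m$ and $\widehat m:=\tfrac1{Nd}\sum_{(k,l):A_{kl}=1}G_{kl}$), one obtains, after also using the exact identity $1+zm=\tfrac{d}{\sqrt{d-1}}\widehat m$, a closed scalar equation of semicircle type for the effective self-energy $t:=-\sqrt{d-1}\,\widehat m/m$,
\[
 t(z)^2+z\,t(z)+1=\cE(z),\qquad\text{while}\qquad m(z)=\frac{1}{-z-\tfrac{d}{d-1}t(z)}
\]
the second relation being an algebraic consequence of the definition of $t$ and of $1+zm=\tfrac{d}{\sqrt{d-1}}\widehat m$. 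Here $\cE$ collects (a) the truncation error from stopping the switching expansion at order $\ell$, (b) contributions of vertices inside short cycles, controlled on $\cT$, and (c) genuine fluctuation terms of the form $\tfrac1{Nd}\sum_{(k,l)}\bigl(G_{ik}G_{lj}-\bE[\,\cdot\,]\bigr)$ and their analogues, estimated via \eqref{e:WdI} by $\Im[m]/(N\eta)$-type quantities. To reach the precision of \eqref{e:mbond} the expansion must be carried to \emph{second order}: the first-order bound on $\cE$ is $\OO((N\eta)^{-1})$ in the bulk, but near the edge only $\OO\bigl((N\eta^{1/2})^{-1}+(N\eta)^{-2}\bigr)$ survives once the leading fluctuation is isolated and estimated by a two-step switching / fluctuation-averaging argument (in the full argument one tracks a small finite system of such quantities, but the relevant instability is captured by this scalar equation).

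\emph{Stability and bootstrap.} The linearisation of $t\mapsto t^2+zt+1$ at the semicircle point $t=\msc(z)$ is $2\msc+z=\sqrt{z^2-4}$, of magnitude $\asymp 1$ for $E$ in the bulk but $\asymp\sqrt{\kappa+\eta}$ near and outside $\pm 2$, where $\kappa=\dist(E,\{\pm2\})$; hence $|t-\msc|\lesssim|\cE|\,(\kappa+\eta)^{-1/2}$ near the edge and $|t-\msc|\lesssim|\cE|$ in the bulk, and since $m$ is a smooth function of $t$ near $(\md,\msc)$ with bounded derivative $\tfrac{d}{d-1}m^2$, the same bounds pass to $\Lambda:=|m-\md|$. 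Feeding the a priori bound $\Lambda\prec N^{-c}$ back into $\cE$ makes $\cE$ quadratic in $\Lambda$ plus the fluctuation terms, giving a self-consistent inequality of the form
\[
 \Lambda\leq \frac{N^{\oo(1)}}{\sqrt{\kappa+\eta}}\Bigl(\Lambda^2+\tfrac{1}{N\eta^{1/2}}+\tfrac{1}{(N\eta)^2}\Bigr)\ \ (|E|\geq2),\qquad \Lambda\leq N^{\oo(1)}\Bigl(\Lambda^2+\tfrac{1}{N\eta}\Bigr)\ \ (|E|\leq2)
\]
the second being the familiar Wigner-type bulk inequality. A standard continuity argument in $\eta$ (start at $\eta\asymp1$, where $\Lambda$ is manifestly tiny, decrease $\eta$ along $\mathbf D$, and rule out a jump across the forbidden regime) solves these and yields \eqref{e:mbond}; iterating the whole scheme a bounded number of times upgrades $N^{-c}$ to the claimed optimal rate.

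\emph{Main obstacle.} The crux is the second-order switching expansion near the edge. Because the matrix entries are hard-constrained, every switching step generates many correlated correction terms --- through returning walks in the locally tree-like graph and through the $\omega_d$ excess edges --- and the hard part will be organising the second-order expansion so that these telescope to the claimed $(N\eta^{1/2})^{-1}+(N\eta)^{-2}$ rather than to the crude $(N\eta^{3/2})^{-1}$, which is useless at $\eta\asymp N^{-2/3}$. Controlling the short-cycle contributions to $\cE$ on $\cT$, and propagating the second-order gain through the $(\kappa+\eta)^{-1/2}$-unstable region all the way down to the optimal scale $N\eta\sqrt{\kappa+\eta}\geq N^{\fa}$, is where essentially all of the work lies.
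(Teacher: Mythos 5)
Your derivation of \eqref{e:mbond} from an optimal moment estimate --- a self-consistent equation for an auxiliary quantity close to $\msc(z)$, whose linearisation degenerates like $\sqrt{\kappa+\eta}$ near $\pm2$, followed by a stability lemma and an $\eta$-continuity bootstrap --- is structurally the same as the paper's Proposition~\ref{p:stable}, Lemma~\ref{p:iterate}, and the lattice argument in Section~\ref{s:proofrigidity}. The parametrisation differs: you use an effective self-energy $t$ built from the trace identity $1+zm=\tfrac{d}{\sqrt{d-1}}\widehat m$ and the depth-one semicircle equation $t^2+zt+1=\cE$, while the paper works with the vertex-removed average $Q(z;\cG)=\tfrac1{Nd}\sum_{i\sim j}G^{(i)}_{jj}$ and the depth-$\ell$ recursion $Q\approx Y_\ell(Q)$, whose linearisation $1-Y_\ell'(\msc)=1-\msc^{2\ell+2}\asymp\ell\sqrt{\kappa+\eta}$ gives the same edge degeneracy up to $\log$ factors. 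Your intermediate inequality for $|E|\ge2$ puts the cross term in a different place --- the paper's \eqref{e:outS0} has $\sqrt{\Lambda}/(N\eta(\kappa+\eta)^{1/4})$ where you write $(N\eta)^{-2}/\sqrt{\kappa+\eta}$ --- but the two converge to the same fixed point under iteration, so the conclusion matches.

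The genuine gap is in the optimal moment estimate you take as input (the role played by Theorem~\ref{t:recursion} and Corollary~\ref{cor:QYQbound}). You sketch this via the resolvent expansion of $\wt G$ around $G$, but the paper explicitly observes in Section~\ref{s:newstrategy} that for fixed $d$ this expansion $\wt G-G=-G\xi G-\sum_{k\geq1}G\xi(-G\xi)^k G$ only decays like $(d-1)^{-k/2}$ per term and hence is useless at the scales needed here. The paper replaces it with the Woodbury expansion $\wt G-G=\sum_{k\geq0}GF\big((G|_{\mathbb F}-P)F\big)^k G$ (Lemma~\ref{lem:woodbury}), in which each power of $G|_{\mathbb F}-P$ contributes a factor $N^{-\delta}$, and pairs it with an iterative resampling scheme organised by forests (Propositions~\ref{p:add_indicator_function} and~\ref{p:iteration}) in which every resampling step either gains $(d-1)^{-\ell/2}$ or adds one more small factor of the type in \eqref{e:defcE1}. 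It is this combination, together with the Ward-type bound for $G_{ij}^{(o)}$ in Proposition~\ref{lem:deletedalmostrandom}, that actually produces the telescoping you describe as the ``main obstacle''; your outline would need to supply it, or an alternative to it, before the error $\cE$ in your self-consistent equation can be bounded at the claimed rate.
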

Theorem \ref{t:rigidity} provides optimal concentration of the Stieltjes transform of the emprical eigenvalue distribution of $H$. The imaginary part of the Stieltjes transform has long been utilized as a means to access information about the empirical eigenvalue density, and precisely following such reasoning leads to Theorem \ref{thm:eigrigidity}. We present the proof of \Cref{thm:eigrigidity} using \Cref{t:rigidity} as an input to \Cref{s:proofrigidity}.

\subsection{Proof ideas}

To establish Theorem \ref{t:rigidity}, we start with the self-consistent equation of a modified Green's function quantity, as introduced in \cite{bauerschmidt2019local, huang2024spectrum}. The quantity is the average of $G_{jj}^{(i)}(z)$ over all pairs of adjacent vertices $i\sim j$:
\begin{align}\label{e:Qsum}
Q(z;\cG):=\frac{1}{Nd}\sum_{i\sim j}G_{jj}^{(i)}(z).
\end{align}
For $d$-regular graphs, although intricate, $Q(z;\cG)$ emerges as a more useful entity than the Stieltjes transform of the empirical eigenvalue distribution of the normalized adjacent matrix $H$. 
To compute $G_{jj}^{(i)}(z)$, we approximate it by the Green's function of a neighborhood of radius $\ell$ around vertex $j$, with vertex $i$ removed, incorporating suitable weights at each boundary vertex. Given that most vertices in random $d$-regular graphs possess large tree neighborhoods, the majority of vertices $j$ in the summation of \eqref{e:Qsum} have large tree neighborhoods. For such vertices $j$, we can subsequently replace the boundary weights in the approximation with the weight $Q(z;\cG)$. Furthermore, the neighborhoods of these vertices $j$, with vertex $i$ removed, are truncated $(d-1)$-ary trees of depth $\ell$.

Let $Y_\ell(Q(z;\cG))$ be the Green's function at the root vertex of a truncated $(d-1)$-ary tree of depth $\ell$, with boundary weights $Q(z;\cG)$. This leads us to the following self-consistent equation for $Q(z;\cG)$:
\begin{align}\label{e:selfeq}
Q(z;\cG)=\frac{1}{Nd}\sum_{i\sim j}G_{jj}^{(i)}(z)\approx Y_\ell(Q(z;\cG)).
\end{align}
for arbitrary $\ell$. The fixed point of the above self-consistent equation \eqref{e:selfeq} is given by the Stieltjes transform of the semi-circle distribution $\msc(z)$. Indeed, $Y_\ell(\msc(z))$ represents the Green's function at the root vertex of an infinite $(d-1)$-ary tree, whose spectral density is governed by the semi-circle distribution. Consequently, the self-consistent equation \eqref{e:selfeq} was utilized in \cite{bauerschmidt2019local, huang2024spectrum} to demonstrate that $Q(z;\cG)$ is closely approximated by $\msc(z)$ with high probability:
\begin{align*}
  \msc(z)=Y_\ell(\msc(z)),\quad  Q(z;\cG)\approx \msc(z).
\end{align*}
The Stieljes transform $m(z)$ of the empirical eigenvalue distribution of $H$ can also be recovered from the quantity $Q$, through the following approximation
\begin{align}\label{e:mz_approx}
    m(z)=\frac{1}{N}\sum_{i=1}^N G_{ii}(z)\approx X_\ell(Q(z;\cG)),
\end{align}
where $X_\ell(Q(z;\cG))$ denotes the Green's function at the root vertex of a truncated  $d$-regular tree of depth $\ell$, with boundary weights $Q(z;\cG)$.

Utilizing the self-consistent equations \eqref{e:selfeq} and \eqref{e:mz_approx}, \cite{bauerschmidt2019local, huang2024spectrum} established that with high probability, uniformly for any $z$ in the upper half-plane with $\Im[z]\geq N^{-1+\oo(1)}$, the Stieltjes transform of the empirical eigenvalue distribution closely approximates $\md(z)$:
\begin{align}\label{e:weak}
|m(z)-\md(z)|\leq N^{-\delta},
\end{align}
for some small $\delta>0$. Furthermore, it was shown that the eigenvectors are completely delocalized. Achieving optimal rigidity of eigenvalue locations, as stated in Theorem \ref{thm:eigrigidity}, necessitates an optimal error bound much stronger than $N^{-\delta}$ in \eqref{e:weak}.  This constitutes a standard problem in random matrix theory, which requires an optimal error bound for the high moments of the self-consistent equation:
\begin{align}\label{e:hmm}
\bE[|Q(z;\cG)-Y_\ell(Q(z;\cG))|^{2p}],
\end{align}
 for large integers $p>0$. Analogous estimates have been established for Wigner matrices \cite{erdHos2012rigidity,erdHos2013local, he2018isotropic}, Erd\H{o}s-R\'enyi graphs \cite{lee2018local,erdHos2013spectral,erdHos2012spectral, he2021fluctuations,huang2020transition,huang2022edge,lee2021higher}, $d$-regular graphs with growing degrees \cite{bauerschmidt2020edge, huang2023edge, he2022spectral}, and $\beta$-ensembles \cite{bourgade2022optimal}.

Distinguished from Wigner matrices, the primary challenge in studying the adjacency matrices of random $d$-regular graphs lies in the correlations between matrix entries, where row and column sums are fixed at $d$.  To address this constraint, the concept of local resampling has been pivotal, initially explored to unravel randomness under such correlations. This technique was first employed to derive spectral statistics for random $d$-regular graphs with $d=N^{\oo(1)}$ in \cite{bauerschmidt2017local}, and subsequently extended in \cite{bauerschmidt2019local, huang2024spectrum} to establish the self-consistent equation \eqref{e:selfeq} and bound its high moments \eqref{e:hmm}. In this method, local resampling randomizes the boundary of a neighborhood $\cT$ (as opposed to randomizing edges near a vertex as in \cite{bauerschmidt2017local}) by exchanging the edge boundary of $\cT$ with randomly chosen edges elsewhere in the graph. Notably, this local resampling is reversible, meaning the law governing the graphs and their switched counterparts is exchangeable.

However, the error bound for the high moments of the self-consistent equation \eqref{e:hmm} in \cite{huang2024spectrum} falls short of optimality, as do the results regarding eigenvalue rigidity. 
Our main contributions extend these findings to the optimal scale. Specifically, we demonstrate that fluctuations in extreme eigenvalues are bounded by $N^{-2/3+\oo(1)}$. This improves the weak $N^{-\oo(1)}$ bound in \cite{huang2024spectrum}, and gives the same order of fluctuation as for the eigenvalues of matrices from the Gaussian orthogonal ensemble. 
Below, we outline the challenges encountered, and the new strategy and key ideas developed to overcome them.

\subsubsection{New Strategy}\label{s:newstrategy}

To illustrate the ideas that will lead to an optimal error bound for \eqref{e:hmm},  let's begin with the first moment of the self-consistent equation:
\begin{align}\label{e:sample}
    \bE[Q(z;\cG)-Y_\ell(Q(z;\cG))]
    =\bE[G_{oo}^{(i)}-Y_\ell(Q(z;\cG))]
    =\bE[\widetilde G_{oo}^{(i)}-Y_\ell(Q(z;\cG))],
\end{align}
where in the first statement we exploit the permutation invariance of vertices, so the expectation of $Q(z;\cG)$ is the same as the expectation of $G_{oo}^{(i)}$ for any pair of adjacent vertices $i\sim o$. Given that most vertices have large tree neighborhoods, we can focus on cases where $o$ has a large tree neighborhood.  The second statement arises from the local resampling process. Instead of computing the expectation of $G_{oo}^{(i)}$, we perform a local resampling around vertex $o$, by switching the boundary edges of the radius $\ell$ neighborhood $\cT=\cB_\ell(o, \cG)$ of vertex $o$, with randomly selected edges $\{(b_\al, c_\al)\}_{\al\in \qq{\mu}}$ from $\cG$.
 We denote the resulting graph as $\tcG$, its Green's function as $\widetilde G(z)$, and the new boundary vertices of $\cT$ after local resampling as $\{c_\al\}_{\al\in\qq{\mu}}$, which are typically distanced apart in terms of graph distance (see Section \ref{s:local_resampling} for further details). As local resampling is reversible, $G_{oo}^{(i)}$ and $\widetilde G_{oo}^{(i)}$ share the same law and expectation, which gives the last expression in \eqref{e:sample}.

To demonstrate the smallness of the final expression in \eqref{e:sample}, we expand  $\widetilde G_{oo}^{(i)}$ using the Schur complement formula.
The radius $\ell$ neighborhood of $o$ in $\tcG^{(i)}$ (where vertex $i$ is removed) is  $\cT^{(i)}$, a truncated $(d-1)$-ary tree at level $\ell$. 
The Schur complement formula states that $\widetilde G_{oo}^{(i)}$ is the same as the Green's function of $\cT^{(i)}$ with boundary weights given by $\widetilde G_{c_\al c_\beta}^{(\bT)}$, which are the Green's functions of $\tcG^{(\bT)}$ (with the vertex set $\bT$ of $\cT$ removed). With high probability, the new boundary vertices $\{c_\al\}_{\al\in\qq{\mu}}$ are far from each other, and exhibiting large tree neighborhoods. Consequently, the neighborhoods of $c_\al$ in $\cG^{(\bT)}$ are given by the truncated $(d-1)$-ary trees, and the boundary weights can be approximated by the Green's function of $\cG$ (the graph before switching) as
\begin{align}\label{e:GtoQ}
   \widetilde G_{c_\al c_\beta}^{(\bT)}
   \approx  G_{c_\al c_\beta}^{(b_\al b_\beta)}\approx \bm1({\al=\beta}) Q(z;\cG),\quad 1\leq \al, \beta\leq \mu.
\end{align}
Consequently, the leading-order term of 
 $\widetilde G_{oo}^{(i)}$ is given by $Y_\ell(Q(z;\cG))$, and \eqref{e:sample} is small. This strategy, utilized in  \cite{huang2024spectrum}, provided a weak bound for \eqref{e:hmm}, by bounding all errors from the approximations such as \eqref{e:GtoQ} by $N^{-\delta}$ for some small $\delta >0$. 

 To achieve optimal estimates for Green's functions, we need to analyze the approximation errors from the Schur complement formula \eqref{e:GtoQ} more carefully. These errors comprise weighted sums of terms involving factors such as:
 \begin{align}\label{e:error_factor}
     (G^{(b_\al)}_{c_\al c_\al}-Q(z;\cG)),  \quad G_{c_\al c_\beta}^{(b_\al b_\beta)},\quad Q(z;\cG)-\msc(z).
 \end{align}
For a more precise description of these error terms, we refer to \Cref{lem:diaglem}. Instead of simply bounding each term in \eqref{e:error_factor} by $N^{-\delta}$, we carefully examine all possibilities of error terms. 
The crucial observation is that the expectation of the first factor in \eqref{e:error_factor} with respect to the randomness of the simple switching is very small:
\begin{align*}
\mathbb{E}_{\mathbf{S}}[G^{(b_{\alpha})}_{c_{\alpha} c_{\alpha}}-Q(z;\mathcal{G})]=\OO\left(\frac{1}{N^{1-\oo(1)}}\right);
\end{align*}
and, up to negligible error, the expectation of the second factor in \eqref{e:error_factor} can be expressed to include the first factor,
\begin{align*}
   \bE_\bfS[G_{c_\al c_\beta}^{(b_\al b_\beta)}]=\frac{d}{\sqrt{d-1}}\bE_\bfS\left[ (G_{c_\al c_\al}^{(b_\al)}-Q(z;\cG))G_{b_\al c_\beta}^{(b_\beta)}\right]+\text{``negligible error"}.
\end{align*}
In this way, we demonstrate that either the error term is negligible or contains one ``diagonal factor" akin to the first factor in \eqref{e:error_factor}. Such a factor, $(G^{(b_\al)}_{c_\al c_\al}-Q(z;\cG))$, is in the same form as the expression \eqref{e:sample} we initiated with. To evaluate the expectation, we will perform another local resampling around the vertex $c_\alpha$, chosen randomly from the last local switching step. 

Our new strategy to derive the optimal error bound for \eqref{e:hmm} is an iteration scheme. At each step, we perform local resampling and rewrite the Green's function of the switched graph in terms of the original graph. Next, we show that each term contains at least one ``diagonal factor" $G_{c_\alpha c_\alpha}^{(b_\alpha)}$ where $(b_\alpha, c_\alpha)$ is an edge selected during local resampling, or it is negligible. Then, we can perform a local resampling around $c_\alpha$, and repeat this procedure. We formalize this iterative scheme using a sequence of forests, as introduced in \Cref{s:forest}. Crucially, we demonstrate that each iteration of local resampling yields an additional factor of at least $N^{-\delta}$ for some small $\delta>0$. After a finite number of steps, all error terms become negligible, leading to an optimal bound for the high moments of the self-consistent equation \eqref{e:hmm}. This iteration is detailed in \Cref{p:add_indicator_function} and \Cref{p:iteration}.
 
\subsubsection{New Technical Ideas}

For each local resampling, we need to rewrite the Green's function of the switched graph in terms of the original graph. While the Schur complement formula suffices for expressions like \eqref{e:sample}, for higher moments, we must also track changes such as:
\begin{align*}
    (\wt Q-Y_\ell (\wt Q)-(Q-Y_\ell(Q))=(1-Y_\ell'(Q))(\wt Q-Q)+\OO(|\wt Q-Q|^2),
\end{align*}
where $\widetilde Q=Q(z;\tcG)$ and $Q=Q(z;\cG)$. Since $\widetilde H-H$ (the difference of the normalized adjacency matrices of $\cG$ and $\tcG$) is low rank,  one can use the Ward identity \eqref{eq:wardex} to show that with high probability
\begin{align*}
    |\widetilde Q-Q|\leq \frac{N^{\oo(1)}\Im[m(z)]}{N\eta}, \quad \eta=\Im[z],
\end{align*}
as done  in prior work \cite{huang2024spectrum}. However it is insufficient for the optimal rigidity.

To achieve improved accuracy for $\wt Q-Q$, it is essential  to compute the difference of the Green's functions $\wt G(z)$ and $G(z)$ for the graphs $\tcG$ and $\cG$. The local resampling around vertex $o$ can be represented by a matrix $\xi:=\wt H-H$. Thus using the resolvent identity, we can write
\begin{align}\label{e:resolvent}
    \wt G(z)-G(z)=-G(z)\xi G(z) -\sum_{k\geq 1}G(z)\xi (-G(z)\xi)^k G(z).
\end{align}
The aforementioned expansion was utilized in \cite{huang2023edge} to prove the edge universality of random $d$-regular graphs, when the degree $d=N^{\oo(1)}$ grows with the size the graph. In this case, owing to our normalization, each entry of $\xi$ scales as $\OO(1/\sqrt{d})$. Thus, the terms in \eqref{e:resolvent} exhibit exponential decay in $1/\sqrt d$. However, in our scenario where $d$ is fixed, this decay is too slow. 

For $d\geq 3$ with $d$ fixed, instead of relying on the  resolvent expansion \eqref{e:resolvent}, we introduce a novel new expansion based on the Woodbury formula, taking advantage of the local tree structure as detailed in \Cref{lem:woodbury}. Specifically, since the rank of the matrix $\xi=\wt H-H$ is at most $\OO((d-1)^\ell)$ (the number of edges involved in local resampling), let's denote the eigenvalue decomposition of $\xi$ as $\xi=UCU^\top$. The Woodbury formula yields the the difference of the Green's functions $\wt G(z)-G(z)$ as 
\begin{align}\label{e:WB}
\wt G(z)-G(z)&=-G(z)U(C^{-1}+U^\top G(z)U)^{-1}U^\top G(z).
\end{align}
Here, the nonzero rows of $U$ correspond to the vertices involved in local resampling. Therefore, the  term $U^\top G(z)U$ in \eqref{e:WB} depends solely on the Green's function entries restricted to the subgraph $\cF:=\cB_{\ell+1}(o,\cG)\cup \{(b_\al, c_\al)\}_{\al\in\qq{\mu}}$. With high probability, these randomly chosen edges $(b_\al, c_\al)$ have tree neighborhoods, and are far apart from each other and the vertex $o$. Let $P(z)$ denote the Green's function of the tree extension of $\cF$ (extending each connected component to an infinite $d$-regular tree). Then $U^\top G(z)U-U^\top P(z)U$ is small, leading to the following expansion
\begin{align}\label{e:tG-G}
\wt G(z)-G(z) = G(z)F(z)G(z)+\sum_{k\geq 1}G(z)F(z)((G(z)-P(z))F(z))^kG(z).
\end{align}
Here, $F(z)=-U(C^{-1}+U^\top P(z)U)^{-1}U^\top$ is an explicit matrix. When restricted to the subgraph, each entry of $G(z)-P(z)$ is smaller than $N^{-\delta}$ for some small $\delta>0$. Hence, the terms in \eqref{e:tG-G} exhibit exponential decay in $N^{-\delta}$ which is much faster than \eqref{e:resolvent}. And we can truncate the expansion \eqref{e:tG-G} at some finite $k$, and the remainder is negligible.  

Another technical idea involves a Ward identity-type bound for the entries of the Green's function, which serves to constrain various error terms. The Ward identity plays a crucial role in mean-field random matrix theory. It states that the average over the  Green's function entries can be controlled by the Stieltjes transform of the empirical eigenvalue distribution, thus ensuring smallness:
\begin{align}\label{eq:wardex}
\frac{1}{N^2}\sum_{ij}|G_{ij}(z)|^2= \frac{\Im[m(z)]}{N\eta},\quad \eta=\Im[z].
\end{align}
Consequently, when selecting two vertices randomly from our graph, the Green's function entries are expected to have small modulus. While some error terms adhere to this form, we also encounter Green's function entries taking such as 
$G_{ij}^{(o)}$,
where $i,j$ are two adjacent vertices of a vertex $o$, i.e. $o\sim i,  o \sim j$. Namely, we take two vertices of distance two, delete their common neighbor, then take the Green's function. In the initial graph $\cG$, $i$ and $j$ have distance two, so $G_{ij}$ is not small.
In \Cref{lem:deletedalmostrandom}, we establish a Ward identity type result for the expectation of $|G_{ij}^{(o)}|^2$, similarly to \eqref{eq:wardex}. The proof again leverages the idea of local resampling. By local resampling around vertex $o$, we reduce the computation to 
\begin{align}\label{e:Gijo}
   \bE[| G_{ij}^{(o)}(z)|^2]=\bE[|\widetilde G_{ij}^{(o)}(z)|^2],
\end{align}
for the switched graph. We then expand it using the Schur complement formula, similarly to \eqref{e:sample}. Crucially, we can bound \eqref{e:Gijo}, by itself times a small factor, and errors as in \eqref{eq:wardex}, leading to the desired bound given by the right-hand side of \eqref{eq:wardex}.

In summary, the iteration scheme presented in this paper  for the computation of the high moments of self-consistent equation  \eqref{e:hmm}  offers a potent method 
to analyze the spectral properties of random $d$-regular graphs. This method yields optimal rigidity for the eigenvalues of random $d$-regular graphs (up to an additional $N^{\oo(1)}$ factor).
Random $d$-regular graphs can also be constructed from $d$ copies of random perfect matchings, or random lifts of a base graph containing two vertices and $d$ edges between them. This class of random graphs obtained from random lifts and in  particular  their extremal  eigenvalues have been extensively studied 
\cite{amit2002random, amit2006random,friedman2003relative, bilu2006lifts, friedman2014relativized, puder2015expansion,lubetzky2011spectra, bordenave2019eigenvalues}. It would be interesting to explore if the approach in this paper can be applied to analyze  extremal eigenvalues in this setting.
 Moreover, our results establish a  $N^{-2/3+\oo(1)}$ fluctuation bound for the second-largest and the smallest eigenvalue, matching that of the Gaussian orthogonal ensemble.  We hope this can be utilized in the future to prove the edge universality of the extremal eigenvalues for random $d$-regular graphs.

\subsection{Related Work}

The eigenvalue statistics of  random graphs have been intensively studied in the past decade. Thanks to a series of papers \cite{bauerschmidt2017bulk, huang2015bulk, lee2018local,erdHos2013spectral,erdHos2012spectral, he2021fluctuations,huang2020transition,huang2022edge,lee2021higher, bauerschmidt2020edge, huang2023edge, he2022spectral}, the bulk and edge statistics of Erd{\H o}s--R{\'e}nyi graphs $G(N,p)$ with $Np\geq N^{\oo(1)}$ and random $d$-regular graphs with $d\geq N^{\oo(1)}$ are now well understood. Universality holds; namely, after proper normalization and shifts, they agree with those from Gaussian orthogonal ensemble. For random $d$-regular graphs, we anticipate such a universality phenomenon holds even for a fixed degree $d\geq 3$. However the situation is dramatically different for very sparse Erd{\H o}s--R{\'e}nyi graphs.

In the sparser regime $Np=\OO(\ln N)$, for Erd{\H o}s--R{\'e}nyi graphs, there exists a critical value $b_*=1/(\ln 4-1)$ such that  if $Np\geq b_*\ln N/N$, the extreme eigenvalues of the normalized adjacency matrix converge to $\pm 2$ \cite{benaych2019largest, alt2021extremal, tikhomirov2021outliers, benaych2020spectral}, and all the eigenvectors are delocalized \cite{alt2022completely, erdHos2013spectral}. For $(\ln  \ln  N)^4\ll Np<b_*\ln N/N$, there exist outlier eigenvalues \cite{tikhomirov2021outliers, alt2021extremal}.
The spectrum splits into three phases: a delocalized phase
in the bulk, a fully localized phase near the spectral edge, and a semilocalized phase in between \cite{alt2023poisson,alt2021delocalization}. Moreover, the joint fluctuations of the eigenvalues near the spectral edges form a Poisson point process. For constant degree Erd\"os-R\'enyi graphs, it was proven in \cite{hiesmayr2023spectral} that the largest eigenvalues are determined by small neighborhoods around vertices of close to maximal degree and the corresponding eigenvectors are localized.

This paper focuses on the eigenvalue statistics. The eigenvectors of random $d$-regular graphs are also important. The complete delocalization of eigenvectors in $\ell_\infty$ norm was proven in previous works \cite{bauerschmidt2019local,huang2024spectrum}. For sparse random regular graphs, several results provide information on the structure of eigenvectors without relying on a local law.  For example, random regular graphs are quantum ergodic
\cite{anantharaman2015quantum}, their local eigenvector statistics converge to those of multivariate Gaussians \cite{backhausz2019almost}, and their high-energy eigenvalues have many nodal domains \cite{ganguly2023many}. Gaussian statistics have been conjectured in broad generality for chaotic systems in both the manifold and graph setting \cite{berry1977regular}. A rich line of research exists toward this idea. For an overview in the manifold setting, see the book \cite{zelditch2017eigenfunctions}, and for the graph setting, refer to \cite{smilansky2013discrete}.

As mentioned above, the second eigenvalue governs the spectral gap of the matrix. Showing the expansion properties of random regular graphs and attempting to find deterministic families of graphs that exhibit these same properties has been a major area of research. For example, the simple random walk on random regular graphs is known to have cutoff \cite{lubetzky2010cutoff}, and these graphs are optimal vertex expanders (see \cite{hoory2006expander}).

\subsection{Outline of the Paper}
In \Cref{s:preliminary}, we recall the concept of local resampling and present results on the estimation of the Green's function of random $d$-regular graphs from \cite{huang2024spectrum}. In \Cref{s:outline}, we state our main results, \Cref{t:recursion}, regarding the high moments estimate of the self-consistent equation, and prove both \Cref{thm:eigrigidity} and \Cref{t:rigidity}, utilizing \Cref{t:recursion} as input. We also outline an iteration scheme to prove \Cref{t:recursion}. For this iteration scheme, we will repeatedly express the Green's functions after local resampling in terms of the original Green's functions before local resampling. In \Cref{sec:expansions}, we gather estimates on the difference in Green's functions before and after local resampling. Additionally, in \Cref{e:error_term}, we collect bounds, which will be used to constrain various error terms encountered in the iteration scheme. Finally, the proof of \Cref{t:recursion} is detailed in \Cref{s:proof_main}.

\subsection{Notation}
We reserve letters in mathfrak mode, e.g. $\fa, \fb, \fc$, to represent small universal constants, and $\fC$ for large universal constants, which
may be different from line by line. We use letters in mathcal mode, e.g. $\cB, \cG, \cT, \cF$, to represent graphs, or subgraphs, and letters in mathbb mode, e.g. $ \bT, \mathbb X$, to represent set of vertices. 
For two quantities $X$ and $Y$ depending on $N$, 
we write that $X = \OO(Y )$ or $X\lesssim Y$ if there exists some universal constant such
that $|X| \leq \fC Y$ . We write $X = \oo(Y )$, or $X \ll Y$ if the ratio $|X|/Y\rightarrow \infty$ as $N$ goes to infinity. We write
$X\asymp Y$ if there exists a universal constant $\fC>0$ such that $ Y/\fC \leq |X| \leq  \fC Y$. We remark that the implicit constants may depend on $d$.
We denote $\qq{a,b} = [a,b]\cap\bZ$ and $\qq{N} = \qq{1,N}$. We say an event $\Omega$ holds with high probability if $\bP(\Omega)\geq 1-\oo(1)$; we say an event $\Omega$ holds with overwhelming high probability, if for any $\fC>0$, 
$\bP(\Omega)\geq 1-N^{-\fC}$ holds provided $N$ is large enough. For two random variables $X_N, Y_N\geq 0$, we write $X_N\prec Y_N$ to mean that for any small $\delta>0$ and $N$ large enough, with overwhelmingly high probability it holds $X_N\leq N^\delta Y_N$,

 \subsection*{Acknowledgements.}
 The research of J.H. is supported by  NSF grant DMS-2331096 and the Sloan research award. 
The research of T.M. is supported by NSF Grant DMS-2212881.
The research of H-T.Y. is supported by NSF grants DMS-1855509 and DMS-2153335. 

\section{Preliminaries}\label{s:preliminary}

In this section we recall some results and concepts from \cite{huang2024spectrum}. 
In \Cref{s:pre} we recall some useful estimates on the Green's functions of random $d$-regular graphs. In \Cref{s:local_resampling}, we recall local resampling and its properties. 

\subsection{Green's Function Estimations}\label{s:pre}
Here, and throughout the following, we use the notation
$
z = E + \ri \eta
$
for the decomposition of $z$ in the upper half complex plane, into its real and imaginary parts, and $\kappa=\kappa(z)=\min\{|E-2|, |E+2|\}$.

In the rest of this paper, we fix the degree $d\geq 3$, and arbitrarily small constants $0<\fc<\fa<1$.  We also take a large radius $\fR:=(\fc/4)\log_{d-1}N$,  a length parameter $(\fc/128)\log_{d-1} N\leq \ell \leq (\fc/64)\log_{d-1} N$ so that $N^{\fc/128}\leq (d-1)^\ell \leq N^{\fc/64}$. We  recall the spectral domain from \eqref{e:D}
\begin{equation} \label{e:D0}
  \mathbf D \deq \{z=E+\ri \eta\col |E| \leq 2+\fa, 0< \eta \leq 1/\fa, N\eta\sqrt{\min\{|E-2|, |E+2|\}+\eta}\geq N^{\fa}\}.
\end{equation}

We recall the integer $\omega_d\geq 1$ from \cite[Definition 2.6]{huang2024spectrum}, which is the number of cycles a graph of degree at most $d$ can have while its Green's function has exponential decay.  Rather than go through the technical definition, the important properties are that $\omega_d\geq 1$ and that $\omega_d$ is nondecreasing in $d$.

\begin{definition}\label{def:omegabar}
Fix $d\geq 3$ and a sufficiently small $0<\fc<1$, $\fR=(\fc /4)\log_{d-1}N$. We define the event $\oOmega$,  where the following occur: 
    \begin{enumerate}
        \item 
The number of vertices that do not have a radius $\fR$ tree neighborhood is at most $N^{\fc}$.
        \item 
        The radius $\fR$ neighborhoods of all vertices have excess at most $\omega_d$. 
    \end{enumerate}
\end{definition}

The event $\oOmega$ is a typical event. The following proposition from \cite[Proposition 2.1]{huang2024spectrum} states that $\oOmega$ holds with high probability. 
\begin{proposition}[{\cite[Proposition 2.1]{huang2024spectrum}}]\label{lem:omega}
$\oOmega$ occurs with probability $1-\OO(N^{-(1-\fc)\omega_d})$.
\end{proposition}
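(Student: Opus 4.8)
The statement asserts that the typical-graph event $\oOmega$—which imposes (1) at most $N^{\fc}$ vertices fail to have a radius-$\fR$ tree neighborhood, and (2) every radius-$\fR$ neighborhood has excess at most $\omega_d$—occurs with probability $1-\OO(N^{-(1-\fc)\omega_d})$. Since this is cited verbatim as \cite[Proposition 2.1]{huang2024spectrum}, the natural plan is to reproduce the standard first-moment (union bound) argument for short-cycle statistics in the configuration model, which I outline below.

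First I would recall the classical estimate for the number of cycles of length $k$ in the configuration model on $N$ half-edges of degree $d$: the expected number of cycles of length $k$ through a fixed set of $k$ vertices is $\OO\big((d-1)^k/N^{k}\big)\cdot N^{k}$-type bookkeeping giving, after summing, $\bE[\#\{\text{cycles of length} \le \fR\}] = \OO(\log N)$ and more refined local bounds. Concretely, for property (2): the neighborhood $\cB_{\fR}(v)$ has excess $\ge \omega_d+1$ precisely when it contains at least $\omega_d+1$ independent cycles, each of length at most $2\fR$; by a union bound over the $\binom{N}{\le (d-1)^{\fR}}$ choices of vertex sets and a switching/enumeration estimate, the probability that a fixed vertex $v$ has excess $\ge \omega_d+1$ in $\cB_\fR(v)$ is $\OO\big((d-1)^{2\fR(\omega_d+1)}/N^{\omega_d+1}\big)$. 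Since $\fR = (\fc/4)\log_{d-1}N$, we have $(d-1)^{2\fR} = N^{\fc/2}$, so this probability is $\OO(N^{(\fc/2)(\omega_d+1) - (\omega_d+1)}) = \OO(N^{-(1-\fc/2)(\omega_d+1)})$; a union bound over the $N$ vertices gives $\OO(N^{-(1-\fc/2)(\omega_d+1)+1}) = \OO(N^{-(1-\fc)\omega_d})$ for the failure probability of (2), after adjusting constants and using $\omega_d\ge 1$.

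For property (1), I would bound $\bE[\#\{v : \cB_\fR(v)\text{ not a tree}\}]$: a vertex fails to have a tree neighborhood iff $\cB_\fR(v)$ contains a cycle of length $\le 2\fR$, equivalently $v$ is within distance $\fR$ of some such cycle. The expected number of short cycles is $\OO(\log N)$ by the cycle-count estimate above, and each short cycle has at most $\OO((d-1)^\fR) = \OO(N^{\fc/4})$ vertices in its $\fR$-ball, so the expected number of "bad" vertices is $\OO(N^{\fc/4}\log N) = \OO(N^{\fc/2})$. Markov's inequality then gives $\bP(\#\{\text{bad }v\} > N^{\fc}) = \OO(N^{\fc/2 - \fc}) = \OO(N^{-\fc/2})$, which is absorbed into $\OO(N^{-(1-\fc)\omega_d})$ when $\omega_d = 1$ and $\fc$ is small; for larger $\omega_d$ one uses the same $\OO(N^{-\fc/2})$ bound, noting it is still $o(1)$ and can be merged with the dominant term by an appropriate (slight) enlargement of the allowed exponent, or—more cleanly—one keeps the $N^{\fc}$ threshold and verifies directly that $\OO(N^{-\fc/2}) = \OO(N^{-(1-\fc)\omega_d})$ fails only if $\omega_d$ is large, in which case one instead invokes a higher-moment/Chernoff bound on the bad-vertex count. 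Finally, I would union the two failure events.

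\textbf{Main obstacle.} The delicate point is matching the exponent exactly to $(1-\fc)\omega_d$ rather than something slightly weaker: property (1)'s Markov bound only gives $\OO(N^{-\fc/2})$, which is weaker than $\OO(N^{-(1-\fc)\omega_d})$ when $\omega_d \ge 2$. Resolving this requires either a concentration (rather than first-moment) argument for the count of short-cycle-adjacent vertices—e.g. a bounded-differences/Azuma estimate exploiting that switching a bounded number of edges changes the bad-vertex count by at most $\OO((d-1)^\fR)$—or, as is done in \cite{huang2024spectrum}, choosing the threshold $N^{\fc}$ and the definition of $\oOmega$ so that the dominant contribution genuinely comes from the excess-condition (2), whose $\OO(N^{-(1-\fc/2)(\omega_d+1)+1})$ bound does have the right shape. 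I would follow the latter route, carefully tracking that $-(1-\fc/2)(\omega_d+1)+1 \le -(1-\fc)\omega_d$ for all $\omega_d\ge 1$ and $\fc$ sufficiently small, and handling (1) by the cruder bound which is then dominated.
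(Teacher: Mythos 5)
The paper does not prove this proposition; it imports it verbatim from \cite[Proposition 2.1]{huang2024spectrum}, so there is no in-paper proof to compare against. Your reconstruction is in the right ballpark for condition (2) (the excess bound): the union bound over vertices with an enumeration of excess-$(\omega_d+1)$ configurations in a radius-$\fR$ ball of size $\lesssim (d-1)^\fR = N^{\fc/4}$ does yield $\OO(N^{-(1-\fc/2)(\omega_d+1)+1})$, and your verification that this is $\leq N^{-(1-\fc)\omega_d}$ because $\omega_d\geq 1$ is correct.

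However, there are two genuine problems with your handling of condition (1). First, a miscalculation: the expected number of cycles of length $\leq 2\fR$ is \emph{not} $\OO(\log N)$. Each length $k$ contributes expectation $\asymp (d-1)^k/(2k)$, so summing over $k\leq 2\fR$ gives $\Theta((d-1)^{2\fR}/\fR) = \Theta(N^{\fc/2}/\log N)$, a polynomially large quantity, not logarithmic. Consequently the expected number of bad vertices is $\Theta(N^{3\fc/4})$ (up to logs), not $\OO(N^{\fc/2})$. Second, and more importantly, your proposed resolution of the gap is self-contradictory: a first-moment Markov bound on the bad-vertex count gives a failure probability around $N^{-\fc/4}$, which for small $\fc$ is \emph{always larger} than $N^{-(1-\fc)\omega_d}$ (even for $\omega_d=1$, since $\fc/4 < 1-\fc$ when $\fc<4/5$). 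So condition (1) is the \emph{dominant} failure event under a first-moment analysis; no amount of tuning the threshold $N^\fc$ or the definition of $\oOmega$ makes condition (2) dominate. The correct fix is precisely the higher-moment route you mention and then set aside: bound $\bE[Z^k]$ for $Z$ the count of cycles of length $\leq 2\fR$ (or equivalently a factorial-moment bound via a switching/configuration-model enumeration), so that $\bP(Z > t) \lesssim (\bE[Z]/t)^k$ with $k$ chosen large relative to $\omega_d/\fc$, which drives the failure probability for condition (1) below $N^{-(1-\fc)\omega_d}$ by a wide margin. You should commit to that argument rather than hoping (2) dominates.
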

As we will see, once we are in $\oOmega$,  Green's functions can be approximated by tree extensions with overwhelmingly high probability. Our starting point is the bound from {\cite[Theorem 4.2]{huang2024spectrum}}, which gives a polynomially small bound on the approximation of the Green's functions.  For this, we recall $Q(z;\cG)$ defined in \eqref{e:Qsum}. We recall the idea of a Green's function extension with general weight $\Delta(z)$ from {\cite[Section 2.3]{huang2024spectrum}}.

\begin{definition}\label{def:pdef}
    Fix degree $d\geq 3$, and a graph $\cT$ with degrees bounded by $d$. We define the function $P(\cT,z,\Delta(z))$ as follows. Define $H(\cT)$ to be the normalized adjacency matrix of $\cT$, and $D(\cT)$ to be the diagonal matrix of degrees of $\cT$. Then 
    \begin{align}\label{e:defP}
    P(\cT,z,\Delta(z)):=\frac{1}{-z+H(\cT)-\frac{\Delta(z)}{d-1}(d\mathbb I-D(\cT))}.
    \end{align}
\end{definition}
When $\Delta(z)=\msc(z)$, \eqref{e:defP} is the Green's function of the tree extension of $\cT$, i.e. extending $\cT$ by attaching copies of infinite $(d-1)$-ary trees to $\cT$ to make each vertex degree $d$.
We recall from \cite[Proposition 2.12]{huang2024spectrum}, if the graph $\cT$ has excess at most $\omega_d$, and ${\rm diam}(\cT)|\Delta(z)-\msc(z)|\ll 1$, then for any $i,j\in \cT$
\begin{align}\label{e:Pijbound}
    |P_{ii}(\cT, z, \Delta(z))|\asymp 1,
    \quad |P_{ij}(\cT, z, \Delta(z))|\lesssim \left(\frac{|\msc(z)|}{\sqrt{d-1}}\right)^{\dist_\cT(i,j)}.
\end{align}

Typically, we will take our graph $\cT$ to be the ball of $\cG$ with some fixed radius, thus for any $r\geq 1$, we define 
$\cB_r(i,\GG)$ to be the ball of radius $r$ around vertex $i$ in $\GG$. We similarly define $\cB_r(i,j,\GG)=\cB_r(i,\GG)\cup \cB_r(j,\GG)$.  The following theorem gives polynomial bounds for the Green's functions and the Stieltjes transform of the empirical eigenvalue distribution. 
\begin{theorem}[{\cite[Theorem 4.2]{huang2024spectrum}}] \label{thm:prevthm}
Fix any sufficiently small $0<\fr<\fa<1$, and $r=\fr \log_{d-1}N$.  For any $z:=E+\ri \eta$, we define $\kappa=\min\{|E-2|, |E+2|\}$, and the error parameters
\begin{align}\label{e:defepsilon}
\varepsilon'(z):=(\log N)^{100}\left(N^{-\fr}+\sqrt{\frac{\Im[m_d(z)]}{N\eta}}+\frac{1}{(N\eta)^{2/3}}\right),\quad \varepsilon(z):=\frac{\varepsilon'(z)}{\sqrt{\kappa(z)+\eta(z)+\varepsilon'(z)}}.
\end{align}
For any $\fC\geq 1$ and $N$ large enough, with probability at least $1-\OO(N^{-\fC})$ with respect to the uniform measure on $\oOmega$, 
\be\label{eq:infbound}
|G_{ij}(z)-P_{ij}(\cB_{r}(i,j,\GG),z,m_{sc}(z))|,\quad |Q(z;\cG)-m_{sc}(z)|,\quad |m(z)-m_d(z)|\lesssim \varepsilon(z).
\ee
uniformly for every $i,j\in \qq{N}$, and any $z=E+\ri \eta$ with $\eta\geq N^{-1+\fa}$. We denote the event that \eqref{eq:infbound} holds as $\Omega(z)$. 
\end{theorem}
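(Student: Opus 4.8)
The statement to be proven is Theorem~\ref{thm:prevthm}, but since it is quoted verbatim from \cite{huang2024spectrum}, a full proof is not expected here; I will instead sketch how one establishes such a polynomial Green's-function bound from the self-consistent equation machinery already set up in the excerpt.

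\textbf{Overall strategy.} The plan is to bound $Q(z;\cG)-\msc(z)$ first, then propagate that bound to $m(z)-\md(z)$ and to the individual entries $G_{ij}(z)$. The starting point is the self-consistent equation \eqref{e:selfeq}: on the event $\oOmega$, most vertices $j$ have a radius-$\fR$ tree neighborhood, so $Q(z;\cG)\approx Y_\ell(Q(z;\cG))$ up to a controllable error coming from (i) the $\le N^{\fc}$ ``bad'' vertices without tree neighborhoods, and (ii) replacing the true boundary weights $\widetilde G_{c_\al c_\beta}^{(\bT)}$ of the truncated $(d-1)$-ary tree by $Q(z;\cG)$ via \eqref{e:GtoQ}. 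One first proves a rough a priori bound $|Q-\msc|\ll 1$ (via a continuity/bootstrap argument in $\eta$, starting from large $\eta$ where everything is $O(1)$ and descending), and then upgrades it. The key analytic input is the stability of the map $w\mapsto Y_\ell(w)$ near its fixed point $\msc(z)$: since $Y_\ell$ is a contraction with $|Y_\ell'(\msc(z))|<1$ away from the spectral edge (and the stability degrades like $1/\sqrt{\kappa+\eta}$ near the edge, which is exactly why $\varepsilon(z)=\varepsilon'(z)/\sqrt{\kappa+\eta+\varepsilon'(z)}$ has that form), one can solve the perturbed fixed-point equation to get $|Q-\msc|\lesssim (\text{self-consistent error})/\sqrt{\kappa+\eta}$.

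\textbf{Controlling the error terms.} The self-consistent error has three sources, matching the three pieces of $\varepsilon'(z)$ in \eqref{e:defepsilon}: the deterministic $N^{-\fr}$ from bad vertices and from truncating the tree at depth $\ell$ (since $(d-1)^{-\ell}\le N^{-\fc/128}$ and off-diagonal $P$-entries decay geometrically by \eqref{e:Pijbound}); the fluctuation term $\sqrt{\Im[\md]/(N\eta)}$ from a concentration/martingale estimate on $Q$ using the Ward identity \eqref{eq:wardex}, which bounds $\frac1{N^2}\sum_{ij}|G_{ij}|^2 = \Im[m]/(N\eta)$ and hence controls the variance of averaged Green's-function quantities; and the $(N\eta)^{-2/3}$ term, which is the typical size of the error produced by the local-resampling/switching step of \cite{bauerschmidt2019local,huang2024spectrum} — here one does a single round of local resampling around a vertex $o$, passes to $\widetilde\cG$, expands $\widetilde G_{oo}^{(i)}$ by the Schur complement formula, and bounds the resulting sums over the $\mu\asymp(d-1)^\ell$ switched edges $(b_\al,c_\al)$ using that these are typically far apart with tree neighborhoods. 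Once $|Q-\msc|\lesssim\varepsilon(z)$ is in hand, $|m-\md|\lesssim\varepsilon(z)$ follows from \eqref{e:mz_approx}, i.e. $m\approx X_\ell(Q)$ and $\md = X_\infty(\msc)$, again using the geometric decay \eqref{e:Pijbound} to truncate; and the entrywise bound $|G_{ij}-P_{ij}(\cB_r(i,j,\cG),z,\msc)|\lesssim\varepsilon(z)$ follows by the Schur complement formula applied at the radius-$r$ ball around $\{i,j\}$ together with the just-established control on the boundary weights.

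\textbf{Main obstacle.} The hardest part is the local-resampling step that produces the $(N\eta)^{-2/3}$ contribution and the concentration of $Q$ around its (conditional) expectation. One must show that after switching, the new boundary vertices $\{c_\al\}$ are with overwhelming probability pairwise far apart and have tree neighborhoods, so that \eqref{e:GtoQ} holds with small error; this requires careful union bounds over the $\le N^{\fc}$ bad vertices and over short cycles, controlled by the definition of $\oOmega$ and the fact that $(d-1)^\ell = N^{o(1)}$ keeps all combinatorial factors subpolynomial. The reversibility/exchangeability of the switching then lets one replace $\bE[G_{oo}^{(i)}]$ by $\bE[\widetilde G_{oo}^{(i)}]$ and extract the leading term $Y_\ell(Q)$ cleanly. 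Since this entire argument is carried out in \cite{huang2024spectrum}, in the present paper Theorem~\ref{thm:prevthm} is simply invoked as a black box, and the novelty of this paper lies instead in pushing the error from $\varepsilon(z)$ down to the optimal scale via the iteration scheme over forests sketched in Section~\ref{s:newstrategy}.
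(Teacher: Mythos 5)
This statement is quoted verbatim from \cite{huang2024spectrum} (their Theorem 4.2) and the present paper offers no proof of it; it is used throughout as a black-box input. You correctly observe this, and your sketch of how the cited result is established — bootstrap in $\eta$ from large scales, the self-consistent equation $Q\approx Y_\ell(Q)$ obtained by local resampling plus the Schur complement, stability of the fixed point $\msc$ where $|1-Y_\ell'(\msc)|\asymp\sqrt{\kappa+\eta}$ explains the shape $\varepsilon=\varepsilon'/\sqrt{\kappa+\eta+\varepsilon'}$, Ward-identity concentration for the $\sqrt{\Im[\md]/(N\eta)}$ piece, and propagation from $Q$ to $m$ and to entrywise bounds on $G_{ij}$ via Schur complement on $\cB_r(i,j,\cG)$ — is consistent with the machinery recalled in Sections~\ref{s:intro}--\ref{s:preliminary} and with \cite{bauerschmidt2019local,huang2024spectrum}. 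Since there is no proof in this paper to compare against, there is nothing further to check; the only caveat is that the precise provenance of the $(N\eta)^{-2/3}$ term (whether it arises solely from the resampling step or also from iterating the fluctuation bound) is an attribution internal to \cite{huang2024spectrum} that cannot be verified from the present excerpt.
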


For any integer $\ell\geq 1$, we define the functions $X_\ell(\Delta(z),z), Y_\ell(\Delta(z),z)$ as
\begin{align}\label{def:Y}
X_\ell(\Delta(z),z)=P_{oo}(\cB_\ell(o,\cX),z,\Delta(z)),\quad Y_\ell(\Delta(z),z)=P_{oo}(\cB_\ell(o,\cY),z,\Delta(z)),
\end{align}
where $\cX$ is the infinite $d$-regular tree with root vertex $o$, and $\cY$ is the infinite $(d-1)$-ary tree with root vertex $o$. 
If the context is clear, we will simply write $X_\ell(\Delta(z),z)=X_\ell(\Delta(z))$ and $Y_\ell(\Delta(z),z)=Y_\ell(\Delta(z))$.

Then $m_{sc}(z)$ is a fixed point of the function $Y_\ell$, i.e. $Y_\ell(m_{sc}(z))=\msc(z)$. And $X_\ell(\msc(z))=\md(z)$. If we take $\Delta(z)=Q(z;\cG)$ as in \eqref{e:Qsum}, then $Y_\ell(Q(z;\cG))$ recovers $Y_\ell(Q)$ as in \eqref{e:selfeq}. The following proposition states that if $\Delta(z)$ is sufficiently close to $m_{sc}(z)$, then $Y_\ell(\Delta(z))$ is close to $m_{sc}(z)$, $X_\ell(\Delta(z))$ is close to $\md(z)$.

\begin{proposition}[{\cite[Proposition 2.10]{huang2024spectrum}}]\label{p:recurbound}
Given a function $\Delta(z)$, if $\ell|\Delta(z)-m_{sc}(z)|\ll 1$, then the functions $X_\ell(\Delta(z)), Y_\ell(\Delta(z))$ 
satisfy
\begin{align}\begin{split}\label{e:Xrecurbound}
&\phantom{{}={}}X_\ell(\Delta(z))-\md(z)
=\frac{d}{d-1}(\md(z))^2(m_{sc}(z))^{2\ell}(\Delta(z)-m_{sc}(z))
+\OO\left(\ell|\Delta(z)-m_{sc}(z)|^2\right),
\end{split}\end{align}
and
\begin{align}\begin{split}\label{e:recurbound}
&\phantom{{}={}}Y_\ell(\Delta(z))-m_{sc}(z)
=(m_{sc}(z))^{2\ell+2}(\Delta(z)-m_{sc}(z))\\
&+(m_{sc}(z))^{2\ell+3}\left(\frac{1-(m_{sc}(z))^{2\ell+2}}{1-(m_{sc}(z))^{2}}\right)(\Delta(z)-m_{sc}(z))^2
+\OO\left(\ell^2|\Delta(z)-m_{sc}(z)|^3\right).
\end{split}\end{align}
\end{proposition}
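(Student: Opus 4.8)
\textbf{Proof proposal for Proposition~\ref{p:recurbound}.}

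The plan is to compute $X_\ell(\Delta)$ and $Y_\ell(\Delta)$ directly from the recursive structure of the (truncated) $d$-regular and $(d-1)$-ary trees, treating $\delta := \Delta(z) - m_{sc}(z)$ as a small perturbation and Taylor-expanding to second (resp.\ third) order. First I would set up the recursion for the Green's function at the root of a truncated $(d-1)$-ary tree of depth $\ell$ with boundary weight $\Delta$. Writing $y_k$ for the diagonal Green's function entry $P_{oo}(\cB_k(o,\cY),z,\Delta)$ at the root of the depth-$k$ tree, the Schur complement formula applied at the root gives the one-step recursion $y_{k+1} = \big(-z - (d-1)\, y_k\big)^{-1}$, with the initial condition $y_0$ being the single-vertex weighted Green's function determined by $\Delta$; in fact by Definition~\ref{def:pdef} the weight $\Delta/(d-1)\cdot(d\mathbb{I}-D)$ is exactly engineered so that $y_0$ and the fixed point coincide with the substitution $y\mapsto m_{sc}(z)$. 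The key algebraic fact is that $m_{sc}(z)$ is the fixed point: $m_{sc} = (-z-(d-1)m_{sc})^{-1}$, equivalently $(d-1)$-free when one uses $m_{sc}^2 + z m_{sc} + 1 = 0$ together with the normalization; the derivative of the map $u \mapsto (-z-(d-1)u)^{-1}$ at $u = m_{sc}$ equals $(m_{sc})^2$ after using $-z-(d-1)m_{sc} = 1/m_{sc}$, but one must be careful: the correct linearization for the $(d-1)$-ary recursion produces the contraction factor $m_{sc}^2$ per level (this is where the $(m_{sc})^{2\ell+2}$ in \eqref{e:recurbound} comes from, with the extra $+2$ accounting for the root vertex having degree $d$ not $d-1$, i.e.\ the boundary condition at level $0$).

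Next I would iterate the linearization. Setting $\delta_k := y_k - m_{sc}$, the recursion gives $\delta_{k+1} = m_{sc}^2\,\delta_k + c_2\,\delta_k^2 + O(\delta_k^3)$ where $c_2$ is the explicit second derivative coefficient, namely $c_2 = (d-1)\,(-z-(d-1)m_{sc})^{-3} = (d-1) m_{sc}^3$. Unwinding this linear-plus-quadratic recursion over $\ell$ steps is a standard computation: the linear part contributes $m_{sc}^{2\ell}\delta_0$, and the quadratic corrections, when summed as a geometric-type series with ratio $m_{sc}^2$ (each earlier quadratic error gets multiplied by the remaining linear factors), produce a term of the shape $m_{sc}^{2\ell+3}\frac{1-m_{sc}^{2\ell+2}}{1-m_{sc}^2}\delta_0^2$ after collecting; the cubic and higher terms are bounded crudely by $O(\ell^2|\delta|^3)$ since there are $O(\ell)$ levels and at each the accumulated quantity is $O(|\delta|)$, so the total cubic contribution is $O(\ell)\cdot O(\ell)\cdot O(|\delta|^3)$ (one factor $\ell$ from the number of terms, one from the fact that $\delta_k$ itself already carries accumulated quadratic error of size up to $O(\ell|\delta|^2)$ feeding into cubic terms). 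The hypothesis $\ell|\Delta - m_{sc}|\ll 1$ is exactly what guarantees $\delta_k$ stays $O(|\delta|)$ throughout, so all the expansions are valid. For $X_\ell$ the tree is $d$-regular rather than $(d-1)$-ary, so the root has $d$ children each of which is the root of a depth-$(\ell-1)$ $(d-1)$-ary tree; hence $X_\ell(\Delta) = (-z - d\,y_{\ell-1})^{-1}$ with $y_{\ell-1}$ the quantity just analyzed. Plugging $y_{\ell-1} = m_{sc} + m_{sc}^{2(\ell-1)}\delta_0 + O(\ell|\delta|^2)$ and using $(-z-d\,m_{sc})^{-1} = m_d$ (by the identity $m_d = (-z - \frac{d}{d-1}m_{sc})^{-1}$ combined with $m_{sc} = (-z-(d-1)m_{sc})^{-1}$ — one checks $-z - d m_{sc}$ is the right denominator via the relation between $\cX$ and $\cY$ recursions), a single first-order Taylor expansion of $u\mapsto(-z-du)^{-1}$ at $u=m_{sc}$ with derivative $d\,m_d^2$ gives \eqref{e:Xrecurbound}; I should double-check whether the exponent is $2\ell$ or $2\ell-2$ and absorb the discrepancy — the statement has $(m_{sc})^{2\ell}$, which suggests the convention is that the $d$-regular ball $\cB_\ell(o,\cX)$ has its $d$ subtrees being $(d-1)$-ary of depth $\ell-1$ but the linearization through the root and back contributes one more factor, reconciling to $m_{sc}^{2\ell}$ times the coefficient $\frac{d}{d-1}m_d^2$.

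The main obstacle I anticipate is bookkeeping the exact constants and exponents — in particular making sure the factor $\frac{d}{d-1}$ in \eqref{e:Xrecurbound}, the shift $2\ell+2$ versus $2\ell$, and the precise form $m_{sc}^{2\ell+3}\frac{1-m_{sc}^{2\ell+2}}{1-m_{sc}^{2}}$ of the quadratic coefficient in \eqref{e:recurbound} all come out correctly. This requires carefully identifying which recursion (root-degree $d$ vs.\ $d-1$) applies at which level, tracking the boundary term $y_0$ governed by the $(d\mathbb{I}-D)$ correction in Definition~\ref{def:pdef}, and being disciplined about the geometric summation of quadratic errors: the denominator $1-m_{sc}^2$ appears because $\sum_{k=0}^{\ell-1} m_{sc}^{2k} = \frac{1-m_{sc}^{2\ell}}{1-m_{sc}^2}$ with the remaining linear factors shifting indices to yield $\frac{1-m_{sc}^{2\ell+2}}{1-m_{sc}^2}$. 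All of this is elementary once the recursion is written down correctly; the conceptual content is entirely in the identity $m_{sc}=(-z-(d-1)m_{sc})^{-1}$ and its consequence that the per-level contraction factor is exactly $m_{sc}^2$, whose modulus is $<1$ for $z$ away from $[-2,2]$ and $=1$ on the spectrum (which is why the $\ell|\Delta-m_{sc}|\ll 1$ hypothesis, not a contraction, is what the statement relies on — the errors are controlled by counting levels, not by decay).
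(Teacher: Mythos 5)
Your overall strategy — write the tree Green's function as a composition of one-step Schur-complement maps, Taylor-expand each around $\msc$, and sum the accumulated quadratic corrections as a geometric series — is the right one and is surely the argument behind the cited \cite[Proposition~2.10]{huang2024spectrum}. (The present paper does not re-prove this; it is quoted, so there is no in-text proof to compare against.) Your identification of the contraction factor $\msc^2$ per level, the form of the geometric sum $\msc^{2\ell+3}\tfrac{1-\msc^{2\ell+2}}{1-\msc^2}$, and the role of $\ell|\Delta-\msc|\ll 1$ in keeping every $\delta_k$ of size $O(|\Delta-\msc|)$ are all the right ideas.

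There is, however, a concrete error in the recursion as you wrote it. With the normalization $H = A/\sqrt{d-1}$, a vertex whose parent is removed has $d-1$ children each contributing $H_{\cdot,\cdot}^2 = 1/(d-1)$ in the Schur complement, so the one-step map is $y_{k+1} = (-z - y_k)^{-1}$, \emph{not} $y_{k+1} = (-z-(d-1)y_k)^{-1}$. Accordingly the identity you invoke, $-z-(d-1)\msc = 1/\msc$, is false for $d>2$; the correct identity is $-z-\msc = 1/\msc$ (immediate from $\msc^2+z\msc+1=0$). With the corrected map $f(u)=(-z-u)^{-1}$ one gets $f'(\msc)=\msc^2$ and $\tfrac12 f''(\msc)=\msc^3$ with no $d$-dependence, whereas your $c_2=(d-1)\msc^3$ would, carried through, put a spurious $(d-1)$ in the quadratic coefficient of \eqref{e:recurbound}. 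The same slip appears in the $X_\ell$ step: the root map is $u \mapsto (-z - \tfrac{d}{d-1}u)^{-1}$, whose derivative at $\msc$ is $\tfrac{d}{d-1}\md^2$ (not $d\,\md^2$), which is precisely the prefactor in \eqref{e:Xrecurbound}. Once these normalization factors are corrected, the exponent bookkeeping you were worried about resolves cleanly: for $Y_\ell$ the boundary value $\Delta$ sits at depth $\ell$ and the recursion applies $f$ exactly $\ell+1$ times down to the root, giving $\msc^{2\ell+2}$; for $X_\ell$ the children of the $d$-regular root feed in the $(d-1)$-ary quantity after $\ell$ applications (depths $\ell$ down to $1$), giving the inner factor $\msc^{2\ell}$, and the outer $d$-regular map contributes the $\tfrac{d}{d-1}\md^2$. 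With those fixes, iterating $\delta_k = \msc^2\delta_{k+1}+\msc^3\delta_{k+1}^2+O(\delta_{k+1}^3)$ and summing closes the argument exactly as you describe.
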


We will later use the Taylor expansion of $Y_\ell(\Delta(z))$ around $\msc(z)$ and utilize the fact that according to \eqref{e:recurbound} the derivatives of $Y_\ell(\Delta(z))$ satisfies,
\begin{align}\begin{split}\label{eq:Yprime}
&Y_\ell'(\Delta(z))=m_{sc}^{2\ell+2}+2m_{sc}^{2\ell+3}\left(\frac{1-m_{sc}^{2\ell+2}}{1-m_{sc}^2}\right)(\Delta-m_{sc})+O(\ell^2|\Delta-m_{sc}|^2),\\
&Y_\ell''(\Delta(z))=2m_{sc}^{2\ell+3}\left(\frac{1-m_{sc}^{2\ell+2}}{1-m_{sc}^2}\right)+O(\ell^2|\Delta-m_{sc}|).
\end{split}\end{align}

As a consequence of \Cref{thm:prevthm}, for $\cG\in \Omega(z)$, there is some sufficiently large integer $\fb\geq 1$ such that for any $\eta\geq N^{-1+\fa}$,
\begin{equation}
|Q(z;\cG)-Y_\ell(Q(z;\cG))|^\fb, \quad |Q(z;\cG)-\msc(z)|^\fb\leq \frac{1}{N^2},
\end{equation}
where $Y_\ell$ is as defined in \eqref{def:Y}.
Moreover, for any $i,j\in \qq{N}$, if $\cB_{\fR/2}(i,j,\cG)$ is a tree (does not contain any cycle), then
\begin{align*}
 |G_{ij}(z)-P_{ij}(z)|\lesssim \varepsilon(z),\quad  |G_{ij}(z)-P_{ij}(z)|^{\fb}\leq \frac{1}{N^2},
\end{align*}
where $P_{ij}$ is the Green's function of infinite $d$-regular tree (the tree extension of $\cB_{\fR/2}(i,j,\cG)$).

\subsection{Local Resampling}  
\label{s:local_resampling}

In this section, we recall the local resampling and its properties. This gives us the framework to talk about resampling from the random regular graph distribution as a way to get an improvement in our estimates of the Green's function.

For any graph $\cG$, we denote the set of unoriented edges by $E(\cG)$,
and the set of oriented edges by $\vec{E}(\cG):=\{(u,v),(v,u):\{u,v\}\in E(\cG)\}$.
For a subset $\vec{S}\subset \vec{E}(\cG)$, we denote by $S$ the set of corresponding non-oriented edges.
For a subset $S\subset E(\cG)$ of edges we denote by $[S] \subset \qq{N}$ the set of vertices incident to any edge in $S$.
Moreover, for a subset $\bV\subset\qq{N}$ of vertices, we define $E(\cG)|_{\bV}$ to be the subgraph of $\cG$ induced by $\bV$.

\begin{definition}
A (simple) switching is encoded by two oriented edges $\vec S=\{(v_1, v_2), (v_3, v_4)\} \subset \vec{E}$.
We assume that the two edges are disjoint, i.e.\ that $|\{v_1,v_2,v_3,v_4\}|=4$.
Then the switching consists of
replacing the edges $\{v_1,v_2\}, \{v_3,v_4\}$ by the edges $\{v_1,v_4\},\{v_2,v_3\}$.
We denote the graph after the switching $\vec S$ by $T_{\vec S}(\cG)$,
and the new edges $\vec S' = \{(v_1,v_4), (v_2,v_3)\}$ by
$
  T(\vec S) = \vec S'
$.
\end{definition}

The local resampling involves a fixed center vertex, we now assume to be vertex $o$,
and a radius $\ell$.
Given a $d$-regular graph $\cG$, we abbreviate $\cT=\cB_{\ell}(o,\cG)$ (which may not be a tree) and its vertex set $\bT$.\index{$\cT, \bT$}
The edge boundary $\del_E \cT$ of $\cT$ consists of the edges in $\cG$ with one vertex in $\bT$ and the other vertex in $\qq{N}\setminus\bT$.
We enumerate the edges of $\del_E \cT$ as $ \del_E \cT = \{e_1,e_2,\dots, e_\mu\}$, where $e_\al=\{l_\al, a_\al\}$ with $l_\al \in \bT$ and $a_\al \in \qq{N} \setminus \bT$. We orient the edges $e_i$ by defining $\vec{e}_i=(l_\al, a_\al)$.
We notice that $\mu$ and the edges $e_1,e_2, \dots, e_\mu$ depend on $\cG$. The edges $e_\al$ are distinct, but
the vertices $a_\al$ are not necessarily distinct and neither are the vertices $l_\al$. Our local resampling switches the edge boundary of $\cT$ with randomly chosen edges in $\cG^{(\bT)}$
if the switching is admissible (see below), and leaves them in place otherwise.
To perform our local resampling, we choose $(b_1,c_1), \dots, (b_\mu,c_\mu)$ to be independent, uniformly chosen oriented edges from the graph $\cG^{(\bT)}$, i.e.,
the oriented edges of $\cG$ that are not incident to $\bT$,
and define 
\begin{equation}\label{e:defSa}
  \vec{S}_\al= \{\vec{e}_\al, (b_\al,c_\al)\},
  \qquad
  {\bf S}=(\vec S_1, \vec S_2,\dots, \vec S_\mu).
\end{equation}
The sets $\bf S$ will be called the \emph{resampling data} for $\cG$. We remark that repetitions are allowed in the resampling data $(b_1, c_1), (b_2, c_2),\cdots, (b_\mu, c_\mu)$.
We define an indicator that will be crucial to the definition of switch.

\begin{definition}
For $\al\in\qq{\mu}$,
we define the indicator functions
$I_\al \equiv I_\al(\cG,{\bf S})=1$\index{$I_\alpha$} 
\begin{enumerate}
\item
 the subgraph $\cB_{\fR/4}(\{a_\al, b_\al, c_\al\}, \cG^{(\bT)})$ after adding the edge $\{a_\al, b_\al\}$ is a tree;
\item 
and $\dist_{\cG^{(\bT)}}(\{a_\al,b_\al,c_\al\}, \{a_\beta,b_\beta,c_\beta\})> {\fR/4}$ for all $\beta\in \qq{\mu}\setminus \{\al\}$.
\end{enumerate}
\end{definition}
 The indicator function $I_\alpha$ imposes two conditions. The first one is a ``tree" condition, which ensures that 
 $a_\al$ and $\{b_\al, c_\al\}$ are far away from each other, and their neighborhoods are trees. 
The second one imposes an ``isolation" condition, which ensures that we only perform simple switching when the switching pair is far away from other switching pairs. In this way, we do not need to keep track of the interaction between different simple switchings. 

We define the \emph{admissible set}
\begin{align}\label{Wdef}
{\mathsf W}_{\bf S}:=\{\al\in \qq{\mu}: I_\al(\cG,{\bf S}) \}.
\end{align}
We say that the index $\al \in \qq{\mu}$ is \emph{switchable} if $\al\in {\mathsf W}_{\bf S}$. We denote the set $\bW_{\bf S}=\{b_\al:\al\in {\mathsf W}_{\bf S}\}$\index{$\bW_{\bf S}$}. Let $\nu:=|{\mathsf W}_{\bf S}|$ be the number of admissible switchings and $\al_1,\al_2,\dots, \al_{\nu}$
be an arbitrary enumeration of ${\mathsf W}_{\bf S}$.
Then we define the switched graph by
\begin{equation} \label{e:Tdef1}
T_{\bf S}(\cG) := \left(T_{\vec S_{\al_1}}\circ \cdots \circ T_{\vec S_{\al_\nu}}\right)(\cG),
\end{equation}
and the resampling data by
\begin{equation} \label{e:Tdef2}
  T({\bf S}) := (T_1(\vec{S_1}), \dots, T_\mu(\vec{S_\mu})),
  \quad
  T_\al(\vec{S}_\al) \deq
  \begin{cases}
    T(\vec{S}_\al) & (\al \in {\mathsf W}_{\bf S}),\\
    \vec{S}_\al & (\al \not\in {\mathsf W}_{\bf S}).
  \end{cases}
\end{equation}

To make the structure more clear, we introduce an enlarged probability space.
Equivalent to the definition above, the sets $\vec{S}_\al$ as defined in \eqref{e:defSa} are uniformly distributed over 
\begin{align*}
{\sf S}_{\al}(\cG)=\{\vec S\subset \vec{E}: \vec S=\{\vec e_\al, \vec e\}, \text{$\vec{e}$ is not incident to $\cT$}\},
\end{align*}
i.e., the set of pairs of oriented edges in $\vec{E}$ containing $\vec{e}_\al$ and another oriented edge in $\cG^{(\bT)}$.
Therefore ${\bf S}=(\vec S_1,\vec S_2,\dots, \vec S_\mu)$ is uniformly distributed over the set
${\sf S}(\cG)=\sf S_1(\cG)\times \cdots \times \sf S_\mu(\cG)$.

We introduce the following notations on the probability and expectation with respect to the randomness of the $\bfS\in \sf S(\cG)$.
\begin{definition}\label{def:PS}
    Given any $d$-regular graph $\cG$, we 
    denote $\bP_\bfS(\cdot)$ the uniform probability measure on ${\sf S}(\cG)$;
 and $\bE_\bfS[\cdot]$ the expectation  over the choice of $\bfS$ according to $\bP_\bfS$. 
\end{definition}

 The following claim from \cite[Lemma 7.3]{huang2024spectrum} states that this switch is invariant under the random regular graph distribution.

\begin{lemma}[{\cite[Lemma 7.3]{huang2024spectrum}}] \label{lem:exchangeablepair}
Fix $d\geq 3$. We recall the operator $T_\bfS$ from \eqref{e:Tdef1}. Let $\cG$ be a random $d$-regular graph  and $\bfS$ uniformly distributed over $\sfS(\cG)$, then the graph pair $(\cG, T_{\bf S}(\cG))$ forms an exchangeable pair:
\begin{align*}
(\cG, T_{\bf S}(\cG))\stackrel{law}{=}(T_{\bf S}(\cG), \cG).
\end{align*}
\end{lemma}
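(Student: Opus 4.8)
The plan is to present the resampling as a measure-preserving involution on the space of pairs $(\cG,\bfS)$, where $\cG$ ranges over the $d$-regular graphs on $\qq N$ and $\bfS\in\sfS(\cG)$, equipped with the measure $\mu$ that samples $\cG$ uniformly and then $\bfS$ uniformly from $\sfS(\cG)$. The candidate map is
\[
\Phi\colon (\cG,\bfS)\longmapsto \bigl(T_{\bf S}(\cG),\, T(\bfS)\bigr),
\]
with $T_{\bf S}$ and $T(\bfS)$ as in \eqref{e:Tdef1}--\eqref{e:Tdef2}. Granting that $\Phi$ is a well-defined involution that preserves $\mu$, the lemma is immediate: if $(\cG,\bfS)\sim\mu$ then $(\cG',\bfS')\deq\Phi(\cG,\bfS)\sim\mu$, and since $\Phi\circ\Phi=\id$ we have $T_{\bfS'}(\cG')=\cG$, so $\bigl(\cG',T_{\bfS'}(\cG')\bigr)=\bigl(T_{\bf S}(\cG),\cG\bigr)$ has the same law as $\bigl(\cG,T_{\bf S}(\cG)\bigr)$. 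Thus there are three things to check: (i) $T(\bfS)\in\sfS(T_{\bf S}(\cG))$; (ii) $\Phi\circ\Phi=\id$; (iii) $\Phi_*\mu=\mu$.

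For (i) and (iii) the key structural fact is that switching changes $\cG$ only through edges incident to $\bT$ at the distance-$\ell$ vertices $l_\al$, or through edges lying entirely in $\cG^{(\bT)}$. Hence $\cG|_{\bT}$ is unchanged, the distances from $o$ inside $\bT$ are unchanged, and $\cB_\ell(o,T_{\bf S}(\cG))$ has the same vertex set $\bT$ and the same induced graph; in particular $\mu$, the enumeration of $\del_E\cT$, and the labels $l_\al$ are data shared by $\cG$ and $T_{\bf S}(\cG)$. One then checks directly that the first coordinate of $T_\al(\vec S_\al)$ is the outward-oriented boundary edge $(l_\al,c_\al)$ of $\cB_\ell(o,T_{\bf S}(\cG))$, its second coordinate $(b_\al,a_\al)$ lies in $(T_{\bf S}(\cG))^{(\bT)}$, and non-switchable indices are untouched; this gives (i). For (iii), since $\Phi$ is a bijection it suffices that $|\sfS(\cG)|=|\sfS(T_{\bf S}(\cG))|$, and $|\sfS_\al(\cG)|$ is twice the number of unoriented edges of $\cG$ not incident to $\bT$, i.e.\ $2\bigl(Nd/2-|E(\cG|_{\bT})|-\mu\bigr)$, which is invariant because $|\bT|$, $E(\cG|_{\bT})$, and $\mu$ are.

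For (ii), a single simple switching is self-inverse once re-encoded: if $\vec S=\{(v_1,v_2),(v_3,v_4)\}$ yields new edges $\vec S'=\{(v_1,v_4),(v_2,v_3)\}$, then $T_{\vec S'}(T_{\vec S}(\cG))=\cG$ and $T(\vec S')=\vec S$. Since $T_{\bf S}$ composes exactly the switches indexed by the admissible set, $\Phi\circ\Phi=\id$ reduces to the identity $\mathsf W_{T(\bfS)}\bigl(T_{\bf S}(\cG)\bigr)=\mathsf W_{\bf S}(\cG)$, i.e.\ to the $\Phi$-invariance of each indicator $I_\al$. Each $I_\al$ depends only on $\cB_{\fR/4}(\{a_\al,b_\al,c_\al\},\cG^{(\bT)})$ together with the edge $\{a_\al,b_\al\}$ (the ``tree'' clause) and on the pairwise $\cG^{(\bT)}$-distances between the triples $\{a_\al,b_\al,c_\al\}$ (the ``isolation'' clause). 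When $I_\al(\cG,\bfS)=1$, the isolation clause ensures the $\al$-th switch is the only modification inside the radius-$\fR/4$ ball around its triple, and the tree clause makes that local picture an isomorphic copy of the one seen after the switch (with the roles of $a_\al$ and $c_\al$ exchanged and $b_\al$ fixed); hence $I_\al(T_{\bf S}(\cG),T(\bfS))=1$, and conversely. Because distinct switches among the admissible indices act on edge-disjoint, far-apart patches, they commute (here one also uses that a repetition $b_\al=b_\beta$ forces at most one of $\al,\beta$ to be admissible), so $\Phi\circ\Phi=\id$.

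The main obstacle is precisely the invariance of the admissible set $\mathsf W_{\bf S}$ in step (ii): verifying that the tree and isolation clauses are carried symmetrically by re-encoding each switch, that the relevant radius-$\fR/4$ patches remain pairwise disjoint after switching, and that repetitions in the resampling data $(b_\al,c_\al)$ cause no interference. This is exactly the combinatorial bookkeeping the indicator $I_\al$ was designed to make tractable, and it is where the argument requires care; the remaining points (i) and (iii) are direct consequences of the locality of switching and the $d$-regularity.
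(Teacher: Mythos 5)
Your proposal is correct and is the standard argument: realize the resampling as a measure-preserving involution $\Phi:(\cG,\bfS)\mapsto(T_{\bf S}(\cG),T(\bfS))$ on the enlarged probability space, from which exchangeability is immediate. The present paper does not reproduce a proof (it cites \cite[Lemma~7.3]{huang2024spectrum}), but your three-step decomposition --- (i) well-definedness of $T(\bfS)\in\sfS(T_{\bf S}(\cG))$, (ii) $\Phi\circ\Phi=\id$ via invariance of the admissible set $\mathsf W_{\bf S}$, (iii) $|\sfS(\cG)|=|\sfS(T_{\bf S}(\cG))|$ because $\bT$, $E(\cG|_\bT)$, and $\mu$ are all switch-invariant --- is precisely the expected structure, and your identification of (ii) as the delicate step (locality of the switch, exchange of the roles of $a_\al$ and $c_\al$, and disjointness of the admissible patches) is the right place to concentrate the effort.

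One detail worth flagging: you correctly take the second coordinate of $T(\vec S_\al)$ to be $(b_\al,a_\al)$, i.e.\ $(v_3,v_2)$, whereas the displayed definition in the paper reads $T(\vec S)=\{(v_1,v_4),(v_2,v_3)\}$, which would give $(a_\al,b_\al)$. With the paper's literal orientation, $T$ is not an involution and $T_{T(\vec S)}\circ T_{\vec S}\neq\id$ (it sends $\{v_1,v_2\},\{v_3,v_4\}$ first to $\{v_1,v_4\},\{v_2,v_3\}$ and then to $\{v_1,v_3\},\{v_2,v_4\}$). Your orientation $(v_1,v_4),(v_3,v_2)$ is the one that makes both identities hold, so you have implicitly corrected what is evidently a typo; it would be worth noting this explicitly, since both (i) and (ii) hinge on it. Also, a small strengthening of your side remark: a repetition $b_\al=b_\beta$ sets the $\cG^{(\bT)}$-distance between the triples $\{a_\al,b_\al,c_\al\}$ and $\{a_\beta,b_\beta,c_\beta\}$ to zero, so it in fact makes \emph{neither} index admissible (rather than ``at most one''), which is why such repetitions cause no interference in step (ii).
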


In the following two lemmas we show that with high probability with respect to the randomness of $\bfS$, the randomly selected edges $(b_\al, c_\al)$ are far away from each other, and have large tree neighborhood. In particular $T_\bfS \cG\in \oOmega$. 
\begin{lemma}\label{lem:configuration}
Fix $d\geq 3$, a $d$-regular graph $\cG\in \oOmega$ and a vertex set $\bF\subset \qq{N}$ containing vertex $o$, with $|\bF|\leq N^{\fc/2}$. We consider the local resampling around vertex $o$, with resampling data $\{(l_\al, a_\al), (b_\al, c_\al)\}_{\al\in \qq{\mu}}$. Then with probability at least $1-N^{-1+2\fc}$ over the randomness of the resampling data ${\bf S}$, we have for any $\al\neq \beta\in \qq{\mu}$, $\dist_\cG(\{b_\al, c_\al\}\cup \bF,\{b_\beta, c_\beta\} )\geq3\fR$; and 
for any  $v\in \cB_{\ell}(\{b_\al, c_\al\}_{\al\in \qq{\mu}}, \cG)$, the radius $\fR$ neighborhood of $v$ is a tree. 
\end{lemma}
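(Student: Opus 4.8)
The statement is essentially a union bound over the randomly resampled edges, powered by the fact that a uniformly chosen oriented edge in a random $d$-regular graph (or in $\cG^{(\bT)}$) lands in a ``bad'' region with probability at most $N^{-1+\oo(1)}$. The plan is as follows. First, recall that on the event $\oOmega$, at most $N^{\fc}$ vertices fail to have a radius $\fR$ tree neighborhood, and all radius $\fR$ neighborhoods have excess at most $\omega_d$. The union of the radius $2\fR$ balls around these $\le N^{\fc}$ exceptional vertices, together with the radius $3\fR$ ball around the set $\bF$, covers at most $N^{\fc}\cdot d^{2\fR} + |\bF|\cdot d^{3\fR}\le N^{\fc+\fc/2+o(1)} = N^{O(\fc)}$ vertices, since $d^{\fR}=N^{\OO(\fc)}$ by our choice $\fR=(\fc/4)\log_{d-1}N$. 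Likewise the radius $2\fR$ ball around $o$ is small. Call $\bB$ this ``forbidden'' vertex set (of size $N^{O(\fc)}$), together with the set $\bT$ of vertices of $\cT$.

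Second, fix $\al\in\qq{\mu}$. The oriented edge $(b_\al,c_\al)$ is uniform over $\vec{E}(\cG^{(\bT)})$, which has $\asymp Nd$ elements. The event that $b_\al$ or $c_\al$ lies within distance $\fR$ of $\bB$ (equivalently, that $(b_\al,c_\al)$ is incident to $\cB_\fR(\bB,\cG)$, a set of $N^{O(\fc)}$ vertices hence $N^{O(\fc)}$ incident edges) has probability at most $N^{O(\fc)}/(Nd)=N^{-1+O(\fc)}$. On the complement of this event, $b_\al,c_\al$ are at distance $>\fR$ from all exceptional vertices, so every vertex within distance $\fR$ of $\{b_\al,c_\al\}$ has the excess-$\omega_d$ property and, being outside the exceptional set, actually has a radius $\fR$ tree neighborhood; a short argument combining these (as in the proof of the tree-neighborhood bounds in \cite{huang2024spectrum}) shows that in fact $\cB_{\fR}(\{b_\al,c_\al\},\cG)$ — and a fortiori $\cB_\fR(v,\cG)$ for $v\in\cB_\ell(\{b_\al,c_\al\},\cG)$ once $\ell+\fR$ is still $\ll\fR$-scale, which we arrange since $\ell\le(\fc/64)\log_{d-1}N\ll\fR$ — is a tree. (One needs $2\ell+\fR\le $ something like $\fR/2 + $ slack; the numerology $\ell\approx\fc\log/64$ versus $\fR=\fc\log/4$ leaves ample room, so I would state the radius bound with $\fR$ replaced by a smaller multiple of $\log_{d-1}N$ if convenient, matching the claim.) Similarly, the probability that $(b_\al,c_\al)$ is incident to $\cB_{3\fR}(\{b_\beta,c_\beta\},\cG)$ for a \emph{fixed} $\beta\ne\al$ is at most $N^{-1+O(\fc)}$, conditionally on $(b_\beta,c_\beta)$; summing over the $\le\mu^2\le d^{2\ell}=N^{O(\fc)}$ pairs keeps the total at $N^{-1+O(\fc)}$.

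Third, take a union bound over $\al\in\qq{\mu}$ (there are $\mu\le d^\ell=N^{O(\fc)}$ of them) of all the bad events above: the event that some $(b_\al,c_\al)$ is close to $\bF\cup\{o\}$ or to an exceptional vertex, or that some pair $(b_\al,c_\al),(b_\beta,c_\beta)$ is within distance $3\fR$. The total probability is $N^{O(\fc)}\cdot N^{-1+O(\fc)} = N^{-1+O(\fc)}$, which is $\le N^{-1+2\fc}$ once $\fc$ is chosen small enough (the implicit constant in $O(\fc)$ is absolute, so this is just a matter of the bookkeeping of exponents). On the complementary event, the two assertions of the lemma hold: $\dist_\cG(\{b_\al,c_\al\}\cup\bF,\{b_\beta,c_\beta\})\ge 3\fR$ for all $\al\ne\beta$, and every $v\in\cB_\ell(\{b_\al,c_\al\}_\al,\cG)$ has a tree radius-$\fR$ neighborhood.

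\textbf{Main obstacle.} The only genuinely delicate point is the interplay of radii: one must verify that being far (distance $>\fR$, say) from all excess-bearing vertices and from the $\le N^{\fc}$ non-tree-neighborhood vertices really forces $\cB_\ell(\{b_\al,c_\al\},\cG)$ to be a tree with a further radius-$\fR$ tree neighborhood, i.e.\ that the two clauses of $\oOmega$ combine correctly at the relevant scales. This is where I would be careful to keep $\ell \ll \fR$ and to track the precise radius in which one can guarantee ``tree'' — the statement as written asserts a radius $\fR$ tree neighborhood for $v$ ranging over a radius-$\ell$ ball, so one effectively needs tree-ness in a radius $\ell+\fR$ ball, which still fits inside what $\oOmega$ and the distance-from-exceptional-vertices condition provide because $\ell+\fR < $ (something like) $\fR + \fc\log_{d-1}N/64 \le \fc\log_{d-1}N/4 + \ldots$ — fine with our choices, but worth spelling out. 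Everything else is a routine first-moment/union-bound computation of the type already performed repeatedly in \cite{huang2024spectrum}.
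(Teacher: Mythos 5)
Your proof takes essentially the same route as the paper's: a first-moment/union bound over the $\mu = N^{\OO(\fc)}$ resampled edges, each of which lands in a forbidden set of size $N^{\OO(\fc)}$ with probability $N^{-1+\OO(\fc)}$. The paper's version in \eqref{e:small1}--\eqref{e:small2} is a touch more economical (it checks distance to $\bF$ and to the exceptional vertices directly, rather than inflating $\bF$ by radius $3\fR$ and the exceptional vertices by radius $2\fR$ first), but your constants do still land below $2\fc$ — e.g., your forbidden ball $\cB_{\fR}(\bB,\cG)$ has size $\lesssim N^{7\fc/4}$ and the union over $\mu \lesssim N^{\fc/64}$ indices keeps the exponent under $2\fc$.

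Two points need repair, however. First, your ``main obstacle'' paragraph misreads what the lemma requires. It asserts only that each individual $v\in\cB_\ell(\{b_\al,c_\al\}_\al,\cG)$ has a tree radius-$\fR$ neighborhood; this is literally the statement that $v$ is not one of the $\leq N^{\fc}$ exceptional vertices from \Cref{def:omegabar}. There is no need for the union $\cB_{\ell+\fR}(\{b_\al,c_\al\},\cG)$ to be a tree, and your parenthetical ``once $\ell+\fR$ is still $\ll\fR$-scale'' is simply false since $\ell+\fR>\fR$; the ``a fortiori'' step deducing tree-ness of $\cB_\fR(v,\cG)$ from tree-ness of $\cB_\fR(\{b_\al,c_\al\},\cG)$ does not go through for $v$ at distance $\ell$ from $\{b_\al,c_\al\}$. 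The paper's \eqref{e:small2} avoids this entirely: the bad event is that some exceptional vertex lies within distance $\ell$ of $\{b_\al,c_\al\}$, giving probability $N^{-1}\cdot N^{\fc}\cdot d(d-1)^\ell$. Second, the closing justification ``which is $\le N^{-1+2\fc}$ once $\fc$ is chosen small enough'' is not a valid argument: the target exponent $2\fc$ and your $\OO(\fc)$ both scale linearly in $\fc$, so shrinking $\fc$ does not change the comparison. What actually saves you is that your implicit constant is concretely $<2$; you should say that rather than appeal to ``$\fc$ small.''
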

\begin{lemma} \label{lem:goodresamplingdata}
Fix $d\geq 3$, a $d$-regular graph $\cG\in \oOmega$. We denote the set of resampling data $\sfF(\cG)\subset \sfS(\cG)$ such that for any $\al\neq \beta\in \qq{\mu}$, $\dist_\cG(\{b_\al, c_\al, o\},\{b_\beta, c_\beta\} )\geq3\fR$, and
for any $v\in \{l_\al, b_\al, c_\al\}_{\al\in \qq{\mu}}$, $\cB_{\fR}(v, \cG)$ is a tree. The for any $\bfS\in \sfF(\cG)$, it holds that 
then $\mu=d(d-1)^\ell$, ${\mathsf W}_{\bf S}=\qq{\mu}$, and $\tcG=T_\bfS(\cG)\in \oOmega$. Moreover, $\cB_\fR(v,\tcG)$ is a tree for any $v\in \cB_\ell(o, \cG)$.
\end{lemma}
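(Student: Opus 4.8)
\textbf{Proof proposal for Lemma \ref{lem:goodresamplingdata}.}

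The plan is to exploit the defining property of $\sfF(\cG)$: for $\bfS\in\sfF(\cG)$ every switching edge $(b_\al,c_\al)$ sits in a genuine tree neighborhood of radius $\fR$, is at distance $\geq 3\fR$ from vertex $o$ and from every other switching triple, and $\cG\in\oOmega$. First I would check $\mu=d(d-1)^\ell$: since $\cB_\ell(o,\cG)$ is a tree (this follows from $\cG\in\oOmega$ together with $\ell\leq(\fc/64)\log_{d-1}N\ll\fR$, so the radius $\fR$ ball around $o$ is a tree and a fortiori the radius $\ell$ ball is), the truncated $d$-regular tree $\cT=\cB_\ell(o,\cG)$ has exactly $d(d-1)^{\ell-1}\cdot(d-1)=d(d-1)^\ell$ boundary edges, giving the count of $\mu$. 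Next I would verify ${\mathsf W}_{\bf S}=\qq{\mu}$, i.e. that $I_\al=1$ for every $\al$. The ``isolation'' condition (second clause of $I_\al$) is immediate from $\dist_\cG(\{b_\al,c_\al,o\},\{b_\beta,c_\beta\})\geq 3\fR$ once one notes $a_\al\in\cB_{\ell+1}(o,\cG)$ so $a_\al$ is close to $o$, hence far from any $\{b_\beta,c_\beta\}$ with $\beta\neq\al$; the distances are all computed in $\cG^{(\bT)}$, but removing $\bT$ only increases distances, so the bound persists. The ``tree'' condition (first clause) is where one combines: the ball $\cB_\fR(b_\al,\cG)$ is a tree by hypothesis, $c_\al$ is a neighbor of $b_\al$, $a_\al$ is at distance $\geq 3\fR$ from $b_\al$ in $\cG^{(\bT)}$ and also has a tree neighborhood (it lies near $o$ and $\cB_\fR(o,\cG)$ is a tree), so the union $\cB_{\fR/4}(\{a_\al,b_\al,c_\al\},\cG^{(\bT)})$ after adding the edge $\{a_\al,b_\al\}$ is a disjoint union of two trees joined by one edge, hence a tree.

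Then I would establish $\tcG=T_\bfS(\cG)\in\oOmega$, i.e. verify the two conditions of Definition \ref{def:omegabar} for the switched graph. For the excess condition, the switching operation $T_{\vec S_\al}$ deletes the boundary edge $\{l_\al,a_\al\}$ and the far-away edge $\{b_\al,c_\al\}$ and inserts $\{l_\al,c_\al\}$ and $\{a_\al,b_\al\}$; since all switchings involve edge-disjoint, far-separated regions (distance $\geq 3\fR$), the effect on any radius-$\fR$ ball is local — a ball in $\tcG$ either coincides with a ball in $\cG$ (away from all switched regions) or is obtained by a single local surgery in a region that was a tree. In the latter case the new ball is still a tree: attaching the infinite-tree-like piece around $c_\al$ to $\cT$ through the fresh edge $\{l_\al,c_\al\}$ cannot create a cycle because $c_\al$'s old neighborhood was a tree disjoint from $\cT$, and deleting the old boundary edge $\{l_\al,a_\al\}$ disconnects $a_\al$'s subtree, which then reattaches via $\{a_\al,b_\al\}$ to another tree region. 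So radius-$\fR$ balls in $\tcG$ have excess $0$ near the switched vertices and excess $\leq\omega_d$ elsewhere (inherited from $\cG\in\oOmega$); in particular the count of vertices without a tree radius-$\fR$ ball in $\tcG$ is no larger than in $\cG$, hence $\leq N^\fc$. The final claim, that $\cB_\fR(v,\tcG)$ is a tree for every $v\in\cB_\ell(o,\cG)$, follows from the same local analysis: such $v$ lies in or adjacent to $\cT$, and after switching the radius-$\fR$ ball around $v$ is the tree $\cT$ with $(d-1)$-ary trees grafted on at the new boundary vertices $c_\al$ (each carrying a genuine tree neighborhood by hypothesis and separated from one another by $\geq 3\fR$), which is again a tree.

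I expect the main obstacle to be the bookkeeping in the $\tcG\in\oOmega$ step: one must argue carefully that performing all $\nu=\mu$ switchings simultaneously does not create interactions — that is, that a radius-$\fR$ ball in $\tcG$ never ``sees'' two different switched regions at once, and that the composition $T_{\vec S_{\al_1}}\circ\cdots\circ T_{\vec S_{\al_\nu}}$ is well-defined and order-independent on the relevant scale. This is precisely what the $3\fR$ separation in the definition of $\sfF(\cG)$ is designed to buy, and the argument is that any path of length $\leq\fR$ in $\tcG$ uses at most one switched edge, so it can be ``lifted'' to a path in $\cG$ of comparable length, transferring the tree/excess properties. The rest is elementary graph surgery, and no probabilistic input is needed here since $\sfF(\cG)$ is a deterministic subset — the probability that $\bfS\in\sfF(\cG)$ was already handled in Lemma \ref{lem:configuration}.
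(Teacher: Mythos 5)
Your proof follows essentially the same route as the paper: count $\mu$ from the tree structure of $\cB_\ell(o,\cG)$, verify both clauses of $I_\al$ directly from the distance and tree hypotheses defining $\sfF(\cG)$, and then show that every radius-$\fR$ ball in $\tcG$ has excess no larger than in $\cG$ by examining the local surgery around the switched edges. One claim in your closing paragraph is stated too strongly: a radius-$\fR$ ball in $\tcG$ centered near $o$ does ``see'' all $\mu$ switched regions simultaneously (all the new edges $\{l_\al,c_\al\}$ lie inside it), so the assertion that such a ball ``never sees two different switched regions at once'' is false as written; however the reformulation you supply immediately afterward — that any \emph{path} of length $\leq\fR$ in $\tcG$ uses at most one switched edge — is the correct statement, and it is precisely the content of the paper's subgraph containment $\cB_\fR(v,\tcG)\subset\cB_\fR(v,\cG)\cup\bigcup_\al\cB_{\fR-r-1}(c_\al,\cG)$ (after removing $\{b_\al,c_\al\}$ and adding $\{l_\al,c_\al\}$), which is what lets one conclude that grafting the disjoint trees around the $c_\al$'s onto $\cT$ yields a tree. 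With that correction the argument is the same as the paper's, which likewise splits into the case $v\in\cB_\ell(o,\cG)$, the case $v\in\cB_\fR(\{a_\al\},\cG)\setminus\cB_\ell(o,\cG)$, and (omitted there) the symmetric case $v\in\cB_\fR(\{b_\al,c_\al\},\cG)$.
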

\begin{proof}[Proof of \Cref{lem:configuration}]
    We sequentially select $(b_\al, c_\al)$ uniformly random from $\cG^{(\bT)}$. For any fixed $\alpha$, we consider all edges that would break the requirements of the lemma. For the first requirement, we have
    \begin{align}\label{e:small1}
        \bP_{\bfS}(\dist_\cG(\bF\cup_{1\leq \beta\leq \al-1}\{b_\beta, c_\beta\},\{b_\al, c_\al\} )\leq3\fR)\lesssim N^{-1}(|\bF|+2\mu)d(d-1)^{3\fR}\leq N^{-1+3\fc/2},
    \end{align}
    where $\bP_{\bfS}(\cdot)$ is the probability with respect to the randomness of $\bfS$ as in \Cref{def:PS}.
    For the second requirement, we recall that  $\cG\in \oOmega$, in which all vertices except for $N^\fc$ many have radius $\fR$ tree neighborhood. Thus  
    \begin{align}\label{e:small2}
        \bP_{\bfS}(\cB_\fR(v,\cG) \text{ is not a tree for some } v\in \cB_\ell(\{b_\al, c_\al\}, \cG))\leq N^{-1} N^\fc d(d-1)^\ell\leq N^{-1+3\fc/2}.
    \end{align}
   The claim of \Cref{lem:configuration} follows from union bounding over all $\alpha$ using \eqref{e:small1} and \eqref{e:small2}. 
\end{proof}

\begin{proof}[Proof of \Cref{lem:goodresamplingdata}]
    Under our assumption, the radius $\fR/2\geq \ell$ neighborhood of $o$, $\cB_{\fR/2}(o, \cG)$, is a truncated $d$-regular tree at level $\fR/2$. Thus $\mu=d(d-1)^\ell$. Moreover, the neighborhoods $\cB_{3\fR/2}(o, \cG)$, and $\cB_{\fR}(\{b_\al, c_\al\}, \cG)$ for $\al\in \qq{\mu}$ are disjoint. It follows that $\dist_{\cG^{(\bT)}}(\{a_\al,b_\al,c_\al\}, \{a_\beta,b_\beta,c_\beta\})> {\fR/4}$ for all $\al\neq \beta\in \qq{\mu}$, and the subgraph $\cB_{\fR/4}(\{a_\al, b_\al, c_\al\}, \cG^{(\bT)})$ after adding the edge $\{a_\al, b_\al\}$ is a tree for all $\al \in \qq{\mu}$. We conclude that ${\mathsf W}_{\bf S}=\qq{\mu}$.

    Next we show that for any vertex $v\in \qq{N}$, the excess of $\cB_{\fR}(v, \tcG)$ is no bigger than that of $\cB_{\fR}(v, \cG)$. Then it follows that $\tcG\in \oOmega$. 
    If $\dist(v, \{a_\al, b_\al, c_\al\}_{\al\in \qq{\mu}})\geq \fR$, then $\cB_{\fR}(v, \tcG)=\cB_{\fR}(v, \cG)$, and the statement follows.
    Otherwise either $v\in \cB_{\fR}(\{a_\al\}_{\al\in \qq{\mu}}, \cG)\subset\cB_{3\fR/2}(o, \cG)$ or $v\in \cB_{\fR}(\{b_\al, c_\al\}, \cG)$ for some $\al\in \qq{\mu}$. We will discuss the first case. The second case can be proven in the same way, so we omit. 
    If $v\in \cB_\ell(o, \cG)$, we denote $\min_{\al \in \qq{\mu}}\dist_\cG(v, \{l_\al\})=r\leq \fR$. Then $\cB_{\fR}(v, \tcG)$ is a subgraph of 
    $\cB_{\fR}(v, \cG)\cup_{\al\in \qq{\mu}} \cB_{\fR-r-1}(\{c_\al,\cG)$ after removing $\{(b_\al,c_\al)\}_{\al\in \qq{\mu}}$ and adding $\{(l_\al, c_\al)\}_{\al\in \qq{\mu}}$. By our assumption, $\cB_{\fR-r-1}(c_\al,\cG)$ are disjoint trees. We conclude that $\cB_{\fR}(v, \tcG)$ is a tree.
    If $v\not\in \cB_\ell(o, \cG)$,
    we denote $\min_{\al \in \qq{\mu}}\dist_\cG(v, a_\al)=r\leq \fR$, then $\cB_{\fR}(v, \cG)$ is a subgraph of 
    $\cB_{\fR}(v, \tcG)\cup_{\al\in \qq{\mu}} 
    \cB_{\fR-r-1}(b_\al,\cG)$ 
    after removing $\{(b_\al,c_\al)\}_{\al\in \qq{\mu}}$ and adding $\{(a_\al, b_\al)\}_{\al\in \qq{\mu}}$. 
   Again by our assumption, $\cB_{\fR-r-1}(c_\al,\cG)$ are disjoint trees, we conclude the excess of $\cB_{\fR}(v, \tcG)$ is at most that of $\cB_{\fR}(v, \cG)$.

\end{proof}

\section{Main Results and Proof Outlines}\label{s:outline}
In \Cref{s:mainresult}, we state our main results \Cref{t:recursion} on the high moments estimate of the self-consistent equation.  
The proof of \Cref{t:recursion} follows an iteration scheme, with foundational concepts introduced in \Cref{s:forest} and the proof outlined in \Cref{t:proofoutline}. Furthermore, the proofs of both \Cref{thm:eigrigidity} and \Cref{t:rigidity}, utilizing \Cref{t:recursion} as an input, are detailed in \Cref{s:proofrigidity}.
 
\subsection{Main Theorems}\label{s:mainresult}
We will use some more flexible error terms, which we define in the following. We notice that they depend on the graph $\cG$ and thus are random quantities. For simplicity of notation, we simply write $Q(z)=Q(z;\cG)$.

\begin{definition}\label{def:phidef}
For any $z=E+\ri\eta$ in the upper half plane, we introduce the control parameter 
\begin{align*}
 \Phi=\Phi(z):=N^\fc\cdot \frac{\Im[m(z)]}{N\eta}. 
\end{align*}
For any $p\geq 1$, we define the error parameter
\begin{align}\begin{split}\label{eq:phidef}
\Psi_p=\Psi_p(z)
&:= \bm1(\cG\in \Omega(z))\bigg[(d-1)^{-\ell/2}(|Q(z)-Y_\ell(Q(z))|+\Phi(z))^{2p}\\
&+\sum_{k=0}^{2p-1}(|Q(z)-Y_\ell(Q(z))|+\Phi(z))^{k}(\Phi(z)+N^{-1+2\fc})^{2p-k}\\
&+(|Q(z)-Y_\ell(Q(z))|+\Phi(z))^{2p-2}\Phi(z)\frac{|1-Y'_\ell(Q(z))|}{N\eta}\\
&+(d-1)^{2\ell}|Q(z)-\msc(z)|^2(|Q(z)-Y_\ell(Q(z))|+\Phi(z))^{2p-1}\bigg].
\end{split} \end{align}  
 \end{definition}

The following is our main result on the high moments estimate of the self-consistent equation.
\begin{theorem}
\label{t:recursion}
For any integer $d\geq 3$ and sufficiently small $0<\fc<\fa<1$,  the Green's function of  a random $d$-regular graph satisfies that,  for any $z=E+\ri\eta\in  \mathbf D$ (recall from \eqref{e:D})  and integer $p\geq 1$,
\begin{align}\begin{split}
\label{e:QY}&\phantom{{}={}}\bE[\bm1(\cG\in \Omega(z))|Q(z)-Y_\ell(Q(z),z)|^{2p}]\lesssim \bE[\Psi_p(z)].
\end{split}\end{align}
Moreover,  \eqref{e:QY} holds with the left side replaced by 
 $\bE[\bm1(\cG\in \Omega(z))|m(z)-X_\ell(Q(z),z)|^{2p}]$.
\end{theorem}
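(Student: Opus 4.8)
\textbf{Proof proposal for Theorem~\ref{t:recursion}.}

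The plan is to prove the bound \eqref{e:QY} by a high-moment estimate built on an iteration over successive local resamplings, exactly as sketched in \Cref{s:newstrategy}. First I would restrict to the event $\cG\in\Omega(z)$, so that all the Green's function bounds of \Cref{thm:prevthm} and \Cref{p:recurbound} are available, and use the permutation invariance of the vertices together with the fact that all but $N^\fc$ vertices have tree neighborhoods (on $\oOmega$) to replace $\bE[\mathbf 1(\cG\in\Omega(z))|Q(z)-Y_\ell(Q(z))|^{2p}]$ by (up to negligible error) $\bE[\mathbf 1(\cG\in\Omega(z))(G_{oo}^{(i)}-Y_\ell(Q(z)))(Q(z)-Y_\ell(Q(z)))^{p-1}\overline{(Q(z)-Y_\ell(Q(z)))}^{p}]$ for a fixed adjacent pair $i\sim o$ with tree neighborhood of radius $\fR$. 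Then I would apply \Cref{lem:exchangeablepair}: the exchangeability of $(\cG,T_\bfS\cG)$ lets me replace $G_{oo}^{(i)}$ by $\widetilde G_{oo}^{(i)}$, the Green's function of the switched graph, while the other $2p-1$ factors also get accented. Using \Cref{lem:goodresamplingdata} and \Cref{lem:configuration}, on the high-probability set $\sfF(\cG)$ of resampling data the switching is fully admissible ($\bW_\bfS=\qq{\mu}$), $T_\bfS\cG\in\oOmega$, and the new boundary vertices $\{c_\al\}$ are mutually distant with tree neighborhoods.

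Next I would expand $\widetilde G_{oo}^{(i)}$ by the Schur complement formula on the truncated $(d-1)$-ary tree $\cT^{(i)}$ with boundary weights $\widetilde G^{(\bT)}_{c_\al c_\beta}$, and likewise expand each of the other $2p-1$ accented factors $\widetilde Q-Y_\ell(\widetilde Q)-(Q-Y_\ell(Q))=(1-Y_\ell'(Q))(\widetilde Q-Q)+\OO(|\widetilde Q-Q|^2)$, controlling $\widetilde Q-Q$ through the Woodbury expansion \eqref{e:tG-G} rather than the slowly-decaying resolvent expansion \eqref{e:resolvent}. Substituting the tree-extension approximations $\widetilde G^{(\bT)}_{c_\al c_\beta}\approx G^{(b_\al b_\beta)}_{c_\al c_\beta}\approx \mathbf 1(\al=\beta)Q(z)$ produces, for the leading term, exactly $Y_\ell(Q(z))$, so the difference is a weighted sum of error terms built from the three factors in \eqref{e:error_factor}: the ``diagonal factor'' $(G^{(b_\al)}_{c_\al c_\al}-Q(z))$, the off-diagonal $G^{(b_\al b_\beta)}_{c_\al c_\beta}$, and $Q(z)-\msc(z)$. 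The key structural lemma (the analogue of \Cref{lem:diaglem}) is that, after taking $\bE_\bfS$, every non-negligible error term either carries one diagonal factor or an explicit factor of $Q-\msc$ of size $(d-1)^{2\ell}|Q-\msc|^2$ (the last line of $\Psi_p$); using $\bE_\bfS[G^{(b_\al)}_{c_\al c_\al}-Q]=\OO(N^{-1+\oo(1)})$ and $\bE_\bfS[G^{(b_\al b_\beta)}_{c_\al c_\beta}]=\frac{d}{\sqrt{d-1}}\bE_\bfS[(G^{(b_\al)}_{c_\al c_\al}-Q)G^{(b_\beta)}_{b_\al c_\beta}]+(\text{negl})$ each diagonal factor is itself of the form we started with. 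The Ward-identity bounds of \eqref{eq:wardex} and \Cref{lem:deletedalmostrandom} bound the remaining $|G_{ij}^{(o)}|$-type entries by $\sqrt{\Phi/N^\fc}$-type quantities, which is where the $\Phi=N^\fc\Im[m]/N\eta$ terms in $\Psi_p$ come from, and where the $\frac{|1-Y_\ell'(Q)|}{N\eta}$ term appears (from pairing a $\widetilde Q-Q$ with a diagonal factor via Ward).

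The heart of the argument, and the main obstacle, is the iteration: after one resampling each error term still contains a factor $(G^{(b_\al)}_{c_\al c_\al}-Q(z))$ which has the same shape as the quantity $\bE[Q-Y_\ell(Q)]$ we began with, so I would re-expand it by resampling around $c_\al$ and repeat. The combinatorial bookkeeping of which vertices have been frozen by earlier switchings, which subtrees remain disjoint, and how the errors multiply is exactly what the ``sequence of forests'' formalism of \Cref{s:forest} is designed to handle; I would organize the iterated expansion so that each resampling step extracts at least one further factor of $N^{-\fc}$ (coming from $\Phi$, from $N^{-1+2\fc}$, or from the $(d-1)^{2\ell}|Q-\msc|^2\le N^{\fc/32}\varepsilon(z)^2$ estimate) together with a gain $(d-1)^{-\ell/2}$ from the decay of tree Green's functions, while never losing more than $N^{\oo(1)}$ to the number of terms generated. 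Because $(d-1)^\ell\ge N^{\fc/128}$, after a bounded number $K=K(\fa,\fc)$ of iterations every surviving term is dominated by one of the four sums in the definition of $\Psi_p$ in \eqref{eq:phidef}, and the deterministic prefactors $(d-1)^{-\ell/2}$, $(\Phi+N^{-1+2\fc})^{2p-k}$, etc., are precisely the residues of this process; truncating the Woodbury expansion \eqref{e:tG-G} at finite order $k$ and the forest iteration at depth $K$ leaves only $\OO(N^{-2})$ remainders, which absorb into the error. The statement for $|m(z)-X_\ell(Q(z),z)|^{2p}$ follows verbatim: replacing $Y_\ell$ by $X_\ell$ and the $(d-1)$-ary tree by the $d$-regular tree changes only the explicit tree Green's functions in the Schur expansion, and \eqref{e:Xrecurbound} gives the analogous Taylor control, so the same iteration and the same $\Psi_p$ apply.
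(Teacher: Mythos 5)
Your proposal is correct and follows essentially the same route as the paper's own proof: you reduce the $2p$-th moment of $Q-Y_\ell(Q)$ to a generic term containing $G_{oo}^{(i)}-Y_\ell(Q)$ with a suitable indicator, apply the exchangeability of \Cref{lem:exchangeablepair}, expand via Schur complement (\Cref{lem:diaglem}) and the Woodbury-based $\widetilde Q-Q$ control (\Cref{lem:Qlemma}, \Cref{lem:woodbury}), isolate diagonal factors $(G^{(b_\al)}_{c_\al c_\al}-Q)$, and iterate over forests until every term is dominated by $\Psi_p$ — precisely the mechanism of \Cref{p:add_indicator_function} and \Cref{p:iteration}, with the finitely-many-iterations truncation and the verbatim treatment of $m-X_\ell(Q)$ exactly as the paper does it.
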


An easy reformulation of \Cref{t:recursion} gives a useful bound, as stated in the following corollary. 
\begin{corollary}\label{cor:QYQbound}
Adopt the notations and assumptions of Theorem \ref{t:recursion}, the following holds
\begin{align}\begin{split}\label{e:QYQbound}
&\phantom{{}={}}\bE[\bm1(\cG\in \Omega(z))|Q(z)-Y_\ell(Q(z),z)|^{2p}]\\
&\lesssim N^{4p\fc}\bE\left[ \bm1(\GG\in \Omega(z))\left(\frac{\Im[m(z)]+\sqrt{(1-Y'_\ell(Q(z)))\Im[m(z)]}}{N\eta} +|Q(z)-\msc(z)|^2\right)^{2p}\right].
\end{split}\end{align}
Moreover,  \eqref{e:QY} holds with the left side replaced by 
 $\bE[{\bm1(\cG\in \Omega(z))}|m(z)-X_\ell(Q(z),z)|^{2p}]$.
\end{corollary}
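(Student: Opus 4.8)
\textbf{Proof plan for Corollary~\ref{cor:QYQbound}.}
The plan is to derive \eqref{e:QYQbound} from \eqref{e:QY} by estimating each of the four groups of terms in $\Psi_p(z)$ (recall \eqref{eq:phidef}) and absorbing them into the right-hand side of \eqref{e:QYQbound}. Throughout I work on the event $\cG\in\Omega(z)$, so that by Theorem~\ref{thm:prevthm} we have $|Q(z)-\msc(z)|\lesssim\varepsilon(z)$ and hence $\ell|Q(z)-\msc(z)|\ll1$; in particular Proposition~\ref{p:recurbound} applies, and $|Q(z)-Y_\ell(Q(z))|$ and $|Q(z)-\msc(z)|$ are both $\oo(1)$. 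The key elementary device is the Young-type inequality: for $a,b\geq 0$ and $0\leq k\leq 2p$,
\begin{align*}
a^k b^{2p-k}\leq \frac{k}{2p}a^{2p}+\frac{2p-k}{2p}b^{2p}\leq a^{2p}+b^{2p},
\end{align*}
which lets us trade any mixed monomial in $|Q-Y_\ell(Q)|$ and an ``error size'' $\mathcal E$ for $|Q-Y_\ell(Q)|^{2p}+\mathcal E^{2p}$; the first piece, when it appears with a genuinely small prefactor (like $(d-1)^{-\ell/2}$ or a power of $N^{\oo(1)}$ times $\varepsilon(z)$), can be moved to the left-hand side of \eqref{e:QY} and absorbed, since $(d-1)^\ell\geq N^{\fc/128}$ is a positive power of $N$ and $\varepsilon(z)$ is a negative power of $N$ on $\mathbf D$.

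First I would treat the leading term $(d-1)^{-\ell/2}(|Q-Y_\ell(Q)|+\Phi)^{2p}$: expand the binomial, split off the pure $|Q-Y_\ell(Q)|^{2p}$ contribution which carries the gain $(d-1)^{-\ell/2}=N^{-\Omega(\fc)}$ and is therefore absorbable into the left side of \eqref{e:QY} after summing a geometric series, and bound the remaining terms by $(d-1)^{-\ell/2}\Phi^{2p}$ plus lower-order cross terms; since $\Phi=N^\fc\Im[m]/(N\eta)\lesssim N^\fc(\Im[m]/(N\eta))$, these are all dominated by $N^{4p\fc}(\Im[m]/(N\eta))^{2p}$. For the second family $\sum_{k}(|Q-Y_\ell(Q)|+\Phi)^k(\Phi+N^{-1+2\fc})^{2p-k}$, I again isolate the $k=2p$ term $(|Q-Y_\ell(Q)|+\Phi)^{2p}$; but here there is no small prefactor, so instead I use that on $\Omega(z)$ one has the a priori bound $|Q-Y_\ell(Q)|\lesssim\varepsilon(z)$ coming from Theorem~\ref{thm:prevthm}, so $(|Q-Y_\ell(Q)|+\Phi)^{2p}\lesssim \varepsilon(z)|Q-Y_\ell(Q)|^{2p-1}(\cdots)$ — more precisely I peel one factor, bounding $(|Q-Y_\ell(Q)|+\Phi)^{2p}\leq (\varepsilon(z)+\Phi)(|Q-Y_\ell(Q)|+\Phi)^{2p-1}$ and iterating / using Young again, which after absorption leaves a bound in terms of $(\Phi+N^{-1+2\fc})^{2p}$; note $N^{-1+2\fc}\leq N^{2\fc}/(N\eta)\lesssim N^{2\fc}\Im[m]/(N\eta)$ whenever $\Im[m]\gtrsim \eta$, which holds since $\Im[m(z)]\geq\Im[m_d(z)]-|m-m_d|\gtrsim\eta$ on $\mathbf D$ by Theorem~\ref{t:rigidity}/the edge behavior of $m_d$. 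The third term $(|Q-Y_\ell(Q)|+\Phi)^{2p-2}\Phi\,|1-Y'_\ell(Q)|/(N\eta)$ is the one that produces the $\sqrt{(1-Y'_\ell(Q))\Im[m]}/(N\eta)$ contribution: write $\Phi|1-Y'_\ell(Q)|/(N\eta)=N^\fc(1-Y'_\ell(Q))\Im[m]/(N\eta)^2=N^\fc\big(\sqrt{(1-Y'_\ell(Q))\Im[m]}/(N\eta)\big)^2$, apply Young with exponents $\tfrac{2p-2}{2p},\tfrac{2}{2p}$ to the product with $(|Q-Y_\ell(Q)|+\Phi)^{2p-2}$, absorb the $|Q-Y_\ell(Q)|^{2p}$ part, and collect the rest. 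The fourth term $(d-1)^{2\ell}|Q-\msc|^2(|Q-Y_\ell(Q)|+\Phi)^{2p-1}$ directly produces the $|Q-\msc|^2$ piece: here I use $|Q-\msc|\lesssim\varepsilon(z)$ once more so that $(d-1)^{2\ell}|Q-\msc|\lesssim (d-1)^{2\ell}\varepsilon(z)=\oo(1)$ (this is where the choice $(d-1)^\ell\leq N^{\fc/64}$ and $\fc$ small matters, since $\varepsilon(z)$ beats $N^{\fc/32}$ on $\mathbf D$), so after Young this term is $\lesssim \oo(1)|Q-Y_\ell(Q)|^{2p}+N^{O(p\fc)}|Q-\msc|^{2p}$, the first part absorbed and the second kept.

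The main obstacle I anticipate is bookkeeping the absorption rigorously: several terms of $\Psi_p$ contain $|Q-Y_\ell(Q)|^{2p}$ with prefactors that are small \emph{only} on $\Omega(z)$ (via $\varepsilon(z)$) rather than deterministically, so one must be careful that after moving them to the left one still has the indicator $\bm1(\cG\in\Omega(z))$ in place and that the cumulative prefactor, summed over all $O(p)$ terms and the geometric series in the first term, stays $\leq \tfrac12$ for $N$ large — this forces the quantitative choices $(d-1)^\ell\in[N^{\fc/128},N^{\fc/64}]$ and $\fc$ small relative to $\fa$ to be used exactly as stated. A secondary point is justifying $\Im[m(z)]\gtrsim\eta$ on all of $\mathbf D$ (including the edge region $|E|\geq2$), needed to compare $N^{-1+2\fc}$ and $\Phi$-type terms with $\Im[m]/(N\eta)$; this follows from the explicit form of $m_d$ together with Theorem~\ref{t:rigidity}, but should be stated as a lemma. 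The final sentence about replacing the left side by $\bE[\bm1(\cG\in\Omega(z))|m(z)-X_\ell(Q(z),z)|^{2p}]$ is immediate: Theorem~\ref{t:recursion} already gives the same bound $\bE[\Psi_p(z)]$ for that quantity, so the identical chain of estimates applies verbatim.
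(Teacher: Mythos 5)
Your overall strategy — Young's inequality to split mixed monomials, then absorbing the $|Q-Y_\ell(Q)|^{2p}$ pieces that come with a genuinely small prefactor into the left side of \eqref{e:QY} — is exactly the paper's route. However, two steps as written would not actually close the argument.

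The treatment of the fourth family is incorrect. You factor $(d-1)^{2\ell}|Q-\msc|^2(|Q-Y_\ell(Q)|+\Phi)^{2p-1}$ by peeling off $(d-1)^{2\ell}|Q-\msc|\lesssim (d-1)^{2\ell}\varepsilon(z)=\oo(1)$ and then applying Young to the remaining $|Q-\msc|\cdot(|Q-Y_\ell(Q)|+\Phi)^{2p-1}$, which produces $|Q-\msc|^{2p}$ on the ``kept'' side. But the target bound \eqref{e:QYQbound} contains $|Q-\msc|^2$ \emph{inside} the $(\cdot)^{2p}$, i.e.\ the allowed power is $|Q-\msc|^{4p}$. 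Since on $\Omega(z)$ we have $|Q-\msc|\ll 1$, the quantity $|Q-\msc|^{2p}$ is \emph{larger} than $|Q-\msc|^{4p}$, and the $\oo(1)$ prefactor you retain is nowhere near large enough (it is at best a small fixed negative power of $N$, not $|Q-\msc|^{2p}$-sized) to repair the discrepancy. The correct move — which is what the paper does in \eqref{e:Young} — is to apply Young directly with $a=|Q-Y_\ell(Q)|^{2p-1}$ and $b=(d-1)^{2\ell}|Q-\msc|^2$ kept together as a single unit, so that $b^{2p}=(d-1)^{4p\ell}|Q-\msc|^{4p}\leq N^{p\fc/16}|Q-\msc|^{4p}$, landing exactly in $N^{4p\fc}(\dots+|Q-\msc|^2)^{2p}$. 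The $\varepsilon$-peel idea is not only unnecessary here; it destroys the power structure you need.

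Separately, invoking Theorem~\ref{t:rigidity} to justify $\Im[m(z)]\gtrsim\eta$ would be circular: the present corollary is the input to the proof of \Cref{t:rigidity}, which is itself established via \Cref{p:iterate} by iterating \eqref{e:QYQbound}. Fortunately, $\Im[m(z)]\gtrsim\eta$ on $\mathbf D$ is elementary and requires no a priori local law: since $\|H\|\leq d/\sqrt{d-1}$ deterministically and $|E|\leq 2+\fa$, $\eta\leq 1/\fa$, one has $\Im[m(z)]=\frac1N\sum_i\eta/((\lambda_i-E)^2+\eta^2)\gtrsim\eta$ for every $d$-regular graph. This deterministic bound is what lets you dominate $N^{-1+2\fc}$ by $N^{2\fc}\Im[m]/(N\eta)$. (Also a small misreading: the second family in \eqref{eq:phidef} sums over $k\in\{0,\dots,2p-1\}$, not up to $2p$, so there is no $k=2p$ term lacking a small factor — every summand carries at least one factor of $(\Phi+N^{-1+2\fc})$ and weighted Young applies directly, with no need for the $\varepsilon$-peeling workaround you introduce there either.)
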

\begin{proof}[Proof of \Cref{cor:QYQbound}]
We will only prove the statement for $|Q-Y_\ell(Q)|$. The statement for $|m-X_\ell(Q)|$ can be proven by the same way. First we notice that $(|x|+|y|)^q\lesssim |x|^q +|y|^q$ for fixed $q\geq 0$, thus we can rewrite \eqref{e:QY} as
\begin{align}\begin{split}\label{e:Q-YQ_bound}
   &\phantom{{}={}}\bE[\bm1(\cG\in \Omega )|Q-Y_\ell(Q)|^{2p}]\lesssim (d-1)^{-\ell/2}\bE[\bm1(\cG\in \Omega)|Q-Y_\ell(Q)|^{2p}]\\
   &+ \bE\left[\bm1(\cG\in \Omega)|Q-Y_\ell(Q)|^{2p-2}\Phi\frac{|1-Y'_\ell(Q)|}{N\eta}\right]+
   \bE\left[\bm1(\cG\in \Omega) \Phi^{2p-1}\frac{|1-Y'_\ell(Q)|}{N\eta}\right]
  \\
   &+\bE\left[\bm1(\cG\in \Omega)\sum_{k=0}^{2p-1}|Q-Y_\ell(Q)|^{k}(\Phi+N^{-1+2\fc})^{2p-k}\right]+\bE[\bm1(\cG\in \Omega)(\Phi+N^{-1+2\fc})^{2p}]
   \\
& +(d-1)^{2\ell}\left(\bE[\bm1(\cG\in \Omega)|Q-Y_\ell(Q)|^{2p-1}|Q-\msc|^2]+\bE\left[\bm1(\cG\in \Omega)\Phi^{2p-1}|Q-\msc|^2\right]\right).
\end{split}\end{align}
We recall Young's inequality, for $1/q+1/r=1$, $a/q+b/r\geq a^{1/q}b^{1/r}$. Take a small $\lambda>0$, we can bound
\begin{align}\begin{split}\label{e:Young}
    &|Q-Y_\ell(Q)|^{2p-2}\Phi\frac{|1-Y'_\ell(Q)|}{N\eta}
    \leq \lambda |Q-Y_\ell(Q)|^{2p} + \lambda^{1-p}\left( \Phi\frac{|1-Y'_\ell(Q)|}{N\eta}\right)^{p},\\
    &|Q-Y_\ell(Q)|^{k}(\Phi+N^{-1+2\fc})^{2p-k}
    \leq \lambda |Q-Y_\ell(Q)|^{2p}  + \lambda^{1-2p/k}(\Phi+N^{-1+2\fc})^{2p}\\
    &|Q-Y_\ell(Q)|^{2p-1}(d-1)^{2\ell}|Q-\msc|^2
    \leq \lambda |Q-Y_\ell(Q)|^{2p}  + \lambda^{-1/(2p-1)}N^{2p\fc}|Q-\msc|^{4p}.
\end{split}\end{align}
We can then plug \eqref{e:Young} back into \eqref{e:Q-YQ_bound}, we get
    \begin{align*}\begin{split}
&\phantom{{}={}}\bE[\bm1(\cG\in \Omega)|Q-Y_\ell(Q)|^{2p}]\lesssim \lambda\bE[\bm1(\cG\in \Omega)|Q-Y_\ell(Q)|^{2p}]\\
&+N^{4p\fc}\bE\left[ \bm1(\GG\in \Omega)\left(\frac{\Im[m ]+\sqrt{(1-Y'_\ell(Q))\Im[m]}}{N\eta}+|Q-\msc|^2\right)^{2p}\right].
\end{split}\end{align*}
For sufficiently small $\lambda>0$, the claim \eqref{e:QYQbound} follows from rearranging the above expression.

\end{proof}

 \Cref{thm:eigrigidity} and \Cref{t:rigidity} are consequences of  \Cref{cor:QYQbound}, we postpone the proofs of \Cref{thm:eigrigidity} and \Cref{t:rigidity} to \Cref{s:proofrigidity}.

\subsection{Switching Edges and the Forest}\label{s:forest}
As explained in the introduction \Cref{s:newstrategy}, our new strategy to prove \Cref{t:recursion} is an iteration scheme. At each iteration, we conduct local resampling and express the Green's function of the switched graph in terms of the original graph. Subsequently, we demonstrate that each non-negligible term incorporates at least one ``diagonal factor",  denoted as $G_{c_\alpha c_\alpha}^{(b_\alpha)}$, where $(b_\alpha, c_\alpha)$ represents an edge selected during local resampling. With this observation, we execute a local resampling around $c_\alpha$ and repeat the process.

Because almost all neighborhoods in the graph have no cycles, with high probability, the edges involved in local resamplings have large tree neighborhoods, and are typically far from each other. Therefore, we think of the edges involved as a forest. In this section we formalize this iterative scheme using a sequence of forests, which encode all the edges involved in local resamplings.

Fix $\mu:=d(d-1)^\ell$, which is the number of boundary edges for a $d$-regular tree truncated at level $\ell$. We start from a graph $\cF_0:=(\bfi_0=\{o_0,i_0\}, E_0:=\{e=\{o_0,i_0\}\})$, which consists of a single edge. Then we construct $\cF_1$ from $\cF_0$, by 
\begin{itemize}
    \item extending $e$ to a $d$-regular tree truncated at level $\ell$, $T_\ell(o_0)$ (with $o_0$ as the root vertex);
    \item adding $\mu$ boundary edges $\{ e'_\al\}_{\al\in\qq{\mu}}$ to $\cT_\ell(o_0)$;
    \item adding $\mu$ new (directed) switching edges $\{ e_\al\}_{\al\in\qq{\mu}}$;
    \item and adding $\mu_0\leq 2p-1$ special edges $\{ \widehat e_\al\}_{\al\in\qq{\mu_0}}$.
\end{itemize}

\begin{figure}
    \centering
    \includegraphics[scale=0.7]{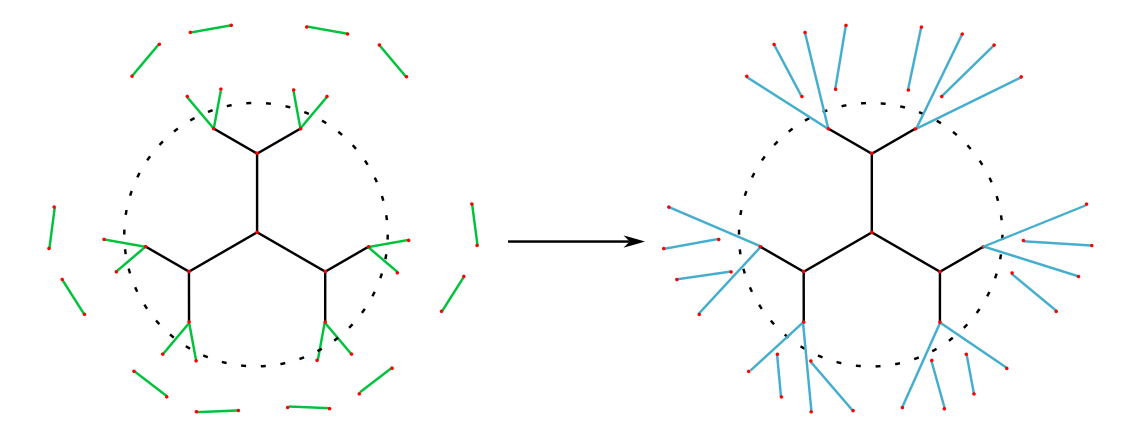}
    \caption{An example of the local resampling performed on the graph. We switch the green edges, on the boundary of the radius $\ell$ neighborhood of a vertex $o$, with randomly chosen edges in the graph. This creates new, blue edges, and a new boundary.}
    \label{fig:switchingproc}
\end{figure}

Therefore
\begin{align}\label{eq:forestdef1}
    \cF_1:=\cT_\ell(o_0)\bigcup\wt\cM_1\cup \cM_1\cup \widehat\cM_1.
\end{align}
where $\cM'_1=\{ e'_\al\}_{\al\in\qq{\mu}}$,
$\cM_1=\{e_\al\}_{\al\in\qq{\mu}}$ and 
$\widehat\cM_1=\{ \widehat e_\al\}_{\al\in\qq{\mu_0}}$.

In general, given the forest $\cF_t=(\bfi_t,E_t)$, with edge sets $\cM'_t, \cM_t, \widehat\cM_t$ constructed in last step, 
we construct $\cF_{t+1}=(\bfi_{t+1},E_{t+1})$ by 
\begin{itemize}
    \item picking one of the switching edges (from last step) $e=(i_t, o_t)\in \cM_t$  and extending $e$ to a $d$-regular tree  truncated at level $\ell$, $\cT_\ell(o_t)$(with $o_t$ as the root vertex);
    \item adding $\mu$ boundary edges $\cM_{t+1}'=\{ e_\al'\}_{\al\in\qq{\mu}}$ to $\cT_\ell(o_t)$;
    \item adding another $\mu$ new (directed) switching edges $\cM_{t+1}=\{e_\al\}_{\al\in\qq{\mu}}$;
    \item and adding $\mu_t$ special edges $\widehat \cM_{t+1}=\{\widehat e_\al\}_{\al \in\qq{\mu_t}}$ such that the total number of special edges is bounded by $2p-1$.
\end{itemize} 
Explicitly, the new forest $\cF_{t+1}=(\bfi_{t+1}, E_{t+1})$ is 
\begin{align}\begin{split}\label{eq:forestdef2}
    &\cF_{t+1}=\cF_t\cup \cT_\ell(o_t)\bigcup\wt \cM_{t+1}\cup \cM_{t+1}\cup \widehat\cM_{t+1},
\end{split}\end{align}
where $\bfi_{t+1}, E_{t+1}$ are its vertex set and edge set, respectively.

The forest $\cF_t$ encodes the local resamplings up to the $t$-th step. Along with this, we denote
\begin{enumerate}
\item 
the set of switching edges $\cK_t=\{(o_0,i_0)\}\cup \cM_1\cup  \cM'_1\cup \cdots \cup \cM_t\cup  \cM'_t$; 
\item 
the set of special edges $\cV_t=\widehat \cM_1\cup \widehat \cM_2\cup\cdots \cup\widehat\cM_t$, which is of size at most $2p-1$;
\item the
core of the forest $\cF_t$ as $\cC_t=(i_0,o_0)\cup \cM_1\cup \cM_2\cdots \cup\cM_t$. Each connected component of $\cF_t$ contains either one core edge, or a special edge;
\item the set of unused core edges
$\cC_t^\circ=\cC_t\setminus \{(i_0,o_0), \cdots, (i_{t-1},o_{t-1})\}$.
\end{enumerate}
For the most of the later analysis, we will focus on one step, and simplify the notations as 
\begin{align}\begin{split}\label{e:cFtocF+}
    &\cF=\cF_t, \quad \cK=\cK_t, \quad \cV=\cV_t, \quad \cC=\cC_t,\quad \cC^\circ =\cC^\circ_t,\\
    &\cF^+=\cF_{t+1}, \quad \cK^+=\cK_{t+1},\quad \cV^+=\cV_{t+1}, \quad \cC^+=\cC_{t+1},\quad (\cC^\circ)^+ =\cC^\circ_{t+1}.
\end{split}\end{align}

We now describe the event that local resamplings select edges that are far away from each other. Our analysis will consist of giving loose bounds when this is not the case, and then a far tighter bound when this does occur. The following indicator functions can be used to make sure that our new resampling data does not fall near the switching edges of any previous switch and are far away from each other. 
\begin{definition}
    \label{def:indicator}
We consider a forest $\cF=(\bfi, E)$ (as in \eqref{e:cFtocF+}) with core edges $\cC$ and special edges $\cV$, viewed as a subgraph of a $d$-regular graph $\cG\in \Omega$. We denote the indicator function $I(\cF, \cG)=1$ if vertices close to core edges have radius $\fR$ tree neighborhoods, and core edges are distance $3\fR$ away from each other. Explicitly,  $I(\cF, \cG)$ is given by
\begin{align*}
   I(\cF, \cG):= \prod_{(u,v)\in \cF}A_{uv}\prod_{(b,c)\in \cC}\left(\prod_{v\in \cB_\ell(c,\cG)}\bm1(\cB_{\fR}(v, \cG) \text{ is a tree})\right) \prod_{(b,c)\neq (b',c')\in \cC}\bm1(\dist_\cG(c,c')\geq 3\fR).
\end{align*}
\end{definition}
    
The following lemma states that this occurs with high probability.  
\begin{lemma}\label{lem:wellbehavedswitch}
Fix a $d$-regular graph $\GG\in \oOmega$, and a forest $\cF=(\bfi, E)$ (as in \eqref{e:cFtocF+}), viewed as a subgraph of a $d$-regular graph $\cG\in \Omega$. If $I(\cF,\cG)=1$, then with probability at least $1-N^{-1+2\fc}$ over the randomness of the resampling data ${\bf S}$, we have $\tcG\in \oOmega$, $I(\cF^+, \cG)=1$ and $I(\cF, \wt\cG)=1$, where $\wt \cG=T_\bfS \cG$.
\end{lemma}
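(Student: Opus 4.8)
\textbf{Proof plan for Lemma~\ref{lem:wellbehavedswitch}.}
The strategy is a union bound over the new resampling data, closely following the proof of \Cref{lem:configuration} but now keeping track of three separate conditions: that $\tcG\in\oOmega$, that $I(\cF^+,\cG)=1$, and that $I(\cF,\wt\cG)=1$. The key structural observation is that conditioning on $I(\cF,\cG)=1$ means the forest $\cF$, with its core edges $\cC$ mutually $3\fR$-separated and surrounded by tree neighborhoods, is already in ``good position'' inside $\cG$; the new local resampling is performed around the chosen vertex $o_t$ (the endpoint of some switching edge $e\in\cM_t$), producing resampling data $\{(l_\al,a_\al),(b_\al,c_\al)\}_{\al\in\qq\mu}$ with the $(b_\al,c_\al)$ drawn uniformly and independently from $\cG^{(\bT)}$, where $\bT$ is the vertex set of $\cT_\ell(o_t)$.

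First I would invoke \Cref{lem:configuration} directly with the vertex set $\bF$ taken to be the union of $[\cC]$, the vertices within distance $\fR$ of the special edges $\cV$, and $\cB_\ell(o_t,\cG)$; this set has size at most $N^{\oo(1)}\ll N^{\fc/2}$ since $\cF$ has at most $O(p)$ components each of polylogarithmic size, and $\bF$ is legitimately allowed to depend on $\cG$ in the statement of \Cref{lem:configuration}. Its conclusion gives, with probability $\geq 1-N^{-1+2\fc}$ over $\bfS$: the new pairs $\{b_\al,c_\al\}$ are pairwise $\geq 3\fR$ apart, are $\geq 3\fR$ from $\bF$, and every vertex in $\cB_\ell(\{b_\al,c_\al\},\cG)$ has a radius-$\fR$ tree neighborhood. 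On this event I would then verify each of the three assertions. For $\tcG\in\oOmega$: since the new pairs are far from $\cB_\ell(o_t,\cG)$ and have tree neighborhoods, the hypotheses of \Cref{lem:goodresamplingdata} are met (the resampling data lies in $\sfF(\cG)$), so $\tcG=T_\bfS\cG\in\oOmega$ and moreover $\cB_\fR(v,\tcG)$ is a tree for $v\in\cB_\ell(o_t,\cG)$. For $I(\cF^+,\cG)=1$: the newly added switching edges $\cM_{t+1}\cup\cM'_{t+1}$ are contained in $\cB_{\ell+1}(\{b_\al,c_\al\},\cG)$ and $\cB_{\ell+1}(o_t,\cG)$, which by the separation just established are $\geq 3\fR$ apart from the old core edges $\cC$ and from each other, and the relevant radius-$\fR$ neighborhoods are trees; the adjacency factors $A_{uv}$ for edges of $\cF^+$ hold because $\cF^+$ is by construction a subgraph of $\cG$. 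For $I(\cF,\wt\cG)=1$: the switching only alters edges inside $\cB_{\fR/2+1}(o_t,\cG)$ and near the $\{b_\al,c_\al\}$, all of which are $\geq 3\fR$-far from the core edges $\cC$ of $\cF$ (which do not include the edge $e$ being extended — $e\in\cM_t\subset\cC$, so one must check $e$ is handled, but $e$'s endpoint $o_t$ is exactly the resampling center and the condition on $\cB_\ell(c,\cG)$ for $c$ an endpoint of $e$ is preserved since these balls lie in $\cB_{\fR/2}(o_t,\cG)$ which \Cref{lem:goodresamplingdata} tells us remains a tree in $\tcG$); hence all the tree-neighborhood and distance factors defining $I(\cF,\cdot)$ are unchanged in passing from $\cG$ to $\wt\cG$, except possibly in a region where we have just argued they still hold.

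The main obstacle I anticipate is the third claim, $I(\cF,\wt\cG)=1$: one must carefully confirm that switching near $o_t$ does not destroy the tree structure of $\cB_\fR(v,\wt\cG)$ for vertices $v\in\cB_\ell(c,\cG)$ when $(b,c)$ is the core edge $e$ with endpoint $o_t$ itself, since this is precisely the region being resampled. The resolution is the ``excess does not increase'' argument already carried out in the proof of \Cref{lem:goodresamplingdata}: the portion of $\cB_\fR(v,\wt\cG)$ that differs from $\cB_\fR(v,\cG)$ is glued together from disjoint trees $\cB_{\fR-r-1}(c_\al,\cG)$ (disjoint by the separation from \Cref{lem:configuration}), so no new cycle is created. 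I would therefore structure the write-up as: (i) apply \Cref{lem:configuration} to get the good event for $\bfS$; (ii) on that event apply \Cref{lem:goodresamplingdata} to get $\tcG\in\oOmega$ and tree-ness of balls near $o_t$ in $\tcG$; (iii) check $I(\cF^+,\cG)=1$ by the separation bounds; (iv) check $I(\cF,\wt\cG)=1$ by combining the separation bounds with the excess-monotonicity argument for the region near $o_t$. Each step is short given the cited lemmas; the only genuine content is the bookkeeping in (iv).
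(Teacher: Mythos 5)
Your proposal matches the paper's proof: both invoke \Cref{lem:configuration} (with $\bF$ essentially the vertex set of $\cF$; your slightly larger choice, which also folds in $\cB_\ell(o_t,\cG)$ and $\fR$-neighborhoods of $\cV$, is still of size $\ll N^{\fc/2}$ and hence harmless), deduce $I(\cF^+,\cG)=1$ from the resulting separation and tree conditions, apply \Cref{lem:goodresamplingdata} on the same event to get $\tcG\in\oOmega$ together with tree-ness of $\cB_\fR(\cdot,\tcG)$ in a neighborhood of $o_t$, and then verify $I(\cF,\tcG)=1$. The one place you go beyond the paper is in fleshing out the check for $I(\cF,\tcG)=1$ at the core edge $(i_t,o_t)$ whose endpoint is the resampling center, where the paper only says ``one can then check''; your resolution via the excess-monotonicity argument imported from \Cref{lem:goodresamplingdata} is exactly what is needed there.
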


\begin{proof}
Thanks to \Cref{lem:configuration} with $\bF$ the vertex set of $\cF$, with probability $1-N^{-1+2\fc}$ over the randomness of the resampling data ${\bf S}$, we have that for any $\al\neq \beta\in \qq{\mu}$, $\dist_\cG(\{b_\al, c_\al\}\cup \bF,\{b_\beta, c_\beta\} )\geq3\fR$; and 
for any  $v\in \cB_{\ell}(\{b_\al, c_\al\}_{\al\in \qq{\mu}}, \cG)$, the radius $\fR$ neighborhood of $v$ is a tree. It follows that $I(\cF^+, \cG)=1$. It also follows from  \Cref{lem:goodresamplingdata} that $\tcG\in \oOmega$ and $\cB_\fR(v,\tcG)$ is a tree for any $v\in \cB_\ell(o, \cG)$. One can then check that $I(\cF,\tcG)=1$. This finishes the proof of \Cref{lem:wellbehavedswitch}.

\end{proof}

From the construction above, each connected component of $\cF$ is either a single edge, or a radius $\ell+1$ ball, and each connected component contains a core edge or a special edge. The following proposition states that the total number of embeddings where $I(\cF,\cG)=1$ is approximately the same as that of choosing each connected component independently.

\begin{proposition}\label{p:sumA}
Given a forest $\cF=(\bfi, E)$ as in \eqref{e:cFtocF+}, and a $d$-regular graph $\cG\in \oOmega$, we have 
\[
\sum_{\bfi} I(\cF,\cG)=Z_{\cF}\left(1+\OO\left(\frac{1}{N^{1-2\fc}}\right)\right),
\]
where
\begin{align}\label{e:sumA2}\begin{split}
Z_{\cF}=(Nd)^{\theta_1(\cF)}\left(Nd[(d-1)!]^{1+d+d(d-1)+\cdots+d(d-1)^{\ell-1}}\right)^{\theta_2(\cF)}.
\end{split}\end{align}
Here $\theta_1(\cF)$ is the number of connected components in $\cF$ which is a single edge; and $\theta_2(\cF)$ is the number of connected components in $\cF$ which is a ball of radius $\ell+1$. We remark that $Z_\cF$ depends only on the forest $\cF$ but not $\cG$.
\end{proposition}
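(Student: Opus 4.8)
The plan is to estimate the number of embeddings of the abstract forest $\cF$ into $\cG$ by building up the forest one connected component at a time, inserting each component into the remaining ``fresh'' part of $\cG$, and tracking two competing effects: the gain from the many choices of where to place each piece (which produces the factors of $Nd$ and the $(d-1)!$ automorphism-type factors) and the loss from the constraint that the indicator $I(\cF,\cG)$ requires the pieces to land on disjoint tree neighborhoods that are pairwise $3\fR$-separated. First I would recall that $I(\cF,\cG)=1$ forces each connected component of $\cF$ to be embedded onto either a single edge of $\cG$ or onto a radius-$(\ell+1)$ ball of $\cG$ that is genuinely tree-like; since $\cG\in\oOmega$, all but $N^\fc$ vertices have tree neighborhoods of radius $\fR\gg\ell+1$, so essentially every candidate location works.

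The main computation is a component-by-component counting argument. Enumerate the connected components $\cF^{(1)},\dots,\cF^{(s)}$ of $\cF$, where $s=\theta_1(\cF)+\theta_2(\cF)$. For a single-edge component, the number of ways to embed it is the number of oriented edges of $\cG$, which is $Nd$ (up to the $O(N^\fc)$ bad vertices and the separation constraints, both negligible). For a radius-$(\ell+1)$ ball component: we first choose the image of the root, $N$ choices; then at the root we must injectively assign its $d$ tree-neighbors among the $d$ neighbors in $\cG$, giving $d!$; at each of the $d$ vertices at distance $1$ we must assign the $d-1$ forward neighbors, giving $(d-1)!$ each, hence $[(d-1)!]^d$; continuing, at level $k$ there are $d(d-1)^{k-1}$ vertices each contributing $(d-1)!$, so summing the exponents over $k=1,\dots,\ell$ gives exactly $[(d-1)!]^{1+d+d(d-1)+\cdots+d(d-1)^{\ell-1}}$ — wait, more carefully the exponent should be $d + d(d-1) + \cdots + d(d-1)^{\ell-1}$ from levels $1$ through $\ell$, plus the initial $d!$ at the root; matching the stated exponent $1+d+d(d-1)+\cdots+d(d-1)^{\ell-1}$ just amounts to writing $d! = d\cdot(d-1)!$ and absorbing the extra $d$ into a factor of $Nd$ rather than $N$, which is precisely why \eqref{e:sumA2} has $Nd[(d-1)!]^{1+\cdots}$ for each ball component. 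Multiplying over all components yields exactly $Z_\cF$ as the leading term.

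To turn this into the claimed estimate with multiplicative error $1+O(N^{-1+2\fc})$, I would argue by a union-bound/inclusion-exclusion argument: at each stage of the sequential embedding, the number of \emph{forbidden} choices for the next vertex (landing within $3\fR$ of a previously placed component, or landing on a vertex without a tree neighborhood of radius $\fR$) is at most $O(N^\fc) + O(|\bfi|\cdot (d-1)^{3\fR})$, and since $\fR = (\fc/4)\log_{d-1}N$ we have $(d-1)^{3\fR} = N^{3\fc/4}$, and $|\bfi|$ and the number of components are bounded by $O(p (d-1)^\ell) = N^{O(\fc)}$; so the fraction of bad choices at each step is $O(N^{-1+\fc'})$ for a slightly larger $\fc'<2\fc$. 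Since there are at most $N^{O(\fc)}$ vertices to place, the product $\prod(1 - O(N^{-1+\fc'}))$ telescopes to $1 - O(N^{-1+2\fc})$, and the lower bound matches the upper bound to the same precision. The main obstacle, and the step requiring the most care, is making the separation/isolation bookkeeping uniform: one must check that the sets being excluded at different stages do not interact badly and that the power of $N$ lost is genuinely only $N^{-1+2\fc}$ and not larger — this is essentially the same estimate as in the proof of \Cref{lem:configuration} (see \eqref{e:small1} and \eqref{e:small2}), applied now with $\bF$ growing through the partially-built forest, so I would model the write-up on that argument and on \Cref{lem:goodresamplingdata}.
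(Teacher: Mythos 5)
Your proposal is correct and follows essentially the same strategy as the paper: count embeddings component by component, with $Nd$ for each single edge, $Nd[(d-1)!]^{1+d+\cdots+d(d-1)^{\ell-1}}$ for each radius-$(\ell+1)$ ball (you are right that $d!$ at the root absorbed into $N$ and the paper's $Nd\cdot[(d-1)!]$ at the core edge are the same bookkeeping), and control the loss from the tree and $3\fR$-separation constraints by a per-step exclusion count of the same type as \eqref{e:small1}--\eqref{e:small2}, accumulated over the $N^{\OO(\fc)}$ pieces. The paper phrases this as induction on the number of connected components rather than a vertex-sequential telescoping product, but the content is identical, including the somewhat loose tracking of how the $\OO(N^{-1+3\fc/2})$ per-component error compounds to $\OO(N^{-1+2\fc})$.
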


\begin{proof}[Proof of Proposition \ref{p:sumA}]
 We can prove \eqref{e:sumA2} by 
induction on the number of connected components. If $\cF$ consists of a single edge $\cF=e=(b,c)$ which is a core edge, then 
\begin{align}\label{e:single_edge}
    \sum_{\bfi}I(\cF,\cG)=\sum_{b,c} A_{bc}\prod_{v\in \cB_\ell(c,\cG)}\bm1(\cB_{\fR}(v, \cG) \text{ is a tree}) )=Nd\left(1+\OO\left(\frac{1}{N^{1-\fc/2}}\right)\right),
\end{align}
where we used the definition of $\overline\Omega$ from \Cref{def:omegabar}. If $\cF=e$ is a speical edge,
then 
\begin{align}\label{e:single_edge2}
 \sum_{\bfi}I(\cF,\cG)=\sum_{e=(u,v)}A_{uv}=Nd.
\end{align}
If $\cF$ consists of a radius $(\ell+1)$-ball, then we can also first sum over its core edge. The number of choices of this is the same as \eqref{e:single_edge}.  Then we sum over the remaining vertices. Each interior vertex of the radius $(\ell+1)$-ball contributes a factor $(d-1)!$, since there are $(d-1)!$ ways to embed its children vertices. We get
\begin{align}\label{e:ball}
    \sum_{\bfi}I(\cF,\cG)=Nd[(d-1)!]^{1+d+d(d-1)+\cdots+d(d-1)^{\ell-1}}\left(1+\OO\left(\frac{1}{N^{1-\fc/2}}\right)\right).
\end{align}

If the statement holds for $\cF$ with $\theta$ connected components, next we show it for $\cF$ with $\theta+1$ connected components. We can first sum over the indices corresponding to a connected component, fixing the other indices. For the summation, we can again first sum over the core edge $e=(b,c)$ or the special edge. The indicator function $I(\cF,\cG)$ requires that any vertex $v\in \cB_\ell(c,\cG)$ has a radius $\fR$-tree neighborhood, and is distance at least $3\fR$ away from other core edges. By the same argument as in \Cref{lem:wellbehavedswitch}, the number of choices is 
\begin{align*}
     Nd\left(1+\OO\left(\frac{1}{N^{1-3\fc/2}}\right)\right).
\end{align*}
Then we sum over the remaining vertices in this connected component. If it is a single edge, we get a factor similarly to \eqref{e:single_edge} and \eqref{e:single_edge2}; if it is a radius $(\ell+1)$-ball, we get a factor similarly to \eqref{e:ball}. Next we can sum over the remaining $\theta$ connected components of $\cF$, which gives \eqref{e:sumA2}.

\end{proof}

\subsection{Proof outline for \Cref{t:recursion}} \label{t:proofoutline}

In the process of proving Theorem \ref{t:recursion}, we will have to prove bounds on a number of functions that depend on the forests $\cF$ which encodes local resamplings. To classify these functions, we give the following definition.

\begin{definition}
    \label{def:pgen}
Fix a large integer $p\geq 1$. Consider forest $\cF=(\bfi, E)$ as defined in \eqref{e:cFtocF+}, with switching edges $\cK$, unused core edges $\cC^\circ$, and special edges $\cV$. Denote $P=P(\cF, z, m_{sc}(z))$, and we call a function $R_\bfi$ \emph{generic with respect to} $\cF$ if $R_\bfi=(G_{cc}^{(b)}-Y_\ell(Q))R_\bfi'$ for some $(b,c)\in \cC^\circ$, 
where $R_\bfi'$ satisfies the following conditions,
\begin{enumerate}
\item 
$R_\bfi'$ contains an arbitrary number of factors of the form $(d-1)^{2\ell}(G_{ss'}-P_{ss'})$ for $s,s'\in \cK$; $(d-1)^{2\ell}(G_{cc}^{(b)}-Q)$ for $(b,c)\in \cC^\circ$; $(d-1)^{2\ell} G_{cc'}^{(bb')}$ for $(b,c)\neq (b',c')\in \cC^\circ$;  $(d-1)^{2\ell} G_{ss'}$ for $s\in \cK, s'\in \cV$; $G_{uv}, G_{vv}, 1/G_{uu}$ for $(u,v)\in \cV$; $1-Y'_\ell(Q), Q-Y_\ell(Q), (d-1)^{2\ell}(Q-\msc)$
and their complex conjugates.

\item For each special edge $(u,v)\in \cV$, there exist at least two terms of the form $(d-1)^{2\ell}G_{ss'}$ for $s\in\cK, s'\in \{u,v\}$, or their complex conjugates.  The number of $1-Y'_\ell(Q)$ factors and its complex conjugate equals the number of special edges.
\item
The total number of special edges and, $Q-Y_\ell(Q)$ factors and its complex conjugate is $2p-1$.

 \end{enumerate}
\end{definition}  

We can regroup the generic terms, depending how many extra factors they have, as given in the following definition.  
\begin{definition}\label{e:defchi}
Fix a large integer $p\geq 1$ and a forest $\cF=(\bfi, E)$ as defined in \eqref{e:cFtocF+}, and we denote $P=P(\cF, z, \msc)$. For any $r\geq 0$, we define $\cE_{r}^{\cF}$ to be the collection of generic terms $R_\bfi$ which contains exact $r$ factors of the form
\begin{align}\begin{split}\label{e:defcE1}
    &\{(d-1)^{2\ell}(G_{c c}^{(b)}-Q)\}_{(b,c)\in \cC^\circ}, \quad \{(d-1)^{2\ell} G_{c c'}^{(bb')}\}_{(b,c)\neq (b',c')\in \cC^\circ},\\
&\{(d-1)^{2\ell} (G_{ss'}-P_{ss'})\}_{s,s'\in \cK},\quad  (d-1)^{2\ell}(Q-\msc).
\end{split}\end{align}
\end{definition}

Thanks to \Cref{thm:prevthm}, these factors in \eqref{e:defcE1} are quite small, bounded by $(d-1)^{2\ell}\varepsilon(z)$. In particular, for $z\in \bf D$ (as defined in \eqref{e:D}), $(d-1)^{2\ell}\varepsilon(z)\leq N^{-\fc}$. Thus, for a generic term in $\cE_r^\cF$, these additional factors contribute to a factor $(d-1)^{2r\ell}\varepsilon^{r}\leq N^{-r\fc}$. Therefore, if we can demonstrate that after each local resampling, $r$ increases and a term remains generic in the sense of Definition \ref{def:pgen}, then this is a favorable outcome, as after a finite number of steps, this will imply that the term is negligible.

\begin{figure}[t]
\begin{center}
\includegraphics[scale=0.7]{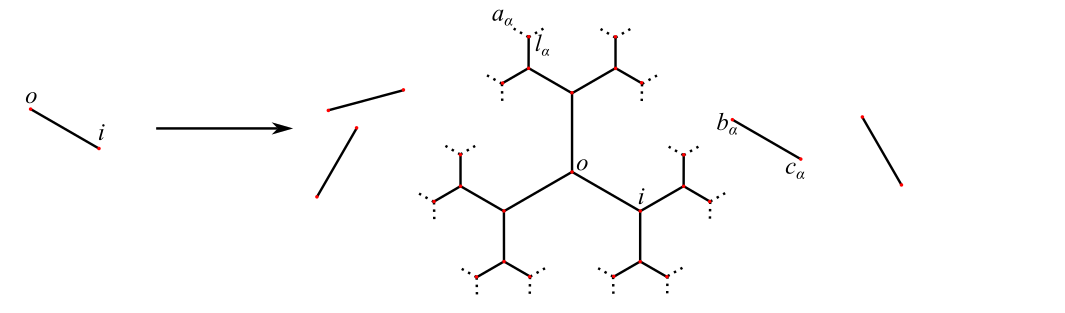}
\end{center}
\caption{Here we give an example of the growth of the tree $\cF$ to $\cF^+$. We take the edge $(i,o)$, then extend it into a depth $\ell$ tree $\cT_\ell(o)$. We then add $\mu$ boundary edges of the form $(l_\alpha,a_\alpha)$ at the boundary of the tree, and $\mu$ new directed switching edges, denoted $e_\alpha:=(b_\alpha, c_\alpha)$ for $\al\in\qq{\mu}$.
\label{fig:construct}}
\end{figure}

The following two propositions state that to compute the expectation of a generic term $R_\bfi\in \cE_r^{\cF}$, we can perform a local resampling around some unused switching edge $(i,o)\in \cC^\circ$. Then we arrive at a weighted sum of the expectation of generic terms $R_{\bfi^+}\in \cE_{r'}^{\cF}$ over some forests $\cF^+=(\bfi^+, E^+)$ (as described in \eqref{e:cFtocF+}). These new terms either have an extra factor $(d-1)^{-\ell/2}$, or $r'\geq r+1$. Then after a finite number of steps this will mean that all the terms are negligible. 

\begin{proposition}\label{p:add_indicator_function}
    Given a forest $\cF=(\bfi, E)$ as in \eqref{e:cFtocF+} and a generic term $R_\bfi=(G_{oo}^{(i)}-Y_\ell(Q))R'_\bfi$. We perform an local resampling around  $(i,o)\in \cF$ with resampling data $\bf S$, and denote $\wt \cG=T_\bfS(\cG)$. Then 
    \begin{align}\begin{split}\label{e:maint}
    &\phantom{{}={}}\frac{1}{Z_\cF}\sum_{\bfi}\bE\left[I(\cF,\cG)\bm1(\cG\in \Omega) R_\bfi(\cG)\right]\\
    &=\frac{1}{Z_\cF}\sum_{\bfi}\bE\left[I(\cF,\cG)I(\cF,\wt\cG)\bm1(\cG,\tcG\in \Omega)(\widetilde G_{oo}^{(i)}-Y_\ell(Q)) R'_\bfi(\wt\cG)\right]+\OO(\bE[\Psi_p]).
\end{split}\end{align}
\end{proposition}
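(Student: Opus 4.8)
\textbf{Proof proposal for Proposition \ref{p:add_indicator_function}.}

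The plan is to reduce the left-hand side to the right-hand side in three moves: first insert the indicator $I(\cF,\cG)$ for a well-behaved switch, then use exchangeability of the pair $(\cG, T_\bfS\cG)$ to transfer the expectation to the switched graph, and finally control all the error terms by $\bE[\Psi_p]$. First I would fix the unused core edge $(i,o)\in\cC^\circ$ and perform the local resampling around $o$ at radius $\ell$, generating the resampling data $\bf S$ as in \Cref{s:local_resampling}. By \Cref{lem:wellbehavedswitch}, conditionally on $I(\cF,\cG)=1$ we have, with probability $1-N^{-1+2\fc}$ over $\bf S$, that $\tcG\in\oOmega$, $I(\cF^+,\cG)=1$ and $I(\cF,\tcG)=1$. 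On the complementary event of probability $\le N^{-1+2\fc}$, I would bound $|R_\bfi|$ crudely using \Cref{thm:prevthm} (the factors $G_{cc}^{(b)}-Y_\ell(Q)$, $Q-Y_\ell(Q)$ etc.\ are all $\OO(1)$ with overwhelming probability on $\Omega(z)$, and at worst polynomially large off it), so that this contribution is $\OO(N^{-1+2\fc})$ times something of size $N^{\OO(\fc)}$, hence absorbed into the $N^{-1+2\fc}$-type term inside $\Psi_p$. This lets me insert $I(\cF,\wt\cG)$ and the indicator $\bm1(\tcG\in\Omega)$ into the expectation at the cost of an $\OO(\bE[\Psi_p])$ error.

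Next I would invoke exchangeability. Since $R_\bfi = (G_{oo}^{(i)}-Y_\ell(Q))R'_\bfi$ and the weight $I(\cF,\cG)\bm1(\cG\in\Omega)$ together with $R'_\bfi$ are all functions of $\cG$ only (the forest $\cF$ is a fixed combinatorial object embedded via $\bfi$, and the quantities $G$, $Q$, $\varepsilon$ are determined by $\cG$), and since by \Cref{lem:exchangeablepair} the pair $(\cG, T_\bfS\cG)$ is exchangeable when $\bf S$ is uniform on $\sfS(\cG)$, the expectation of $I(\cF,\cG)I(\cF,\wt\cG)\bm1(\cG,\tcG\in\Omega)\,R_\bfi(\cG)$ equals that of the same functional with $\cG$ and $\tcG$ swapped. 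This is where the definition of $Q$ as a vertex-averaged quantity and the permutation invariance matter: the summand on the right of \eqref{e:maint} reads $\widetilde G_{oo}^{(i)}$ with $Y_\ell(Q)$ still evaluated at $Q=Q(z;\cG)$ (i.e.\ the \emph{original} graph), because only the ``$G$-part'' $G_{oo}^{(i)}$ and the arguments living on the resampled subgraph get moved to $\tcG$, while $Q$, being essentially invariant under the low-rank perturbation, can be kept as is up to an error I would estimate via the Ward-identity bound $|\wt Q - Q|\prec N^{\oo(1)}\Im[m]/(N\eta)\le \Phi/N^{\fc/2}$ and the smoothness of $Y_\ell$ (using \eqref{eq:Yprime}); this discrepancy, multiplied by the $2p-1$ remaining generic factors each of size $\OO(1)$, is exactly of the form $(|Q-Y_\ell(Q)|+\Phi)^{2p-2}\Phi\cdot(\text{stuff})$ appearing in $\Psi_p$, or smaller.

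The main obstacle I expect is the bookkeeping of the error from replacing $Y_\ell(Q(z;\tcG))$ by $Y_\ell(Q(z;\cG))$ and, more importantly, justifying that $R'_\bfi(\cG)$ may be replaced by $R'_\bfi(\wt\cG)$ (or vice versa) on the event where the switch is well-behaved: this requires that all the entries appearing in $R'_\bfi$ — the $G_{ss'}-P_{ss'}$, the $G_{cc'}^{(bb')}$, the special-edge entries $G_{uv}$, the factors $1-Y'_\ell(Q)$ — change by a negligible amount under the resampling. Here I would use that $\wt H - H$ is supported on $\cB_{\ell+1}(o,\cG)\cup\{(b_\al,c_\al)\}$, which on the good event is disjoint (distance $\ge 3\fR$) from the other core edges, special edges, and their neighborhoods recorded in $\cF$; combined with the Woodbury-based expansion (\Cref{lem:woodbury}, i.e.\ \eqref{e:tG-G}) and the decay estimate \eqref{e:Pijbound}, each such entry changes by at most $N^{\oo(1)}(d-1)^{-c\fR}\le N^{-c'}$, which after multiplication by the $(d-1)^{2\ell}$ prefactors and the remaining bounded factors is still $\le \Psi_p$ in size. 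Assembling these pieces — the indicator insertion error, the exchangeability swap, the $Q$ versus $\wt Q$ error, and the $R'_\bfi$-stability error — each bounded by a term already present in the definition \eqref{eq:phidef} of $\Psi_p$, yields \eqref{e:maint}.
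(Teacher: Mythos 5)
Your proposal has the correct skeleton---insert the indicator $I(\cF,\wt\cG)$ via \Cref{lem:wellbehavedswitch}, transfer the expectation to the switched graph via the exchangeable pair of \Cref{lem:exchangeablepair}, and control the $Y_\ell(Q)$ versus $Y_\ell(\wt Q)$ discrepancy using $|\wt Q-Q|\lesssim\Phi$---and this is essentially what the paper does. But your final paragraph contains a genuine conceptual error. You describe as ``the main obstacle'' the need to ``justify that $R'_\bfi(\cG)$ may be replaced by $R'_\bfi(\wt\cG)$'' by showing that all entries appearing in $R'_\bfi$ change by a negligible amount under the resampling. This is not needed, and your intuition that it should be ``negligible'' is not correct at this level of generality. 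Exchangeability of $(\cG,\tcG)$ applied to the symmetric functional $I(\cF,\cG)I(\cF,\tcG)\bm1(\cG,\tcG\in\Omega)\cdot(\cdot)$ moves \emph{everything} to $\tcG$ at once: you get $\wt G_{oo}^{(i)}R'_\bfi(\wt\cG)$ and $Y_\ell(\wt Q)R'_\bfi(\wt\cG)$ exactly, with no approximation. The only term that then differs from the target right-hand side of \eqref{e:maint} is $Y_\ell(\wt Q)$ versus $Y_\ell(Q)$, and this alone needs the Ward-identity bound. Your proposed detour of proving entry-by-entry stability of $R'_\bfi$ under resampling is precisely the kind of heavy lifting that \Cref{p:iteration} is devoted to (and even there the factors are not replaced by themselves but re-expanded via Schur and Woodbury), so presenting it as a ``stability of $R'_\bfi$'' step that closes with an $N^{-c'}$ bound would not hold up.

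Two smaller points. First, \Cref{lem:wellbehavedswitch} only delivers $\tcG\in\oOmega$ (the combinatorial event), not $\tcG\in\Omega(z)$ (the Green's function event); the paper upgrades from $\oOmega$ to $\Omega$ in a separate step using H\"older, Jensen, and $\bP(\oOmega\setminus\Omega)\lesssim N^{-\fd+3}$ (cf.\ \eqref{e:ffbbcopy}). Your proposal slides over this. Second, your phrase that ``only the $G$-part $G_{oo}^{(i)}$ and the arguments living on the resampled subgraph get moved to $\tcG$, while $Q$ \ldots can be kept as is'' is a misleading description of how the exchangeability swap operates: $Q$ is also moved to $\wt Q$, and you then choose to subtract it back at the cost of the Ward-identity error. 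If you delete the $R'_\bfi$-stability discussion, split $R_\bfi=(G_{oo}^{(i)}-Y_\ell(Q))R'_\bfi$, apply exchangeability to each piece, and absorb $Y_\ell(\wt Q)-Y_\ell(Q)$ using \Cref{lem:Qlemma} and \Cref{p:small_Ri}, you recover the paper's proof.
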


\begin{proposition}\label{p:iteration}
    Given a forest $\cF=(\bfi, E)$ and a generic term $R_\bfi=(G_{oo}^{(i)}-Y_\ell(Q))R'_\bfi\in \cE_r^\cF$. We construct $\cF^+=(\bfi^+, E^+)$ (as in \Cref{s:forest} given by \eqref{e:cFtocF+}) by performing an local resampling around  $(i,o)\in \cF$ with resampling data $\bf S$, and denote $\wt \cG=T_\bfS(\cG)$. Then up to an error of size $\OO(\bE[\Psi_p])$,  
    \begin{align}\label{e:IFIF}
        \frac{1}{Z_\cF}\sum_{\bfi}\bE\left[I(\cF,\cG)I(\cF,\wt\cG)\bm1(\cG,\tcG\in \Omega)(\widetilde G_{oo}^{(i)}-Y_\ell(Q)) R'_\bfi(\wt\cG)\right]
    \end{align}
    can be rewritten as a weighted sum of  terms in the following form
     \begin{align}\label{e:case1}
       \frac{1}{(d-1)^{\ell/2}}\frac{1}{Z_{\cF^+}}\sum_{\bfi^+}  \bE[\bm1(\cG\in \Omega)I(\cF^+, \cG)R_{\bfi^+}(\cG)],\quad R_{\bfi^+}\in \cE_{r}^{\cF^+},
    \end{align}
    or 
    \begin{align}\label{e:case2}
        \frac{1}{Z_{\cF^+}}\sum_{\bfi^+}  \bE[\bm1(\cG\in \Omega)I(\cF^+, \cG)R_{\bfi^+}(\cG)],\quad R_{\bfi^+}\in \cE_{r'}^{\cF^+},\quad r'\geq r+1
    \end{align}
    where the sum of total weights is $\OO(\poly(\ell))$.
   In this way, we either gain an extra factor $(d-1)^{-\ell/2}$, or $r'\geq r+1$ and we gain some extra factor in the form of  \eqref{e:defcE1}. 
\end{proposition}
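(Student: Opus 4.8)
\textbf{Proof proposal for \Cref{p:iteration}.}

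The plan is to start from the expression \eqref{e:IFIF} in the switched graph $\wt\cG$ and systematically express the Green's function entries of $\wt\cG$ in terms of those of $\cG$, keeping track of which factors are "diagonal factors" of the form $G_{c_\al c_\al}^{(b_\al)}-Q$ and which are small factors of the type appearing in \eqref{e:defcE1}. First I would expand $\wt G_{oo}^{(i)}$ using the Schur complement formula: since under $I(\cF,\wt\cG)=1$ the radius-$\ell$ neighborhood of $o$ in $\wt\cG^{(i)}$ is a truncated $(d-1)$-ary tree with boundary vertices $\{c_\al\}_{\al\in\qq\mu}$, the quantity $\wt G_{oo}^{(i)}$ equals the Green's function of this tree with boundary weights $\wt G_{c_\al c_\beta}^{(\bT)}$. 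Using \Cref{lem:diaglem} (the "diagonal lemma" referenced after \eqref{e:error_factor}), I would write $\wt G_{oo}^{(i)}-Y_\ell(Q)$ as a weighted sum over terms, each of which is either negligible (bounded by the contributions collected in $\Psi_p$) or contains at least one factor of the form $(G_{c_\al c_\al}^{(b_\al)}-Q)$, a factor $G_{c_\al c_\beta}^{(b_\al b_\beta)}$ with $\al\ne\beta$, or $Q-\msc$. The point is that $Y_\ell(Q)$ is exactly the tree Green's function with constant boundary weight $Q$, so the difference only picks up the boundary-weight fluctuations listed in \eqref{e:error_factor}, plus the error from approximating $\wt G_{c_\al c_\beta}^{(\bT)}$ by $G_{c_\al c_\beta}^{(b_\al b_\beta)}$ and then by $\bm1(\al=\beta)Q$; these last approximation errors are controlled via the Woodbury expansion \eqref{e:tG-G} and the Ward-identity bounds of \Cref{lem:deletedalmostrandom}.

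Next I would handle the factor $R'_\bfi(\wt\cG)$. Its constituent factors — the $(d-1)^{2\ell}(G_{ss'}-P_{ss'})$, $(d-1)^{2\ell}(G_{cc}^{(b)}-Q)$, $(d-1)^{2\ell}G_{cc'}^{(bb')}$, the special-edge entries, and the $1-Y'_\ell(Q)$, $Q-Y_\ell(Q)$, $(d-1)^{2\ell}(Q-\msc)$ factors — all refer to $\wt\cG$, and I need to replace $\wt\cG$ by $\cG$. Because $\wt H-H$ is supported on the $\OO((d-1)^\ell)$ edges involved in this local resampling, which under $I(\cF,\cG)I(\cF,\wt\cG)=1$ are far from all edges of $\cK$ and $\cV$, the Woodbury formula \eqref{e:WB}–\eqref{e:tG-G} shows each such entry changes by an amount that is itself a product containing at least one small factor of type \eqref{e:defcE1} (localized near the new switching edges) times bounded Green's function entries; moreover the new entries $(d-1)^{2\ell}G_{c_\al c_\beta}^{(b_\al b_\beta)}$ and $(d-1)^{2\ell}(G_{c_\al c_\al}^{(b_\al)}-Q)$ themselves belong to the list \eqref{e:defcE1}. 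After this replacement, the whole integrand is a weighted sum of generic terms with respect to $\cF^+$ (the forest obtained by extending $(i,o)$ to $\cT_\ell(o)$ and adjoining the new boundary, switching, and special edges), and one checks Definition \ref{def:pgen}'s bookkeeping conditions are preserved: the count of $Q-Y_\ell(Q)$ factors plus special edges stays $2p-1$, each special edge still carries two off-diagonal $(d-1)^{2\ell}G_{ss'}$ factors, and the number of $1-Y'_\ell(Q)$ factors matches the number of special edges. The replacement of $\sum_\bfi I(\cF,\cG)$-weighted sums by $Z_\cF$-normalized ones, and similarly for $\cF^+$, uses \Cref{p:sumA} up to the stated $\OO(N^{-1+2\fc})$ relative error, absorbed into $\Psi_p$.

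The dichotomy in the conclusion then emerges from counting: by \Cref{lem:diaglem} the leading term of $\wt G_{oo}^{(i)}-Y_\ell(Q)$ is a single $(G_{c_\al c_\al}^{(b_\al)}-Q)$ summed over $\al\in\qq\mu$ with coefficients of size $\OO(\poly(\ell))\cdot|\msc|^{2\ell}\asymp (d-1)^{-\ell}\poly(\ell)$ per term, but there are $\mu=d(d-1)^\ell$ of them, so one must be careful. The key structural fact is that summing $G_{c_\al c_\al}^{(b_\al)}-Q$ over $\al$ and dividing by $\mu$ is, by permutation invariance and the definition \eqref{e:Qsum} of $Q$, essentially $Q-Q=0$ up to the resampling error $\bE_\bfS[G_{c_\al c_\al}^{(b_\al)}-Q]=\OO(N^{-1+\oo(1)})$ flagged in \Cref{s:newstrategy}; retaining one such factor as the new "diagonal factor" of $\cF^+$ costs either the $(d-1)^{-\ell/2}$ gain (when we keep it without extracting smallness, landing in \eqref{e:case1} with $r$ unchanged) or produces an extra factor from \eqref{e:defcE1} (when the expansion forces an additional $G_{c_\al c_\beta}^{(b_\al b_\beta)}$ or $Q-\msc$, landing in \eqref{e:case2} with $r'\ge r+1$). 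I expect the main obstacle to be precisely this combinatorial accounting — verifying that every term produced by the Schur/Woodbury expansion genuinely falls into one of the two advertised buckets with the claimed total weight $\OO(\poly(\ell))$, that no term escapes with neither the $(d-1)^{-\ell/2}$ gain nor an increment in $r$, and that the $\mu\asymp(d-1)^\ell$ multiplicity from the boundary sum is always compensated by the $|\msc|^{2\ell}\lesssim (d-1)^{-\ell}$ decay of the tree Green's function coefficients so that no factor of $(d-1)^\ell$ is left uncancelled. A secondary technical point is ensuring the Woodbury remainder (the tail $k\ge K$ of \eqref{e:tG-G}) is genuinely negligible at the relevant scale, which uses $(d-1)^{2\ell}\varepsilon(z)\le N^{-\fc}$ on $\mathbf D$ together with $\|G-P\|$ restricted to $\cF^+$ being $\prec\varepsilon(z)$.
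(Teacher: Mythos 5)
Your high-level outline is the right one and matches the paper's strategy: expand $\wt G_{oo}^{(i)}-Y_\ell(Q)$ via the Schur complement (\Cref{lem:diaglem}), replace the constituent factors of $R'_\bfi(\wt\cG)$ by their $\cG$-versions via the Woodbury expansion (\Cref{lem:generalQlemma}, \Cref{lem:Qlemma}), control the replacement errors by \Cref{lem:task2} and \Cref{lem:deletedalmostrandom}, check that the bookkeeping conditions of \Cref{def:pgen} survive, and convert the indicator sums to $Z_\cF$-normalized ones via \Cref{p:sumA}. You also correctly identify that the whole argument hinges on the $\bE_\bfS$-smallness $\bE_\bfS[G_{c_\al c_\al}^{(b_\al)}-Q]=\OO(N^{-1+\fc})$.

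But the combinatorial accounting you flag as ``the main obstacle'' is not a detail to be filled in later --- it is the actual content of the proposition, and your description of the mechanism producing the $(d-1)^{-\ell/2}$ gain is not right. You say the gain comes from ``retaining one such factor as the new diagonal factor of $\cF^+$ \dots without extracting smallness'' or from the $|\msc|^{2\ell}$-decay alone compensating the multiplicity $\mu\asymp(d-1)^\ell$; neither is correct. The $(d-1)^{-\ell}$ coefficient in \eqref{e:G-Y}, summed over the $(d-1)^\ell$ boundary indices, gives $\OO(1)$, not a gain. The actual source of the gain is the observation (\Cref{p:upbb}) that if a boundary index $\al$ appears \emph{only once} among all factors of $\widehat R_{\bfi^+}$, then averaging over the resampling data annihilates the term to $\OO(N^{-1+\fc})$; consequently one may restrict to configurations where every boundary index appears at least twice, which reduces the number of \emph{free} boundary indices to at most half the number of boundary factors. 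It is this halving, played against the $(d-1)^{-\ell}$ coefficient per factor, that leaves a net $(d-1)^{-\ell/2}$ (or better), as tracked by the exponent $(k_1+k)\ell/2$ in \eqref{e:oneterm2}--\eqref{e:R+term}. Without this step there is simply no cancellation: the leading term in \eqref{e:G-Y} is bounded only by $\OO(\varepsilon)$, which is exactly the input you started from, so the iteration would not improve anything.

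Beyond that, the case analysis needed to land every term in either \eqref{e:case1} or \eqref{e:case2} requires distinguishing by the counts $k_1, k_2, k_3$ of factors of type $(d-1)^{2\ell}(G_{c_\al c_\al}^{(b_\al)}-Q)$, $(d-1)^{2\ell}G_{c_\al c}^{(b_\al b)}$, and $(d-1)^{2\ell}G_{c_\al c_\beta}^{(b_\al b_\beta)}$. The off-diagonal case $k_3=2$ (case ii:offab in the paper), where a factor $(d-1)^{2\ell}G_{c_\beta c_{\beta'}}^{(b_\beta b_{\beta'})}$ appears, needs a separate argument using \eqref{e:Gccbb} to peel off a $(G_{c_\nu c_\nu}^{(b_\nu)}-Q)$ and re-enter the $k_1\geq1$ analysis, and the case of multiple off-diagonal factors uses the Ward-identity bound in \Cref{p:small_Ri}. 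Your sketch does not cover these branches. In summary: the frame is right, but the heart of the proposition --- why no term escapes both buckets --- depends on the single-index cancellation plus pairing combinatorics, which your proposal neither states precisely nor executes.
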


\begin{proof}[Proof of \Cref{t:recursion}]
We will only prove the bound for $\bE[{\bm1(\cG\in \Omega)}|Q-Y_\ell(Q)|^{2p}]^{1/2p}$, the bound for $\bE[{\bm1(\cG\in \Omega)}|m(z)-X_\ell (Q)|^{2p}]^{1/2p}$ can be proven in the same way. 
Denote $\cF_0=(\bfi_0=(i,o), E=\{(i,o)\})$, we introduce an indicator function $I(\cF_0, \cG)=1$ if any vertex $v\in \cB_\ell(o,\cG)$ has a radius $\fR$ tree neighborhood in $\cG$.  Then we split
\begin{align}\begin{split} \label{e:tt1}
&\phantom{{}={}}\bE[(Q-Y_\ell (Q))^{p}\overline{(Q-Y_\ell (Q))^p}{\bm1(\cG\in \Omega)}]\\
&=\frac{1}{Nd}\bE\left[\sum_{o,i}A_{oi}(G_{oo}^{(i)}-Y_\ell (Q))(Q-Y_\ell (Q))^{p-1}\overline{(Q-Y_\ell (Q))^p}{\bm1(\cG\in \Omega)}\right]\\
&=\frac{1}{Nd}\sum_{\bfi_0}\bE\left[I(\cF_0,\cG)(G_{oo}^{(i)}-Y_\ell (Q))(Q-Y_\ell (Q))^{p-1}\overline{(Q-Y_\ell (Q))^p}{\bm1(\cG\in \Omega)}\right]\\
&+\OO\left(\frac{1}{N^{1-2\fc}}\right)\bE\left[|Q-Y_\ell (Q)|^{2p-1}{\bm1(\cG\in \Omega)}\right],
\end{split}\end{align}
where we used that for $\cG\in \Omega$, $A_{oi}|G_{oo}^{(i)}|,|Y_\ell (Q)|\lesssim 1$ from \Cref{thm:prevthm}, and $I(\cF_0,\cG)=1$ except for $\OO(d(d-1)^\ell N^{\fc})$ vertices from \Cref{def:omegabar}.

We denote 
\begin{align*}
    R_{\bfi_0}=(G_{oo}^{(i)}-Y_\ell (Q))(Q-Y_\ell (Q))^{p-1}\overline{(Q-Y_\ell (Q))^p},
\end{align*}
which satisfies the conditions in \Cref{def:pgen}, thus $R_{\bfi_0}$ is generic with respect to $\cF_0$. With this notation, we can rewrite \eqref{e:tt1} as 
\begin{align*}
   \bE[(Q-Y_\ell (Q))^{p}\overline{(Q-Y_\ell (Q))^p}{\bm1(\cG\in \Omega)}]
   =\frac{1}{Z_{\cF_0}}\sum_{\bfi_0}\bE\left[I(\cF_0,\cG){\bm1(\cG\in \Omega)}R_{\bfi_0}\right]+\OO(\bE[\Psi_p]),
\end{align*}
where $Z_{\cF_0}=Nd$.
The above expression is in the form of \Cref{p:add_indicator_function}, and we can start to use \Cref{p:add_indicator_function} and \Cref{p:iteration} to iterate. Each step of the iteration, we will either get an extra factor $(d-1)^{-\ell/2}\leq N^{-\fc/256}$ as in \eqref{e:case1}; or as in \eqref{e:case2}, we get an extra factor in the form \eqref{e:defcE1}. For $z\in \mathbf D$, these factors are much smaller than $N^{-\fc/256}$, thanks to \Cref{thm:prevthm}. Therefore after a finite number of iterations (at most $256/\fc$), the terms are all bounded by $\OO(\bE[\Psi_p])$. This finishes the proof of \eqref{e:QY}. 
\end{proof}

\subsection{Proof of \Cref{thm:eigrigidity} and \Cref{t:rigidity} }
\label{s:proofrigidity}
In this section, we prove \Cref{thm:eigrigidity} and \Cref{t:rigidity} using  \Cref{cor:QYQbound} as input.
We recall the following estimates from \cite[(6.12), (6.13)]{erdHos2017dynamical} and \eqref{eq:Yprime}, which give that for $z=E+\ri \eta$, $\kappa:=\min\{|E-2|, |E+2|\}$,
\begin{align}\begin{split}\label{eq:mscapprox}
&|1-Y'_\ell(\msc(z))|=|1-\msc(z)^{2\ell+2}|,\quad |1-m_{sc}^2(z)| \asymp \sqrt{\kappa+\eta},\\
&\Im[m_{d}(z)]\asymp\left\{\begin{array}{cc}
\sqrt{\kappa+\eta}&|E|\leq 2,\\
\eta/\sqrt{\kappa+\eta}&|E|>2.
\end{array}
\right. 
\end{split}
\end{align}
Since $|\msc(z)|\leq 1$, we can take $(\fc/128)\log_{d-1} N\leq \ell \leq (\fc/64)\log_{d-1} N$ such that $|1+m_{sc}^2(z)+m_{sc}^4(z)+\cdots+m_{sc}^{2\ell}(z)|\gtrsim 1$.
 The following stability Proposition states that if $|m(z)-X_\ell (Q(z))|$ and $|Q(z)-Y_\ell (Q(z))|$ are small, so are $|Q(z)-\msc(z)|$ and $|m(z)-\md(z)|$.

\begin{proposition}\label{p:stable}
Fix a $z=E+\ri\eta\in \mathbf D$, with $\kappa=\min\{|E-2|, |E+2|\}$, and a $d$-regular graph $\cG\in \Omega(z)$. Assume that 
\begin{align*}
|Q(z)-m(z)|, |m(z)-m_d(z)|\leq \min\left\{\frac{1}{N^\fc}, \frac{\sqrt{\kappa+\eta}}{\log N}\right\},
\end{align*}
and 
there exists a small $0<\delta<1$ so that,
\begin{align*}
|m(z)-X_\ell(Q(z))|, |Q(z)-Y_\ell(Q(z))|=: \delta\leq \frac{\kappa+\eta}{\log N} .
\end{align*}
  Then for $N$ large enough,
\begin{align}\label{e:Q-msc}
\bm1(\cG\in \Omega(z))|Q(z)-\msc(z)|, \bm1(\cG\in \Omega(z))|m(z)-\md(z)|=\OO\left(\frac{\delta}{\sqrt{\kappa+\eta+\delta}}\right),
\end{align}
where the implicit constant is independent of $N$.
\end{proposition}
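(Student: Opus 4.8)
\textbf{Proof proposal for Proposition \ref{p:stable}.}

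The plan is to treat the two self-consistent equations
\[
m(z) = X_\ell(Q(z),z) + \OO(\delta), \qquad Q(z) = Y_\ell(Q(z),z) + \OO(\delta)
\]
as perturbations of the fixed-point relations $\md = X_\ell(\msc)$ and $\msc = Y_\ell(\msc)$, and to extract from them a quadratic inequality for the quantity $u := |Q(z)-\msc(z)|$. First I would apply Proposition \ref{p:recurbound} with $\Delta(z) = Q(z)$; since by hypothesis $|Q(z)-\msc(z)| \leq |Q(z)-m(z)| + |m(z)-\md(z)| + |\md(z)-\msc(z)|$ and all three summands are $\OO(1/N^\fc)$ (the last because $\cG\in\Omega(z)$ and $|\md-\msc|$ is small on $\mathbf D$ once we verify it, or one folds it into the hypothesis), we have $\ell|Q(z)-\msc(z)|\ll 1$ and \eqref{e:recurbound} applies. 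Writing $w := Q(z)-\msc(z)$, \eqref{e:recurbound} gives
\[
Q(z) - Y_\ell(Q(z)) = (1 - \msc^{2\ell+2}) w \;-\; \msc^{2\ell+3}\frac{1-\msc^{2\ell+2}}{1-\msc^2}\, w^2 \;+\; \OO(\ell^2 |w|^3),
\]
and the left side is $\OO(\delta)$ by assumption. Using \eqref{eq:mscapprox}, $|1-\msc^{2\ell+2}| \asymp |1-\msc^2| \asymp \sqrt{\kappa+\eta}$ (here the choice of $\ell$ with $|1+\msc^2+\cdots+\msc^{2\ell}|\gtrsim 1$ is exactly what guarantees $|1-\msc^{2\ell+2}|\gtrsim\sqrt{\kappa+\eta}$, not merely $\lesssim$), and the coefficient of $w^2$ is $\OO(1)$. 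So we obtain
\[
\sqrt{\kappa+\eta}\;|w| \;\lesssim\; \delta + |w|^2 + \ell^2|w|^3 \;\lesssim\; \delta + |w|^2,
\]
the last step because $\ell^2|w| \ll 1$.

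The next step is to solve this inequality. The quadratic $\sqrt{\kappa+\eta}\,t \lesssim \delta + t^2$ has, for $t$ in the regime $t\leq c\sqrt{\kappa+\eta}$, the consequence $t \lesssim \delta/\sqrt{\kappa+\eta}$; and more uniformly one gets $|w| \lesssim \delta/\sqrt{\kappa+\eta+\delta}$ — the two regimes being $\delta \lesssim \kappa+\eta$ (where $\delta/\sqrt{\kappa+\eta+\delta}\asymp\delta/\sqrt{\kappa+\eta}$) and $\delta \gtrsim \kappa+\eta$ (where the bound reads $|w|\lesssim\sqrt{\delta}$, which follows directly from $\sqrt{\kappa+\eta}|w|\lesssim\delta+|w|^2$ by considering whether $|w|^2$ or $\delta$ dominates). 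To make this rigorous I would use a continuity/bootstrap argument: the a priori bound $|w| = \OO(1/N^\fc) \ll \sqrt{\kappa+\eta}$ (valid since $z\in\mathbf D$ forces $\kappa+\eta \geq N^{2\fa-2}/(N\eta)^2\cdot(\ldots)$, in any case $\sqrt{\kappa+\eta} \gg N^{-\fc}$ is not automatic, so one should instead run the bootstrap from the scale where $\eta=1/\fa$ and decrease $\eta$, or simply note that the hypothesis $|Q-m|,|m-m_d|\le \sqrt{\kappa+\eta}/\log N$ already places $|w|$ below the unstable branch) puts $|w|$ on the stable branch of the quadratic, and then the inequality self-improves to $|w| \lesssim \delta/\sqrt{\kappa+\eta+\delta}$. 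This proves the first half of \eqref{e:Q-msc}.

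For the second half, $|m(z)-\md(z)|$, I would apply the $X_\ell$ part: by \eqref{e:Xrecurbound} with $\Delta = Q(z)$,
\[
X_\ell(Q(z)) - \md(z) = \frac{d}{d-1}\md^2\,\msc^{2\ell}\,(Q(z)-\msc(z)) + \OO(\ell|Q(z)-\msc(z)|^2),
\]
and combining with $m(z) = X_\ell(Q(z)) + \OO(\delta)$, the triangle inequality and $|\md|,|\msc| = \OO(1)$ give $|m(z)-\md(z)| \lesssim \delta + |w| + \ell|w|^2 \lesssim \delta + |w| \lesssim \delta/\sqrt{\kappa+\eta+\delta}$ (using $\delta \leq \sqrt{\delta}\cdot\sqrt{\delta} \lesssim \delta/\sqrt{\kappa+\eta+\delta}$ when $\kappa+\eta+\delta \lesssim 1$, which holds on $\mathbf D$). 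Multiplying through by $\bm1(\cG\in\Omega(z))$ throughout is harmless since every estimate used holds on $\Omega(z)$.

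\textbf{Main obstacle.} The delicate point is not the algebra but ensuring the iteration/bootstrap is legitimate: one must confirm that the hypothesized smallness of $|Q-m|$, $|m-\md|$ and $\delta$ genuinely places $|w|$ on the \emph{stable} side of the quadratic fixed-point map (i.e. that we are not in a regime where the $|w|^2$ term could dominate and blow the bound up), and that the lower bound $|1-\msc^{2\ell+2}|\gtrsim\sqrt{\kappa+\eta}$ — which is what prevents the linear coefficient from degenerating and is the whole reason for the specific constraint on $\ell$ — actually holds uniformly for $z\in\mathbf D$. Getting the constants in "$\delta \leq (\kappa+\eta)/\log N$" to interact correctly with the $\ell^2$, $\log N$ and $(d-1)^{2\ell}$ factors scattered through Propositions \ref{p:recurbound} and \ref{thm:prevthm} is where the real care is needed; the rest is a standard stability-of-self-consistent-equation argument as in \cite{erdHos2017dynamical}.
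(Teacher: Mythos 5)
Your proposal follows the same route as the paper: expand $Q - Y_\ell(Q)$ via Proposition~\ref{p:recurbound}, read off a quadratic relation for $w = Q - \msc$, and use the a priori smallness of $w$ to select the small root (the paper solves the quadratic explicitly and compares the two roots; you phrase it as an absorption argument in the inequality $\sqrt{\kappa+\eta}\,|w|\lesssim \delta + |w|^2$ — these are equivalent). The second half, passing to $m-\md$ via \eqref{e:Xrecurbound}, is also identical to the paper.

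One step in your writeup is genuinely false and you should not lean on it: $|\md(z)-\msc(z)|$ is \emph{not} small on $\mathbf D$. For fixed $d$, $\md$ and $\msc$ are the Stieltjes transforms of distinct densities (Kesten--McKay vs.\ semicircle) and differ by $\OO(1)$ at bulk energies — e.g.\ $\msc(\ri 0^+) = \ri$ while $\md(\ri 0^+) = (d-1)\ri/d$. So the triangle inequality $|Q-\msc| \leq |Q-m|+|m-\md|+|\md-\msc|$ with the stated hypothesis cannot give $|Q - \msc| \ll \sqrt{\kappa+\eta}$. The root cause is that the hypothesis as printed, $|Q(z)-m(z)| \leq \cdots$, appears to be a typo for $|Q(z)-\msc(z)| \leq \cdots$: otherwise the hypothesis would never hold (since $|Q-m|\approx |\msc-\md| = \OO(1)$), and this is indeed how the proposition is invoked in Lemma~\ref{p:iterate}, where the prior estimate \eqref{e:defLa} supplies $|Q-\msc|\prec \Lambda \le \sqrt{\kappa+\eta}/N^{2\fc}$. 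You flag this yourself ("or one folds it into the hypothesis"), and that second option is the correct reading; just drop the incorrect claim about $|\md-\msc|$. Two smaller imprecisions, neither fatal: the coefficient of $w^2$ in \eqref{e:recurbound} is $\OO(\ell)$, not $\OO(1)$ (harmless, since the a priori bound $|w|\lesssim\sqrt{\kappa+\eta}/\log N$ together with $\ell\lesssim c\log N$ for small $c$ still lets you absorb $\ell|w|^2$ into $\sqrt{\kappa+\eta}|w|$), and $|1-\msc^{2\ell+2}|\asymp\sqrt{\kappa+\eta}$ should really be the two-sided bound $\sqrt{\kappa+\eta}\lesssim|1-\msc^{2\ell+2}|\lesssim\ell\sqrt{\kappa+\eta}$, though again only the lower bound is used.
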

\begin{proof}[Proof of \Cref{p:stable}]
In the proof, for the simplicity of the notations, we omit the dependence on $z$.   We have from \eqref{e:recurbound} that
\begin{align}\begin{split}\label{e:Q-YQ4}
Q-Y_\ell(Q)&=(Q-m_{sc})-(Y_\ell(Q)-m_{sc})\\
&=(1-m_{sc}^{2\ell+2})(Q-m_{sc})-m_{sc}^{2\ell+3}\left(\frac{1-m_{sc}^{2\ell+2}}{1-m_{sc}^{2}}\right)(Q-m_{sc})^2+O(\ell^2|Q-m_{sc}|^3).
\end{split}
\end{align}
We consider the quadratic equation $ax^2+bx+c=0$, with
\begin{align}\begin{split}\label{e:coeff}
&a=-m_{sc}^{2\ell+3}\left(\frac{1-m_{sc}^{2\ell+2}}{1-m_{sc}^{2}}\right)+\OO\left(\ell^2|Q -m_{sc}|\right),\\
&b=1-m_{sc}^{2\ell+2},
\quad 
c=Y_\ell(Q)-Q, \quad |c|=\delta.
\end{split}\end{align}
We recall that by our choice of $\ell$, it holds $|1+(\msc )^2+(\msc )^4+\cdots+(\msc )^{2\ell}|\gtrsim 1$.
It follows that $|a|\gtrsim |\msc |^{2\ell+3}$. $\sqrt{\kappa+\eta}\lesssim |1-m_{sc} ^2|\lesssim |b|\lesssim \ell|1-m_{sc} ^2|\lesssim \ell \sqrt{\kappa+\eta}$ and $|b/a|=|1-m_{sc}^2 |/|m_{sc} |^{2\ell+3}\gtrsim \sqrt{\kappa+\eta}$.

 By our assumption $\delta\ll \kappa+\eta$, then  $|ac/b^2|\lesssim |c|/(\kappa+\eta)\ll1$.
 The two roots of $ax^2+bx+c=0$ satisfies:$|x_1|=|b/a|+\OO(|c/b|)\asymp |b/a|\gtrsim \sqrt{\kappa+\eta}$ and 
\begin{align}\label{e:cc1}
|x_2|=\OO(|c/b|)\lesssim \frac{|c|}{\sqrt{\kappa+\eta}}\lesssim \frac{\delta}{\sqrt{\kappa+\eta+\delta}}.
\end{align}
Since $|Q-\msc|\ll \sqrt{\kappa+\eta}$, it is necessary that 
\begin{align*}
    |Q-\msc|=|x_2|\lesssim \frac{\delta}{\sqrt{\kappa+\eta+\delta}}.
\end{align*}
The first statement in \eqref{e:Q-msc} follows.
To prove the second statement in \eqref{e:Q-msc}, thanks to Proposition \ref{p:recurbound}, we rewrite $m-m_d$ as
\begin{align}
\begin{split}\label{eq:mdbreakdown}
m-m_d&=m-X_\ell (Q)+X_\ell (Q)-m_d\\
&=m-X_\ell (Q)+\frac{d}{d-1}(\md )^2(m_{sc} )^{2\ell}|Q -m_{sc} |
+\OO\left(\ell|Q -m_{sc} |^2\right).
\end{split}
\end{align}
It follows from plugging \eqref{e:Q-msc} into \eqref{eq:mdbreakdown}, we get
\begin{align*}
   \bm1(\cG\in \Omega)|m-m_d|\lesssim \frac{\delta}{\sqrt{\kappa+\eta+\delta}}.
\end{align*}
\end{proof}

\begin{lemma}\label{p:iterate}
Fix a $z=E+\ri\eta\in \mathbf D$, with $\kappa=\min\{|E-2|, |E+2|\}$. If there exists some deterministic control parameter $\Lambda(z)$ such that the prior estimate holds
\begin{align}\label{e:defLa}
\bm1(\cG\in \Omega(z))|m(z)-m_d(z)|,\quad \bm1(\cG\in \Omega(z))|Q(z)-   \msc(z)|\prec \Lambda(z)\leq \frac{\sqrt{\kappa+\eta}}{N^{2\fc}}.
\end{align}
Then \begin{enumerate}
\item If $|E|\leq 2$, the following holds
\begin{align}\label{e:bb1}
\bm1(\cG\in \Omega(z))|Q(z)-\msc(z)|, \bm1(\cG\in \Omega(z))|m(z)-m_d(z)|\prec  \frac{N^{2\fc}}{N \eta}.
\end{align}
\item If $|E|\geq 2$, the following holds
\begin{align}\label{e:bb2}
\bm1(\cG\in \Omega(z))|Q(z)-\msc(z)|, \bm1(\cG\in \Omega(z))|m(z)-m_d(z)|\prec N^{2\fc}\left(\frac{1}{N\sqrt{\eta(\kappa+\eta)}}+\frac{1}{(N\eta)^2\sqrt{\kappa+\eta}}\right).
\end{align}
\end{enumerate}
\end{lemma}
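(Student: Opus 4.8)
The plan is to set up \Cref{p:iterate} as a \emph{self-improving} (bootstrap / fixed-point) estimate whose only two analytic inputs are the high-moment bound on the self-consistent equation, \Cref{cor:QYQbound}, and the deterministic stability estimate \Cref{p:stable}. Throughout one restricts to the event $\{\cG\in\Omega(z)\}$: its complement contributes at most $N^{-\fC}$ by \Cref{thm:prevthm}, which together with the crude deterministic bounds $\Im[m]\le 1/\eta$, $|Q|\lesssim 1$ dominates everything else; and one fixes $z=E+\ri\eta\in\mathbf D$ so that the margin $N\eta\sqrt{\kappa+\eta}\ge N^\fa$ is always available, taking $\fc$ small enough relative to $\fa$ that the various $N^{O(\fc)}$ prefactors are harmless.

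The first move is to feed the prior estimate \eqref{e:defLa} into the right-hand side of \eqref{e:QYQbound} to replace the random quantities there by deterministic ones. From \eqref{eq:Yprime}, \eqref{eq:mscapprox} and the choice of $\ell$ one gets $1-Y_\ell'(Q)=1-\msc^{2\ell+2}+\OO(\ell\,\Lambda)\asymp\sqrt{\kappa+\eta}$ since $\ell\Lambda\ll\sqrt{\kappa+\eta}$, and $\Im[m]\le\Im[m_d]+N^{\oo(1)}\Lambda$. Plugging these into \Cref{cor:QYQbound} and applying Chebyshev in $p$ (which converts the $N^{4p\fc}$ into a uniform $N^{2\fc}$) gives
\begin{align*}
\bm1(\cG\in\Omega(z))\big(|Q-Y_\ell(Q)|+|m-X_\ell(Q)|\big)\prec N^{2\fc}\left(\frac{\Im[m_d]+\sqrt{\sqrt{\kappa+\eta}\,\Im[m_d]}+\sqrt{\kappa+\eta}\,\Lambda}{N\eta}+\Lambda\,|Q-\msc|\right),
\end{align*}
where the only dangerous contribution, $|Q-\msc|^2$, has deliberately been kept in the absorbable form $\Lambda|Q-\msc|$. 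Call the left side $\delta$. One checks, using $z\in\mathbf D$ together with $\Lambda\le\sqrt{\kappa+\eta}N^{-2\fc}$ and \eqref{eq:mscapprox}, that $\delta\ll\kappa+\eta$, so that \Cref{p:stable} applies and yields $\bm1(\cG\in\Omega(z))\big(|Q-\msc|+|m-m_d|\big)\prec\delta/\sqrt{\kappa+\eta+\delta}\asymp\delta/\sqrt{\kappa+\eta}$.

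Writing $M$ for a high $L^{2p}$-norm of $\bm1(\cG\in\Omega(z))|Q-\msc|$, the two displays give $M\lesssim\frac{N^{2\fc}}{\sqrt{\kappa+\eta}}(\mathrm{floor}+\Lambda M)$, and the hypothesis $\Lambda\le\sqrt{\kappa+\eta}N^{-2\fc}$ is precisely what makes $N^{2\fc}\Lambda/\sqrt{\kappa+\eta}$ small enough to absorb the $\Lambda M$ term into the left side; the same is done for $|m-m_d|$. What remains is to simplify $\mathrm{floor}/\sqrt{\kappa+\eta}$ via \eqref{eq:mscapprox}. For $|E|\le2$ one has $\Im[m_d]\asymp\sqrt{\kappa+\eta}$ and every piece collapses to $1/(N\eta)$ up to constants, which is \eqref{e:bb1}. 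For $|E|\ge2$, $\Im[m_d]\asymp\eta/\sqrt{\kappa+\eta}$, and a short computation turns $\mathrm{floor}/\sqrt{\kappa+\eta}$ into $\frac{1}{N\sqrt{\eta(\kappa+\eta)}}+\frac{1}{N(\kappa+\eta)}+\frac{\Lambda}{N\eta}$; the first term is the first term of \eqref{e:bb2}, the second is dominated by it, and the third is where a second pass is needed: once $|m-m_d|$ has been improved to (the first term of) \eqref{e:bb2}, one re-runs the argument with $\Im[m]\le\Im[m_d]+N^{\oo(1)}(\text{that bound})$, and the $\Im[m]$-feedback is exactly what produces the $\frac{1}{(N\eta)^2\sqrt{\kappa+\eta}}$ term; one checks this pair of terms is a fixed point, so a bounded (in fact $\OO(1/\fc)$) number of passes closes the estimate.

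The step I expect to be the real work is the regime-by-regime bookkeeping hidden in the last paragraph. The bound \eqref{e:bb2} is genuinely a maximum of two different expressions — the one governing moderate distances to the spectral edge and the one that takes over extremely close to $\pm2$ — and in each sub-regime of $\mathbf D$ one must verify both that $\delta\ll\kappa+\eta$ (so \Cref{p:stable} is legitimately applicable) and that $N^{2\fc}\Lambda/\sqrt{\kappa+\eta}$ is small (so the quadratic term can be absorbed), i.e.\ that the hypothesis $\Lambda\le\sqrt{\kappa+\eta}N^{-2\fc}$ is used sharply rather than cosmetically. Tracking the $N^{O(\fc)}$ factors honestly across the finitely many passes, and making sure the probabilistic $\prec$-losses do not accumulate (e.g.\ by adjusting the moment order $p$ and the tolerance in $\prec$ at each pass), is the remaining point of care.
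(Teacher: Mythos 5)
Your proposal is correct and follows essentially the same route as the paper: feed the prior estimate \eqref{e:defLa} into \Cref{cor:QYQbound} (using \eqref{eq:Yprime}, \eqref{eq:mscapprox} to replace $\Im[m]$ and $|1-Y_\ell'(Q)|$ by deterministic proxies), extract a pointwise bound by high-moment Markov, convert it to a bound on $|Q-\msc|$ and $|m-\md|$ via \Cref{p:stable}, and then iterate the resulting self-improving map $\Lambda\mapsto T(\Lambda)$ a bounded number of times to reach the floor; the case split $|E|\leq 2$ vs.\ $|E|\geq 2$ is handled exactly as you indicate via the two branches of $\Im[\md]$ in \eqref{eq:mscapprox}. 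The only cosmetic difference is that you fold the quadratic term $|Q-\msc|^2$ into an $\Lambda\cdot M$ absorption on the $L^{2p}$ level, whereas the paper keeps it as $\Lambda^2/\sqrt{\kappa+\eta}$ inside the iterated bound and lets the fixed-point iteration kill it — these are equivalent bookkeepings.
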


\begin{proof}

We recall the moment bounds of $Q-Y_\ell(Q)$ amd $m-X_\ell(Q)$ from \Cref{cor:QYQbound},
\begin{align}\begin{split}\label{e:Q-YQ}
&\bE[\bm1(\GG\in \Omega)|Q-Y_\ell(Q)|^{2p}]
\lesssim N^{4p\fc}\bE\left[\bm1(\GG\in \Omega)\left(\frac{\sqrt{\Im[m]|1-Y'_\ell(Q)|}}{N\eta}+\frac{\Im[m]}{N\eta}+|Q-\msc|^2\right)^{2p}\right],\\
&\bE[\bm1(\GG\in \Omega)|m-X_\ell (Q)|^{2p}]
\lesssim N^{4p\fc}\bE\left[\bm1(\GG\in \Omega)\left(\frac{\sqrt{\Im[m]|1-Y'_\ell(Q)|}}{N\eta}+\frac{\Im[m]}{N\eta}+|Q-\msc|^2\right)^{2p}\right].
\end{split}\end{align}

By our assumption \eqref{e:defLa}, we recall that condition on $\cG\in \Omega$
\begin{align}\begin{split}\label{e:Immz}
&\Im[m(  z)]\lesssim \Im[\md]+|m- m_d|\prec\Im[\md] +\Lambda ,\\
&|1-Y'_\ell(Q)|
=|1-Y'_\ell(\msc)|+\OO(|Q-\msc|)
\prec\sqrt{\kappa+\eta}+\Lambda.
\end{split}\end{align}
Plugging \eqref{e:defLa} into the first statement in \eqref{e:Q-YQ}, we get
\begin{align}\begin{split}\label{e:Q-YQ2}
&\phantom{{}={}}\bE[\bm1(\GG\in \Omega)|Q-Y_\ell(Q)|^{2p}]\\
&\prec N^{4p\fc}\bE\left[\bm1(\GG\in \Omega)\left(\frac{\sqrt{(\Im[m_d]+\Lambda)(|1-Y'_\ell(\msc)|+\Lambda)}}{N\eta}+\frac{\Im[m_d]+\Lambda}{N\eta}+\Lambda^2\right)^{2p}\right],
\end{split}\end{align}
Thus the Markov's inequality leads to the following
\begin{align}\begin{split}\label{e:Q-YQ3}
    \bm1(\GG\in \Omega)|Q-Y_\ell(Q)|
    &\prec N^{2\fc}\left(\frac{\sqrt{(\Im[m_d]+\Lambda)(|1-Y'_\ell(\msc)|+\Lambda)}}{N\eta}+\frac{\Im[m_d]+\Lambda}{N\eta}+\Lambda^2\right)\\
    &\prec N^{2\fc}\left(\frac{\sqrt{\Im[m_d]\sqrt{\kappa+\eta}}+\sqrt{\Lambda\sqrt{\kappa+\eta}}}{N\eta}+\Lambda^2\right),
\end{split}\end{align}
where in the last line we used \eqref{eq:mscapprox} that $\Im[m_d]\lesssim \sqrt{\kappa+\eta}$, $|1-Y'(\msc)|\prec \ell \sqrt{\kappa+\eta}$, and $\Lambda\prec \sqrt{\kappa+\eta}$.
Similarly it follows from the second statement in \eqref{e:Q-YQ} that
\begin{align}\label{e:m-XQ2}
     \bm1(\GG\in \Omega)|m-X_\ell (Q)|
    \prec N^{2\fc}\left(\frac{\sqrt{\Im[m_d]\sqrt{\kappa+\eta}}+\sqrt{\Lambda\sqrt{\kappa+\eta}}}{N\eta}+\Lambda^2\right).
\end{align}

Using \eqref{e:Q-YQ3} and \eqref{e:m-XQ2} as input, \Cref{p:stable} implies that
\begin{align}\label{e:newbound}
     \bm1(\GG\in \Omega)|Q-\msc|,  \bm1(\GG\in \Omega)|m-m_d|\prec N^{2\fc}\left(\frac{\sqrt{\Im[m_d]\sqrt{\kappa+\eta}}+\sqrt{\Lambda\sqrt{\kappa+\eta}}}{N\eta\sqrt{\kappa+\eta}}+\frac{\Lambda^2}{\sqrt{\kappa+\eta}}\right).
\end{align}

If $|E|\geq 2$, then $\Im[m_d]\asymp\eta/ \sqrt{|\kappa|+\eta}$, and \eqref{e:newbound} simplifies to
\begin{align}\label{e:outS0}
  \bm1(\GG\in \Omega)|Q-\msc|,  \bm1(\GG\in \Omega)|m-m_d|\prec N^{2\fc}\left(\frac{1}{N\sqrt{\eta (\kappa+\eta)}}+\frac{\sqrt{\Lambda}}{N\eta(\kappa+\eta)^{1/4}}+\frac{\Lambda^2}{\sqrt{\kappa+\eta}}\right).
\end{align}
We can take the new $\Lambda$ as the righthand side of \eqref{e:outS0}, by iterating \eqref{e:outS0}, we get
\begin{align}\label{e:outS}
 \bm1(\GG\in \Omega)|Q-\msc|,  \bm1(\GG\in \Omega)|m-m_d|\prec  \frac{N^{2\fc}}{\sqrt{\kappa+\eta}}\left(\frac{1}{N\sqrt{\eta}}+\frac{1}{(N\eta)^2}\right).
\end{align}
This finishes the proof of \eqref{e:bb1}.

If $|E|\leq 2$, then $\Im[m_d]\asymp\sqrt{\kappa+\eta}$ and $\Lambda(w)\prec \sqrt{|\kappa|+\eta}$, \eqref{e:newbound} simplifies to
\begin{align}\label{e:boundinside}
  \bm1(\GG\in \Omega)|Q-\msc|,  \bm1(\GG\in \Omega)|m-m_d|\prec N^{2\fc}\left(\frac{1}{N\eta}+\frac{\Lambda^2}{\sqrt{\kappa+\eta}}\right).
\end{align}
Again, we can take the new $\Lambda$ as the righthand side of \eqref{e:boundinside}, by iterating \eqref{e:boundinside}, we get
\begin{align}\label{e:boundinside2}
 \bm1(\GG\in \Omega)|Q-\msc|,  \bm1(\GG\in \Omega)|m-m_d|\prec  \frac{N^{2\fc}}{N \eta}.
\end{align}
This finishes the proof of \eqref{e:bb2}.

\end{proof}

\begin{proof}[Proof of  \Cref{t:rigidity}]
We take a lattice grid
\begin{align*}
    \mathbf L=\{E+\ri\eta\in \mathbf D: E\in N^{-2}\bZ,\eta\in N^{-2}\bZ \}.
\end{align*}
For $\cG\in \oOmega$ and $z\in \mathbf D$, $m(z), Q(z), \msc(z), \md(z)$ are all Lipschitz with Lipschitz constant at most $\OO(N)$.
Thus if we can show that for any $z\in \mathbf L$ 
\begin{align}\label{e:mbondcopy}
\bm1(\cG\in \oOmega)|Q(z)-\msc(z)|, \bm1(\cG\in \oOmega)|m(z)-m_d(z)|\prec 
\left\{\begin{array}{cc}
 \frac{N^{2\fc}}{N\eta}, & -2\leq E\leq 2,\\
 \frac{N^{2\fc}}{\sqrt{\kappa+\eta}}\left(\frac{1}{N\eta^{1/2}}+\frac{1}{(N\eta)^2}\right), & |E|\geq 2.
\end{array}
\right.
\end{align}
 then \Cref{t:rigidity} follows.

First for $z\in \mathbf L$ with $\Im[z]\geq 1$, \Cref{thm:prevthm} implies that
\begin{align*}
   \bm1(\cG\in \oOmega)|m(z)-m_d(z)|,\quad \bm1(\cG\in \oOmega)|Q(z)-   \msc(z)|\prec \frac{1}{N^{2\fc}}\leq \frac{\sqrt{\kappa+\eta}}{N^{2\fc}}.
\end{align*}
This verifies the assumption \eqref{e:defLa} in \Cref{p:iterate}. We conclude that \eqref{e:mbondcopy} holds for $z\in \mathbf L$ with $\Im[z]\geq 1$. Then inductively we can show that if \eqref{e:mbondcopy} holds for $z\in \mathbf L$ with $\Im[z]\geq (k+1)/N^2$, then it also holds for $z\in \mathbf L$ with $\Im[z]\geq k/N^2$. More precisely, for $z=E+\ri \eta\in \mathbf L$ with $\Im[z]\geq k/N^2$, we denote $z'=z+(\ri/N^2)\in \mathbf L$. By the inductive hypothesis, and that the Lipschitz constants of $m(z), \md(z)$ are at most $\OO(N)$ 
\begin{align*}
    \bm1(\cG\in \oOmega)|m(z)-m_d(z)|
    =\bm1(\cG\in \oOmega)|m(z')-m_d(z')| +\OO(1/N)
    \prec \frac{N^{2\fc}}{N\eta}\leq \frac{\sqrt{\kappa+\eta}}{N^{2\fc}}.
\end{align*}
\Cref{p:iterate} implies that \eqref{e:mbondcopy} holds for $z$. This finishes the induction and \Cref{t:rigidity}.

\end{proof}

\begin{proof}[Proof of  \Cref{thm:eigrigidity}]
The optimal rigidity \eqref{e:optimal_rigidity} follows from a standard argument using the Stieltjes transform \eqref{e:mbond} estimate as an input, see \cite[Section 11]{erdHos2017dynamical}.  For didactic purposes, we show here how to show that $\lambda_2\leq 2+N^{-2/3+2\fa}$ holds with overwhelmingly high probability on $\overline{\Omega}$. It has been proven in {\cite[Theorem 1.3]{huang2024spectrum}} that with high probability on $\overline{\Omega}$, it holds \begin{align}\label{e:la2bound}
    \lambda_2\leq 2+N^{-\oo(1)}.
\end{align}
In the following we show that with high probability, there is no eigenvalue on the interval $[2+N^{-2/3+2\fa}, 2+\fa]$. It, together with \eqref{e:la2bound}, implies that $\lambda_2\leq 2+N^{-2/3+2\fa}$.

We take $z=2+\kappa+\ri\eta$, with $ N^{-2/3+2\fa}\leq\kappa\leq \fa$ and $\eta=N^{\fa}/(N\sqrt\kappa)\ll \kappa$. With this choice, one can check that $N\eta\sqrt{\kappa}=N^\fa$ and $z\in {\mathbf  D}$, and $\Im[\md(z)]\asymp \eta/\sqrt{\kappa+\eta}\ll 1/{N\eta}$.
In  this regime,   \Cref{t:rigidity} implies that
\begin{align*} \begin{split}
\left|m(z) - \md(z)\right|
&\leq N^{3\fc}\left(\frac{1}{N\sqrt{\eta(\kappa+\eta)}}+\frac{1}{(N\eta)^2\sqrt{\kappa+\eta}}\right)\lesssim \frac{N^{3\fc}}{N^{2\fa} (N\eta)}\ll \frac{1}{N\eta}.
\end{split}\end{align*}
with probability $1-\OO(N^{-\fC})$ with respect to the uniform measure on $\oOmega$, provided $\fa\geq 4\fc$. Thus it follows that 
\begin{align*}
    \Im[m(z)]\leq |m(z)+\md(z)|+\Im[\md(z)]\ll \frac{1}{N\eta}.
\end{align*}
If there is an eigenvalue on the interval $\lambda_i\in [2+\kappa-\eta, 2+\kappa+\eta]$, then 
\begin{align*}
    \Im[m(z)]
    =\frac1N\sum_{i=1}^N \Im\left[\frac{1}{\lambda_i-z}\right]\geq\frac{1}{N}\frac{\eta}{|\lambda_i-(2+\kappa)|^2+\eta^2}\geq \frac{1}{2N\eta},
\end{align*}
which is impossible. It follows that with probability $1-\OO(N^{-\fC})$ with respect to the uniform measure on $\oOmega$, $\lambda_2\leq 2+N^{-2/3+2\fa}$.
\end{proof}

\section{Expansions of Green's Function Differences}\label{sec:expansions}
In this section, we gather estimates on the difference in Green's functions before and after local resampling. For Green's functions related to the center of the local resampling, we employ Schur complement formulas, with results detailed in \Cref{sec:schurlemmas}. This methodology has been previously utilized in similar contexts \cite{huang2024spectrum,bauerschmidt2019local} to establish the local law of random $d$-regular graphs.

For Green's function terms away from the center of the local resampling, we develop a novel expansion using the Woodbury identity, as stated in \Cref{lem:woodbury}. This expansion represents a reorganization of the resolvent identity. In prior research \cite{huang2023edge}, resolvent identities played a pivotal role in analyzing the changes induced by simple switching in Green's functions, yielding an expansion where the terms exhibit exponential decay in $1/\sqrt d$. This decay rate proves adequate when $d$ scales with the size of the graph; however, in our scenario, where $d$ remains fixed, the decay is too slow. Notably, in the new expansion introduced in \Cref{lem:woodbury}, the terms decay exponentially at a rate of $1/N^{\oo(1)}$. 

\subsection{Switching using 
the Schur complement}\label{sec:schurlemmas}
In this section we will use the Schur complement formula to study the Green's function after local resampling. 
We recall the local resampling and related notations from \Cref{s:local_resampling}, and the spectral domain ${\mathbf D}$ from \eqref{e:D0}.

\begin{lemma}\label{lem:diaglem}
Fix $z\in {\mathbf D}$, a $d$-regular graph $\cG\in \Omega(z)$, and an edge $(i,o)\in \cG$. 
We denote $\cT=\cB_\ell(o, \cG)$, $T_{\bf S}$ the local resampling  with resampling data ${\bf S}=\{(l_\al, a_\al), (b_\al, c_\al)\}_{\al\in\qq{\mu}}$ around  $o$, and $\widetilde G=T_{\bf S}G$. Condition on that $\tcG\in \Omega(z)$ and $I(\{(i,o)\}\cup \{(b_\al, c_\al)\}_{\al\in\qq{\mu}},\cG)=1$ (recall from \Cref{def:indicator}), the following holds 
\begin{enumerate}
    \item 
$\widetilde G_{oo}^{(i)}-Y_\ell(Q)$ can be rewritten as a weighted sum  
\begin{align}\label{e:G-Y}
   \widetilde G_{oo}^{(i)}-Y_\ell(Q)=\frac{\fc_1} {(d-1)^\ell}\sum_{\al\in\sfA_i}(G^{(b_\alpha)}_{c_\alpha c_\alpha}-Q)
   +\frac{\fc_2} {(d-1)^\ell}\sum_{\al\neq \beta\in\sfA_i} G^{(b_\alpha b_\beta)}_{c_\alpha c_\beta}+\sum_{\bm\al}\frac{\fc_{\bm\al} U_{\bm\al}}{(d-1)^{2\ell}} +\cE.
\end{align}

\item For $s,s'\in\{o,i\}$, $\widetilde G_{ss'}-P_{ss'}$ can be rewritten as a weighted sum, 
\begin{align*}\begin{split}
    \widetilde G_{ss'}-P_{ss'}
    &=\frac{\fc_1} {(d-1)^\ell}\sum_{\al\in\sfA_i}(G^{(b_\alpha)}_{c_\alpha c_\alpha}-Q)
    +\frac{\fc_2} {(d-1)^\ell}\sum_{\al\notin\sfA_i}(G^{(b_\alpha)}_{c_\alpha c_\alpha}-Q)
    \\
   &+\frac{\fc_3} {(d-1)^\ell}\sum_{\al\neq \beta\in \sfA_i} G^{(b_\alpha b_\beta)}_{c_\alpha c_\beta}
   +\frac{\fc_4} {(d-1)^\ell}\sum_{\al\neq \beta\in\sfA^\complement_i} G^{(b_\alpha b_\beta)}_{c_\alpha c_\beta}
   \\
   &+\frac{\fc_5} {(d-1)^\ell}\sum_{\al\in\sfA_i\atop \beta\in \sfA^\complement_i} G^{(b_\alpha b_\beta)}_{c_\alpha c_\beta}+\fc_6(Q-\msc)+\sum_{\bm\al}\frac{\fc_{\bm\al} U_{\bm\al}}{(d-1)^{2\ell}} +\cE.
\end{split}\end{align*}
\end{enumerate}
where $\sfA_i:=\{\al\in \qq{1, \mu}: \dist_{\cT}(i, l_\al)=\ell+1\}$ and $\sfA^\complement_i=\qq{\mu}\setminus \sfA_i$; the summation is over indices $\bm\al=(\al_1, \al_2,\cdots, \al_{2k})\in \qq{\mu}^{2k}$ for some $2\leq k\leq \fb$;
\begin{align*}
    U_{\bm\al}=\widehat U_{{\al_1}{\al_2}} \widehat U_{{\al_3}{\al_4}}
   \cdots \widehat U_{{\al_{2k-1}}{\al_{2k}}},
\end{align*}
where $\widehat U_{\al \al}=(d-1)^{2\ell}(G_{c_\al c_\al}^{(b_\al)}-Q)$ or $(d-1)^{2\ell}(Q-\msc)$; and $\widehat U_{\al \beta}=(d-1)^{2\ell} G_{c_\al c_\beta}^{(b_\al b_\beta)}$ if $\al\neq \beta$; and the total weights are bounded by $\OO(\poly(\ell))$: 
\begin{align*}
    \sum_j |\fc_j|+\sum_{\bm\al}|\fc_{\bm\al}|=\OO( \poly(\ell)).
\end{align*}
The error $\cE$ satisfies
\begin{align}\label{e:defCE}
 |\cE|\lesssim \sum_{\al} |\wt G^{(\bT)}_{c_\al c_\al}-G^{(b_\al)}_{c_\al c_\al}|+\sum_{\al\neq \beta} |\wt G^{(\bT)}_{c_\al c_\beta}-G^{(b_\al b_\beta)}_{c_\al c_\beta}| +\frac{1}{N^2}.
\end{align}
\end{lemma}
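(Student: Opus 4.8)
The plan is to apply the Schur complement formula to the switched graph $\tcG^{(i)}$. Under the conditioning $I(\{(i,o)\}\cup\{(b_\al,c_\al)\}_{\al\in\qq\mu},\cG)=1$, by \Cref{lem:goodresamplingdata} all $\mu=d(d-1)^\ell$ switchings are admissible, so $\cT^{(i)}=\cB_\ell(o,\tcG)^{(i)}$ is a truncated $(d-1)$-ary tree of depth $\ell$ rooted at $o$, whose boundary vertices are exactly $\{c_\al:\al\in\sfA_i\}$, where $\sfA_i$ selects the boundary edges that lie on the side of $\cT$ away from $i$. First I would write, via Schur complement on the vertex set $\bT$ of $\cT$ (with $i$ removed),
\begin{align*}
\widetilde G_{oo}^{(i)} = \left(\bigl(-z+H(\cT^{(i)})\bigr) - B^\top \widetilde G^{(\bT)}_{[\,\cdot\,]} B\right)^{-1}_{oo},
\end{align*}
where $B$ records the boundary edges $\{(l_\al,c_\al)\}_{\al\in\sfA_i}$ connecting $\bT$ to the complement. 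Comparing with the definition of $Y_\ell(Q)=P_{oo}(\cB_\ell(o,\cY),z,Q)$, the difference is governed by replacing the true boundary weights $\widetilde G^{(\bT)}_{c_\al c_\beta}$ by the constant diagonal weight $Q$. A standard resolvent/first-order perturbation expansion of the inverse in the boundary block then produces \eqref{e:G-Y}: the linear term in $(\widetilde G^{(\bT)}_{c_\al c_\al}-Q)$ gives the $\fc_1$ sum, the linear off-diagonal term in $\widetilde G^{(\bT)}_{c_\al c_\beta}$ gives the $\fc_2$ sum, and the higher-order terms in the expansion (which, because each $P$-entry on the tree decays like $(|\msc|/\sqrt{d-1})^{\dist}$ by \eqref{e:Pijbound}, come with a geometric factor compensating the $(d-1)^\ell$ boundary vertices) produce the $U_{\bm\al}$ polynomial terms; here one truncates the Neumann-type series at order $\fb$ since $|\widehat U|^\fb\le N^{-2}$ on $\Omega(z)$. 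The key point is that each explicit coefficient $\fc_j,\fc_{\bm\al}$ is a ratio of $P$-entries on the truncated tree (hence $\OO(1)$ per term), and the number of distinct "shapes" of terms is $\OO(\poly(\ell))$, giving the claimed bound on $\sum|\fc_j|+\sum|\fc_{\bm\al}|$. The leftover error $\cE$ is precisely the substitution cost of replacing $\widetilde G^{(\bT)}_{c_\al c_\beta}$, the Green's function of $\tcG$ with $\bT$ removed, by $G^{(b_\al)}_{c_\al c_\al}$ resp.\ $G^{(b_\al b_\beta)}_{c_\al c_\beta}$, the Green's function of $\cG$ (the graph before switching) with the single vertex $b_\al$ (resp.\ $b_\al,b_\beta$) removed; since the boundary vertices $c_\al$ have tree neighborhoods of radius $\fR/4\gg\ell$ after deleting $\bT$, and since $\bT$ is far from $\{b_\al,c_\al\}$, these two neighborhoods agree as rooted graphs, and the replacement cost is bounded term by term, yielding \eqref{e:defCE} after collecting the $\fb$-th power tail into the $1/N^2$.

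For part (2), I would repeat the same Schur complement analysis but now for $\widetilde G_{ss'}$ with $s,s'\in\{o,i\}$ (no vertex removed), comparing against $P_{ss'}=P_{ss'}(\cF,z,\msc)$, the tree-extension Green's function. The new feature is twofold: first, since we no longer delete $i$, both "halves" of the boundary of $\cT$ contribute, which is why the sums now split into $\al\in\sfA_i$, $\al\in\sfA_i^\complement$, and mixed $\al\in\sfA_i,\beta\in\sfA_i^\complement$ ranges (coefficients $\fc_1,\dots,\fc_5$); second, the reference weight is $\msc$ rather than $Q$, producing the extra $\fc_6(Q-\msc)$ term from re-expanding $Y_\ell(Q)$ around $\msc$ via \eqref{e:recurbound} (equivalently, from the discrepancy between using weight $Q$ in the Schur expansion and weight $\msc$ in $P$). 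Everything else — the $U_{\bm\al}$ polynomial remainder, the coefficient bounds, the form of $\cE$ — is identical to part (1).

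The main obstacle is the bookkeeping in the Schur/Neumann expansion: one must verify that every higher-order term genuinely takes the product form $U_{\bm\al}=\widehat U_{\al_1\al_2}\cdots\widehat U_{\al_{2k-1}\al_{2k}}$ with matching index pairs (no "dangling" single index), that the compensating powers of $(d-1)^{-\ell}$ versus the $\mu\asymp(d-1)^\ell$ summation range balance so that the coefficients stay $\OO(\poly(\ell))$ rather than blowing up exponentially in $\ell$, and that the truncation at order $\fb$ is legitimate on $\Omega(z)$. This is where the tree structure of $\cT^{(i)}$ and the decay estimate \eqref{e:Pijbound} are used crucially: the off-diagonal $P$-entries between distinct boundary vertices decay like $(|\msc|/\sqrt{d-1})^{2\ell}$, exactly cancelling the $(d-1)^{2\ell}$ prefactor in the definition of $\widehat U_{\al\beta}$, so that each $\widehat U_{\al\beta}$ is $\OO(1)$-sized and the combinatorics of how many $(\al,\beta)$ pairs appear is controlled by the fixed depth-$\ell$ tree geometry. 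I expect the rest to be routine given the estimates already quoted from \cite{huang2024spectrum}.
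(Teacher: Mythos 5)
Your plan is essentially the paper's own proof: Schur complement on $\bT$ with $i$ removed, expansion of the boundary block around the reference weight, tracking the $k=1$ linear term, bundling $k\ge2$ into $U_{\bm\al}$ using the decay of $P^{(i)}$-entries to control the coefficient sums, and collecting the substitution cost $\widetilde G^{(\bT)}_{c_\al c_\beta}\to G^{(b_\al b_\beta)}_{c_\al c_\beta}$ into $\cE$. One small inaccuracy in your description: all $\mu$ boundary edges $(l_\al,c_\al)$ remain after switching, not just those indexed by $\sfA_i$; the restriction to $\sfA_i$ arises because $P^{(i)}_{ol_\al}=0$ for $\al\notin\sfA_i$ (as $o$ and $l_\al$ lie in different components of $\cT^{(i)}$), which is the mechanism the paper uses and which your higher-order bookkeeping implicitly relies on.
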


\begin{proof}[Proof of Lemma \ref{lem:diaglem}]
We denote $\cT=\cB_\ell(o,\cG)$ the radius $\ell$ neighborhood of vertex $o$, and its vertex set $\bT$. 
We denote the normalized adjacency matrix of $\cG^{(i)}$ as $H^{(i)}$, and its restriction to $\bT$ as $H^{(i)}_\bT$. Let $\wt B$ be the normalized adjacency matrix of the directed edges $\{(c_\al, l_\al)\}_{\al \in \qq{\mu}}$. We also denote the Green's function of $\cG^{(\bT)}$ and $\wt\cG^{(\bT)}$ as $ G^{(\bT)}$ and $\wt G^{(\bT)}$ respectively.  

Thanks to the Schur complement formula, we have 
\begin{align*}
\widetilde G_{oo}^{(i)}=\left(\frac{1}{H_{\bT}^{(i)}-z-\widetilde B^\top\widetilde G^{(\bT)}\widetilde B}\right)_{oo}.
\end{align*}
Since $o$ has a radius $\fR/2$ tree neighborhood, $H^{(i)}_\bT$ is the normalized adjacency matrix of a truncated $(d-1)$-ary tree, 
\begin{align*}
    \msc=\left(\frac{1}{H_{\bT}^{(i)}-z-\widetilde B^\top \msc{\mathbb I}\widetilde B}\right)_{oo}, \quad P^{(i)}:=P(\cT^{(i)}, z, \msc)=\frac{1}{H_{\bT}^{(i)}-z-\widetilde B^\top \msc{\mathbb I}\widetilde B}.
\end{align*}
By taking the difference of the two above expressions, we have
\begin{equation*}
\widetilde G_{oo}^{(i)}-\msc=\left(\frac{1}{H_{\bT}^{(i)}-z-\widetilde B^\top\widetilde G^{(\bT)}\widetilde B}-\frac{1}{H_{\bT}^{(i)}-z-\widetilde B^\top \msc{\mathbb I}\widetilde B}\right)_{oo}.
\end{equation*}
 Then we expand using the resolvent identity. By our assumption $\tcG\in \Omega$, and thus $|\tG^{(\bT)}_{c_\al c_\al}- \msc|\leq N^{-\fc}$. Thus for some sufficiently large constant $\fb$, the following holds
\begin{align}\begin{split}\label{eq:resolventexp}
    &\phantom{{}={}}\left(\frac{1}{H_{\bT}^{(i)}-z-\widetilde B^\top\widetilde G^{(\bT)}B}-\frac{1}{H_{\bT}^{(i)}-z- \widetilde B^\top \msc{\mathbb I}B}\right)_{oo}\\
    &=\left(P^{(i)}\sum_{k=1}^\fb \left(\widetilde B^\top (\widetilde G^{(\bT)}-\msc\mathbb I)\widetilde B P^{(i)}\right)^k\right)_{oo}+\OO(N^{-2})\\
    &=\left(P^{(i)}\sum_{k=1}^\fb \left(\widetilde B^\top (\widetilde G^{(\bT)}-Q\mathbb I+(Q-\msc)\mathbb I)\widetilde B P^{(i)}\right)^k\right)_{oo}+\OO(N^{-2}).
\end{split}
\end{align}

Recall $Y_\ell(Q)$ from \eqref{def:Y}. By the same argument we also have that
\begin{align}\label{e:YQ-m}
    Y_\ell(Q)-\msc=\left(P^{(i)}\sum_{k=1}^\fb \left(\widetilde B^\top (Q-\msc)\mathbb I\widetilde B P^{(i)}\right)^k\right)_{oo}+\OO(N^{-2}).
\end{align}

By taking the difference of \eqref{eq:resolventexp} and \eqref{e:YQ-m}, up to an error $\OO(N^{-2})$, we get that the difference $\widetilde G_{oo}^{(i)}-Y_\ell(Q)$ is given as
\begin{align}\label{e:GooY}
    \left(P^{(i)}\sum_{k=1}^\fb \left(\widetilde B^\top (\widetilde G^{(\bT)}-Q\mathbb I+(Q-\msc)\mathbb I)\widetilde B P^{(i)}\right)^k\right)_{oo}-\left(P^{(i)}\sum_{k=1}^\fb \left(\widetilde B^\top (Q-\msc)\mathbb I\widetilde B P^{(i)}\right)^k\right)_{oo}.
\end{align}

By \eqref{e:Pijbound}, we see the entries of $P^{(i)}$ decay exponentially, and in particular we have that if $\al \in \sfA_i$, then $\dist_\cT(i, l_\al)=\ell+1$, and
\begin{align*}
    |P^{(i)}_{ol_\al}|\lesssim (d-1)^{-\dist_{\cT}(o,\ell)/2}=(d-1)^{-\ell/2};
\end{align*}
otherwise if $\al \in\sfA_i^\complement$, then $o,l_\al$ are in different connected components of $\cT^{(i)}$, and $|P^{(i)}_{ol_\al}|=0$.
Thus the $k=1$ term in \eqref{e:GooY}, is given by  
\begin{align}\label{e:G-Y2}
    \frac{\fc_1} {(d-1)^\ell}\sum_{\al\in\sfA_i}(\wt G^{(\bT)}_{c_\alpha c_\alpha}-Q)
   +\frac{\fc_2} {(d-1)^\ell}\sum_{\al\neq \beta\in\sfA_i} \wt G^{(\bT)}_{c_\alpha c_\beta},
\end{align}
where the two coefficients $|\fc_1|,|\fc_2|=\OO(1)$.

We obtain the first two terms in \eqref{e:G-Y}, after replacing $\widetilde G_{c_\al c_\al}^{(\bT)},\widetilde G_{c_\al c_\beta}^{(\bT)}$ in \eqref{e:G-Y2} by $G_{c_\al c_\al}^{(b_\al)}, G_{c_\al c_\beta}^{(b_\al b_\beta)}$. We collect the difference in the error term $\cE$ (as in \eqref{e:defCE}).

For the terms $k\geq 2$, in general 
$P^{(i)}(\widetilde B^\top V \widetilde B P^{(i)})^k$ is given as a sum of terms in the following form
\begin{align}\label{e:PUP}
   \sum_{\al_1, \al_2,\cdots,\al_{2k}\in \qq{\mu}}P^{(i)}_{o l_{\al_1}} V_{c_{\al_1} c_{\al_2}} P^{(i)}_{l_{\al_2} l_{\al_3}} V_{c_{\al_3} c_{\al_4}} 
   P^{(i)}_{l_{\al_4} l_{\al_5}}\cdots V_{c_{\al_{2k-1} }c_{\al_{2k}}}P^{(i)}_{l_{\al_{2k}} o}.
\end{align}

Recall from \eqref{e:Pijbound}, the contribution from $P^{(i)}$ terms in \eqref{e:PUP} depends only on the distances between $l_{\al_{2i}}$ and $l_{\al_{2i+1}}$
\begin{align*}
    f(\bm\al):=P^{(i)}_{o l_{\al_1}} P^{(i)}_{l_{\al_2} l_{\al_3}} 
   \cdots P^{(i)}_{l_{\al_{2k}} o},\quad |f(\bm\al)|\lesssim (d-1)^{-\ell}\prod_{j=1}^{k-1} (d-1)^{-\dist_{\cT}(l_{\alpha_{2j}},l_{\alpha_{2j+1}})/2}.
\end{align*}
We can reorganize \eqref{e:PUP} in the following way according to the equivalent class defined above
\begin{align}\begin{split}\label{e:totalsum0}
    &\phantom{{}={}}\sum_{\bm\al}f(\bm\al)V_{c_{\al_1}c_{\al_2}} V_{c_{\al_3}c_{\al_4}}
   \cdots V_{c_{\al_{2k-1}}c_{\al_{2k}}}\\
   &=\sum_{\bm\al}f(\bm\al) (d-1)^{-2k\ell}((d-1)^{2\ell}V_{c_{\al_1}c_{\al_2}}) ((d-1)^{2\ell}V_{c_{\al_3}c_{\al_4}})
   \cdots ((d-1)^{2\ell} V_{c_{\al_{2k-1}}c_{\al_{2k}}})\\
   &=:(d-1)^{-2\ell}\sum_{\bm\al}\fc_{\bm\al}\widehat U_{{\al_1}{\al_2}}\widehat U_{{\al_3}{\al_4}}
   \cdots  \widehat U_{{\al_{2k-1}}{\al_{2k}}},
\end{split}\end{align}
where $\fc_{\bm\al}=f(\bm\al) (d-1)^{-2(k-1)\ell}$ and $\widehat U_{\al_{2j-1}\al_{2j}}:=(d-1)^{2\ell} V_{c_{\al_{2j-1}}c_{\al_{2j}}}$ is one of the three terms $(d-1)^{2\ell}(G_{c_\al c_\al}^{(b_\al)}-Q)$, $(d-1)^{2\ell}(Q-\msc)$, or $(d-1)^{2\ell} G_{c_\al c_\beta}^{(b_\al b_\beta)}$.
Finally for the summation over the weights:
\begin{align*}
   \sum_{\bm\al}\fc_{\bm\al}
   &= \sum_{\bm\al}|f(\bm\al)| (d-1)^{-2(k-1)\ell}\lesssim \sum_{\bm\al}(d-1)^{-(k+1)\ell}\prod_{j=1}^{k-1} (d-1)^{-\dist_{\cT}(l_{\alpha_{2j}},l_{\alpha_{2j+1}})/2}\\
   &=\sum_{\al_1, \al_{2k}}(d-1)^{2\ell}
   \prod_{j=1}^{k-1}\sum_{\al_{2j}, \al_{2j+1}}(d-1)^{-\ell}(d-1)^{-\dist_{\cT}(l_{\alpha_{2j}},l_{\alpha_{2j+1}})/2}= \OO(\poly(\ell)). 
\end{align*}
where in the last inequality, we used that the total number of choices for $\al_1, \al_{2k}$ are $\OO((d-1)^{2\ell})$; and the following estimates
\begin{align}\label{e:sumal_beta}
    \sum_{\al} (d-1)^{-\dist_{\cT}(l_\al, l_\beta)/2}=\OO(\poly(\ell)),\quad \sum_{\al, \beta}(d-1)^{-\ell} (d-1)^{-\dist_{\cT}(l_\al, l_\beta)/2}=\OO(\poly(\ell)),
\end{align}
which is from the simple observation that given the index $\al$, and the distance $r=\dist_\cT(l_\al, l_\beta)$, the number of choices for $\beta$ is of order $\OO((d-1)^{r/2})$. After replacing $\widetilde G_{c_\al c_\al}^{(\bT)},\widetilde G_{c_\al c_\beta}^{(\bT)}$ in \eqref{e:G-Y2} by $G_{c_\al c_\al}^{(b_\al)}, G_{c_\al c_\beta}^{(b_\al b_\beta)}$, this finishes the first statement in \Cref{lem:diaglem}.

Now, we generalize this argument to a term of the form $\wt G_{ss'}-P_{ss'}$ with $s,s'\in\{o,i\}$. We do the exact same expansion as before using the Schur complement formula, 
\begin{align}
\begin{split}\label{eq:XQdecomp}
&\phantom{{}={}}\wt G_{s_1s_2}-P_{s_1s_2}=\left(P\sum_{k=1}^\fb \left(\widetilde B^\top (\widetilde G^{(\bT)}-\msc\mathbb I)\widetilde B P\right)^k\right)_{ss'}+\OO(\Phi).
\end{split}\end{align}
Using the above expansion, the statement follows from the same argument as the first statement.

\end{proof}

We derive a similar expansion for factors that are Green's function entries with at most one indices in $\{o,i\}$.
\begin{lemma}\label{lem:offdiagswitch}
Fix $z\in {\mathbf D}$, a $d$-regular graph $\cG\in \Omega(z)$, and edges $(i,o), (b,c), (b',c')\in \cG$.
We denote $\cT=\cB_\ell(o, \cG)$, $T_{\bf S}$ the local resampling  with resampling data ${\bf S}=\{(l_\al, a_\al), (b_\al, c_\al)\}_{\al\in\qq{\mu}}$ around  $o$, and $\widetilde G=T_{\bf S}G$. Condition on that $\tcG\in \Omega(z)$ and $I(\{(i,o), (b,c), (b',c')\}\cup \{(b_\al, c_\al)\}_{\al\in\qq{\mu}},\cG)=1$ (recall from \Cref{def:indicator}), the following holds  
\begin{enumerate}
    \item  $\widetilde G^{(ib)}_{oc}$ can be rewritten as  a weighted sum 
\begin{align*}
   \widetilde G^{(ib)}_{oc}=\frac{\fc_1} {(d-1)^{\ell/2}}\sum_{\al\in\sfA_i}G_{c_\al c}^{(b_\al b)}
   +\sum_{\bm\al}\fc_{\bm\al}U_{\bm\al} +\cE,
\end{align*}
where $\sfA_i:=\{\al\in \qq{1, \mu}: \dist_{\cT}(i, l_\al)=\ell+1\}$; the summation is over indices $\bm\al=(\al_1, \al_2,\cdots, \al_{2k})\in \qq{\mu}^{2k+1}$ for some $1\leq k\leq \fb$;
\begin{align*}
    U_{\bm\al}=\widehat U_{{\al_1}{\al_2}} \widehat U_{{\al_3}{\al_4}}
   \cdots \widehat U_{{\al_{2k-1}}{\al_{2k}}}G_{c_{\al_{2k+1}}c}^{(b_{\al_{2k+1}}b)},
\end{align*}
where $\widehat U_{\al \al}=(d-1)^{2\ell}(G_{c_\al c_\al}^{(b_\al)}-Y_\ell(Q))$ or $(d-1)^{2\ell}(Q-\msc)$; and $\widehat U_{\al \beta}=(d-1)^{2\ell} G_{c_\al c_\beta}^{(b_\al b_\beta)}$ if $\al\neq \beta$; and the total weights are bounded by $\poly(\ell)$: 
\begin{align*}
    |\fc_1|+\sum_{\bm\al}|\fc_{\bm\al}|=\OO( \poly(\ell)),
\end{align*}
and
\begin{align*}
    |\cE|\lesssim \sum_\al (|\wt G_{c_\al c}^{(\bT b)}-G_{c_\al c}^{(b_\al b)}|+|\wt G_{c_\al c_\al}^{(\bT b)}-G^{(b_\al )}_{c_\al c_\al}|) +\sum_{\al \neq \beta}|\wt G_{c_\al c_\beta}^{(\bT b)}-G^{(b_\al b_\beta)}_{c_\al c_\beta}|+N^{-2}.
\end{align*}
\item $\wt G_{cc}^{(b)}$ can be rewritten as
\begin{align*}
    \wt G_{cc}^{(b)}=G_{cc}^{(b)}+\cE. 
\end{align*}
where 
\begin{align*}
    |\cE|\lesssim |\wt G^{(\bT b)}_{cc}-G^{( b)}_{cc}|
    +(d-1)^\ell\sum_\al |\wt G_{cc_\al}^{(\bT b)}|^2.
\end{align*}
\item  $\wt G_{cc'}^{(bb')}$ can be rewritten as
\begin{align*}
    \wt G_{cc'}^{(bb')}=G_{cc'}^{(bb')}+\cE. 
\end{align*}
where 
\begin{align*}
    |\cE|\lesssim |\wt G^{(\bT bb')}_{cc'}-G^{( bb')}_{cc'}|
    +(d-1)^\ell\sum_\al (|\wt G_{cc_\al}^{(\bT bb')}|^2+|\wt G_{c'c_\al}^{(\bT bb')}|^2).
\end{align*}
\end{enumerate}
\end{lemma}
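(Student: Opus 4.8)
The plan is to prove part (3) by a single application of the Schur complement formula that removes the vertex set $\bT$ of $\cT=\cB_\ell(o,\cG)$ from the switched graph $\tcG$; this is the same argument as for part (2), of which part (3) is the mild generalization allowing $b\ne b'$, $c\ne c'$ (and part (1) is the same computation keeping the first-order term explicit). Throughout I use that $\tcG\in\Omega(z)$ and that $I(\{(i,o),(b,c),(b',c')\}\cup\{(b_\al,c_\al)\}_{\al\in\qq{\mu}},\cG)=1$; in particular $b,b',c,c'$ lie at distance $\geq 3\fR\gg\ell$ from $\bT$ and from all switching vertices $\{a_\al,b_\al,c_\al\}$, the radius $\fR$ balls around all of these are trees, and the vertices $c_\al$ are pairwise $\geq 3\fR$ apart, so all the objects below are well defined and the relevant propagators decay.

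\emph{Step 1 (the Schur identity).} By the definition of $T_{\bf S}$, the internal edges of $\cT$ are untouched while each boundary edge $\{l_\al,a_\al\}$ of $\cT$ becomes $\{l_\al,c_\al\}$, so the only edges of $\tcG$ joining $\bT$ to its complement are the $\{l_\al,c_\al\}$, $\al\in\qq{\mu}$. Writing $\wt H^{(bb')}-z$ in block form with blocks $\bT$ and $W:=V\setminus(\bT\cup\{b,b'\})$ (note $b,b',c,c'\notin\bT$), the $\bT$-block is $H_\bT-z$ with $H_\bT$ the normalized adjacency of $\cT|_{\bT}$, the off-diagonal block is supported on the pairs $(c_\al,l_\al)$ with entry $1/\sqrt{d-1}$, and $(\wt H^{(bb')}-z)^{-1}|_{WW}=\wt G^{(\bT bb')}$. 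The identity $(A-BD^{-1}C)^{-1}=A^{-1}+A^{-1}B(D-CA^{-1}B)^{-1}CA^{-1}$ with $A^{-1}=\wt G^{(\bT bb')}$ then gives the exact formula
$$\wt G^{(bb')}_{cc'}-\wt G^{(\bT bb')}_{cc'}=\tfrac{1}{d-1}\sum_{\al,\beta\in\qq{\mu}}\wt G^{(\bT bb')}_{cc_\al}\,(M^{-1})_{l_\al l_\beta}\,\wt G^{(\bT bb')}_{c_\beta c'},$$
where $M=H_\bT-z-\tfrac{1}{d-1}V$ is a matrix on $\bT$ and $V$ has $(l_\al,l_\beta)$-entry $\wt G^{(\bT bb')}_{c_\al c_\beta}$ (summed over coincident indices).

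\emph{Step 2 (estimates and collapse of the sum).} Since $\tcG\in\Omega(z)$ and the removed vertices are far from each $c_\al$, one has $|\wt G^{(\bT bb')}_{c_\al c_\al}-\msc|\leq N^{-\fc}$, and since the $c_\al$ are pairwise $\geq 3\fR$ apart with tree-like neighborhoods, $|\wt G^{(\bT bb')}_{c_\al c_\beta}|$ for $\al\ne\beta$ is bounded by $(d-1)^{-\fR}$ (exponential decay for well-separated tree-like vertices, cf.\ \Cref{thm:prevthm} and \eqref{e:Pijbound}). Hence the diagonal part of $\tfrac{1}{d-1}V$ agrees, up to an $\OO(N^{-\fc})$ entrywise error, with the weight matrix $\tfrac{\msc}{d-1}(d\mathbb I-D(\cT))$ appearing in $P(\cT,z,\msc)^{-1}$, and its off-diagonal part is negligible; thus $M$ equals $P(\cT,z,\Delta)^{-1}$ for some $\Delta$ with $\mathrm{diam}(\cT)|\Delta-\msc|\ll1$, and \eqref{e:Pijbound} yields $|(M^{-1})_{l_\al l_\beta}|\lesssim (d-1)^{-\dist_\cT(l_\al,l_\beta)/2}$. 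Plugging this in, bounding $|\wt G^{(\bT bb')}_{cc_\al}\wt G^{(\bT bb')}_{c_\beta c'}|\leq\tfrac12(|\wt G^{(\bT bb')}_{cc_\al}|^2+|\wt G^{(\bT bb')}_{c_\beta c'}|^2)$, and using $\sum_\beta(d-1)^{-\dist_\cT(l_\al,l_\beta)/2}=\OO(\poly(\ell))$ from \eqref{e:sumal_beta}, the double sum collapses to $\tfrac{\OO(\poly(\ell))}{d-1}\sum_\al(|\wt G^{(\bT bb')}_{cc_\al}|^2+|\wt G^{(\bT bb')}_{c'c_\al}|^2)\leq(d-1)^\ell\sum_\al(|\wt G^{(\bT bb')}_{cc_\al}|^2+|\wt G^{(\bT bb')}_{c'c_\al}|^2)$ for $N$ large. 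Writing $\cE:=\wt G^{(bb')}_{cc'}-G^{(bb')}_{cc'}=[\wt G^{(bb')}_{cc'}-\wt G^{(\bT bb')}_{cc'}]+[\wt G^{(\bT bb')}_{cc'}-G^{(bb')}_{cc'}]$ and leaving the second bracket as the stated remaining error term completes the argument.

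\emph{Main obstacle.} The crux is Step 2: showing that the Schur complement matrix $M$ inherits the exponential off-diagonal decay of a tree Green's function. This requires combining two ingredients carefully — the structural observation that $T_{\bf S}$ changes only the boundary edges of $\cT$ (so $M$ differs from $P(\cT,z,\msc)^{-1}$ only through the diagonal weights $\wt G^{(\bT bb')}_{c_\al c_\al}$ and through negligible off-diagonal entries $\wt G^{(\bT bb')}_{c_\al c_\beta}$), and the perturbative stability \eqref{e:Pijbound}, which applies precisely because $\mathrm{diam}(\cT)\sim\log N$ while $|\wt G^{(\bT bb')}_{c_\al c_\al}-\msc|\leq N^{-\fc}$ makes $\mathrm{diam}(\cT)|\Delta-\msc|\ll1$. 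Everything else is routine resolvent algebra together with the geometric sums \eqref{e:sumal_beta}.
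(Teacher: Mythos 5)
Your treatment of parts (2) and (3) is correct and takes essentially the paper's own route: a single Schur complement removing the vertex set $\bT$ of $\cT$, giving
$\wt G^{(bb')}_{cc'}=\wt G^{(\bT bb')}_{cc'}+\frac{1}{d-1}\sum_{\al,\beta}\wt G^{(\bT bb')}_{cc_\al}\,\wt G^{(bb')}_{l_\al l_\beta}\,\wt G^{(\bT bb')}_{c_\beta c'}$,
with your $(M^{-1})_{l_\al l_\beta}$ being exactly $\wt G^{(bb')}_{l_\al l_\beta}$. The extra work in your Step~2 showing that $M^{-1}$ inherits tree-like off-diagonal decay is not actually needed: since $\tcG\in\Omega(z)$, \Cref{thm:prevthm} already gives $|\wt G^{(bb')}_{l_\al l_\beta}|\lesssim 1$, and the crude count $\mu\lesssim(d-1)^\ell$ then produces the stated $(d-1)^\ell$ bound directly. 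You relax your sharper $\poly(\ell)$ estimate back to $(d-1)^\ell$ in any case, so the two approaches coincide in the end.

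The genuine gap is part (1). You dismiss it as ``the same computation keeping the first-order term explicit,'' but it is structurally different and cannot be obtained from the parts (2)/(3) argument. There the vertex $o$ lies \emph{inside} $\bT$, so a Schur complement that removes $\bT$ does not apply to $\wt G^{(ib)}_{oc}$. What is required is the mixed off-diagonal Schur formula
\begin{align*}
\wt G^{(ib)}_{oc}=-\frac{1}{\sqrt{d-1}}\sum_{\al}\wt G^{(ib)}_{ol_\al}\,\wt G^{(\bT b)}_{c_\al c},
\end{align*}
followed by an expansion of the inside factor $\wt G^{(ib)}_{ol_\al}$ around the tree propagator $P^{(i)}_{ol_\al}$ via the resolvent series
$\bigl(P^{(i)}\sum_{k\geq 1}(\wt B^\top(\wt G^{(\bT b)}-\msc\mathbb I)\wt B P^{(i)})^k\bigr)_{ol_\al}$
used in the proof of \Cref{lem:diaglem}. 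That second expansion is where all the stated structure comes from: by \eqref{e:Pijbound}, $P^{(i)}_{ol_\al}\asymp(d-1)^{-\ell/2}$ when $\al\in\sfA_i$ and vanishes otherwise (since then $o$ and $l_\al$ lie in different components of $\cT^{(i)}$), which produces the leading term $\frac{\fc_1}{(d-1)^{\ell/2}}\sum_{\al\in\sfA_i}G^{(b_\al b)}_{c_\al c}$; the $k\geq 1$ terms produce the products $U_{\bm\al}$ of $\widehat U$-factors; and the replacements $\wt G^{(\bT b)}_{c_\al c}\to G^{(b_\al b)}_{c_\al c}$, $\wt G^{(\bT b)}_{c_\al c_\beta}\to G^{(b_\al b_\beta)}_{c_\al c_\beta}$ are what generate the error $\cE$. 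None of this is present in, or follows from, your write-up, so part (1) is left unproved.
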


\begin{proof}
The proof largely emulates the proof of Lemma \ref{lem:diaglem}, with some slight differences. We expand $\wt G_{oc}^{(ib)}$ using the off-diagonal Schur complement formula as
\begin{align*}
\wt G^{(ib)}_{oc}
&=-\frac{1}{\sqrt{d-1}}\sum_{\alpha} \wt G^{(ib)}_{o l_\alpha}\wt G^{(\bT b)}_{c_\alpha c}.
\end{align*}

We can then replace $\wt G_{c_\al c}^{(\bT b)}$ with $G_{c_\al c}^{(b_\al b)}$, as
\begin{align}\begin{split}\label{eq:offdiagonalexp}
\wt G^{(ib)}_{oc}
&=-\frac{1}{\sqrt{d-1}}\sum_{\alpha} \wt  G^{(ib)}_{o l_\alpha}G^{(b_\alpha b)}_{c_\alpha c}+\cE_1\\
&=-\frac{1}{\sqrt{d-1}}\sum_{\alpha} (\wt G^{(ib)}_{o l_\alpha}-P^{(i)}_{ol_\alpha})G^{(b_\alpha b)}_{c_\alpha c}+ P^{(i)}_{ol_\alpha} G^{(b_\alpha b)}_{c_\alpha c}+\cE_1. 
\end{split}\end{align} 
where the error term is
\begin{align*}
    |\cE_1|\lesssim \sum_\al |\wt G_{c_\al c}^{(\bT b)}-G_{c_\al c}^{(b_\al b)}|.
\end{align*}

For the second term on the righthand side of \eqref{eq:offdiagonalexp}, we recall that if $\dist_\cT(i, l_\al)=\ell+1$, $|P^{(i)}_{ol_\al}|\asymp (d-1)^{-\dist_{\cT}(o,\ell)/2}=(d-1)^{-\ell/2}$;
otherwise $|P^{(i)}_{ol_\al}|=0$. Thus we can rewrite it as
\begin{align*}
  \frac{\fc_1}{(d-1)^{\ell/2}}\sum_{\al\in\sfA_i}G^{(b_\alpha b)}_{c_\alpha c},  
\end{align*}
with $\fc_1=\OO(1)$.

For the first term on the righthand side of \eqref{eq:offdiagonalexp}, we do the same expansion as in \eqref{eq:XQdecomp} from the proof of Lemma \ref{lem:diaglem}, writing
\begin{align*}
   \sum_\al (\wt G^{(ib)}_{o l_\alpha}-P^{(i)}_{ol_\alpha})G^{(b_\al b)}_{c_\al c}= \sum_\al \left(P^{(i)}\sum_{k=1}^\fb \left(\widetilde B^\top (\widetilde G^{(\bT b)}-\msc\mathbb I)\widetilde B P^{(i)}\right)^k\right)_{ol_\al}G^{(b_\al b)}_{c_\al c}+\OO(N^{-2}).
\end{align*}

By the same argument as in the proof of \Cref{lem:diaglem}, we can replace $\wt G_{c_\al c_\al}^{(\bT b)}$ and $ \wt G_{c_\al c_\beta}^{(\bT b)}$ by $G^{(b_\al)}_{c_\al c_\al}$ and $G^{(b_\al b_\beta)}_{c_\al c_\beta}$ respectively, and
the terms corresponding to $k\geq 1$ are given as a sum 
\begin{align}\label{eq:PUP2}
   \sum_{\al_1, \al_2,\cdots,\al_{2k+1}\in \qq{\mu}}P^{(i)}_{o l_{\al_1}} V_{c_{\al_1} c_{\al_2}} P^{(i)}_{l_{\al_2} l_{\al_3}} V_{c_{\al_3} c_{\al_4}} 
   P^{(i)}_{l_{\al_4} l_{\al_5}}\cdots V_{c_{\al_{2k-1} }c_{\al_{2k}}}P^{(i)}_{l_{\al_{2k}} l_{\al_{2k+1}}} G_{c_{\al_{2k+1}}c}^{(b_{\al_{k+1}}b)}.
\end{align}
where $ V_{c_{\al_{2j-1}}c_{\al_{2j}}}$ is one of the three terms $(G_{c_\al c_\al}^{(b_\al)}-Q)$, $(Q-\msc)$,  or $ G_{c_\al c_\beta}^{(b_\al b_\beta)}$.

The contribution from $P^{(i)}$ terms in \eqref{e:PUP} depends only on the distances between $l_{\al_{2i}}$ and $l_{\al_{2i+1}}$
\begin{align*}
    f(\bm\al):=P^{(i)}_{o l_{\al_1}} P^{(i)}_{l_{\al_2} l_{\al_3}} 
   \cdots P^{(i)}_{l_{\al_{2k}} l_{\al_{2k+1}}},\quad |f(\bm\al)|\lesssim (d-1)^{-\ell/2}\prod_{j=1}^{k} (d-1)^{-\dist_{\cT}(l_{\alpha_{2j}},l_{\alpha_{2j+1}})/2}.
\end{align*}
We can reorganize \eqref{eq:PUP2} in the following way 
\begin{align*}\begin{split}
    &\phantom{{}={}}\sum_{\bm\al}f(\bm\al)V_{c_{\al_1}c_{\al_2}} V_{c_{\al_3}c_{\al_4}}
   \cdots V_{c_{\al_{2k-1}}c_{\al_{2k}}}G_{c_{\al_{2k+1}}c}^{(b_{\al_{k+1}}b)}\\
   &=\sum_{\bm\al}f(\bm\al) (d-1)^{-2k\ell}((d-1)^{2\ell}V_{c_{\al_1}c_{\al_2}}) ((d-1)^{2\ell}V_{c_{\al_3}c_{\al_4}})
   \cdots ((d-1)^{2\ell} V_{c_{\al_{2k-1}}c_{\al_{2k}}})G_{c_{\al_{2k+1}}c}^{(b_{\al_{k+1}}b)}\\
   &=:\sum_{\bm\al}\fc_{\bm\al}\widehat U_{{\al_1}{\al_2}}\widehat U_{{\al_3}{\al_4}}
   \cdots  \widehat U_{{\al_{2k-1}}{\al_{2k}}}G_{c_{\al_{2k+1}}c}^{(b_{\al_{k+1}}b)},
\end{split}\end{align*}
where $\fc_{\bm\al}=f(\bm\al) (d-1)^{-2k\ell}$ and $\widehat U_{\al_{2j-1}\al_{2j}}:=(d-1)^{2\ell} V_{c_{\al_{2j-1}}c_{\al_{2j}}}$ is one of the three terms $(G_{c_\al c_\al}^{(b_\al)}-Q)$, $(Q-\msc)$,  or $ G_{c_\al c_\beta}^{(b_\al b_\beta)}$.
Finally for the summation over the weights:
\begin{align*}\begin{split}
   \sum_{\bm\al}\fc_{\bm\al}
   &= \sum_{\bm\al}|f(\bm\al)| (d-1)^{-2k\ell}\lesssim \sum_{\bm\al}(d-1)^{-(k+1)\ell}\prod_{j=1}^{k} (d-1)^{-\dist_{\cT}(l_{\alpha_{2j}},l_{\alpha_{2j+1}})/2}\\
   &=\sum_{\al_1}(d-1)^{-\ell}
   \prod_{j=1}^{k}\sum_{\al_{2j}, \al_{2j+1}}(d-1)^{-\ell}(d-1)^{-\dist_{\cT}(l_{\alpha_{2j}},l_{\alpha_{2j+1}})/2}=\OO(\poly(\ell)). 
\end{split}\end{align*}
where in the last inequality, we used that the total number of choices for $\al_1$ is $\OO((d-1)^{\ell})$ and \eqref{e:sumal_beta}. This finishes the first statement in \Cref{lem:offdiagswitch}, and the error term $\cE$ is given by 
\begin{align*}
    |\cE|\lesssim \sum_\al (|\wt G_{c_\al c}^{(\bT b)}-G_{c_\al c}^{(b_\al b)}|+|\wt G_{c_\al c_\al}^{(\bT b)}-G^{(b_\al )}_{c_\al c_\al}|) +\sum_{\al \neq \beta}|\wt G_{c_\al c_\beta}^{(\bT b)}-G^{(b_\al b_\beta)}_{c_\al c_\beta}|+N^{-2}.
\end{align*}

For the second statement, we expand $\wt G_{cc}^{(b)}$ using the Schur complement formula as
\begin{align*}
\wt G^{(b)}_{cc}
=\wt G^{(\bT b)}_{cc}
+\frac{1}{d-1}\sum_{\al,\beta}\wt G^{(\bT b)}_{cc_\al}\widetilde G^{( b)}_{l_\al l_\beta }\wt G^{(\bT b)}_{c_\beta c}
=\wt G^{(b)}_{cc}+\cE,
\end{align*}
where 
\begin{align*}
    |\cE|\lesssim |\wt G^{(\bT b)}_{cc}-G^{( b)}_{cc}|
    +(d-1)^\ell\sum_\al |\wt G_{cc_\al}^{(\bT b)}|^2.
\end{align*}

For the last statement, we expand $\wt G_{cc'}^{(bb')}$ using the Schur complement formula as
\begin{align*}
\wt G^{(bb')}_{cc'}
=\wt G^{(\bT bb')}_{cc'}
+\frac{1}{d-1}\sum_{\al,\beta}\wt G^{(\bT bb')}_{cc_\al}\widetilde G^{( bb')}_{l_\al l_\beta }\wt G^{(\bT bb')}_{c_\beta c'}
=G^{( bb')}_{cc'}+\cE,
\end{align*}
where 
\begin{align*}
    |\cE|\lesssim |\wt G^{(\bT bb')}_{cc'}-G^{( bb')}_{cc'}|
    +(d-1)^\ell\sum_\al (|\wt G_{cc_\al}^{(\bT bb')}|^2+|\wt G_{c'c_\al}^{(\bT bb')}|^2).
\end{align*}

\end{proof}

\subsection{An Expansion using the Woodbury formula}\label{sec:fanalysis}
In this section, we propose a novel expansion using the Woodbury formula. This is an important part of our analysis.
For a graph $\GG$ with Green's function $G$, we consider the Green's function after the switch $\widetilde G:=T_\bfS(G)$ around some vertex $o\in \qq{N}$, and denote its normalized adjacency matrix as $\wt H$.

We compare the normalized adjacency matrix of the switched graph to that of the original graph, $\wt H-H$. We denote the rank of this difference as $r$, and rewrite $\wt H-H=UCU^\top$, where $U$ is an $N\times r$ matrix, and $C$ is $r\times r$. Then, the Woodbury formula gives us 
\begin{align}\label{e:tGG}
\tG-G=(H-z+UCU^\top)^{-1}-(H-z)^{-1}=-GU(C^{-1}+U^\top G U)^{-1}U^\top G.
\end{align}

We denote $ \cF$ the set of edges involved in the local resampling, and $\wt \cF$ the graph after local resampling
\begin{align*}
  \cF:=\cB_{\ell}(o,\GG)\cup\bigcup_{\al=1}^\mu \{(l_\alpha,a_\alpha), (b_\alpha, c_\alpha)\},\quad   \wt \cF:=\cB_{\ell}(o,\GG)\cup\bigcup_{\al=1}^\mu \{(l_\alpha,c_\alpha), (a_\alpha, b_\alpha)\}.
\end{align*}

We will analyze this using the matrix $P:=P(\cF,z,m_{sc})$, as was define in Definition \ref{def:pdef}. Along with comparing $\tG-G$. we can similarly compare $\tP-P$, where $\tP:=P(\widetilde \cF,z,m_{sc})$.

Notice that $\widetilde P^{-1}-P^{-1}=\widetilde H-H$ when restricted to the vertex set $\mathbb F$ of $\cF$, and it is given by $UCU^\top$. We can use the Woodbury formula on $P$ as well, giving
\begin{align}\label{e:tP-P}
    \tP-P=-PU(C^{-1}+U^\top P U)^{-1}U^\top P.
\end{align}

Our next lemma attempts to expand $\tG-G$ in terms of $\tP-P$. We denote the adjacency matrices of our switching as
\begin{align*}
    \xi_\al:=\frac{1}{\sqrt{d-1}}\left(\Delta_{l_\al a_\al}
    +\Delta_{b_\al c_\al}
    -\Delta_{l_\al c_\al}-\Delta_{a_\al b_\al}\right).
\end{align*}
Then 
\begin{align}\label{e:sumxi}
    UCU^\top=\wt H-H=\widetilde P^{-1}-P^{-1}=\sum_{\al }\xi_\al.
\end{align}

We also introduce the following matrix $F$, which is nonzero on the vertex set $\mathbb F$,
\begin{align*}
F:=\sum_{\al \in \qq{\mu}} \xi_{\al}+\sum_{\al, \beta\in \qq{\mu}} \xi_{\al}\tP \xi_{\beta}.
\end{align*}

\begin{lemma}\label{lem:woodbury} 
Fix $z\in {\mathbf D}$, a $d$-regular graph $\cG\in \Omega(z)$, and edges $(i,o)\in \cG$. 
We denote $T_{\bf S}$ the local resampling  with resampling data ${\bf S}=\{(l_\al, a_\al), (b_\al, c_\al)\}_{\al\in\qq{\mu}}$ around  $o$, and $\widetilde G=T_{\bf S}G$. Conditioned on that $\tcG\in \Omega(z)$ and $I(\{(i,o), (b,c), (b',c')\}\cup \{(b_\al, c_\al)\}_{\al\in\qq{\mu}},\cG)=1$ (recall from \Cref{def:indicator}), the following holds  
\begin{align*}
\tG-G=\sum_{k\geq 0} GF((G|_{\mathbb F}-P)F)^{k}G.
\end{align*} 
\end{lemma}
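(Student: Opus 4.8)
\textbf{Proof proposal for \Cref{lem:woodbury}.}

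The plan is to start from the two Woodbury identities \eqref{e:tGG} and \eqref{e:tP-P}, which express both $\tG-G$ and $\tP-P$ via the \emph{same} pair of factors $(C^{-1}+U^\top G U)^{-1}$ and $(C^{-1}+U^\top P U)^{-1}$ sandwiched by Green's function / $P$-matrix entries, and then to trade the $G$-version of the sandwiched matrix for the $P$-version, paying an error controlled by $(U^\top(G-P)U)$. Concretely, write $M_G \deq C^{-1}+U^\top G U$ and $M_P \deq C^{-1}+U^\top P U$, so that $M_G-M_P = U^\top (G|_{\mathbb F}-P) U$ (the restriction to $\mathbb F$ is automatic since the nonzero rows of $U$ sit on vertices of $\cF$). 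Using the resolvent identity for the inverses, $M_G^{-1}=M_P^{-1}-M_P^{-1}(M_G-M_P)M_G^{-1}$, and iterating, one gets a geometric-type expansion $M_G^{-1} = \sum_{k\ge 0} M_P^{-1}\big(-(M_G-M_P)M_P^{-1}\big)^k$ once we know $\|M_P^{-1}(M_G-M_P)\|$ is small; the latter follows from \Cref{thm:prevthm} / \eqref{eq:infbound}, since on $\cG,\tcG\in\Omega(z)$ the restricted entries of $G-P$ are $\lesssim\varepsilon(z)\le N^{-\fc}$ (here the excess bound in $\oOmega$ and the isolation coming from $I(\cdots,\cG)=1$ guarantee $P=P(\cF,z,\msc)$ really is the tree-extension Green's function with well-separated components, so \eqref{e:Pijbound} applies and $M_P^{-1}$ is bounded).

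Next I would re-package everything in terms of $\xi_\al$ and the matrix $F$. Plugging the expansion of $M_G^{-1}$ back into $\tG-G = -GU M_G^{-1} U^\top G$ and recognizing that $-U M_P^{-1} U^\top = -PU M_P^{-1}U^\top P \cdot (\text{stuff})$ is not quite the shape we want — instead the cleaner route is: from \eqref{e:tP-P}, $\tP-P = -PU M_P^{-1} U^\top P$, and from \eqref{e:sumxi}, $UCU^\top = \sum_\al \xi_\al$, so a direct algebraic manipulation (multiply \eqref{e:tP-P} on both sides by $P^{-1}$ and its variants, using $P^{-1}\tP = \umat + P^{-1}(\tP-P)$) gives the closed form $-U M_P^{-1} U^\top = \sum_\al \xi_\al + \sum_{\al,\beta}\xi_\al \tP \xi_\beta + (\text{higher order in }\xi)$; but in fact the identity $\tP - P = P F P$ with $F$ as defined should hold \emph{exactly} when $\wt\cF$ is obtained from $\cF$ by the local switch (this is the rank-$\le 2$ Sherman–Morrison bookkeeping for each $\xi_\al$, summed, using that the switched edges are disjoint so the $\xi_\al$ act on disjoint vertex pairs up to the shared tree $\cB_\ell(o,\cG)$) — I would verify this identity $\tP-P = PFP$ first as a standalone algebraic lemma. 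Granting that, $-U M_P^{-1}U^\top = F + (\text{correction})$, and substituting into the expanded $\tG-G$ collapses the series into $\tG-G = \sum_{k\ge 0} GF((G|_{\mathbb F}-P)F)^k G$, because each extra factor $(M_G-M_P)$ on the inside becomes a factor $U^\top(G-P)U$, which conjugated back out by the $F$-identity is exactly an insertion of $(G|_{\mathbb F}-P)F$.

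The main obstacle I anticipate is the bookkeeping in the middle step — showing that $-U M_P^{-1}U^\top$ really equals $F$ (equivalently $\tP-P=PFP$) rather than just $F$ plus genuinely-present higher-order $\xi$-terms, and then tracking how the geometric series in $M_G^{-1}$ reassembles into a geometric series in $(G|_{\mathbb F}-P)F$ without leftover boundary terms. This requires using that $F$ by construction already contains the two-fold term $\sum_{\al\beta}\xi_\al \tP\xi_\beta$, so that $F = \xi\,\tP^{-1}\tP(\umat+\tP\xi)\tP^{-1}\cdots$; the cleanest phrasing is probably $F = (\umat + \xi\tP)\xi$ where $\xi=\sum_\al\xi_\al$, and then $PFP = P(\umat+\xi\tP)\xi P$, while $\tP - P = -PU M_P^{-1}U^\top P$ with $UCU^\top=\xi$, so one needs $-UM_P^{-1}U^\top = (\umat+\xi\tP)\xi$, i.e. $M_P^{-1} = -(C + C U^\top P U (\umat+\xi\tP)\xi\cdots)$ — this is exactly the algebraic content of Woodbury for a product of commuting-support rank-two updates, and making it rigorous in one line is the delicate point. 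A secondary (but routine) obstacle is justifying convergence and the interchange of the (finite, by the $\prec N^{-\fc}$ decay and $\fb$-truncation) sums: since each application of $(G|_{\mathbb F}-P)$ restricted to $\mathbb F$ costs $N^{-\fc}$ while $\|F\|,\|G\|,\|P\|=\OO(1)$ (using $\eta>0$ so $\|G\|\le1/\eta$, and then the tree bounds \eqref{e:Pijbound} to upgrade to $\OO(\poly(\ell))$ on the relevant finite blocks), the tail beyond $k=\fb$ is $\OO(N^{-2})$ and the expansion is really a finite sum up to negligible error; I would state it as the full series for cleanliness and note the truncation remark.
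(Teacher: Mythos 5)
Your outline is correct and follows the same strategy the paper uses: Woodbury for both $\tG-G$ and $\tP-P$, a Neumann/resolvent expansion of $(C^{-1}+U^\top GU)^{-1}$ around $(C^{-1}+U^\top PU)^{-1}$, and then the identification $-U(C^{-1}+U^\top P U)^{-1}U^\top = F$ to collapse the series into $\sum_{k\geq0}GF((G|_{\mathbb F}-P)F)^kG$. The place where you hedge — whether $-UM_P^{-1}U^\top$ equals $F$ exactly or only modulo ``higher order in $\xi$'' terms — is in fact an exact algebraic identity, and there are no remainders. The paper's computation is a two-line telescoping: starting from $-UM_P^{-1}U^\top = P^{-1}\tP P^{-1}-P^{-1}$ (which is just \eqref{e:tP-P} conjugated by $P^{-1}$), one writes
\begin{align*}
P^{-1}\tP P^{-1}-P^{-1}
&= P^{-1}\tP(P^{-1}-\tP^{-1})\\
&= \bigl((P^{-1}-\tP^{-1})+\tP^{-1}\bigr)\tP(P^{-1}-\tP^{-1})\\
&= (P^{-1}-\tP^{-1}) + (P^{-1}-\tP^{-1})\tP(P^{-1}-\tP^{-1}),
\end{align*}
and since $P^{-1}-\tP^{-1}=\sum_\al\xi_\al$ by \eqref{e:sumxi}, the right-hand side is exactly $F$. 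Your guessed form $F=(\umat+\xi\tP)\xi$ with $\xi=\sum_\al\xi_\al$ is the same statement and is correct; no Sherman--Morrison per-$\xi_\al$ bookkeeping or disjoint-support argument is needed. With this identity in hand, the sign-tracking you allude to works out: each of the $k+1$ occurrences of $UM_P^{-1}U^\top$ in the $k$-th Neumann term contributes a $(-F)$, which cancels the $(-1)^{k+1}$ from the expansion, giving the stated series with no stray signs. Your secondary concern about truncating the infinite series at $k=\fb$ with $\OO(N^{-2})$ error is legitimate but is not needed for this lemma as stated (the paper states the identity as an exact infinite series here, valid since $\eta>0$ forces $\|U^\top(G-P)U\,M_P^{-1}\|<1$ on the conditioning event by \Cref{thm:prevthm}); the truncation is done downstream in \Cref{lem:generalQlemma}.
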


\begin{proof}

The nonzero rows of $U$ are parametrized by $\{l_\alpha, a_\alpha, b_\alpha, c_\alpha\}_{\al\in \qq{\mu}}$.
By rearranging the above expression \eqref{e:tP-P}, we get
\begin{align}\label{e:Fz}
P^{-1}\tP P^{-1}-P^{-1}=-U(C^{-1}+U^\top P U)^{-1}U^\top.
\end{align}
We recall that $\widetilde P^{-1}-P^{-1}=\widetilde H-H=UCU^\top$. 
We can reorganize \eqref{e:Fz} as
\begin{align}\begin{split}\label{e:defF2}
    &\phantom{{}={}}P^{-1}\tP P^{-1}-P^{-1}=P^{-1}\tP \tP^{-1}+P^{-1}\tP (P^{-1}-\tP^{-1})-P^{-1}\\
    &=P^{-1}\tP (P^{-1}-\tP^{-1})
    =(P^{-1}-\tP^{-1})\tP (P^{-1}-\tP^{-1})+\tP^{-1}\tP (P^{-1}-\tP^{-1})\\
     &=(P^{-1}-\tP^{-1})+(P^{-1}-\tP^{-1})\tP (P^{-1}-\tP^{-1}) \\
    &=\sum_{\al \in \qq{\mu}} \xi_{\al}+\sum_{\al, \beta\in \qq{\mu}} \xi_{\al}\tP \xi_{\beta}=F,
\end{split}\end{align}
where in the last line we used \eqref{e:sumxi}.

By plugging \eqref{e:Fz} and \eqref{e:defF2} into \eqref{e:tGG}, and use the resolvent identity, we conclude that
\begin{align*}
\tG-G
&=-GU(C^{-1}+U^\top G U)^{-1}U^\top G
=-GU(C^{-1}+U^\top P U+U^\top (G-P) U)^{-1}U^\top G\\
&
=-GU\left((C^{-1}+U^\top P U)^{-1}\sum_{k\geq 0}(-1)^k(U^\top (G-P) U (C^{-1}+U^\top P U)^{-1})^k \right)U^\top G\\
&
=\sum_{k\geq 0}(-1)^{k+1} GU(C^{-1}+U^\top P U)^{-1}(U^\top (G-P) U (C^{-1}+U^\top P U)^{-1})^k U^\top G\\
&=\sum_{k\geq 0} GF((G|_{\mathbb F}-P)F)^{k}G
.
\end{align*}

\end{proof}

\subsection{Switching using 
the Woodbury Identity}\label{sec:woodbury_lemmas}

In this section we will use the expansion formula \Cref{lem:woodbury} to study the Green's function after local resampling. If vertex $o$ has a tree neighborhood, the following lemma gives a simple formula for the average of the Green's function over the radius $\ell$ ball. It will be used later to simplify the expression.

\begin{lemma}\label{lem:boundaryreduction}
Fix $z\in {\mathbf D}$, a $d$-regular graph $\cG\in \Omega(z)$, and edges $(i,o)\in \cG$. 
We denote $\cT=\cB_\ell(o, \cG)$, $T_{\bf S}$ the local resampling  with resampling data ${\bf S}=\{(l_\al, a_\al), (b_\al, c_\al)\}_{\al\in\qq{\mu}}$ around  $o$, and $\widetilde G=T_{\bf S}G$. We denote $\sfA_i:=\{\al\in \qq{1, \mu}: \dist_{\cT}(i, l_\al)=\ell+1\}$ and $\sfA_i^\complement=\qq{\mu}\setminus \sfA_i$. Condition on that  $I(\{(i,o),\cG)=1$ (recall from \Cref{def:indicator}), then for $\sfL\in \{l,a\}$ and any vertex $v\in \qq{1,N}$ such that $\dist_\cG(v, o)\geq \fR/2$, 
\begin{align}
\label{e:aveall}
(d-1)^{-\ell/2}\sum_{\alpha=1}^\mu G_{v \sfL_\alpha}=\ft_1(\sfL) G_{vo},
\end{align}
and 
\begin{align}\label{e:subset}
(d-1)^{-\ell/2}\sum_{\alpha \in\sfA_i} G_{v \sfL_\alpha}=\ft_2(\sfL)G_{vi}+\ft_3(\sfL) G_{vo}.
\end{align}
\end{lemma}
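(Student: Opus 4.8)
\textbf{Proof proposal for Lemma~\ref{lem:boundaryreduction}.}

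The key structural fact is that under the indicator $I(\{(i,o)\},\cG)=1$, the ball $\cB_{\fR/2}(o,\cG)$ is a truncated $d$-regular tree, so the vertices $\{l_\al\}_{\al\in\qq\mu}$ are exactly the level-$\ell$ vertices of this tree and $\{a_\al\}_{\al\in\qq\mu}$ the level-$(\ell+1)$ vertices, each $l_\al$ having $(d-1)$ children among the $a_\beta$'s. The plan is to use the Schur complement (equivalently, the resolvent identity for removing a vertex set), treating $\cT=\cB_\ell(o,\cG)$ as a subset whose complement Green's function we control. Since $v$ is at distance $\geq \fR/2$ from $o$, it lies outside $\bT$, so $G_{v\, l_\al}$ and $G_{v\, a_\al}$ can be expanded by propagating from $v$ into the tree. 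Concretely, writing the normalized adjacency matrix in block form with respect to $\bT$ versus its complement and applying Schur, one gets $G_{v\sfL_\al}=\sum_{w\in\bT} G^{(\bT)}_{v w'} (\text{boundary factor}) \cdots$; more efficiently, one uses that for a vertex $\sfL_\al$ inside the tree, $G_{v\sfL_\al}$ equals $G_{vo}$ times the ratio of the tree Green's function $P_{o\,\sfL_\al}/P_{oo}$ up to an error absorbed by the fact that we are on the event $\Omega(z)$ where $G$ agrees with the tree extension $P$ on this neighborhood (Theorem~\ref{thm:prevthm}).

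First I would make precise the tree-propagation identity: for any vertex $w$ in the truncated $d$-regular tree $\cB_{\fR/2}(o,\cG)$, and $v$ outside, $G_{vw}$ can be written in terms of $G_{vo}$ (and, when we restrict to $\sfA_i$, also $G_{vi}$, since deleting the branch toward $i$ splits the tree). The point is that the sum $\sum_{\al} P_{o\,l_\al}$ over all level-$\ell$ vertices is a deterministic quantity depending only on $d,\ell,\msc(z)$: by the recursive structure of the tree Green's function (cf. \eqref{e:Pijbound} and \eqref{def:Y}), $P_{o\,l_\al}$ depends only on $\dist(o,l_\al)=\ell$, so $\sum_\al P_{o\,l_\al}=\mu_\ell \cdot (\text{single-entry value})$ where $\mu_\ell = d(d-1)^{\ell-1}$; combined with the $(d-1)^{-\ell/2}$ prefactor this produces a bounded constant $\ft_1(l)$. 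The same computation with $\sfL=a$ gives $\ft_1(a)$, using that each $a_\al$ is at distance $\ell+1$. For \eqref{e:subset}, the set $\sfA_i$ picks out exactly the level-$\ell$ vertices lying in the subtree \emph{not} containing $i$; deleting the edge at $o$ toward $i$ (equivalently, working in $\cG^{(i)}$ near $o$) decomposes $\cB_\ell(o)$ into the branch containing $i$'s direction and the remaining $(d-1)$ branches, and a short linear-algebra identity (resolvent identity relating $G$ and $G^{(i)}$, i.e. $G_{vw}=G^{(i)}_{vw}+G_{vi}G_{iw}/G_{ii}$) expresses $\sum_{\al\in\sfA_i}G_{v l_\al}$ as a linear combination of $G_{vo}$ and $G_{vi}$ with bounded coefficients $\ft_2(\sfL),\ft_3(\sfL)$.

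The main obstacle I anticipate is bookkeeping the error terms: strictly, $G$ restricted to $\cB_{\fR/2}(o,\cG)$ is not \emph{exactly} the tree Green's function $P$, only within $\varepsilon(z)$ (Theorem~\ref{thm:prevthm}), and the statement \eqref{e:aveall}--\eqref{e:subset} is presumably meant to hold exactly (or up to a negligible $\OO(N^{-2})$-type error folded into the $\ft_j$, since the lemma as displayed has no error term). I would resolve this by deriving the identities \emph{exactly} via Schur complement / resolvent identities internal to the graph $\cG$ — i.e. never invoking $P$, only using that $\cB_{\fR/2}(o,\cG)$ is genuinely a tree so the adjacency matrix there has the exact recursive block structure — so that $\ft_1,\ft_2,\ft_3$ come out as exact rational functions of $G_{oo}, G_{ii}, G_{oi}$ and the tree transfer coefficients, and the $(d-1)^{-\ell/2}$ scaling is exactly compensated by the $(d-1)^\ell$-fold branching with the $(d-1)^{-1/2}$-per-step decay of off-diagonal tree resolvent entries. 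A secondary subtlety is that $l_\al$ (or $a_\al$) may coincide for different $\al$ only if $\mu < d(d-1)^\ell$, but the indicator $I(\{(i,o)\},\cG)=1$ rules this out by forcing the tree structure, so all $\mu=d(d-1)^\ell$ boundary vertices are distinct and the combinatorial count is clean.
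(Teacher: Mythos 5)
Your intuition is right that the lemma must be an \emph{exact} algebraic identity exploiting the tree structure of $\cB_{\fR/2}(o,\cG)$ rather than an approximation via $P$ on $\Omega(z)$, and you correctly anticipate this tension. But the mechanism you propose to resolve it has a genuine gap: you say $\ft_1,\ft_2,\ft_3$ will "come out as exact rational functions of $G_{oo}, G_{ii}, G_{oi}$," and you plan to invoke the resolvent identity $G_{vw}=G^{(i)}_{vw}+G_{vi}G_{iw}/G_{ii}$. Neither of these is correct, and the distinction matters. The coefficients in the lemma are \emph{deterministic}, depending only on $z,d,\ell$ (not on any entries of $G$), and they need to be so: in the downstream use (Lemma~\ref{lem:generalQlemma}, around \eqref{e:coefficient}) the $\ft_j$'s are absorbed into deterministic weights bounded by $\poly(\ell)$. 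If $\ft_j$ involved $1/G_{ii}$ or $G_{oi}$, the accounting of the generic terms in Definition~\ref{def:pgen} and Proposition~\ref{p:small_Ri} would no longer close. The $G\to G^{(i)}$ resolvent identity is precisely the kind of tool that injects such random factors and should not appear here.

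What the paper does instead is iterate the \emph{row equations} $(G(H-z))_{vy}=0$ for $y$ running through the tree shells $\cS_k$ (distance $k$ from $o$, not on the $i$-side). Summing that identity over $y\in\cS_k$ and using the deterministic $(d-1)$-ary branching of the tree gives a three-term recursion for $(d-1)^{-k/2}\sum_{y\in\cS_k}G_{vy}$ whose coefficients are just $z$ and $\pm 1$; this is a two-step linear recursion with deterministic transfer matrix $\begin{pmatrix}0&1\\-1&z\end{pmatrix}$, eigenvalues $\msc$ and $1/\msc$, and initial data $(G_{vi}/\sqrt{d-1}, G_{vo})$. Iterating $\ell+1$ times gives \eqref{e:subset} directly with deterministic, $\poly(\ell)$-bounded coefficients (the boundedness uses the choice $|\msc|^\ell\asymp 1$). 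The proof of \eqref{e:aveall} is the same recursion started from $(G(H-z))_{vo}=0$, which gives $(d-1)^{-1/2}\sum_{y\in\text{level }1}G_{vy}=zG_{vo}$ as the initial datum. Your Schur-complement picture is not wrong per se — the block $A=H_\cT-z$ that you would invert is itself deterministic, so a careful Schur-complement argument would also yield deterministic constants — but your write-up explicitly predicts the wrong dependence, so as stated the proposal would lead you to a weaker (and for this paper, unusable) statement. The repair is to drop the $G^{(i)}$ resolvent identity entirely and work only with the linear equations $(G(H-z))_{vy}=0$, noting that the resulting recursion coefficients come from $H|_\cT$ and $z$, both deterministic.
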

\begin{proof}[Proof of Lemma \ref{lem:boundaryreduction}]
We will only prove \eqref{e:subset}, and the proof for \eqref{e:aveall} follows from essentially the same argument. 

By our assumption that $I(\{(i,o)\}, \cG)=1$, vertex $o$ has radius $\fR/2$ tree neighborhood. We denote $\cS_0=\{o\}$, and $\cS_k=\{y\in \qq{N}: \dist_\cT(y,o)=k, \dist_\cT(y,i)=k+1\}$ for $0\leq k\leq \ell$. $\cS_k$ is the collection of vertices in $\cT^{(i)}$, which are distance $k$ to the root vertex $o$. 
By using the equation $(G(H-z))_{vo}=\delta_{vo}=0$,
\begin{align*}
\frac1{\sqrt{d-1}}\sum_{y\in \cS_1} G_{vy}=zG_{vo}-\frac1{\sqrt{d-1}}G_{vi}.
\end{align*}
More generally, by summing over $(G(H-z))_{vy}=0$ for $y\in \cS_k$ we get
\begin{align*}
    0&=\sum_{y\in \cS_k}(G(H-z))_{vy}=-z\sum_{y\in \cS_k}G_{vy}+\sum_{y\in \cS_k}(GH)_{vy}\\
    &=-z\sum_{y\in \cS_k}G_{vy}+\sqrt{d-1}\sum_{y\in \cS_{k-1}}G_{vy}+\frac{1}{\sqrt{d-1}}\sum_{y\in \cS_{k+1}}G_{vy}.
\end{align*}
We can convert this into an equation of the sum $\sum_{y\in \cS_k}G_{v y}$, for $k\geq 1$,
\begin{align*}
&\phantom{{}={}}\sum_{y\in \cS_{k+1}}\left(\frac{1}{\sqrt{d-1}}\right)^{k+1}G_{v y}=z\sum_{y\in \cS_{k}}\left(\frac{1}{\sqrt{d-1}}\right)^{k}G_{vy}-\sum_{y\in \cS_{k-1}}\left(\frac{1}{\sqrt{d-1}}\right)^{k-1}G_{vy}.
\end{align*}

 The recursive equation gives
\begin{align*}
\left(
\begin{array}{c}
(d-1)^{-\ell/2}\sum_{y\in \cS_\ell}G_{v y}\\
(d-1)^{-(\ell+1)/2}\sum_{y\in \cS_{\ell+1}} G_{vy}
\end{array}\right)
&=
\left(
\begin{array}{cc}
0&1\\
-1&z
\end{array}
\right) 
\left(
\begin{array}{c}
(d-1)^{-(\ell-1)/2}\sum_{y\in \cS_{\ell-1}}G_{v y}\\
(d-1)^{-\ell/2}\sum_{y\in \cS_{\ell}} G_{vy}
\end{array}\right)\\
&=
\left(\begin{array}{cc}
0&1\\
-1&z
\end{array}
\right)^{\ell+1} \left(
\begin{array}{c}
G_{vi}/\sqrt{d-1}\\
 G_{vo}
\end{array}
\right).
\end{align*}

The eigenvalues of the transfer matrix are given by $\msc$ and $1/\msc$. Therefore $(d-1)^{-\ell/2}\sum_{y\in \cS_\ell}G_{v y}$, 
$(d-1)^{-(\ell+1)/2}\sum_{y\in \cS_{\ell+1}} G_{vy}$ can be written as linear combination of $G_{vi}, G_{vo}$, with bounded coefficients, using that $|\msc|^\ell\asymp 1$.
The claim \eqref{e:subset} follows by noticing that 
\begin{align*}
    &\sum_{\alpha \in\sfA_i}(d-1)^{-\ell/2} G_{v l_\alpha}
    =(d-1)^{-(\ell-2)/2}\sum_{y\in \cS_{\ell}}G_{v y}.\\
    &\sum_{\alpha \in\sfA_i}(d-1)^{-\ell/2} G_{v a_\alpha}
    =(d-1)^{-\ell/2}\sum_{y\in \cS_{\ell+1}}G_{v y}.
\end{align*}

\end{proof}
In the following lemma, we gather estimates on the difference in Green's functions before and after local resampling, using \Cref{lem:woodbury}.
\begin{lemma}\label{lem:generalQlemma}
Fix $z\in {\mathbf D}$, a $d$-regular graph $\cG\in \Omega(z)$, and edges $(i,o), (b,c), (b',c')\in \cG$. 
We denote $\cT=\cB_\ell(o, \cG)$, $T_{\bf S}$ the local resampling  with resampling data ${\bf S}=\{(l_\al, a_\al), (b_\al, c_\al)\}_{\al\in\qq{\mu}}$ around  $o$, and $\widetilde G=T_{\bf S}G$. Condition on that $\tcG\in \Omega(z)$ and $I(\{(i,o)\}\cup \{(b_\al, c_\al)\}_{\al\in \qq{\mu}},\cG)=1$ (recall from \Cref{def:indicator}), the following holds  
\begin{enumerate}
\item  Fix any vertices $s,w$ in $ \cG$,
$\widetilde G_{sw}-G_{sw}$ can be rewritten as a weighted sum 
\begin{align*}
    \widetilde G_{sw}-G_{sw}=\sum_{\bms}\fc_\bms U_{\bms}+\cE,
\end{align*}
the summation is over indices $\bms=(s_1, s_2,\cdots, s_{2k})\in (\{l_\al, a_\al, b_\al, c_\al\}_{\al\in \qq{\mu}})^{2k}$ for some $1\leq k\leq \fb$;
\begin{align*}
    U_{\bms}=(d-1)^{2\ell}G_{ss_1} \widehat U_{s_2 s_3} \widehat U_{s_4 s_5}
   \cdots \widehat U_{s_{2k-2}s_{2k-1}}G_{s_{2k} w},
\end{align*}
where  $\widehat U_{s_{2j}s_{2j+1}}=(d-1)^{2\ell} (G_{s_{2j} s_{2j+1}}-P_{s_{2j} s_{2j-1}})$; and the total weights are bounded by $\poly(\ell)$: \begin{align*}
    \sum_{\bms}|\fc_{\bms}|=\OO( \poly(\ell)).
\end{align*}

\item 
For $s\in \{o,i\}$ and any vertex $w$ in $\cG$, 
$\widetilde G_{sw}-G_{sw}$ can be rewritten as a weighted sum 
\begin{align}\begin{split}\label{e:tG-Gexp}
  \widetilde G_{sw}-G_{sw}
  &=\fc_1  G_{ow} +\fc_2 G_{iw}+\frac{\fc_3 \sum_{\al\in \sfA_i} G_{b_\al w}+\fc_4 \sum_{\al\in \sfA_i} G_{c_\al w}}{(d-1)^{\ell/2}}+\sum_{\bms_1}\fc_{\bms_1} U_{\bms_1}\\
  & + \frac{\fc_5\sum_{\al\in\sfA^\complement_i} G_{b_\al w}+ \fc_6\sum_{\al\in\sfA^\complement_i} G_{c_\al w}}{(d-1)^{\ell/2}} +\sum_{\bms_2}\fc_{\bms_2} U_{\bms_2}+\cE,
\end{split}\end{align}
where $\sfA_i:=\{\al\in \qq{1, \mu}: \dist_{\cT}(i, l_\al)=\ell+1\}$ and $\sfA_i^\complement :=\qq{\mu}\setminus \sfA_i$. The summation in the first line is over indices $\bms_1=(s_1, s_2,\cdots, s_{2k})\in (\{l_\al, a_\al, b_\al, c_\al\}_{\al\in \qq{\mu}})^{2k}$ for some $1\leq k\leq \fb$;
\begin{align*}
    U_{\bms_1}= \widehat U_{ss_1}\widehat U_{s_2 s_3} \widehat U_{s_4 s_5}
   \cdots \widehat U_{s_{2k-2}s_{2k-1}}G_{s_{2k}w},
\end{align*}
where $\widehat U_{ss_1}=(d-1)^{2\ell}(G_{s s_1}-P_{s s_1})$ and $\widehat U_{s_{2j}s_{2j+1}}=(d-1)^{2\ell} (G_{s_{2j} s_{2j+1}}-P_{s_{2j} s_{2j-1}})$; the summation in the second line is over indices $\bms_2=(s_1, s_2,\cdots, s_{2k+1})\in (\{l_\al, a_\al, b_\al, c_\al\}_{\al\in \qq{\mu}})^{2k+1}$ for some $1\leq k\leq \fb$;
\begin{align*}
    U_{\bms_2}= \widehat U_{s_1 s_2} \widehat U_{s_3 s_4}
   \cdots \widehat U_{s_{2k-1}s_{2k}}G_{s_{2k+1}w},
\end{align*}
where $\widehat U_{s_{2j-1}s_{2j}}=(d-1)^{2\ell} (G_{s_{2j-1} s_{2j}}-P_{s_{2j-1} s_{2j}})$;
and the total weights are bounded by $\poly(\ell)$: \begin{align*}
    |\fc_1|+|\fc_2|+|\fc_3|+|\fc_4|+|\fc_5|+|\fc_6|+\sum_{\bms_1}|\fc_{\bms_1}|+\sum_{\bms_2}|\fc_{\bms_2}|=\OO( \poly(\ell)).
\end{align*}

\item    For any vertex $w$ in $ \cG$, $(1/\widetilde G_{ww})-(1/G_{ww})$ can be rewritten as a weighted sum 
\begin{align*}
   (1/\widetilde G_{ww})-(1/G_{ww})=\sum_{1\leq j\leq \fb} \frac{1}{G_{ww}^{j+1}}\sum_{\bms_j}\fc_{\bms_j} U_{\bms_j}+\cE,
\end{align*}
where for the $j$-th term, the summation is over indices 
$\bms_j=(s_1, s_2,\cdots, s_{2k})\in (\{l_\al, a_\al, b_\al, c_\al\}_{\al\in \qq{\mu}})^{2k}$ for some $j\leq k\leq j\fb$;
\begin{align*}
    U_{\bms_j}=
    \prod_{1\leq r\leq j} (d-1)^{2\ell} G_{s_{2r-1} w}G_{s_{2r} w}
    \prod_{j+1\leq r\leq k}\widehat U_{s_{2r-1} s_{2r}}, 
\end{align*}
where  $\widehat U_{s_{2j}s_{2j+1}}=(d-1)^{2\ell} (G_{s_{2j} s_{2j+1}}-P_{s_{2j} s_{2j-1}})$; and the total weights are bounded by $\OO(\poly(\ell))$.
\end{enumerate}
In all cases, the error $\cE$ satisfies
\begin{align*}
  \cE\leq N^{-2}.
\end{align*}
\end{lemma}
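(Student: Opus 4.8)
The plan is to feed the Woodbury expansion of \Cref{lem:woodbury},
\[
\tG-G=\sum_{k\geq 0} GF\big((G|_{\mathbb F}-P)F\big)^{k}G,\qquad F=\sum_{\al\in\qq{\mu}}\xi_\al+\sum_{\al,\beta\in\qq{\mu}}\xi_\al\tP\xi_\beta ,
\]
into each of the three statements and then truncate and reorganize exactly as in the proofs of \Cref{lem:diaglem} and \Cref{lem:offdiagswitch}, with the resolvent expansion used there replaced by this one. First I would truncate the series at a large constant $k=\fb$. Under the stated hypotheses $\cG,\tcG\in\Omega(z)$ and $I(\{(i,o)\}\cup\{(b_\al,c_\al)\},\cG)=1$, every connected component of $\cF$ and of $\wt\cF$ is a tree (up to excess at most $\omega_d$), so \Cref{thm:prevthm} bounds the entries of $G|_{\mathbb F}-P$ restricted to $\mathbb F$ by $\varepsilon(z)$ while the entries of $\tP$ obey the exponential bound \eqref{e:Pijbound}; since each $\xi_\al$ has entries $\pm(d-1)^{-1/2}$ supported on $\{l_\al,a_\al,b_\al,c_\al\}$, all entries of $F|_{\mathbb F}$ are $\OO(1)$, and the fact that the quadruples are mutually $3\fR$-separated makes every inner sum over a resampling vertex a geometrically convergent sum of total weight $\OO(\poly(\ell))$ after the $(d-1)^{2\ell}$ normalization bookkeeping (this is \eqref{e:sumal_beta}). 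Hence each order-$k$ term is $\OO(\poly(\ell))\,(N^{-\fc})^{k}$ and the tail $k>\fb$ goes into $\cE\leq N^{-2}$.

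For part (1) I would then expand the finitely many remaining terms $GF\big((G|_{\mathbb F}-P)F\big)^{k}G$ entrywise, substituting $F=\sum_\al\xi_\al+\sum_{\al,\beta}\xi_\al\tP\xi_\beta$. Each expanded term is a chain beginning with one entry $G_{ss_1}$ ($s_1$ a resampling vertex), ending with one entry $G_{s_{2k}w}$, and alternating in between between (i) entries of $\xi_\al$ and of $\tP$, which are bounded and absorbed into the coefficient $\fc_\bms$, and (ii) entries of $G|_{\mathbb F}-P$, which after extracting a factor $(d-1)^{2\ell}$ each become the factors $\wh U_{s_{2j}s_{2j+1}}=(d-1)^{2\ell}(G_{s_{2j}s_{2j+1}}-P_{s_{2j}s_{2j+1}})$. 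Collecting over all choices of $\al,\beta$ and all $k\leq\fb$, and bounding the coefficients with \eqref{e:Pijbound} and \eqref{e:sumal_beta} as above, yields the stated form with $\sum_\bms|\fc_\bms|=\OO(\poly(\ell))$.

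For part (2) I would additionally use that $I(\{(i,o)\},\cG)=1$ makes $\cB_{\fR/2}(o,\cG)$ a genuine tree, so for $s\in\{o,i\}$ the leftmost factor splits as $G_{ss_1}=P_{ss_1}+(G_{ss_1}-P_{ss_1})$: the $(G-P)$ part produces an extra $\wh U_{ss_1}$, hence a $U_{\bms_1}$-term, while $P_{ss_1}$ is nonzero only for $s_1\in\{l_\al,a_\al\}$, depends only on $\dist(s,s_1)$ — which is precisely the $\dist_\cT(i,l_\al)=\ell+1$ dichotomy distinguishing $\sfA_i$ from $\sfA^\complement_i$, producing the two families with weights $\fc_3,\fc_4$ resp. $\fc_5,\fc_6$ — and has modulus $\asymp(d-1)^{-\ell/2}$. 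After pulling this out, the residual inner sums of Green's function entries over the tree boundary are exactly of the form treated by \Cref{lem:boundaryreduction}, which rewrites $(d-1)^{-\ell/2}\sum_\al G_{vl_\al}$ and $(d-1)^{-\ell/2}\sum_\al G_{va_\al}$ as linear combinations of $G_{vo},G_{vi}$ when $\dist_\cG(v,o)\geq\fR/2$, the case $\dist_\cG(v,o)<\fR/2$ sitting inside the tree and handled by the same transfer-matrix computation; this gives the $\fc_1G_{ow}+\fc_2G_{iw}$ and the $(d-1)^{-\ell/2}\sum_\al G_{b_\al w},G_{c_\al w}$ pieces. For part (3) I would write $1/\tG_{ww}=1/\big(G_{ww}+(\tG_{ww}-G_{ww})\big)$ and expand as a geometric series in $(\tG_{ww}-G_{ww})/G_{ww}$, using $|G_{ww}|\asymp1$ on $\Omega(z)$ (from \eqref{e:Pijbound}), truncating at $j=\fb$; substituting the part-(1) expansion of each of the $j$ factors $\tG_{ww}-G_{ww}$, each of which contributes one pair $G_{ws_{2r-1}}G_{s_{2r}w}$, yields exactly $\prod_{r\leq j}(d-1)^{2\ell}G_{s_{2r-1}w}G_{s_{2r}w}\prod_{r>j}\wh U_{s_{2r-1}s_{2r}}$.

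The main obstacle, as in \Cref{lem:diaglem} and \Cref{lem:offdiagswitch}, is the bookkeeping of the powers of $(d-1)^{\ell}$: one must check that after summing over every resampling index the cumulative weight stays $\OO(\poly(\ell))$ rather than growing like $(d-1)^{c\ell}$, which is where the exponential decay \eqref{e:Pijbound} of the tree Green's function and the $3\fR$-separation of the resampling quadruples enter, and where the $(d-1)^{2\ell}$ normalization inside the $\wh U$ factors is calibrated. A secondary point of care, specific to part (2), is to correctly separate the genuinely non-negligible pieces — the $G_{ow},G_{iw}$ and the $\sfA_i$-sums — from those that must be folded into the $\wh U$ factors or into $\cE$, which is again dictated by which $P$-entries survive the $\dist_\cT(i,l_\al)$-dichotomy.
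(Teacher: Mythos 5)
Your proposal follows essentially the same route as the paper's own proof: feed the Woodbury expansion of \Cref{lem:woodbury} in, truncate at a large constant $\fb$ using the $\varepsilon(z)$-smallness of the entries of $G|_{\mathbb F}-P$, reorganize chains entrywise, split the leading factor $G_{ss_1}=P_{ss_1}+(G_{ss_1}-P_{ss_1})$ for $s\in\{o,i\}$, apply \Cref{lem:boundaryreduction} to the surviving $P_{ss_1}$ averages, and Taylor expand for part (3). The only piece you gloss over is the origin of the $U_{\bms_2}$ family (odd-length chains): these come from keeping the leading $P_{ss_1}$ factor at orders $k\geq 2$ of the Woodbury series, where the product $P_{ss_1}F_{s_1s_2}$ is absorbed into the coefficient $\fc_{\bms_2}$ and the remaining chain $(G_{s_2s_3}-P_{s_2s_3})F_{s_3s_4}\cdots G_{s_{2k+1}w}$ has an odd number of resampling indices; your text implicitly lumps the $P$-leading contributions into the $k=1$ analysis with \Cref{lem:boundaryreduction}, which only accounts for the $\fc_1,\dots,\fc_6$ terms. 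With that clarification the argument matches the paper.
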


\begin{proof}[Proof of Lemma \ref{lem:generalQlemma}]
We first prove the second term on the righthand side of \eqref{eq:fexpansion3}. The first statement is easier, and can be proven in the same way. 
We recall the notations from \Cref{lem:woodbury}, in particular $P:=P(\cT,z, m_{sc})$. Thus, by Lemma \ref{lem:woodbury}, there is some finite $\fb$ such that, 
\begin{align}\label{e:G-Gexp}
\widetilde G-G
=
\sum_{k=1}^\fb GF\left( (G-P)F\right)^{k-1}G+\OO(N^{-2}),
\end{align}
where
\begin{align*}
F:=\sum_{\al \in \qq{\mu}} \xi_{\al}+\sum_{\al, \beta\in \qq{\mu}} \xi_{\al}\tP \xi_{\beta},
\end{align*}
which has nonzero entries only on the vertices $\{l_\al, a_\al, b_\al, c_\al\}_{\al\in \qq{\mu}}$, and for any $\al, \al'\in \qq{\mu}$ 
\begin{align}\label{e:Fbound}
    |F_{\sfL_\al \sfL'_{\al'}}|\lesssim (d-1)^{-\dist_{\cT}(l_\al, l_{\al'})},
    \quad \sfL, \sfL'\in \{l,a,b,c\}.
\end{align}

Thus we can rewrite the $k$-th term in \eqref{e:G-Gexp} as
\begin{align}\begin{split}\label{eq:fexpansion3}
&\phantom{{}={}}\left(G F ((G-P)F)^{k-1}G\right)_{sw}\\
&=\sum_{\bms}
(G_{ss_1}-P_{ss_1})
F_{s_1s_2}
(G_{s_2s_3}-P_{s_2s_3})
F_{s_3s_4}
\cdots
(G_{s_{2k-2}s_{2k-1}}-P_{s_{2k-2}s_{2k-1}})
F_{s_{2k-1}s_{2k}}G_{s_{2k}w}\\
&+\sum_{\bms}
P_{ss_1}
F_{s_1s_2}
(G_{s_2s_3}-P_{s_2s_3})
F_{s_3s_4}
\cdots
(G_{s_{2k-2}s_{2k-1}}-P_{s_{2k-2}s_{2k-1}})
F_{s_{2k-1}s_{2k}}G_{s_{2k}w},
\end{split}\end{align}
where the summation is over $\bms=(s_1,s_2,\cdots,s_{2k})\in (\{l_\al, a_\al, b_\al, c_\al\}_{\al\in \qq{\mu}})^{2k}$.
Roughly speaking, we have 
\begin{align*}\begin{split}
    \widetilde G-G
    &=GFG+\text{higher order terms}\\
    &=PFG+(G-P)FG+\text{higher order terms}
\end{split}\end{align*}
The terms linear in $G$ on the righthand side of \eqref{e:tG-Gexp} corresponds to $PFG$.

We can reorganize the first term on the righthand side of \eqref{eq:fexpansion3} in the following way
\begin{align}\label{e:biaodayi}
    \sum_{\bms}\fc_\bms\widehat U_{ss_1}\widehat U_{s_2s_3}\widehat U_{s_4 s_5}\cdots \widehat U_{s_{2k-2}s_{2k-1}} G_{s_{2k}w},\quad \fc_\bms:=f(\bms)(d-1)^{-2k\ell},
\end{align}
where $\widehat U_{ss_1}=(d-1)^{2\ell}(G_{s s_1}-P_{s s_1})$ and $\widehat U_{s_{2j}s_{2j+1}}=(d-1)^{2\ell} (G_{s_{2j} s_{2j+1}}-P_{s_{2j} s_{2j-1}})$,
and for $\bm s=(\sfL^{(1)}_{\al_1}, \sfL^{(2)}_{\al_2},\cdots, \sfL^{(2k)}_{\al_{2k}})$ and $ \sfL^{(1)}, \sfL^{(2)},\cdots, \sfL^{(2k)}\in \{l,a,b,c\}$, 
\begin{align*}
    f(\bms)=F_{s_1s_2}F_{s_3 s_4}\cdots F_{s_{2k-1}s_{2k}},\quad |f(\bms)|\leq \prod_{j=1}^k (d-1)^{-\dist_\cT(l_{\al_{2j-1}}, l_{\al_{2j}})},
\end{align*}
where we used \eqref{e:Fbound}. For the summation over the weights:
\begin{align*}\begin{split}
   \sum_{\bms}|\fc_{\bms}|
   &= \sum_{\bms}|f(\bms)| (d-1)^{-2k\ell}\lesssim \sum_{\bm\al}4^k(d-1)^{-2k\ell}\prod_{j=1}^{k} (d-1)^{-\dist_{\cT}(l_{\alpha_{2j-1}},l_{\alpha_{2j}})/2}\\
   &\leq 4^k
   \prod_{j=1}^{k}\sum_{\al_{2j-1}, \al_{2j}}(d-1)^{-\ell}(d-1)^{-\dist_{\cT}(l_{\alpha_{2j-1}},l_{\alpha_{2j}})/2}= \OO(\poly(\ell)). 
\end{split}\end{align*}
where in the last inequality, we used \eqref{e:sumal_beta}. These terms in \eqref{e:biaodayi} give the summation over $\bms_1$ in the first line of \eqref{e:tG-Gexp}.

For the second term on the righthand side of \eqref{eq:fexpansion3}, with $k\geq 2$, we reorganize them as
\begin{align}\label{e:biaodaer}
    \sum_{\bms}\fc_\bms\widehat U_{s_2s_3}\widehat U_{s_4 s_5}\cdots \widehat U_{s_{2k-2}s_{2k-1}} G_{s_{2k}w}, \quad \fc_\bms=f(\bms)(d-1)^{-2(k-1)\ell},
\end{align}
where $\widehat U_{s_{2j}s_{2j+1}}=(d-1)^{2\ell} (G_{s_{2j} s_{2j+1}}-P_{s_{2j} s_{2j-1}})$,
and for $\bm s=(\sfL^{(1)}_{\al_1}, \sfL^{(2)}_{\al_2},\cdots, \sfL^{(2k)}_{\al_{2k}})$ and $ \sfL^{(1)}, \sfL^{(2)},\cdots, \sfL^{(2k)}\in \{l,a,b,c\}$, 
\begin{align*}
    f(\bms)=P_{ss_1}F_{s_1s_2}F_{s_3 s_4}\cdots F_{s_{2k-1}s_{2k}},\quad |f(\bms)|\leq (d-1)^{-\ell/2}\prod_{j=1}^k (d-1)^{-\dist_\cT(l_{\al_{2j-1}}, l_{\al_{2j}})},
\end{align*}
where we used \eqref{e:Fbound}. The summation over the weights:
\begin{align*}\begin{split}
   \sum_{\bms}|\fc_{\bms}|
   &= \sum_{\bms}|f(\bms)| (d-1)^{-2(k-1)\ell}\lesssim \sum_{\bm\al}4^k(d-1)^{-k\ell}\prod_{j=1}^{k} (d-1)^{-\dist_{\cT}(l_{\alpha_{2j-1}},l_{\alpha_{2j}})/2}\\
   &=4^k
   \prod_{j=1}^{k}\sum_{\al_{2j-1}, \al_{2j}}(d-1)^{-\ell}(d-1)^{-\dist_{\cT}(l_{\alpha_{2j-1}},l_{\alpha_{2j}})/2}= \OO(\poly(\ell)). 
\end{split}\end{align*}
where in the last inequality, we used \eqref{e:sumal_beta}. 
These terms in \eqref{e:biaodaer} give the summation over $\bms_2$ in the first line of \eqref{e:tG-Gexp}.

For the second term on the righthand side of \eqref{eq:fexpansion3}, with $k=1$ (i.e. the term $(PFG)_{sw}$),
we can sum over $s_1$ first. Since $s\in\{o,i\}$, $P_{ss_1}\neq 0$ only if $s_1\in \{l_\al, a_\al\}_{\al\in \qq{\mu}}$. Therefore fix $s_2=\sfL_{\al_2}$ for some $\sfL\in \{l,a,b,c\}$ and $1\leq \alpha_2\leq \mu$, we have
\begin{align}\label{e:coefficient}
    \sum_{s_1}P_{ss_1}F_{s_1 s_2}=\sum_{{\al_1}\in \qq{\mu}}\left(P_{sl_{\al_1}}F_{l_{\al_1} \sfL_{\al_2}}+P_{sa_{\al_1}}F_{a_{\al_1} \sfL_{\al_2}}\right)
\end{align}
Recall the set $\sfA_i=\{\al\in \qq{\mu}: \dist_\cT(i,l_\al)=\ell+1\}$. First we notice that the above summation depends only on if $\al_2\in\sfA_i$ or $\al_2\notin\sfA_i$. Moreover, thanks to \eqref{e:Fbound} and  \eqref{e:sumal_beta}.
\begin{align*}
    \left|\sum_{s_1}P_{ss_1}F_{s_1 s_2}\right|\lesssim \sum_\al (d-1)^{-\ell/2}(d-1)^{-\dist_{\cT}(l_\al, l_{\al_2})}\lesssim (d-1)^{-\ell/2}.
\end{align*}
From the discussion above we conclude that \eqref{e:coefficient} is in the following form
\begin{align*}
    \sum_{s_1}P_{ss_1}F_{s_1 s_2}=(d-1)^{-\ell/2}(\widehat\fc_1(\sfL)\bm1(\al_2\in\sfA_i)+ \widehat\fc_2(\sfL)\bm1(\al_2\not\in\sfA_i)),
\end{align*}
where the two coefficients $|\widehat\fc_1(\sfL)|,|\widehat\fc_2(\sfL)|\leq \poly(\ell)$. Separating $\sfL_{\al_2}$ in \eqref{e:coefficient} into two cases: $\sfL\in \{a,l\}$ or $\sfL\in \{b,c\}$, and using Lemma \ref{lem:boundaryreduction} for the case $\sfL\in \{a,l\}$, we get 
\begin{align*}\begin{split}
    &\phantom{{}={}}\sum_{s_1,s_2}P_{ss_1}F_{s_1 s_2}G_{s_2 w}
    =\sum_{s_2=\sfL_{\al_2}}(\widehat\fc_1(\sfL)\bm1(\al_2\in\sfA_i)+ \widehat\fc_2(\sfL)\bm1(\al_2\not\in\sfA_i))G_{s_2 w}\\
    &=\fc_1  G_{ow} +\fc_2 G_{iw}+\frac{\fc_3 \sum_{\al\in \sfA_i} G_{b_\al w}+\fc_4 \sum_{\al\in \sfA_i} G_{c_\al w}+\fc_5\sum_{\al\in\sfA^\complement_i} G_{b_\al w}+ \fc_6\sum_{\al\in\sfA^\complement_i} G_{c_\al w}}{(d-1)^{\ell/2}}, 
\end{split}\end{align*}
where the coefficients $|\fc_j|\leq \OO(\poly(\ell))$ for $1\leq j\leq 6$ depending on $s\in\{o,i\}$.
This finishes the proof of the second statement in \Cref{lem:generalQlemma}.

For the first statement in \Cref{lem:generalQlemma}, we have exactly the same expansion as in \eqref{eq:fexpansion3}, and the first statement in \Cref{lem:generalQlemma} follows from the same analysis as the first term on the righthand side of \eqref{eq:fexpansion3}.

For the last statement, by Taylor expansion 
\begin{align}\label{e:texp}
    \frac{1}{\widetilde G_{ww}}-  \frac{1}{ G_{ww}}
    =\sum_{1\leq j\leq \fb} \frac{(G_{ww}-\wt G_{ww})^j}{G_{ww}^{j+1}}+\OO(N^{-2}).
\end{align}
Then the last statement in \Cref{lem:generalQlemma} follows from the first statement.

\end{proof}

In the following lemma, we gather estimates on the difference of $Q-Y_\ell(Q)$ before and after local resampling, using \Cref{lem:woodbury}.
\begin{lemma}\label{lem:Qlemma}
Fix $z\in {\mathbf D}$, a $d$-regular graph $\cG\in \Omega(z)$, and edges $(i,o), (b,c), (b',c')\in \cG$. 
We denote $\cT=\cB_\ell(o, \cG)$, $T_{\bf S}$ the local resampling  with resampling data ${\bf S}=\{(l_\al, a_\al), (b_\al, c_\al)\}_{\al\in\qq{\mu}}$ around  $o$, and $\widetilde G=T_{\bf S}G$. Condition on that $\tcG\in \Omega(z)$ and $I(\{(i,o)\}\cup \{(b_\al, c_\al)\}_{\al\in \qq{\mu}},\cG)=1$ (recall from \Cref{def:indicator}), then $(\wt Q-Y_\ell(\wt Q))-(Q-Y_\ell(Q))$ can be rewritten as
\begin{align}\begin{split}\label{e:tQ-Qexp}
  \frac{1-Y'_\ell(Q)}{Nd}\sum_{u,v}A_{uv}\left(\sum_{\bms_1}\fc_{\bms_1} U^{(u)}_{\bms_1}+\sum_{0\leq j\leq \fb}\frac{G_{uv}}{G_{uu}^{j+1}}\sum_{\bms_2}\fc_{\bms_2} U^{(u,v)}_{\bms_2}+\sum_{0\leq j\leq \fb}\frac{1}{G_{uu}^{j+1}}\sum_{\bms_3}\fc_{\bms_3} U^{(u,v)}_{\bms_3}\right)+\cE,
\end{split}\end{align}
where 
the first summation is over indices $\bms_1=(s_1, s_2,\cdots, s_{2k})\in (\{l_\al, a_\al, b_\al, c_\al\}_{\al\in \qq{\mu}})^{2k}$ for some $1\leq k\leq \fb$;
\begin{align*}
    U^{(u,v)}_{\bms_1}=(d-1)^{2\ell} G_{vs_1} (d-1)^{2\ell} G_{s_{2} v} \widehat U_{s_3 s_4} \widehat U_{s_5 s_6}
   \cdots \widehat U_{s_{2k-1}s_{2k}};
\end{align*}
the second summation is over indices $\bms_2=(s_1, s_2,\cdots, s_{2k})\in (\{l_\al, a_\al, b_\al, c_\al\}_{\al\in \qq{\mu}})^{2k}$ for some $j+1\leq k\leq (j+1)\fb$;
\begin{align*}
    U^{(u,v)}_{\bms_2}=(d-1)^{2\ell} G_{us_1} (d-1)^{2\ell} G_{s_{2} v}
    \prod_{2\leq r\leq j+1} (d-1)^{2\ell} G_{s_{2r-1} u}(d-1)^{2\ell}G_{s_{2r} u}
    \prod_{j+2\leq r'\leq k}\widehat U_{s_{2r'-1} s_{2r'}}; 
\end{align*}
the third summation is over indices $\bms_3=(s_1, s_2,\cdots, s_{2k})\in (\{l_\al, a_\al, b_\al, c_\al\}_{\al\in \qq{\mu}})^{2k}$ for some $j+2\leq k\leq (j+2)\fb$;
\begin{align*}
    U^{(u,v)}_{\bms_3}
    &=\prod_{1\leq r\leq 2}(d-1)^{2\ell} G_{us_{2r-1}} (d-1)^{2\ell} G_{s_{2r} v}\\
    &\times
    \prod_{3\leq r'\leq j+2} (d-1)^{2\ell} G_{s_{2r'-1} u}(d-1)^{2\ell}G_{s_{2r'} u}
    \prod_{j+3\leq r''\leq k}\widehat U_{s_{2r''-1} s_{2r''}} .
\end{align*}
In all the above cases $\widehat U_{s_{2k-1}s_{2k}}=(d-1)^{2\ell} (G_{s_{2k-1} s_{2k}}-P_{s_{2k-1} s_{2k}})$; the total summation of weights $|\fc_\bms|$ is bounded by $\OO(\poly(\ell))$; and the error $\cE$ satisfies
\begin{align*}
   \cE\leq N^{-2} +\Phi^2.
\end{align*}
\end{lemma}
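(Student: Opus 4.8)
\textbf{Proof proposal for Lemma \ref{lem:Qlemma}.}

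The plan is to reduce the statement about $\wt Q - Y_\ell(\wt Q)$ minus $Q - Y_\ell(Q)$ to the already-established expansions of $\wt G - G$ (and of $1/\wt G_{ww} - 1/G_{ww}$) from \Cref{lem:generalQlemma}, combined with the Taylor expansion of $Y_\ell$ recorded in \eqref{eq:Yprime}. First, using the definition $Q(z;\cG) = \frac{1}{Nd}\sum_{i\sim j}G_{jj}^{(i)}(z)$ and the analogous formula for $\wt Q$, write
\begin{align*}
\wt Q - Q = \frac{1}{Nd}\sum_{u\sim v}\bigl(\wt G_{vv}^{(u)} - G_{vv}^{(u)}\bigr).
\end{align*}
The key structural point is that $G_{vv}^{(u)}$ is not literally a Green's function entry of $\cG$ but of $\cG^{(u)}$; however, by a standard Schur-complement identity $G_{vv}^{(u)} = G_{vv} - G_{vu}G_{uv}/G_{uu}$, so $\wt G_{vv}^{(u)} - G_{vv}^{(u)}$ decomposes into $(\wt G_{vv} - G_{vv})$, a term involving $(\wt G_{vu}\wt G_{uv} - G_{vu}G_{uv})$, and a term involving $(1/\wt G_{uu} - 1/G_{uu})$; each of these is handled by one of the three parts of \Cref{lem:generalQlemma} (with the roles of $s,w$ played by $u$ and $v$). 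Expanding the product $\wt G_{vu}\wt G_{uv} = (G_{vu} + (\wt G - G)_{vu})(G_{uv} + (\wt G - G)_{uv})$ and substituting the weighted-sum expansions of each $(\wt G - G)$ factor produces exactly the three families of terms $U^{(u,v)}_{\bms_1}, U^{(u,v)}_{\bms_2}, U^{(u,v)}_{\bms_3}$, whose index structure mirrors: one factor of $(\wt G - G)$ (linear term, first family), two factors coupled through $1/G_{uu}^{j+1}$ arising from the Taylor expansion of $1/\wt G_{uu}$ composed with the product (second and third families, the extra power $j$ tracking the order of the geometric series in \eqref{e:texp}). The $(d-1)^{2\ell}$ normalizations and the claim $\sum|\fc_\bms| = \OO(\poly(\ell))$ are then inherited verbatim from \Cref{lem:generalQlemma}, since no new summations over the switching vertices are introduced.

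Next, to account for the $Y_\ell$ part: by the mean value theorem and \eqref{eq:Yprime},
\begin{align*}
Y_\ell(\wt Q) - Y_\ell(Q) = Y_\ell'(Q)(\wt Q - Q) + \OO\bigl(\ell^2|\wt Q - Q|^2\bigr),
\end{align*}
so that
\begin{align*}
(\wt Q - Y_\ell(\wt Q)) - (Q - Y_\ell(Q)) = (1 - Y_\ell'(Q))(\wt Q - Q) + \OO\bigl(\ell^2|\wt Q - Q|^2\bigr).
\end{align*}
Plugging in the weighted-sum expansion of $\wt Q - Q$ obtained above and pulling the scalar prefactor $(1-Y_\ell'(Q))/(Nd)$ out front gives precisely the displayed form \eqref{e:tQ-Qexp}. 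It remains to absorb the quadratic remainder and all truncation tails into $\cE$ with the bound $\cE \leq N^{-2} + \Phi^2$: the $\OO(\ell^2|\wt Q - Q|^2)$ term is controlled using the a priori bound $|\wt Q - Q| \lesssim N^{\oo(1)}\Im[m]/(N\eta) \lesssim \Phi$ (which follows from the Ward identity \eqref{eq:wardex} as recalled in the introduction, valid on the event $\cG, \wt\cG \in \Omega(z)$), yielding $\ell^2|\wt Q - Q|^2 \lesssim \Phi^2$; the tails of the geometric/Taylor series truncated at order $\fb$ contribute $\OO(N^{-2})$ by the high-power smallness bounds in \Cref{thm:prevthm} (each factor of $(d-1)^{2\ell}(G - P)$ is $\leq N^{-\fc}$, so $\fb$ large makes the tail negligible), exactly as in the proofs of \Cref{lem:diaglem} and \Cref{lem:generalQlemma}.

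The main obstacle I anticipate is the bookkeeping in the second and third families $U^{(u,v)}_{\bms_2}, U^{(u,v)}_{\bms_3}$: these arise from composing the Taylor expansion $1/\wt G_{uu} - 1/G_{uu} = \sum_j (G_{uu} - \wt G_{uu})^j/G_{uu}^{j+1} + \OO(N^{-2})$ with the product $\wt G_{vu}\wt G_{uv}$, and one must verify that each of the $j$ copies of $(G_{uu} - \wt G_{uu})$ — itself a weighted sum over switching vertices via \Cref{lem:generalQlemma}(3) — multiplies together cleanly so that the index ranges $j+1 \leq k \leq (j+1)\fb$ and $j+2 \leq k \leq (j+2)\fb$ come out as stated, and that the product of all the $\poly(\ell)$ weight bounds (a product of at most $\fb$ such bounds) is still $\poly(\ell)$. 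This is routine but requires care: one organizes it by first fixing the number $j$ of $1/G_{uu}$-derivative factors, then merging the resulting multi-indices, and checking that the powers of $(d-1)^{2\ell}$ attached to the $G_{u s}, G_{s v}, G_{s u}$ factors (two per $(\wt G - G)$ expansion, as dictated by \Cref{lem:generalQlemma}) balance against the $(d-1)^{-2k\ell}$ coming from the $\fc_{\bms}$ weights — precisely the same balancing that already appears, e.g., in \eqref{e:biaodayi}–\eqref{e:biaodaer}.
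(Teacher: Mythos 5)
Your proposal follows essentially the same route as the paper's proof: decompose $\wt Q - Q$ over edges via the Schur complement $G_{vv}^{(u)} = G_{vv} - G_{uv}^2/G_{uu}$, Taylor expand $Y_\ell$ around $Q$ to pull out the $(1-Y_\ell'(Q))$ prefactor, substitute the weighted-sum expansions of $\wt G - G$ and $1/\wt G_{ww} - 1/G_{ww}$ from \Cref{lem:generalQlemma}, and control the quadratic remainder by the Ward-identity bound $|\wt Q - Q|\lesssim \Phi$. The only organizational difference is that you expand $\wt G_{vu}\wt G_{uv}$ as a product of $(G + (\wt G - G))$ factors whereas the paper writes the difference $\wt G_{uv}^2 - G_{uv}^2$ as $2G_{uv}(\wt G_{uv} - G_{uv}) + (\wt G_{uv} - G_{uv})^2$, which is equivalent after symmetrizing; otherwise the proof is the same.
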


\begin{proof}[Proof of Lemma \ref{lem:Qlemma}]
By \eqref{eq:Yprime},
\begin{align}\label{e:wtQ-Q}
    (\wt Q-Y_\ell(\wt Q))-(Q-Y_\ell(Q))
    =(1-Y'_\ell(Q))(\wt Q-Q)+\OO(|\wt Q-Q|^2).
\end{align}

Next, we investigate the difference $\wt Q-Q$. 
By the Schur complement formula, we can write 
\begin{align}\label{e:schurexp}
    G_{vv}^{(u)}=G_{vv}-\frac{G_{uv}^2}{G_{uu}}.
\end{align}
By
\Cref{lem:generalQlemma}, for $x,y\in \{u,v\}$,
$\widetilde G_{xy}-G_{xy}$ can be rewritten as a weighted sum 
\begin{align}\label{e:diftG-G}
    \widetilde G_{xy}-G_{xy}=\sum_{\bms}\fc_\bms U_{\bms}+\cE,
\end{align}
the summation is over indices $\bms=(s_1, s_2,\cdots, s_{2k})\in (\{l_\al, a_\al, b_\al, c_\al\}_{\al\in \qq{\mu}})^{2k}$ for some $1\leq k\leq \fb$;
\begin{align*}
    U_{\bms}=(d-1)^{2\ell} G_{xs_1} (d-1)^{2\ell} G_{s_{2k} y} \widehat U_{s_2 s_3} \widehat U_{s_4 s_5}
   \cdots \widehat U_{s_{2k-2}s_{2k-1}},
\end{align*}
where $\widehat U_{s_{2j}s_{2j+1}}=(d-1)^{2\ell} (G_{s_{2j} s_{2j+1}}-P_{s_{2j} s_{2j-1}})$; and the total weights are bounded by $\OO(\poly(\ell))$: \begin{align*}
    \sum_{\bms}|\fc_{\bms}|=\OO( \poly(\ell)).
\end{align*}

For the difference $\widetilde G_{vv}^{(u)}- G_{vv}^{(u)}$, using \eqref{e:schurexp} and \eqref{e:texp}, we can rewrite it as
\begin{align*}
  \widetilde G_{vv}^{(u)}- G_{vv}^{(u)}
  &=  (\wt G_{vv}-G_{vv})
  +\frac{2G_{uv}(\wt G_{uv} -G_{uv})}{G_{uu}}\sum_{0\leq j\leq \fb }\left(\frac{ G_{uu}-\wt G_{uu}}{G_{uu}}\right)^j\\
  &+\frac{(\wt G_{uv} -G_{uv})^2}{G_{uu}}\sum_{0\leq j\leq \fb }\left(\frac{ G_{uu}-\wt G_{uu}}{G_{uu}}\right)^j+\OO(N^{-2}).
\end{align*}
Then we can replace the differences $\wt G_{vv}-G_{vv}$, $\wt G_{uu}-G_{uu}$ and $\wt G_{uv}-G_{uv}$ by \eqref{e:diftG-G}, giving
\begin{align}\label{e:tG-Gexpa}
   \widetilde G_{vv}^{(u)}- G_{vv}^{(u)}= \sum_{\bms_1}\fc_\bms U^{(v)}_{\bms_1}+\sum_{0\leq j\leq \fb}\frac{G_{uv}}{G_{uu}^{j+1}}\sum_{\bms_2}\fc_{\bms_2} U^{(u,v)}_{\bms_2}+\sum_{0\leq j\leq \fb}\frac{1}{G_{uu}^{j+1}}\sum_{\bms_3}\fc_{\bms_3} U^{(u,v)}_{\bms_3}+\cE
\end{align}
where the first summation is over indices $\bms_1=(s_1, s_2,\cdots, s_{2{k_1}})\in (\{l_\al, a_\al, b_\al, c_\al\}_{\al\in \qq{\mu}})^{2{k_1}}$ for some $1\leq {k_1}\leq \fb$;
\begin{align*}
    U^{(v)}_{\bms_1}=(d-1)^{2\ell} G_{vs_1} (d-1)^{2\ell} G_{s_{2} v} \widehat U_{s_3 s_4} \widehat U_{s_5 s_6}
   \cdots \widehat U_{s_{2{k_1}-1}s_{2{k_1}}};
\end{align*}
the second summation is over indices $\bms_2=(s_1, s_2,\cdots, s_{2{k_2}})\in (\{l_\al, a_\al, b_\al, c_\al\}_{\al\in \qq{\mu}})^{2{k_2}}$ for some $j+1\leq {k_2}\leq (j+1)\fb$;
\begin{align*}
    U^{(u,v)}_{\bms_2}=(d-1)^{2\ell} G_{us_1} (d-1)^{2\ell} G_{s_{2} v}
    \prod_{2\leq r\leq j+1} (d-1)^{2\ell} G_{s_{2r-1} u}(d-1)^{2\ell}G_{s_{2r} u}
    \prod_{j+2\leq r'\leq {k_2}}\widehat U_{s_{2r'-1} s_{2r'}} ;
\end{align*}
the third summation is over indices $\bms_3=(s_1, s_2,\cdots, s_{2{k_3}})\in (\{l_\al, a_\al, b_\al, c_\al\}_{\al\in \qq{\mu}})^{2{k_3}}$ for some $j+2\leq {k_3}\leq (j+2)\fb$;
\begin{align*}
    U^{(u,v)}_{\bms_3}
    &=\prod_{1\leq r\leq 2}(d-1)^{2\ell} G_{us_{2r-1}} (d-1)^{2\ell} G_{s_{2r} v}\\
    &\times \prod_{3\leq r'\leq j+2} (d-1)^{2\ell} G_{s_{2r'-1} u}(d-1)^{2\ell}G_{s_{2r'} u}
    \prod_{j+3\leq r''\leq {k_3}}\widehat U_{s_{2r''-1} s_{2r''}} ;
\end{align*}
In all the above cases $\widehat U_{s_{2k}s_{2k+1}}=(d-1)^{2\ell} (G_{s_{2k} s_{2k+1}}-P_{s_{2k} s_{2k-1}})$; and the total summation of weights $|\fc_\bms|$ is bounded by $\OO(\poly(\ell))$.

If we average over the edges $(u,v)\in \cG$ in \eqref{e:tG-Gexpa}, by the Ward identity, we conclude that 
\begin{align}\label{e:wtQ-Q2}
    |\wt Q-Q|\lesssim \Phi.
\end{align}
The claim of \Cref{lem:Qlemma} follows from plugging \eqref{e:tG-Gexpa} and \eqref{e:wtQ-Q2} into \eqref{e:wtQ-Q}.

\end{proof}

\section{Bounds on Error Terms}\label{e:error_term}
In the previous section, we collect various expansions of the differences in Green's functions before and after local resampling. In this section, we show that the error terms in these differences are in fact small. Fix $z\in {\mathbf D}$, a $d$-regular graph $\cG\in \Omega(z)$, we introduce the error parameter
\begin{align}\label{e:defUG}
    \Pi_p(z):=\bm1(\cG\in \Omega(z))\left(|Q(z;\cG)-Y_\ell(Q(z;\cG))|+\Phi(z)\right)^{2p-1},
\end{align}
and recall $\Phi(z)=N^\fc \cdot \Im[m(z)]/(N\eta)$ from \Cref{def:phidef}.

First, we will show that if we delete a common neighbor of two vertices, the expected value of the Green's function is as small as it would be if the two vertices were chosen at random.
\begin{proposition}
    \label{lem:deletedalmostrandom}
    Fix $z\in {\mathbf D}$ and a $d$-regular graph $\cG\in \Omega(z)$. Let $\Pi_p(z)$ be as in \eqref{e:defUG}.  We denote the indicator function $I(o,\cG)=1$ if for any $v\in \cB_\ell(o, \cG)$, $v$ has a radius $\fR$ tree neighborhood. We denote $T_{\bf S}$ the local resampling around $o$ with resampling data $\bf S$, and $\widetilde G=T_{\bf S}G$. Then for any neighbor vertices $i,j$ of $o$, we have
    \begin{align}\label{e:sameasdisconnect}
        \bE[I(o,\cG) |G^{(o)}_{ij}(z)|^2 \Pi_p(z)]\lesssim \bE[\Psi_p(z)].
    \end{align}
\end{proposition}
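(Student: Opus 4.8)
The strategy mirrors the first-moment computation in \eqref{e:sample}: exploit the exchangeability of the local resampling around $o$ to replace $G_{ij}^{(o)}$ by $\widetilde G_{ij}^{(o)}$, expand the latter by the Schur complement formula, and bound the resulting terms by $\Psi_p$, with the key point that the dangerous contributions are proportional to $\bE[I(o,\cG)|G_{ij}^{(o)}|^2\Pi_p]$ itself times a small factor, so they can be absorbed on the left. Concretely, I would first use \Cref{lem:exchangeablepair} to write $\bE[I(o,\cG)|G_{ij}^{(o)}|^2\Pi_p(\cG)]$ as an average over resampling data $\bfS$ of $\bE[I(o,\cG)\bm1(\cG,\tcG\in\Omega)|\widetilde G_{ij}^{(o)}|^2\Pi_p(\tcG)]$ up to an error $\OO(\bE[\Psi_p])$ coming from the atypical $\bfS$ (using \Cref{lem:wellbehavedswitch}, \Cref{lem:configuration}, and the fact that on the complement the integrand is $\OO(1)$ times $\Pi_p$, which contributes a factor $N^{-1+2\fc}$). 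On the event where $\bfS$ is good, $i$ and $j$ are two children of $o$ in a tree neighborhood, so in $\tcG^{(o)}$ the balls around $i,j$ of radius $\ell$ are disjoint truncated $(d-1)$-ary trees; the Schur complement formula then expresses $\widetilde G_{ij}^{(o)}$ as a weighted sum of products $P^{(o)}_{i l_\al}\widetilde G^{(\bT)}_{c_\al c_\beta}P^{(o)}_{l_\beta j}$ plus higher-order terms, exactly as in the proof of \Cref{lem:diaglem} (one can either invoke \Cref{lem:offdiagswitch} or redo the Schur expansion directly, noting $\dist_\cT(i,j)=2$ in the original graph but the vertex $o$ separating them has been deleted so each $P^{(o)}_{i\cdot}$ decays like $(d-1)^{-\ell/2}$).

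Next I would write $\widetilde G_{ij}^{(o)}=\sum_{\bm\al}\fc_{\bm\al}(\cdots)+\cE$ and plug this into $|\widetilde G_{ij}^{(o)}|^2=\widetilde G_{ij}^{(o)}\overline{\widetilde G_{ij}^{(o)}}$. For the cross terms, I use the replacement $\widetilde G^{(\bT)}_{c_\al c_\beta}\approx G^{(b_\al b_\beta)}_{c_\al c_\beta}$ and then, crucially, the two identities highlighted in the introduction: $\bE_\bfS[G^{(b_\al)}_{c_\al c_\al}-Q]=\OO(N^{-1+\oo(1)})$ and $\bE_\bfS[G^{(b_\al b_\beta)}_{c_\al c_\beta}]=\tfrac{d}{\sqrt{d-1}}\bE_\bfS[(G^{(b_\al)}_{c_\al c_\al}-Q)G^{(b_\beta)}_{b_\al c_\beta}]+(\text{negligible})$, so that each off-diagonal Green's function factor carries a diagonal factor $(G^{(b_\al)}_{c_\al c_\al}-Q)$ which is $\prec\varepsilon(z)$ by \Cref{thm:prevthm}. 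Since each $P^{(o)}$ factor already gives $(d-1)^{-\ell/2}$ and the number of boundary edges is $\mu=d(d-1)^\ell$, the sums over $\al,\beta$ cost at most $\OO(\poly(\ell))$ per $P$-pair by \eqref{e:sumal_beta}; counting carefully, the "leading" term of $|\widetilde G_{ij}^{(o)}|^2$ is a product of four $P^{(o)}$-type factors and two Ward-controllable sums $\tfrac1{(d-1)^\ell}\sum_{\al,\beta}|G^{(\bT)}_{c_\al c_\beta}|^2$, which by the Ward identity \eqref{eq:wardex} and $\tcG\in\Omega$ is $\OO(\Phi)$, giving a bound of order $\Phi$ times lower-order factors — this is already of the size absorbed into $\Psi_p$ via the $(\Phi+N^{-1+2\fc})^{2p-k}$ and $\Phi\,|1-Y_\ell'(Q)|/(N\eta)$ terms in \eqref{eq:phidef}. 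The remaining terms either contain an explicit factor $(d-1)^{2\ell}\varepsilon(z)^{\ge1}\le N^{-\fc}$, or are $\OO(N^{-2})$, and all are multiplied by $\Pi_p$, hence bounded by $\bE[\Psi_p]$.

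The main obstacle, as in the rest of the paper, is the bookkeeping of the Schur expansion: one must verify that after expanding $|\widetilde G_{ij}^{(o)}|^2$ every term either (i) contains a genuine Ward-summable pair $\tfrac1{(d-1)^\ell}\sum|G^{(\bT)}_{c_\al c_\beta}|^2$ that produces $\Phi$, (ii) contains a diagonal factor $(G^{(b_\al)}_{c_\al c_\al}-Q)$ whose $\bE_\bfS$ is $\OO(N^{-1+\oo(1)})$, or (iii) has enough small factors $(d-1)^{2\ell}(G-P)$ to be negligible; and that the combinatorial weights from the $\sum_{\al,\beta}$ with the $P^{(o)}$-decay stay $\OO(\poly(\ell))$. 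The delicate point is the self-improving step: the "bad" part of $\bE_\bfS[|\widetilde G_{ij}^{(o)}|^2]$ that is genuinely of the form $|G^{(o')}_{i'j'}|^2$ for a new vertex $o'$ with children $i',j'$ must be shown to reappear with a prefactor $\le N^{-\fc}$ (coming from the $(d-1)^{2\ell}\varepsilon$ factors), so that moving it to the left-hand side and dividing by $1-N^{-\fc}$ closes the estimate; making this precise requires identifying, inside the Schur expansion, exactly which product reproduces the structure $G^{(o')}_{i'j'}$ and checking it is always accompanied by at least one small factor. Once that is in place, \eqref{e:sameasdisconnect} follows by rearranging and using \eqref{eq:phidef} to recognize every surviving term as a summand of $\Psi_p$.
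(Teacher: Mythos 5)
Your high-level skeleton is right — resample around $o$, use exchangeability, Schur-expand $\widetilde G^{(o)}_{ij}$, observe that $P^{(o)}_{ij}=0$ so every term picks up off-diagonal decay, and close via a self-improving absorption — and you correctly identify the absorption as the crux. But there are two concrete gaps that prevent this from becoming a proof.

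\textbf{The replacement $\widetilde G^{(\bT)}_{c_\al c_\beta}\approx G^{(b_\al b_\beta)}_{c_\al c_\beta}$ is circular.} The error bound that justifies it is precisely the first line of \Cref{lem:task2}, whose proof (see \eqref{e:lastterm1}) invokes the present \Cref{lem:deletedalmostrandom}. The paper's proof avoids this by a different Schur decomposition: it deletes the vertex set $\bW=\{b_\al\}$ and writes
\begin{align*}
\wt G^{(\bT)}|_{\bW^\complement}=G^{(\bT)}|_{\bW^\complement}-G^{(\bT)}(G^{(\bT)}|_{\bW})^{-1}G^{(\bT)}+G^{(\bT \bW)}\widetilde {B}\wt G^{(\bT)}|_\bW \widetilde {B}^\top G^{(\bT \bW)},
\end{align*}
where the first two pieces are Ward-controllable by \Cref{c:expectationbound} (they involve $G$-quantities at random vertices $c_\al,c_\beta,b_\gamma$), while the last piece is isolated as the dangerous one. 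Without this step you have no non-circular way to pass from $\widetilde G^{(\bT)}$ to $G$-quantities.

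\textbf{The $\bE_\bfS$-linearization identities from \Cref{p:upbb} are a wrong turn here.} You are bounding $\bE[I(o,\cG)|\widetilde G^{(o)}_{ij}|^2\Pi_p]$, a squared quantity: $\bE_\bfS$ does not factor through $|\cdot|^2$, so $\bE_\bfS[G^{(b_\al)}_{c_\al c_\al}-Q]=\OO(N^{-1+\fc})$ and the rewriting of $\bE_\bfS[G^{(b_\al b_\beta)}_{c_\al c_\beta}]$ buy you nothing after taking the modulus squared. Those identities are used in the iteration scheme (\Cref{p:iteration}), where the quantity is linear in the newly-switched factor. Here the correct tool is Cauchy--Schwarz plus the Ward identity, applied after the $\bW$-deletion above.

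\textbf{The self-improving step is flagged but not resolved.} You correctly observe that a copy of $|G^{(o')}_{i'j'}|^2$ must reappear with a small prefactor, and that this is "the delicate point," but you do not say where it comes from or why the prefactor is small. In the paper, the dangerous piece reduces (after discarding $\gamma\neq\al$ and $\gamma'\neq\beta$ by Ward) to $\sum_{x\sim b_\alpha,\,y\sim b_\beta}G^{(b_\alpha)}_{c_\alpha x}\,\wt G^{(\bT)}_{b_\alpha b_\beta}\,G^{(b_\beta)}_{yc_\beta}$. One bounds $|\wt G^{(\bT)}_{b_\al b_\beta}|\lesssim 1$ loosely and $|G^{(b_\beta)}_{yc_\beta}|\lesssim\varepsilon$ using \Cref{thm:prevthm} (because $y,c_\beta$ are two neighbors of $b_\beta$ lying in different components of the tree extension once $b_\beta$ is removed, so $P^{(b_\beta)}_{yc_\beta}=0$). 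What remains is $\varepsilon^2|G^{(b_\al)}_{c_\al x}|^2$, which by permutation invariance has the same expectation as $\varepsilon^2|G^{(o)}_{ij}|^2$; moving this to the left side and rearranging closes the argument. This identification — that the bad term has \emph{one} side of the same structure $G^{(o')}_{i'j'}$ times an $\varepsilon$ from the \emph{other} side — is the main content of the proof and needs to be spelled out.
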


We need the following estimates for the proof of \Cref{lem:deletedalmostrandom}.

\begin{lemma}
    \label{c:expectationbound}
   Fix $z\in {\mathbf D}$ and a $d$-regular graph $\cG\in \Omega(z)$. We denote $T_{\bf S}$ the local resampling around $o$ with resampling data $\bf S$, $\bT$ the vertex set of $\cT=\cB_\ell(o,\cG)$ and the set $\bW=\{b_\al\}_{\al \in \qq{\mu}}$.
   Given indices $\al\neq \beta$, and a vertex $y\sim c_\beta$ (where the adjacency $\sim$ is with respect to the graph $\cG$) or $y\in \{a_\gamma\}_{\gamma\in\qq{\mu}}$, the following holds
\begin{align}\label{e:Gcxbound}
 \bE_\bfS[|(G^{(\bT)}(G^{(\bT)}|_{\bW})^{-1}G^{(\bT)})_{c_\al c_\beta}|^2],
 \quad 
\bE_\bfS[|G_{c_\al c_\beta}^{(\bT\bW)}|^2],
\quad 
\bE_\bfS[|G_{c_\al y}^{(\bT\bW)}|^2]\lesssim \Phi+N^{-1+\fc}.
\end{align}
\end{lemma}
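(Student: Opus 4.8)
\textbf{Proof proposal for Lemma \ref{c:expectationbound}.}

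The plan is to exploit the fact that, conditionally on $\cG$, the pairs $(b_\al,c_\al)$ and $(b_\beta,c_\beta)$ are \emph{independent uniformly chosen oriented edges} of $\cG^{(\bT)}$, so the left-hand sides of \eqref{e:Gcxbound} are essentially normalized double sums of squared Green's function entries, which are controlled by the Ward identity. First I would recall that $\mu=d(d-1)^\ell\leq N^{\fc/64}$ and that $\bE_\bfS[\,\cdot\,]$ amounts to averaging each coordinate $(b_\gamma,c_\gamma)$ uniformly over $\vec E(\cG^{(\bT)})$. Fix the pair of indices $\al\neq\beta$. Because $\al\neq\beta$, the two relevant edges $(b_\al,c_\al)$ and $(b_\beta,c_\beta)$ are sampled independently; the remaining coordinates are irrelevant for the quantities in \eqref{e:Gcxbound}. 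Thus for the middle quantity,
\begin{align*}
\bE_\bfS\big[|G_{c_\al c_\beta}^{(\bT\bW)}|^2\big]
\leq \frac{1}{(Nd)^2}\sum_{b_\al\sim c_\al}\sum_{b_\beta\sim c_\beta} |G_{c_\al c_\beta}^{(\bT\bW)}|^2,
\end{align*}
where the normalization $(Nd)$ is (up to a $1+\OO(1/N)$ factor) the number of oriented edges not incident to $\bT$. Here a slight subtlety is that the \emph{set} $\bW$ of removed vertices itself depends on $\bfS$; I would handle this by first removing a further conditioning, bounding $|G^{(\bT\bW)}_{c_\al c_\beta}|$ by a resolvent expansion in the rank-$\OO((d-1)^\ell)$ perturbation coming from deleting $\bW\setminus\{b_\al,b_\beta\}$, or more cleanly by noting that on $\Omega(z)$ the Green's function entries of any such minor are comparable to those of the tree extension of a local neighbourhood (by Theorem \ref{thm:prevthm} and \eqref{e:Pijbound}) and hence, after passing to $G^{(\bT b_\al b_\beta)}$, to $G$ itself up to additive error $\lesssim (d-1)^\ell\varepsilon(z)\leq N^{-\fc}$. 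Once the removed-vertex set is reduced to the two vertices $b_\al,b_\beta$, the inner double sum over $c_\al,c_\beta$ is a genuine sum over all vertices, so
\begin{align*}
\frac{1}{N^2}\sum_{c_\al,c_\beta}|G_{c_\al c_\beta}^{(b_\al b_\beta)}|^2
\leq \frac{C}{N^2}\sum_{x,y}|G_{xy}(z)|^2+\OO(N^{-1+\fc})
= \frac{C\,\Im[m(z)]}{N\eta}+\OO(N^{-1+\fc})\lesssim \Phi+N^{-1+\fc},
\end{align*}
using the Ward identity \eqref{eq:wardex}; the sum over $b_\al,b_\beta$ then only contributes the bounded-degree factor absorbed into the constant. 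The case $y\in\{a_\gamma\}$ or $y\sim c_\beta$ is the same: $y$ ranges over $\OO(\mu d)$ fixed vertices, and summing $|G^{(\bT\bW)}_{c_\al y}|^2$ over $c_\al$ and one fixed second index again invokes the first Ward identity in \eqref{e:WdI}, $N^{-1}\sum_{c_\al}|G_{c_\al y}|^2=\Im[G_{yy}]/(N\eta)\lesssim \Phi$. The first quantity, with the matrix $(G^{(\bT)}(G^{(\bT)}|_{\bW})^{-1}G^{(\bT)})_{c_\al c_\beta}$, is handled by noting that $G^{(\bT)}|_{\bW}$ is, on $\Omega(z)$, a bounded-size matrix whose inverse has operator norm $\OO(1)$ (its diagonal entries are $\asymp 1$ and off-diagonal entries decay as in \eqref{e:Pijbound} since the $b_\al$ are far apart on the event we may restrict to), so $|(G^{(\bT)}(G^{(\bT)}|_\bW)^{-1}G^{(\bT)})_{c_\al c_\beta}|\lesssim \sum_{x,x'\in\bW}|G^{(\bT)}_{c_\al x}||G^{(\bT)}_{x' c_\beta}|$, and Cauchy–Schwarz plus the Ward bound for each of the $\OO(\mu^2)$ terms gives the same $\Phi+N^{-1+\fc}$ bound (the $\mu^2$ is polylogarithmic and absorbed, or more carefully one keeps the exponential decay of $|G^{(\bT)}_{c_\al x}|$ in $\dist(c_\al,\bT)$ to sum cleanly).

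The main obstacle, and the only place requiring care, is the dependence of the removed-vertex set $\bW=\{b_\al\}$ on the resampling data: one cannot directly write $\bE_\bfS$ as an unconstrained average over $(b_\al,c_\al)$ because the minor being inverted changes with $\bfS$. I would resolve this by first intersecting with the high-probability event from Lemma \ref{lem:configuration} (with $\bF$ the vertex set of $\cT$) on which the $b_\gamma$ are pairwise at distance $\geq 3\fR$ and have tree neighbourhoods — the complementary event has $\bP_\bfS$-probability $\leq N^{-1+2\fc}$ and contributes $\lesssim N^{-1+2\fc}\max|G^{(\bT\bW)}_{c_\al c_\beta}|^2\lesssim N^{-1+2\fc}\eta^{-2}$... which is \emph{not} obviously small, so instead I would use the deterministic bound $|G^{(\bT\bW)}_{xy}|\leq \eta^{-1}$ only after noting $z\in\mathbf D$ forces $N\eta\sqrt{\kappa+\eta}\geq N^\fa$, hence $\eta^{-2}\leq (N/N^\fa)^2(\kappa+\eta)\leq N^{2-2\fa}$, giving a contribution $\leq N^{-1+2\fc}\cdot N^{2-2\fa}\cdot N^{-2}=N^{-1-2\fa+2\fc}\ll N^{-1+\fc}$ provided $\fc$ is small relative to $\fa$ — which is our standing assumption. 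On the good event, removing the vertices $\bW\setminus\{b_\al,b_\beta\}$ is a rank-$\OO(\mu)$ perturbation supported far from $c_\al,c_\beta$, and on $\Omega(z)$ the corresponding resolvent expansion converges with each additional factor bounded by $(d-1)^\ell\varepsilon(z)\leq N^{-\fc}$, so $G^{(\bT\bW)}_{c_\al c_\beta}=G^{(b_\al b_\beta)}_{c_\al c_\beta}+\OO(N^{-\fc}\cdot(\text{tree-decay factor}))$; squaring and averaging then reduces everything to the unconditional Ward identity as above. Collecting the three cases yields \eqref{e:Gcxbound}, and I expect the write-up to be short once the good-event reduction is in place.
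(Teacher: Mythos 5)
Your treatment of the first quantity in \eqref{e:Gcxbound} contains a genuine gap that makes the bound fail. You argue that on the good event $(G^{(\bT)}|_{\bW})^{-1}$ has operator norm $\OO(1)$, so
\begin{align*}
\bigl|(G^{(\bT)}(G^{(\bT)}|_{\bW})^{-1}G^{(\bT)})_{c_\al c_\beta}\bigr|\lesssim \sum_{x,x'\in\bW}|G^{(\bT)}_{c_\al x}|\,|G^{(\bT)}_{x'c_\beta}|,
\end{align*}
and then apply Ward term by term. But the right-hand side contains the term $(x,x')=(b_\al,b_\beta)$, and since $c_\al\sim b_\al$ and $c_\beta\sim b_\beta$ in $\cG^{(\bT)}$, both $|G^{(\bT)}_{c_\al b_\al}|$ and $|G^{(\bT)}_{b_\beta c_\beta}|$ are $\asymp 1$ on $\Omega(z)$ (they are Green's function entries between adjacent vertices, of size $\asymp|\msc|/\sqrt{d-1}$). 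So your upper bound is itself $\asymp 1$ and cannot yield $\lesssim\Phi+N^{-1+\fc}$. The smallness is not captured by any crude operator-norm step; it comes from the \emph{diagonal structure} of $G^{(\bT)}|_{\bW}$. The paper Taylor-expands $(G^{(\bT)}|_{\bW})^{-1}$ around its diagonal $D=\diag(G^{(\bT)}|_{\bW})$,
\begin{align*}
(G^{(\bT)}|_{\bW})^{-1}=\sum_{k=0}^\fb D^{-1}\bigl((G^{(\bT)}|_{\bW}-D)D^{-1}\bigr)^k+\OO(N^{-2}),
\end{align*}
so that the expansion of $(G^{(\bT)}\cdots G^{(\bT)})_{c_\al c_\beta}$ becomes a sum of chains $G^{(\bT)}_{c_\al b_{\gamma_1}}D^{-1}_{b_{\gamma_1}b_{\gamma_1}}\cdots D^{-1}_{b_{\gamma_{k+1}}b_{\gamma_{k+1}}}G^{(\bT)}_{b_{\gamma_{k+1}}c_\beta}$ in which, because $\al\neq\beta$, at least one index must change somewhere along the chain. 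Every surviving term therefore carries at least one \emph{off-diagonal} factor — $G^{(\bT)}_{c_\al b_\gamma}$ with $\gamma\neq\al$, or $G^{(\bT)}_{c_\beta b_\gamma}$ with $\gamma\neq\beta$, or $G^{(\bT)}_{b_\gamma b_{\gamma'}}$ with $\gamma\neq\gamma'$ — and these are exactly the terms controllable by Ward after $\bE_\bfS$, while the remaining chain factors are uniformly small via $(d-1)^\ell\varepsilon\leq(d-1)^{-\ell}$. That mechanism is the heart of the lemma and is absent from your argument.

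Your bad-event estimate is also incorrect as stated. You compute $\bP_\bfS(\text{bad})\cdot\sup|G^{(\bT\bW)}_{c_\al c_\beta}|^2\lesssim N^{-1+2\fc}\cdot\eta^{-2}$, correctly flag that this need not be small, and then insert a factor $N^{-2}$ to obtain $N^{-1+2\fc}\cdot N^{2-2\fa}\cdot N^{-2}$. There is no source for that extra $N^{-2}$ in the expression $\bE_\bfS[\bm1(\text{bad})|\cdot|^2]$; without it the contribution is $N^{1-2\fa+2\fc}$, which diverges. The paper's restriction to $\bfS\in\sfF(\cG)$ plus $\cG\in\Omega(z)$ controls the relevant entries by $\OO(1)$ (rather than by $\eta^{-1}$), which is what actually makes the bad-event term $\OO(N^{-1+\fc})$; this step does require an argument, but the one you wrote down does not close.
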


\begin{proof}[Proof of \Cref{c:expectationbound}]
    We only proof the first inequality in \eqref{e:Gcxbound} as the other proofs are similar.
    We recall the set of resampling data $\sfF(\cG)\subset \sfS(\cG)$ from \Cref{lem:goodresamplingdata}. Then 
    \Cref{lem:configuration} and \Cref{thm:prevthm} together imply that
    \begin{align*}
       \bE_\bfS[|(G^{(\bT)}(G^{(\bT)}|_{\bW})^{-1}G^{(\bT)})_{c_\al c_\beta}|^2]
       =\bE_\bfS[\bm1(\bfS\in \sfF(\cG))|(G^{(\bT)}(G^{(\bT)}|_{\bW})^{-1}G^{(\bT)})_{c_\al c_\beta}|^2]+\OO(N^{-1+\fc}).
    \end{align*}
    
    In the rest, we restrict to the event that $\bfS\in \sfF(\cG)$. We denote the diagonal matrix $D=\diag(G^{(\bT)}|_{\bW})$, then we can Taylor expand the inverse as
\begin{align*}
    (G^{(\bT)}|_{\bW})^{-1}=\sum_{k=0}^\fb D^{-1}\left((G^{(\bT)}|_{\bW}-D)D^{-1}\right)^k+\OO(N^{-2}).
\end{align*}

Excluding some negligible part, we can rewrite $(G^{(\bT)}(G^{(\bT)}|_{\bW})^{-1}G^{(\bT)})_{c_\al c_\beta}$ as
\begin{align}\begin{split}\label{eq:firsterror}
&\phantom{{}={}}\bigg[G^{(\bT)}
\sum_{k=0}^\fb D^{-1}\left((G^{(\bT)}|_{\bW}-D)D^{-1}\right)^k
G^{(\bT)}\bigg]_{c_\alpha c_\beta}\\
&=
\sum_{k=0}^\fb\sum_{\al_1, \al_2,\cdots,\al_{k+1}\in \qq{\mu}}G^{(\bT)}_{c_\al b_{\al_1}}D^{-1}_{b_{\al_1}b_{\al_1}} V_{b_{\al_1} b_{\al_2}} D^{-1}_{b_{\al_2}b_{\al_2}}V_{b_{\al_2} b_{\al_3}} D^{-1}_{b_{\al_3}b_{\al_3}}\cdots D^{-1}_{b_{\al_{k+1} }b_{\al_{k+1}}}G^{(\bT)}_{b_{\al_{k+1}}c_\beta}.
\end{split}\end{align}
For each summand in \eqref{eq:firsterror}: if $k=0$, since $\al\neq \beta$, either $\al\neq \al_1$ or $\beta\neq \al_1$; if $k\geq 1$, $V_{b_{\al_1} b_{\al_2}}$ is nonzero only if $\al_1\neq \al_2$, in which case $V_{b_{\al_1} b_{\al_2}}=G^{(\bT)}_{b_{\al_1}b_{\al_2}}$. Therefore, by Theorem \ref{thm:prevthm}, 
\begin{align*}
    \sum_\al |V_{b_\al b_\beta}|\leq (d-1)^\ell\varepsilon\leq (d-1)^{-\ell}.
\end{align*} 
Therefore, as we know that to get from $\alpha$ to $\beta$ we must change indices, we can then bound the sum \eqref{eq:firsterror} as
\begin{align*}
    |\eqref{eq:firsterror}|\leq \frac{1}{(d-1)^\ell}\sum_{\gamma\neq \al}|G^{(\bT)}_{c_\al b_\gamma}|+\frac{1}{(d-1)^\ell}\sum_{\gamma\neq \beta}|G^{(\bT)}_{c_\beta b_\gamma}|
    +\frac{1}{(d-1)^{2\ell}}\sum_{\gamma\neq \gamma'}|G^{(\bT)}_{b_\gamma b_{\gamma'}}|.
\end{align*}
Taking square and expectation over $\bfS$, thanks to the Ward identity, we conclude
\begin{align*}
    \bE_{\bfS}[|\eqref{eq:firsterror}|^2]
    \lesssim 
    \sum_{\gamma\neq \al}\frac{\bE_\bfS[|G^{(\bT)}_{c_\al b_\gamma}|^2]}{(d-1)^\ell} 
    +\sum_{\gamma\neq \beta}\frac{\bE_\bfS[|G^{(\bT)}_{c_\beta b_\gamma}|^2]}{(d-1)^\ell}
    +\sum_{\gamma\neq \gamma'}
    \frac{\bE_\bfS[|G^{(\bT)}_{b_\gamma b_{\gamma'}}|^2]}{(d-1)^{2\ell}}\lesssim \Phi. 
\end{align*}
This finishes the proof for the first statement in \eqref{e:Gcxbound}.

\end{proof}

\begin{proof}[Proof of \Cref{lem:deletedalmostrandom}]
We will do a local resampling around the vertex $o$. We split the lefthand side of \eqref{e:sameasdisconnect} into two terms
\begin{align}\begin{split}\label{e:GUmain0}
&\phantom{{}={}}\bE\left[I(o,\cG) |G^{(o)}_{ij}|^2 \Pi_p\right] \\ 
&=\bE\left[I(o,\cG)I(o,\wt\cG)\bm1(\tcG\in \oOmega) |G^{(o)}_{ij}|^2 \Pi_p\right]  +\bE\left[I(o,\cG)\bE_\bfS[1-I(o,\wt\cG)\bm1(\tcG\in \oOmega) ] |G^{(o)}_{ij}|^2 \Pi_p\right] \\
&=\bE\left[I(o,\cG)I(o,\wt\cG)\bm1(\tcG\in \oOmega)  |G^{(o)}_{ij}|^2 \Pi_p\right]  +\OO\left(\frac{1}{N^{1-2\fc}}\bE\left[I(o,\cG) |G^{(o)}_{ij}|^2 \Pi_p\right] \right)\\
&=\bE\left[I(o,\cG)I(o,\wt\cG)\bm1(\tcG\in \oOmega)  |G^{(o)}_{ij}|^2 \Pi_p\right]  +\OO\left(\frac{1}{N^{1-2\fc}}\bE\left[ \Pi_p\right] \right),\\
\end{split}\end{align}
where in the third line we used \Cref{lem:wellbehavedswitch}, that $\bE_\bfS[1-I(o,\tcG)\bm1(\tcG\in \oOmega) ]\leq N^{-1+2\fc}$; in the last line we used that for $\cG\in \Omega$, $|G_{ij}^{(o)}|\lesssim 1$. 

Next we analyze the first term in the last line of \eqref{e:GUmain0}.
\begin{align}\begin{split}\label{e:GUmaint1}
&\phantom{{}={}}\bE\left[I(o,\cG)I(o,\wt\cG)\bm1(\tcG\in \oOmega) |G^{(o)}_{ij}|^2 \Pi_p\right]\\
&=
\bE\left[I(o,\cG)I(o,\wt\cG)\bm1(\tcG\in \Omega) |G^{(o)}_{ij}|^2 \Pi_p\right]  +\bE\left[I(o,\cG)I(o,\wt\cG)\bm1(\tcG\in \oOmega\setminus \Omega)  |G^{(o)}_{ij}|^2 \Pi_p\right].
\end{split}\end{align}
For the last term in \eqref{e:GUmaint1}, we have that
\begin{align}\begin{split}\label{e:ffbbcopy}
&\phantom{{}={}}\bE\left[I(o,\cG)I(o,\wt\cG)\bm1(\tcG\in \oOmega\setminus \Omega)  |G^{(o)}_{ij}|^2 \Pi_p\right]
\lesssim \bE\left[\bm1(\tcG\in \oOmega\setminus \Omega)  \Pi_p\right]\\
&\lesssim \bE\left[\Pi_p^{2p/(2p-1)}\right]^{1-1/2p}\bE\left[\bm1( \tcG\in \oOmega\setminus\Omega)\right]^{1/2p}\\
&\lesssim  N^{-(\fd-3)/2p} \bE\left[\Pi_p^{2p/(2p-1)}\right]^{1-1/2p},
\end{split}\end{align}
where in the first line we used that $|G_{ij}^{(o)}|\lesssim 1$ for $\cG\in \Omega$; in the second line we used H\"older's inequality; in the last line we used Jensen's inequality and $\bP(\oOmega\setminus \Omega)\lesssim N^{-\fd+3}$, which follows  from Proposition \ref{thm:prevthm}.
Recall the expression of $\Pi_p$ from \eqref{e:defUG}, we conclude that 
\begin{align*}
|\eqref{e:ffbbcopy}|\lesssim  \bE[\Psi_p],
\end{align*}
provided we take that $\fd\geq 2p+3$.

By Lemma \ref{lem:exchangeablepair}, we can rewrite the first term on the righthand side of \eqref{e:GUmaint1}
\begin{align}\begin{split}\label{e:IIGU}
    \bE\left[I(o,\cG)I(o,\wt\cG){\bm1(\tcG\in \Omega)}|G^{(o)}_{ij}|^2 \Pi_p\right]
    &=\bE\left[I(o,\cG)I(o,\wt\cG){\bm1(\cG\in \Omega)}|\wt G^{(o)}_{ij}|^2  \wt\Pi_p\right]\\
    &\lesssim \bE\left[I(o,\cG) |\wt G^{(o)}_{ij}|^2  \Pi_p\right],
\end{split}\end{align}
where with overwhelmingly high probability,
\begin{align*}
    \bm1(\cG\in \Omega) \wt \Pi_p
    =\bm1(\cG\in \Omega) \bm1(\tcG\in \Omega)(|Q(z;\tcG)-Y(Q(z;\tcG))|+\wt \Phi(z))^{2p-1}\lesssim \Pi_p.
\end{align*}

In the following we estimate the righthand side of \eqref{e:IIGU}. We denote $\cT=\cB_\ell(o,\cG)$ and its vertex set
$\bT$, and $P:=P(\cT,z,\msc)$. We notice that since $i,j$ are distinct neighboring vertices of $o$, $i,j$ are in different connected components of $\cT^{(o)}$ Thus $P^{(o)}_{ij}=0$, and $\wt G^{(o)}_{ij}=\wt G^{(o)}_{ij}-P^{(o)}_{ij}$. We will use the same argument as in the proof of \Cref{lem:diaglem}.    In the rest, we restrict to the event that $\bfS\in \sfF(\cG)$ from \Cref{lem:goodresamplingdata}, then
\begin{align}\label{e:GooY2}
    \wt G^{(o)}_{ij}=\left(P^{(o)}\sum_{k=1}^\fb \left(\widetilde B^\top (\widetilde G^{(\bT)}-\msc\mathbb I)\widetilde B P^{(o)}\right)^k\right)_{ij}+\OO(N^{-2}).
\end{align}
For any $1\leq k\leq \fb$, the above expression is a sum of terms in the following form
\begin{align}\label{e:PUP2}
   \sum_{\al_1, \al_2,\cdots,\al_{2k}\in \qq{\mu}}P^{(o)}_{i l_{\al_1}} V_{c_{\al_1} c_{\al_2}} P^{(o)}_{l_{\al_2} l_{\al_3}} V_{c_{\al_3} c_{\al_4}} 
   P^{(o)}_{l_{\al_4} l_{\al_5}}\cdots V_{c_{\al_{2k-1} }c_{\al_{2k}}}P^{(o)}_{l_{\al_{2k}} j},
\end{align}
where $ V_{c_{\al_{2j-1}}c_{\al_{2j}}}$ is one of the two terms $(\wt G_{c_\al c_\al}^{(\bT)}-\msc)$ or $\wt G_{c_\al c_\beta}^{(\bT)}$.

Next, we show that the summand in \eqref{e:PUP2} is zero, unless there exists some $\al_{2j-1}\neq \al_{2j}$. Otherwise the summand simplifies to 
\begin{align}\label{e:oneterma}
    P^{(o)}_{i l_{\al_1}} V_{c_{\al_1} c_{\al_1}} P^{(o)}_{l_{\al_1} l_{\al_3}} V_{c_{\al_3} c_{\al_3}} 
   P^{(o)}_{l_{\al_3} l_{\al_5}}\cdots V_{c_{\al_{2k-1} }c_{\al_{2k-1}}}P^{(o)}_{l_{\al_{2k-1}} j}.
\end{align}
For the sequence of indices $i, l_{\al_1}, l_{\al_3},\cdots, l_{\al_{2k-1}}, j$, there exists some adjacent pairs, say $(l_{\al_{2j-1}}, l_{\al_{2j+1}})$, which are in different connected components of $\cT^{(o)}$. Then $P^{(o)}_{l_{\al_{2j-1}} l_{\al_{2j+1}}}=0$, and the term in \eqref{e:oneterma} vanishes.   Consequently, for the summation \eqref{e:PUP2}, we can restrict to the case that there exists some $\al_{2j-1}\neq \al_{2j}$. By the same argument as in the proof of \Cref{lem:diaglem}, using \eqref{e:sumal_beta} and Theorem \ref{thm:prevthm}, we can bound \eqref{e:PUP2} as
\begin{align}\label{e:firstbound}
    |\eqref{e:PUP2}|\lesssim \frac{1}{(d-1)^\ell}\sum_{\al\neq \beta}|\wt G_{c_\al c_\beta}^{(\bT)}|,
\end{align}
and by plugging \eqref{e:GooY2},\eqref{e:PUP2} and \eqref{e:firstbound} back into \eqref{e:IIGU} we conclude that
\begin{align}\label{e:IIGU2}
    \bE\left[I(o,\cG) |\wt G^{(o)}_{ij}|^2  \Pi_p\right]\lesssim \frac{1}{(d-1)^\ell}\sum_{\al\neq \beta}\bE\left[I(o,\cG) |\wt G^{(\bT)}_{c_\al c_\beta}|^2  \Pi_p\right].
\end{align}
Next, we will rewrite $|\wt G_{c_\al c_\beta}^{(\bT)}|$ back into the Green's function of the graph $\cG$. As we do so, any term that we can bound as $\OO(\Phi+N^{-1+\fc})$ is negligible, as the resulting term $\bE\left[{\bm1(\cG\in \Omega)}(\Phi+N^{-1+\fc})  \Pi_p\right]=\OO(\bE[\Psi_p])$.
We denote $\bW:=\{b_\alpha\}_{\alpha\in \qq{\mu}}$. We can reduce $\wt G^{(\bT)}$ to a function of $G$ by deleting $\bW$. This is because as $\bT$ is already deleted, deleting $\bW$ will remove all other edges affected by the switch. To see the effect, we define a new matrix $\widetilde  B:=\widetilde H_{\bW \bW^\complement}$, which is the adjacency matrix of edges from $\bW$ to $\bW^{\complement}$ in the switched graph $\wt\cG$. We can then do the decomposition using Schur complement formula
\begin{align}\begin{split}\label{eq:tildeexpansion1}
\wt G^{(\bT)}|_{\bW^\complement}&=\wt G^{(\bT \bW)}+\wt G^{(\bT \bW)}\widetilde {B}\wt G^{(\bT)}|_\bW \widetilde {B}^\top  \wt G^{(\bT \bW)}\\
&=G^{(\bT)}|_{\bW^\complement}-G^{(\bT)}(G^{(\bT)}|_{\bW})^{-1}G^{(\bT)}+G^{(\bT \bW)}\widetilde {B}\wt G^{(\bT)}|_\bW \widetilde {B}^\top G^{(\bT \bW)}.
\end{split}\end{align}

Thanks to \Cref{c:expectationbound}, the contributions from the first two terms in \eqref{eq:tildeexpansion1}, are bounded by $\OO(\Phi)$. 

For the last term in \eqref{eq:tildeexpansion1} we write it explicitly as
\begin{align}\label{e:GBGBG}
(G^{(\bT \bW)}\widetilde {B}\wt G^{(\bT)}|_\bW \widetilde {B}^\top G^{(\bT \bW)})_{c_\alpha c_\beta}
=\sum_{\gamma,\gamma'\in \qq{\mu}}(G^{(\bT \bW)}\widetilde B)_{c_\alpha b_\gamma}(\wt G^{(\bT)})_{b_{\gamma}b_{\gamma'}} (G^{(\bT \bW)}\widetilde B)_{c_\beta b_{\gamma'}},
\end{align}

For the first term in the summand of \eqref{e:GBGBG}, it is given by 
\begin{align}\label{e:GtBexp}
    (G^{(\bT \bW)}\widetilde B)_{c_\alpha b_\gamma}
    =\frac{1}{\sqrt{d-1}} \left(G^{(\bT \bW)}_{c_\al a_\gamma}+\sum_{x\sim b_\gamma\atop x\neq c_\gamma} G^{(\bT \bW)}_{c_\al x}\right)
\end{align}
where the adjacency relation $\sim$ is with respect to the graph $\cG$. Plugging \eqref{e:GtBexp} into \eqref{e:GBGBG},  and using \Cref{c:expectationbound}, we have 
\begin{align*}
    \bE_{\bfS}\left[\left|\sum_{\gamma,\gamma'\in \qq{\mu}\atop \gamma\neq \al}(G^{(\bT \bW)}\widetilde B)_{c_\alpha b_\gamma}(\wt G^{(\bT)})_{b_{\gamma}b_{\gamma'}} (G^{(\bT \bW)}\widetilde B)_{c_\beta b_{\gamma'}}\right|^2\right]\lesssim \Phi.
\end{align*}
A similar estimate holds for $\gamma'\neq \beta$. Thus we can reduce \eqref{e:GBGBG} to the sum where $\al=\gamma, \beta=\gamma'$ 
\begin{align}
    \label{e:GtGG}
\frac1{d-1}\left(G_{c_\al a_\al}^{(\bT \bW)}+\sum_{
x\sim b_\alpha\atop
x\neq c_\alpha} G^{(\bT \bW)}_{c_\alpha x}\right)\wt G^{(\bT)}_{b_\alpha b_\beta}
\left(G_{c_\beta a_\beta}^{(\bT \bW)}+
\sum_{
 y\sim b_\beta\atop
y\neq c_\beta} G^{(\bT \bW)}_{yc_\beta}\right).
\end{align} 
Next, we show that we can replace $G_{c_\al x}^{(\bT \bW)},G_{y c_\beta}^{(\bT \bW)}$ in the above expression by $G_{c_\al x}^{(b_\al)}, G_{yc_\beta}^{(b_\beta)}$, and further reduce \eqref{e:GtGG} to 
\begin{align}\begin{split}\label{e:GtG2}
&\phantom{{}={}}\bE_\bfS\left|\left(G_{c_\al a_\al}^{(\bT \bW)}+\sum_{
x\sim b_\alpha\atop
x\neq c_\alpha} G^{(\bT \bW)}_{c_\alpha x}\right)\wt G^{(\bT)}_{b_\alpha b_\beta}
\left(G_{c_\beta a_\beta}^{(\bT \bW)}+
\sum_{
 y\sim b_\beta\atop
y\neq c_\beta} G^{(\bT \bW)}_{yc_\beta}\right)\right|^2\\
&\lesssim \bE_\bfS\left|\sum_{{
x\sim b_\alpha, y\sim b_\beta\atop
x\neq a_\alpha, y\neq a_\beta}} G^{(b_\alpha)}_{c_\alpha x}\wt G^{(\bT)}_{b_\alpha b_\beta}G^{(b_\beta)}_{yc_\beta}\right|^2+\OO(\Phi),
\end{split}\end{align}
where the two terms $G_{c_\al a_\al}^{(\bT \bW)}$ and $G_{c_\beta a_\beta}^{(\bT \bW)}$ are negligible thanks to \Cref{c:expectationbound}. To replace $G_{c_\al x}^{(\bT \bW)}$ by $G_{c_\al x}^{(b_\al)}$, we use the following Schur complement formula
\begin{align*}
G^{(\bT \bW)}_{c_\alpha x}=G^{(b_\alpha)}_{c_\alpha x}-\sum_{u,v\in \bT \bW\backslash b_\alpha}G^{(b_\alpha)}_{c_\alpha u}(G|_{\bT \bW}^{(b_\alpha)})^{-1}_{uv}G^{(b_\alpha)}_{v x},
\end{align*}
then the last term can be bounded using the Ward identity
\begin{align}\label{e:warduse}
    \bE_\bfS[|G^{(b_\alpha)}_{c_\alpha u}|^2]
    \lesssim 
    \bE_\bfS\left[|G_{c_\alpha u}|^2+\frac{|G_{c_\al b_\al}G_{b_\al u}|^2}{|G_{b_\al b_\al}|^2}\right]\lesssim \Phi.
\end{align}

Finally for \eqref{e:GtG2}, we notice that $c_\alpha,x$ are distinct neighbors of $b_\alpha$; and $y,c_\beta$ are distinct neighbors of $b_\beta$. We use the upper bound $\varepsilon$ from Theorem \ref{thm:prevthm} on $G_{yc_\beta}^{(b_\beta)}$ and the loose bound of $|\widetilde G_{b_\alpha b_\beta}^{(\bT)}|\lesssim 1$, which gives
\begin{align*}
&\phantom{{}={}}\bE  I(o,\cG)\bE_\bfS\left[\left|\sum_{\substack{
x\sim b_\alpha, y\sim b_\beta\\
x\neq a_\alpha, y\neq a_\beta}} G^{(b_\alpha)}_{c_\alpha x}\wt G^{(\bT)}_{b_\alpha b_\beta}G^{(b_\beta)}_{yc_\beta}\right|^2 \Pi_p\right]\\
&\lesssim \sum_{
x\sim b_\alpha
x\neq a_\alpha}\varepsilon^2 \bE[ I(o,\cG)\bE_\bfS[|G^{(b_\al)}_{c_\al x}|^2]\Pi_p]\\
&\lesssim \sum_{
x\sim b_\alpha
x\neq a_\alpha}\varepsilon^2 \bE[ \bE_\bfS[I(c_\al,\cG)|G^{(b_\al)}_{c_\al x}|^2]\Pi_p] +N^{-1+\fc}\bE[\Pi_p]\\
&\lesssim \varepsilon^2 \bE[ I(o,\cG)|G^{(o)}_{ij}|^2\Pi_p] +\bE[\Psi_p]
\end{align*}
where in the third line we used that $\bE_\bfS[1-I(c_\al, \cG)]\leq N^{-1+\fc}$; in the last line, we used the permutation invariance of the vertices, so that $G_{ij}^{(o)}$ and $G_{c_\al x}^{(b_\al)}$ have the same distribution. 

Thus combining the discussion above, we arrive at the following bound 
    \begin{align}
\bE[I(o, \cG)|G_{ij}^{(o)}|^2\Pi_p]\lesssim \bE[\Psi_p]+ \varepsilon^2\bE[I(o,\cG)|G_{ij}^{(o)}|^2\Pi_p],
\end{align}
and the claim \eqref{e:sameasdisconnect} follows from rearranging.  

\end{proof}

As a consequence of \Cref{lem:deletedalmostrandom}, the following proposition states that during the local resampling, the errors $\cE$ from \Cref{lem:diaglem} and \Cref{lem:offdiagswitch} (after averaging) are negligible. 

\begin{proposition}
\label{lem:task2}
   Fix $z\in {\mathbf D}$, a $d$-regular graph $\cG\in \Omega(z)$ and $\bT$ the vertex set of $\cT=\cB_\ell(o,\cG)$. Let $\Pi_p(z)$ be as in \eqref{e:defUG}.   We denote the indicator function $I(o,\cG)=1$ if for any $v\in \cB_\ell(o, \cG)$, $v$ has a radius $\fR$ tree neighborhood. We denote $T_{\bf S}$ the local resampling around $o$ with resampling data $\bf S$, and $\widetilde G=T_{\bf S}G$. Then for any $\al, \beta\in \qq{\mu}$, the following holds
\begin{align}\begin{split}\label{eq:task2}
    &\frac{1}{N}\sum_{u\not\in \bT}\bE\bE_\bfS[I(o,\cG)|\widetilde G^{(\bT\cup \mathbb X)}_{c_\alpha c_\beta}-G_{c_\alpha c_\beta}^{(b_\alpha b_\beta)}|\Pi_p]\lesssim\bE[\Psi_p],\quad \mathbb X=\emptyset \text{ or } \{u\},\\
    &\frac{1}{N}\sum_{u\sim v\not\in \bT}\bE\bE_\bfS[I(o,\cG)|\widetilde G^{(\bT\cup \mathbb X)}_{c_\alpha v}-G_{c_\alpha v}^{(b_\alpha\cup \mathbb X)}|\Pi_p]\lesssim \bE[\Psi_p],\quad \mathbb X=\emptyset \text{ or } \{u\},\\
   &\frac{1}{N^2}\sum_{ u\sim v\not\in \bT}\sum_{ u'\sim v'\not\in \bT}\bE\bE_\bfS[I(o,\cG)|\widetilde G^{(\bT u u')}_{v v'}-G^{u u'}_{v v'}|\Pi_p]\lesssim \bE[\Psi_p],
   \end{split}
    \end{align} 
    where the adjacency relation $\sim$ is with respect to the graph $\cG$. 
\end{proposition}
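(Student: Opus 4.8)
\textbf{Proof proposal for Proposition~\ref{lem:task2}.}

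The plan is to reduce each of the three estimates to \Cref{lem:deletedalmostrandom} after a chain of Schur-complement expansions, exactly in the spirit of the reduction carried out at the end of the proof of \Cref{lem:deletedalmostrandom}. Let me describe the first bound in detail; the other two are structurally identical. First I would insert the indicator $I(o,\tcG)\bm1(\tcG\in\oOmega)$ at the cost of $\OO(N^{-1+2\fc}\bE[\Pi_p])$ using \Cref{lem:wellbehavedswitch}, and split off the event $\tcG\in\oOmega\setminus\Omega$ using H\"older and $\bP(\oOmega\setminus\Omega)\lesssim N^{-\fd+3}$ (taking $\fd\geq 2p+3$), so that term is $\OO(\bE[\Psi_p])$; both of these moves are already used verbatim in \eqref{e:GUmain0}--\eqref{e:ffbbcopy}. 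This restricts us to $\cG,\tcG\in\Omega$ and to the ``good'' resampling data event $\bfS\in\sfF(\cG)$ of \Cref{lem:goodresamplingdata}, off which we again pay only $\OO(N^{-1+\fc})$. On this event, the neighborhoods of $b_\al,c_\al,b_\beta,c_\beta$ are disjoint trees far from $\bT$ and from $o$.

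Next I would run the same Schur-complement chain as in \eqref{eq:tildeexpansion1}: write $\wt G^{(\bT\cup\mathbb X)}_{c_\al c_\beta}$ with $\bW=\{b_\gamma\}_{\gamma\in\qq{\mu}}$ deleted, so that $\wt G^{(\bT\cup\mathbb X\cup\bW)}=G^{(\bT\cup\mathbb X\cup\bW)}$ (deleting $\bT$ and $\bW$ removes every edge altered by the switch), and expand back:
\begin{align*}
\wt G^{(\bT\cup\mathbb X)}|_{\bW^\complement}
= G^{(\bT\cup\mathbb X)}|_{\bW^\complement}
- G^{(\bT\cup\mathbb X)}(G^{(\bT\cup\mathbb X)}|_{\bW})^{-1}G^{(\bT\cup\mathbb X)}
+ G^{(\bT\cup\mathbb X\cup\bW)}\widetilde B\,\wt G^{(\bT\cup\mathbb X)}|_{\bW}\,\widetilde B^\top G^{(\bT\cup\mathbb X\cup\bW)}.
\end{align*}
The second term is controlled by \Cref{c:expectationbound} (the bound $\bE_\bfS[|(G^{(\bT)}(G^{(\bT)}|_\bW)^{-1}G^{(\bT)})_{c_\al c_\beta}|^2]\lesssim\Phi+N^{-1+\fc}$; the extra deletion of $u$ changes nothing by one more Schur step and the Ward identity as in \eqref{e:warduse}). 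For the third term I would expand $\widetilde B$ entrywise as in \eqref{e:GtBexp}, restrict the double sum over $\gamma,\gamma'$ to the diagonal $\gamma=\al,\gamma'=\beta$ (off-diagonal contributions are $\OO(\Phi)$ by \Cref{c:expectationbound}), replace the remaining $G^{(\bT\cup\mathbb X\cup\bW)}_{c_\al x}$ by $G^{(b_\al)}_{c_\al x}$ via Schur plus Ward, and then use the pointwise bound $\varepsilon$ from \Cref{thm:prevthm} on $G^{(b_\beta)}_{y c_\beta}$ together with $|\wt G^{(\bT)}_{b_\al b_\beta}|\lesssim 1$. After averaging $\frac1N\sum_u$ and using permutation invariance to identify $G^{(b_\al)}_{c_\al x}$ with $G^{(o)}_{ij}$ (distinct neighbors of a common vertex), I arrive at $\lesssim \bE[\Psi_p]+\varepsilon^2\bE[I(o,\cG)|G^{(o)}_{ij}|^2\Pi_p]$, and \Cref{lem:deletedalmostrandom} closes the loop. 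For the second and third bounds of \eqref{eq:task2} the only difference is that one (resp.\ both) of the outer indices is an honest vertex $v$ (resp.\ $v',v$) rather than a boundary vertex $c_\al$; the same chain applies, with the final step using either \Cref{lem:deletedalmostrandom} again or, when both free indices are generic, the plain Ward bound \eqref{eq:wardex}.

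The main obstacle I anticipate is bookkeeping the many ``negligible'' pieces uniformly in $\al,\beta$ and in the deleted vertex set $\mathbb X$: one must check that every Schur expansion truncated at order $\fb$ leaves a remainder $\OO(N^{-2})$, that each Ward-identity application really produces $\Phi+N^{-1+\fc}$ rather than something larger near the spectral edge (this is where $z\in\mathbf D$ and the definition of $\Phi$ via $\Im[m]/(N\eta)$ are used), and that the product with $\Pi_p$ always recombines into one of the terms defining $\Psi_p$ in \eqref{eq:phidef} — in particular that the $\varepsilon^2$ prefactor can be absorbed by rearrangement. None of these steps is individually hard, but making the reduction to \Cref{lem:deletedalmostrandom} airtight in all three cases simultaneously is the delicate part.
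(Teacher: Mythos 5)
Your high-level plan --- Schur complement chain via $\wt G^{(\bT\bW)}=G^{(\bT\bW)}$, then \Cref{c:expectationbound} and \Cref{lem:deletedalmostrandom} --- is exactly the one the paper takes. But there is a gap in the key decomposition. You reuse \eqref{eq:tildeexpansion1} verbatim, which expands $\wt G^{(\bT\cup\mathbb X)}_{c_\alpha c_\beta}$ as $G^{(\bT\cup\mathbb X)}_{c_\alpha c_\beta}$ plus two corrections. The quantity to estimate, however, is $\wt G^{(\bT\cup\mathbb X)}_{c_\alpha c_\beta}-G^{(b_\alpha b_\beta)}_{c_\alpha c_\beta}$, not $\wt G^{(\bT\cup\mathbb X)}_{c_\alpha c_\beta}-G^{(\bT\cup\mathbb X)}_{c_\alpha c_\beta}$; after bounding your ``second'' and ``third'' terms, you are left with the unaddressed difference $G^{(\bT\cup\mathbb X)}_{c_\alpha c_\beta}-G^{(b_\alpha b_\beta)}_{c_\alpha c_\beta}$. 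The paper avoids this by using the variant identity \eqref{e:tGt}, in which the reference Green's function is $G^{(b_\alpha b_\beta)}$ rather than $G^{(\bT)}$ and the remaining deletion set $\bT\bW\setminus\{b_\alpha,b_\beta\}$ excludes $\{b_\alpha,b_\beta\}$ and hence lies far from $c_\alpha,c_\beta$; this merges your first two terms into a single error $-G^{(b_\alpha b_\beta)}(G^{(b_\alpha b_\beta)}|_{\bT\bW\setminus\{b_\alpha b_\beta\}})^{-1}G^{(b_\alpha b_\beta)}$, which is controlled by Ward as in \eqref{e:t1}. The missing piece in your version can in fact be handled by one more Schur step plus Ward (deleting $\{b_\alpha,b_\beta\}$ from $G^{(\bT)}$ and $\bT$ from $G^{(b_\alpha b_\beta)}$ and comparing), so the gap is patchable, but as written your sketch never actually produces a difference from the correct reference $G^{(b_\alpha b_\beta)}$.

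Two smaller points. The exchangeability preamble (inserting $I(o,\tcG)\bm 1(\tcG\in\oOmega)$ and passing to $\tcG$) is not part of the paper's proof of \Cref{lem:task2}: the paper works directly with the deterministic Schur identity on $\cG\in\Omega$ and restricts to $\bfS\in\sfF(\cG)$ inside \Cref{c:expectationbound} where control of $\wt G^{(\bT)}$ is needed; your extra step is harmless but unnecessary here. Also, the $\varepsilon^2$ prefactor you write at the end is a vestige of the self-consistent rearrangement inside the proof of \Cref{lem:deletedalmostrandom}; in \Cref{lem:task2} there is no self-consistency --- one simply applies \Cref{lem:deletedalmostrandom} as a black box to $\sum_{x\sim b_\alpha}\bE\bigl[\bE_\bfS[I(c_\alpha,\cG)|G^{(b_\alpha)}_{c_\alpha x}|^2]\Pi_p\bigr]$ --- so that factor should not appear.
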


\begin{proof}[Proof of Proposition \ref{lem:task2}]
We will only prove the first statement in \eqref{eq:task2} with $\al\neq \beta$ and $\mathbb X=\emptyset$ as the others are similar. 
Thanks to the Schur complement formula, similar to \eqref{eq:tildeexpansion1}, we have
\begin{align}\label{e:tGt}
\wt G^{(\bT)}-G^{(b_\alpha b_\beta)}=-G^{(b_\alpha b_\beta)}(G^{(b_\alpha b_\beta)}|_{\bT \bW\backslash \{b_\alpha b_\beta\}})^{-1}G^{( b_\alpha b_\beta)}+G^{(\bT \bW)}\widetilde{B}\wt G^{(\bT)}|_\bW \widetilde{B}^\top G^{(\bT \bW)}.
\end{align}
where $\bW:=\{b_\alpha\}_{\alpha\in \qq{\mu}}$ and  $\widetilde  B:=\widetilde H_{\bW \bW^\complement}$.
Therefore, for $\cG\in \Omega$, we have
\begin{align}\begin{split}\label{e:decomp}
&\phantom{{}={}}\bE_\bfS[I(o,\cG)|\wt G^{(\bT)}_{c_\alpha c_\beta}-G_{c_\alpha c_\beta}^{(b_\alpha b_\beta)}|]\\
&\leq\bE_\bfS[I(o,\cG)|(G^{(b_\alpha b_\beta)}(G^{(b_\alpha b_\beta)}|_{\bT \bW\backslash \{b_\alpha b_\beta\}})^{-1}G^{( b_\alpha b_\beta)})_{c_\alpha c_\beta}|]\\
&+\bE_\bfS[I(o,\cG)|(G^{(\bT \bW)}\widetilde{B}\wt G^{(\bT)}|_\bW \widetilde{B}^\top G^{(\bT \bW)})_{c_\alpha c_\beta}|].
\end{split}\end{align}
We consider the two error terms separately.  By a Cauchy-Schwarz inequality, 
\begin{align}\begin{split}\label{e:t1}
&\phantom{{}={}}\bE_\bfS\bigg[I(o,\cG)|(G^{(b_\alpha b_\beta)}(G^{(b_\alpha b_\beta)}|_{\bT \bW\backslash \{b_\alpha b_\beta\}})^{-1}G^{( b_\alpha b_\beta)})_{c_\alpha c_\beta}|\bigg]\\
&\lesssim \sum_{x,y\in \bT\bW\setminus\{b_\al, b_\beta\}}\bE_\bfS[I(o,\cG)|G^{(b_\alpha b_\beta)}_{c_\alpha x}|^2|(G^{(b_\alpha b_\beta)}|_{\bT \bW\setminus \{b_\alpha b_\beta\}})_{xy}^{-1}|]\\
&\phantom{{}\lesssim \sum_{x,y\in \bT\bW\setminus\{b_\al, b_\beta\}}{}}+\bE_\bfS[I(o,\cG)|G^{(b_\al b_\beta )}_{y c_\beta}|^2(G^{(b_\al b_\beta)}|_{\bT \bW\setminus \{b_\alpha b_\beta\}})_{xy}^{-1}|]\\
&\lesssim 
\sum_{x,y\in \bT\bW\setminus\{b_\al, b_\beta\}}\bE_\bfS[I(o,\cG)|(G^{(b_\alpha b_\beta)}_{c_\alpha x}|^2 +|G^{(b_\al b_\beta )}_{y c_\beta}|^2)]\lesssim \Phi,
\end{split}\end{align}
where in the last line we used that $\cG\in \Omega$, and the row and column sums of $(G^{(b_\alpha b_\beta)}|_{\bT W\backslash b_\alpha b_\beta})^{-1}$ are of order $\OO(1)$.  Then, using a Ward identity as in \eqref{e:warduse}, the above sum is bounded by  $\OO(\Phi)$ as desired.

For the second error term in \eqref{e:tGt}, we need to bound
\begin{align}\label{e:lastterm}
\bE_\bfS\left[I(o,\cG)|(G^{(\bT \bW)}\widetilde{B}\wt G^{(\bT )}|_{\bW } \widetilde{B}^\top G^{(\bT \bW)})_{c_\alpha c_\beta}|\right].
\end{align}
By the same analysis as for \eqref{e:GBGBG}, we can expand this term out and use the Cauchy-Schwarz inequality
\begin{align*}
\bE_\bfS[|(G^{(\bT \bW)}\widetilde{B}\wt G^{(\bT )}|_{\bW } \widetilde{B}^\top G^{(\bT \bW)})_{c_\alpha c_\beta}]\lesssim \sum_{\gamma\in \qq{\mu}}\bE_\bfS[|(G^{(\bT \bW)}\widetilde{B})_{c_\al b_\gamma}|^2] 
+\bE_\bfS[|(G^{(\bT \bW)}\widetilde{B})_{c_\beta b_\gamma}|^2].
\end{align*}
The two terms above can be studied in the same way, in the following we will focus on the first term. If $\gamma\neq \al$, thanks to \Cref{c:expectationbound}, we have $\bE_\bfS[|(G^{(\bT \bW)}\widetilde{B})_{c_\al b_\gamma}|^2] \lesssim \Phi$. For $\gamma=\al$, then by the same argument as for \eqref{e:GtG2}, we can show that
\begin{align*}
    \bE_\bfS[|(G^{(\bT \bW)}\widetilde{B})_{c_\al b_\al}|^2]\lesssim \sum_{x\sim b_{\alpha}\atop x\neq b_\al}\bE_\bfS[|G_{c_\alpha x}^{(b_\alpha)}|^2]+\OO(\Phi).
\end{align*}
Combining the above estimates, and taking expectation on both sides of \eqref{e:lastterm}, we conclude that
\begin{align}\begin{split}\label{e:lastterm1}
    &\phantom{{}={}}\bE\left[I(o,\cG)\bE_\bfS[|(G^{(\bT \bW)}\widetilde{B}\wt G^{(\bT )}|_{\bW } \widetilde{B}^\top G^{(\bT \bW)})_{c_\alpha c_\beta}]|\Pi_p\right]\\
    &\lesssim \sum_{
x\sim b_{\alpha}\atop x\neq b_\al}\bE[\bE_\bfS[|G_{c_\alpha x}^{(b_\alpha)}|^2]\Pi_p]+\bE[\Phi \Pi_p]\\
&\lesssim \sum_{
x\sim b_{\alpha}\atop x\neq b_\al}\bE[\bE_\bfS[I(c_\al, \cG)|G_{c_\alpha x}^{(b_\alpha)}|^2]\Pi_p]+\bE[\Phi \Pi_p]\lesssim \bE[\Psi_p],
\end{split}\end{align}
where in the third inequality, we used that with probability at least $1-N^{\fc-1}$, $I(c_\al, \cG)=1$; in the last inequality we used \Cref{lem:deletedalmostrandom} and the permutation invariance of the vertices.

The claim \eqref{eq:task2} follows from combining  \eqref{e:decomp}, \eqref{e:t1} and \eqref{e:lastterm1}.

\end{proof}

We now give the general ways of bounding the expectation of generic terms (recall from \Cref{def:pgen}.

\begin{proposition}\label{p:small_Ri}
    Given a generic term $R_\bfi$ as in \Cref{def:pgen}, and recall $\Pi_p(z)$ from \eqref{e:defUG}, then for any function $V(Q,m,z)$ of $Q(z), m(z), z$, the following holds
    \begin{align}\label{e:rough_bound}
         \frac{1}{Z_\cF}\sum_{\bfi}\bE[\bm1(\cG\in \Omega)|V(Q,m,z)||R_\bfi| ]\lesssim \bE[ |V(Q,m,z)| \Pi_p].
    \end{align}
    Moreover, if $R_\bfi$ satisfies one of the following two conditions
    \begin{enumerate}
        \item \label{i1:small}$R_\bfi$ contains two terms in the form: $(d-1)^{2\ell}G_{cc'}^{(bb')}$ with $(c,b)\neq (c',b')\in \cC^\circ$;
       \item \label{i1:smallcase2}$R_\bfi$ contains a term $(d-1)^{2\ell}G_{cc'}^{(bb')}$ with 
     $(c,b)\neq (c',b')\in \cC^\circ$, and a Green's function term $(d-1)^{2\ell}(G_{ss'}-P_{ss'})$ with $s,s'\in \cK$ in different connected components of $\cF$;
        \item \label{i2:small}$R_\bfi$ contains a factor $(d-1)^{2\ell} G_{cc'}^{(bb')}$ with $(c,b)\neq (c',b')\in \cC^\circ$, and $\cV\neq\emptyset$.
    \end{enumerate}
    then 
    \begin{align}\label{e:refined_bound}
        \frac{1}{Z_\cF}\sum_{\bfi}\bE[I(\cF, \cG)\bm1(\cG\in \Omega)R_\bfi ]=\OO(\bE[\Psi_{p}]).
    \end{align}
\end{proposition}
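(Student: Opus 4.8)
The plan is to bound $|R_\bfi|$ factor by factor and then carry out the summation over the embedding $\bfi$ component by component. The inputs will be: \Cref{thm:prevthm}, which on $\Omega(z)$ gives $|G_{ij}|\lesssim 1$ for all $i,j$, $|G_{cc}^{(b)}|\asymp 1$, $|1-Y'_\ell(Q)|\lesssim 1$, $|G_{cc}^{(b)}-Y_\ell(Q)|\leq|G_{cc}^{(b)}-Q|+|Q-Y_\ell(Q)|\lesssim\varepsilon(z)+|Q-Y_\ell(Q)|\lesssim 1$, and the bound $(d-1)^{2\ell}\varepsilon(z)\leq N^{-\fc}$ (valid for $z\in\mathbf D$) on each ``small'' factor $(d-1)^{2\ell}(G_{ss'}-P_{ss'})$, $(d-1)^{2\ell}(G_{cc}^{(b)}-Q)$, $(d-1)^{2\ell}(Q-\msc)$; \Cref{p:sumA}, which turns $\tfrac1{Z_\cF}\sum_\bfi I(\cF,\cG)(\cdots)$ into a product of one normalized sum per connected component of $\cF$ with relative error $\OO(N^{-1+2\fc})$; and the Ward identity \eqref{eq:wardex} together with Cauchy--Schwarz, by which a factor $(d-1)^{2\ell}G_{sw}$ — with any deleted vertices removed via a Schur expansion as in \eqref{e:warduse}, at the price of an additive $N^{-1+\fc}$ — whose endpoint(s) lie among the freely summed forest vertices produces a normalized sum $\lesssim(d-1)^{2\ell}(\Im[m(z)]/(N\eta)+N^{-1+\fc})^{1/2}$; I would also use $\tfrac1N\sum_s\Im[G_{ss}]=\Im[m(z)]$ and $(d-1)^{2\ell}\leq N^{\fc/32}$. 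For \eqref{e:rough_bound}, which carries no $I(\cF,\cG)$, I would first split $\sum_\bfi=\sum_\bfi I(\cF,\cG)+\sum_\bfi(1-I(\cF,\cG))$ and dispose of the second sum by bounding its Green's-function factors crudely via \Cref{thm:prevthm} and counting its (few) embeddings as in \Cref{p:sumA}.

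\textbf{The rough bound \eqref{e:rough_bound}.} On $\Omega(z)$ I would bound the leading factor $G_{cc}^{(b)}-Y_\ell(Q)$, every small factor of \Cref{def:pgen}(1), and the factors $G_{uv},G_{vv},1/G_{uu},1-Y'_\ell(Q)$ attached to each special edge by $\OO(1)$. For each special edge $(u,v)\in\cV$, its two factors $(d-1)^{2\ell}G_{su},(d-1)^{2\ell}G_{s'v}$ ($s,s'\in\cK$) would be summed over the edge $(u,v)$ and over the free vertices $s,s'$; two Cauchy--Schwarz steps, \eqref{eq:wardex}, and $\tfrac1N\sum_s\Im[G_{ss}]=\Im[m(z)]$ give
\begin{align*}
\frac1{N^2}\sum_{s,s'}\frac1{Nd}\sum_{u\sim v}(d-1)^{4\ell}\,|G_{su}|\,|G_{s'v}|
\;\lesssim\;\frac{(d-1)^{4\ell}}{N\eta}\Big(\frac1N\sum_{s}\Im[G_{ss}]^{1/2}\Big)^{2}
\;\leq\;(d-1)^{4\ell}\,\frac{\Im[m(z)]}{N\eta}\;\leq\;\Phi(z).
\end{align*}
Each $Q-Y_\ell(Q)$ factor contributes $|Q-Y_\ell(Q)|$. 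Since by \Cref{def:pgen}(3) the number of special edges and $Q-Y_\ell(Q)$ factors together is $2p-1$, the product of all these contributions is $\lesssim(|Q-Y_\ell(Q)|+\Phi)^{2p-1}$, the remaining unweighted sums over the other components equal $Z_\cF$ up to $1+\OO(N^{-1+2\fc})$ by \Cref{p:sumA}, and multiplying by $|V(Q,m,z)|$ (a function of $Q,m,z$ only) gives \eqref{e:rough_bound}.

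\textbf{The refined bound \eqref{e:refined_bound}.} I would insert absolute values and repeat the estimate, now extracting the extra smallness from the hypotheses. In \textbf{case (1)} the two factors $(d-1)^{2\ell}G_{c_1c_2}^{(b_1b_2)}$, $(d-1)^{2\ell}G_{c_3c_4}^{(b_3b_4)}$ have endpoints on (at most four) distinct single-edge core components in $\cC^\circ$, pairwise at distance $\geq 3\fR$ on $\{I(\cF,\cG)=1\}$ and all freely summed; removing the $b_i$ by Schur and using \eqref{eq:wardex},
\begin{align*}
\frac1{N^{4}}\sum_{c_1,c_2,c_3,c_4}(d-1)^{4\ell}\,|G_{c_1c_2}^{(b_1b_2)}|\,|G_{c_3c_4}^{(b_3b_4)}|
\;\lesssim\;(d-1)^{4\ell}\Big(\frac{\Im[m(z)]}{N\eta}+N^{-1+\fc}\Big)\;\leq\;\Phi(z)+N^{-1+2\fc},
\end{align*}
while the $2p-1$ principal factors still give $\lesssim(|Q-Y_\ell(Q)|+\Phi)^{2p-1}$; the expression is thus $\lesssim\bE[\bm1(\cG\in\Omega)(\Phi+N^{-1+2\fc})(|Q-Y_\ell(Q)|+\Phi)^{2p-1}]$, the $k=2p-1$ summand of the $\sum_k$ term of \eqref{eq:phidef}, hence $\OO(\bE[\Psi_p])$. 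In \textbf{case (2)}, $s,s'\in\cK$ lie in distinct components, so $P_{ss'}=0$ and the factor is $(d-1)^{2\ell}G_{ss'}$; summing it over $s,s'$ gives $\lesssim(d-1)^{2\ell}(\Im[m(z)]/(N\eta))^{1/2}$, and combined with the off-diagonal factor bounded as in case~(1) the product is again $\lesssim\Phi+N^{-1+2\fc}$, so one concludes as before. In \textbf{case (3)}, the off-diagonal factor $(d-1)^{2\ell}G_{cc'}^{(bb')}$ would be combined with a special edge $(u,v)\in\cV$ — allowing for $c$ to coincide with an attachment vertex $s\in\cK$ of that edge — and summed jointly over $c,c',u,v$ and the remaining attachment vertex; the same Cauchy--Schwarz/Ward manipulations bound the resulting average by a term of \eqref{eq:phidef} carrying an extra $\Phi$ and the quantity $|1-Y'_\ell(Q)|/(N\eta)$ coming from the special edge, which together with the remaining $\leq 2p-2$ principal factors is again $\OO(\bE[\Psi_p])$.

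\textbf{Expected main difficulty.} The step that looks routine but is actually the crux is the organization of the nested sums over the connected components of $\cF$: distributing the Green's-function factors among the Cauchy--Schwarz pairings, tracking the vertices deleted inside factors such as $G_{cc'}^{(bb')}$ (which forces the additive $N^{-1+\fc}$ errors, absorbed into the $N^{-1+2\fc}$ appearing in $\Psi_p$), and handling attachment vertices $s\in\cK$ that coincide with core endpoints $c\in\cC^\circ$. Making the power count land exactly on one of the four terms of \eqref{eq:phidef} — rather than on something merely polynomially small — is what has to be done with care, and is precisely what dictates the shape of $\Psi_p$.
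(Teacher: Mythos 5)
Your treatment of \eqref{e:rough_bound} and of cases \ref{i1:small}--\ref{i1:smallcase2} of \eqref{e:refined_bound} is essentially the paper's: bound all factors by $\OO(1)$, apply Cauchy--Schwarz and the Ward identity to each special edge to extract a factor $\Phi$, use \Cref{p:sumA} to peel off the remaining components of $\cF$, and in cases (1)--(2) observe that two freely summed long-range factors together contribute a further $\Phi$. The only cosmetic difference is that the paper uses the explicit Schur identity \eqref{e:scf}--\eqref{e:scf2} rather than your phrase ``removing the $b_i$ by Schur,'' but the content is the same.

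Case \ref{i2:small}, however, has a genuine gap. You write that ``the same Cauchy--Schwarz/Ward manipulations'' produce a bound carrying the factor $|1-Y'_\ell(Q)|/(N\eta)$, but if you track the powers this does not hold. The problematic subcase is when $(d-1)^{2\ell}G_{c_1c_2}^{(b_1b_2)}$ is the \emph{only} factor connecting the two single-edge components $\cX_1=(b_1,c_1)$ and $\cX_2=(b_2,c_2)$ and the special edges are attached elsewhere (or not attached to $\cX_1,\cX_2$ at all). Then the off-diagonal factor is $\ell^1$-averaged over two free components and Cauchy--Schwarz plus the Ward identity only give
\[
\frac{(d-1)^{2\ell}}{(Nd)^2}\sum_{b_1\sim c_1,\,b_2\sim c_2}\bigl|G_{c_1c_2}^{(b_1b_2)}\bigr|
\;\lesssim\;(d-1)^{2\ell}\Bigl(\tfrac{\Im[m]}{N\eta}\Bigr)^{1/2}
\;=\;\bigl((d-1)^{4\ell}\Im[m]/(N\eta)\bigr)^{1/2}\gg\Phi,
\]
and multiplying by the $\Phi^k$ from the special edges and $(|Q-Y_\ell(Q)|)^{2p-1-k}$ leaves a quantity of order $(|Q-Y_\ell(Q)|+\Phi)^{2p-1}\sqrt{(d-1)^{2\ell}\Phi}$, which is not dominated by any term of $\Psi_p$. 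Worse, the factor $1/(N\eta)$ that appears in the third line of \eqref{eq:phidef} has no origin in this Ward-only computation.

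The missing ingredient is the operator-norm bound $\|G(E+\ri\eta)\|\leq\eta^{-1}$, used in the paper at \eqref{eq:spectralbound}--\eqref{e:normbound}. The paper splits the special edges into three sets $\bV_1,\bV_2,\bV_0$ according to whether their attachment vertices lie in $\cX_1$, in $\cX_2$, or in neither, collects the corresponding Green's function factors into $U(b_1,c_1)$ and $V(b_2,c_2)$, and bounds
\[
\Bigl|\tfrac{1}{N^2 d^2}\sum_{b\sim c,\,b'\sim c'}U(b,c)\,G_{cc'}^{(bb')}\,V(b',c')\Bigr|\;\lesssim\;\frac{1}{N^2\eta}\Bigl(\textstyle\sum|U|^2\sum|V|^2\Bigr)^{1/2}
\]
via $\|G\|\leq\eta^{-1}$ (after reducing $G_{cc'}^{(bb')}$ to a finite combination of entries of $G$ by \eqref{e:scf}). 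The Ward identity \eqref{e:ward} then controls $\sum|U|^2,\sum|V|^2$ by powers of $Nd\Phi$, and the prefactor $1/(N^2\eta)$ is exactly what produces the $\Phi\,|1-Y'_\ell(Q)|/(N\eta)$ structure of the third term of $\Psi_p$; the $|1-Y'_\ell(Q)|$ factors are supplied by the $k$ special edges via \Cref{def:pgen}(2). There is also a bookkeeping step you do not mention: before this can be applied, the paper replaces $I(\cF,\cG)$ by a product $I(\widehat\cF,\cG)A_{b_1c_1}A_{b_2c_2}\prod A_{u_iv_i}$ \eqref{e:relaxbc} so that the summations over $(b_1,c_1),(b_2,c_2)$ and the special edges become genuine free summations over all edges of $\cG$, which is what \eqref{e:normbound} needs. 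Without the spectral-norm input and this decoupling of the indicator function, the count does not close.
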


\begin{proof}

Based on \Cref{def:pgen}, assume that there are $k$ special edges $\cV=\{(u_i, v_i)\}_{1\leq i\leq k}$, for $0\leq k\leq 2p-1$. Then we can write
\[
R_\bfi=(Q-Y_\ell(Q))^{2p-1-k}(1-Y'_\ell(Q))^{k}\left(\prod_{i=1}^{k} G_{s_iw_i}G_{s'_iw_i'}\right) W,
\]
for some $s_i,s_i'\in \cK$ and $w_i,w_i'\in \{u_i,v_i\}$ corresponding to the special edge $(u_i,v_i)$, and $W$ is the product of other factors. For $\cG\in \Omega$, thanks to \Cref{thm:prevthm} we have $|W|\lesssim 1$.  Then we have
\begin{align}\begin{split}\label{e:Eexp}
&\phantom{{}={}}\frac{1}{Z_\cF}\sum_{\bfi}\bE[\bm1(\cG\in \Omega)|V||R_\bfi| ]\\
&=\frac{1}{Z_\cF}\sum_{\bfi}\bE\left[\bm1(\cG\in \Omega)|V|\left|(Q-Y_\ell(Q))^{2p-1-k}(1-Y'_\ell(Q))^{k}\prod_{i=1}^{k} G_{s_iw_i}G_{s'_iw_i'}W\right|\right]\\
&\lesssim\frac{1}{Z_\cF}\sum_{\bfi}\bE[\bm1(\cG\in \Omega)|V||Q-Y_\ell(Q)|^{2p-1-k}\prod_{i=1}^{k} (|G_{s_iw_i}|^2+|G_{s'_iw_i'}|^2)] .
\end{split}\end{align}
We can then first sum over the special edges, thanks to the Ward identity, each special edge gives a factor $N^\fc\Im[m]/(N\eta)=\Phi$. Then we sum over the remaining indices in $\bfi$. This gives \eqref{e:rough_bound}
    \begin{align*}
        \frac{1}{N}\frac{1}{Z_\cF}\sum_{\bfi}\bE[\bm1(\cG\in \Omega)|V||R_\bfi| ]\lesssim \frac{1}{N}\bE\left[\bm1(\cG\in \Omega)|V|\left(|Q-Y_\ell(Q)|+\Phi\right)^{2p-1}\right].
    \end{align*}

Next we prove \eqref{e:refined_bound}  under assumption \Cref{i1:small}. We start with the Schur complement formula
\begin{align}\label{e:scf}
 G_{cc'}^{(bb')}=G_{cc'}+(G (G|_{\{bb'\}})^{-1} G)_{cc'}.
\end{align}
For the last term in \eqref{e:scf}, it consists of four terms
\begin{align}\label{e:scf2}
   G_{cb} (G|_{\{bb'\}})_{bb'}^{-1} G_{b'c'} 
   +G_{cb} (G|_{\{bb'\}})_{bb}^{-1} G_{bc'} 
   +G_{cb'} (G|_{\{bb'\}})_{b'b}^{-1} G_{bc'} 
   +G_{cb'} (G|_{\{bb'\}})_{b'b'}^{-1} G_{b'c'} 
\end{align}
For $\cG\in \Omega$, thanks to \Cref{thm:prevthm}, we conclude that
\begin{align*}
|G_{cc'}^{(bb')}|\lesssim |G_{bb'}|+|G_{bc'}|+|G_{b'c}|+|G_{cc'}|,
\end{align*}
and the Ward identity leads to 
\begin{align}\begin{split}\label{e:ccbb}
   \frac{1}{(Nd)^2}\bm1(\cG\in \Omega) \sum_{c\sim b\atop c'\sim b'}|G_{cc'}^{(bb')}|^2
   &\lesssim 
   \frac{1}{(Nd)^2}\bm1(\cG\in \Omega) \sum_{c\sim b\atop c'\sim b'}(|G_{bb'}|^2+|G_{bc'}|^2+|G_{b'c}|^2+|G_{cc'}|^2)\\
   &\lesssim \bm1(\cG\in \Omega)\Phi.
\end{split}\end{align}
Under assumption \Cref{i1:small} that $R_\bfi$ contains two Green's function terms $G_{c_1c'_1}^{(b_1b'_1)}$, $G_{c_2c'_2}^{(b_2b'_2)}$, where $(b_1,c_1)$, $(b_1',c_1')$, $(b_2,c_2),(b_2',c_2')\in \cK$. The same as in \eqref{e:Eexp}, we have 
\begin{align}\begin{split}\label{e:Eexp2}
&\phantom{{}={}}\frac{1}{Z_\cF}\left|\sum_{\bfi}\bE[I(\cF, \cG)\bm1(\cG\in \Omega)R_\bfi ]\right|\\
&\lesssim\frac{1}{Z_\cF}\left|\sum_{\bfi}\bE\left[I(\cF, \cG)\bm1(\cG\in \Omega)|Q-Y_\ell(Q)|^{2p-1-k}(|G_{c_1c'_1}^{(b_1b'_1)}|^2+|G_{c_2c'_2}^{(b_2b'_2)}|^2)\prod_{i=1}^{k} (|G_{s_iw_i}|^2+|G_{s'_iw_i'}|^2)\right] \right|.
\end{split}\end{align}
Then can then first sum over the special edges, which gives a factor $\Phi^k$. Then thanks to \eqref{e:Eexp2},  the average over $b_1,b_1', c_1, c_1, b_2, b_2', c_2, c_2'$ gives another factor of $\Phi$. Finally we sum over the remaining indices in $\bfi$. This gives \eqref{e:refined_bound}:
\begin{align*}
        \frac{1}{Z_\cF}\sum_{\bfi}\bE[I(\cF, \cG)\bm1(\cG\in \Omega)|R_\bfi| ]\lesssim \bE\left[\bm1(\cG\in \Omega)\Phi\left( \Phi+|Q-Y_\ell(Q)|\right)^{2p-1}\right]=\OO(\bE[\Psi_{p}]).
    \end{align*}

The statement under assumption \Cref{i1:smallcase2} can be proven in the same way as above, so we omit.

Next we prove \eqref{e:refined_bound}  under assumption \Cref{i2:small} that $R_\bfi$ contains a term $(d-1)^{2\ell}G^{(b_1 b_2)}_{c_1 c_2}$ with $(b_1,c_1)\neq (b_2,c_2)\in \cC^\circ$. We define $\cX_1=(b_1, c_1)$ and  $\cX_2=(b_2,c_2)$. They are two distinct connected components of $\cF$. If there is another Green's function term $(d-1)^{2\ell}(G_{ss'}-P_{ss'})$ with $s\in \cX_1,s'\in \cX_2$, or another copy $(d-1)^{2\ell}G^{(b_1b_2)}_{c_1c_2}$, then the same argument as in \Cref{i1:small} and \Cref{i1:smallcase2} gives \eqref{e:refined_bound}.

We then consider the case that $(d-1)^{2\ell}G^{(b_1b_2)}_{c_1c_2}$ is the unique term which contains indices in both $\cX_1$ and $\cX_2$.  We will do this by categorizing factors of $R_\bfi$ based on their relationship with $\cX_1$ and $\cX_2$.  To see how to categorize factors, we consider $U, V$ as vectors with entries $U(b,c)$ and $V(b',c')$, 
\begin{align}
\begin{split}\label{eq:spectralbound}
\left|\frac1{N^2d^2}\sum_{b\sim c\atop b'\sim c'}U(b,c) G_{cc}^{(bb')}V(b',c')\right|&=
\left|\frac1{N^2d^2}\sum_{b\sim c\atop b'\sim c'}U(b,c) (G_{cc'}+(G (G|_{\{bb'\}})^{-1} G)_{cc'})V(b',c')\right|,
\end{split}
\end{align}
where the adjacency $\sim$ is with respect to the graph $\cG$.
Because $\|G(E+\ri \eta)\|_{\spec}\leq\eta^{-1}$, then 
\begin{align}\label{e:UGV}
    \left|\frac1{N^2d^2}\sum_{b\sim c\atop b'\sim c'}U(b,c) G_{cc'}V(b',c')\right|
    \lesssim 
    \frac{1}{N^2\eta} \sqrt{\sum_{b\sim c}|U(b,c)|^2 \sum_{b'\sim c'} |V(b',c'))|^2}.
\end{align}
Using the expression \eqref{e:scf2} and \Cref{thm:prevthm}, the summation in \eqref{eq:spectralbound} involving the term $(G (G|_{\{bb'\}})^{-1} G)_{cc'}$ gives the same bound as in \eqref{e:UGV}. And we conclude that
\begin{align}\label{e:normbound}
    \left|\frac1{N^2d^2}\sum_{b\sim c\atop b'\sim c'}U(b,c) G_{cc}^{(bb')}V(b',c')\right|\lesssim \frac{1}{N^2\eta} \sqrt{\sum_{b\sim c}|U(b,c)|^2 \sum_{b'\sim c'} |V(b',c'))|^2}.
\end{align}

Denote the forest $\widehat \cF$ from $\cF$ by removing $\{(b_1,c_1), (b_2,c_2)\}$ and special edges $\cV$,
\begin{align*}
    \widehat \cF=(\widehat \bfi, \widehat E)=\cF\setminus \left( \{(b_1,c_1), (b_2,c_2)\}\cup\{ (u_i, v_i)\}_{1\leq i\leq k}\right).
\end{align*}
Next we show that we can replace the indicator function $I(\cF,\cG)$ by $I(\widehat\cF, \cG)A_{b_1c_1}A_{b_2c_2}\prod_{i=1}^k A_{u_i v_i}$, and the error is negligible: 
\begin{align}\begin{split}\label{e:relaxbc}
    &\phantom{{}={}}\frac{1}{Z_\cF}\sum_{\bfi}\bE[I(\cF, \cG)\bm1(\cG\in \Omega)R_\bfi ]\\
    &= \frac{1}{Z_\cF}\sum_{\bfi}\bE\left[\bm1(\cG\in \Omega)I(\widehat\cF, \cG)A_{b_1c_1}A_{b_2c_2}\prod_{i=1}^k A_{u_i v_i} R_\bfi \right]+\OO(\bE[\Psi_p]).
\end{split}\end{align}
We denote that
\begin{align*}
\Delta I&:= I(\widehat\cF, \cG)A_{b_1c_1}A_{b_2c_2}\prod_{i=1}^k A_{u_i v_i} -I(\cF,\cG)\\
&=\left(I(\widehat\cF, \cG)A_{b_1c_1}A_{b_1c_2}- I(\widehat \cF', \cG)\right)\prod_{i=1}^k A_{u_i v_i},
\end{align*}
where $\widehat \cF'=\cF\setminus \{(u_i,v_i)\}_{i\in \qq{k}}$.
To show \eqref{e:relaxbc}, we first notice that  
\begin{align}\begin{split}\label{e:relaxbc2}
    &\phantom{{}={}}\frac{1}{Z_\cF}\sum_{\bfi}\bE\left[|\Delta I|\bm1(\cG\in \Omega)|R_\bfi| \right]\\
    &\lesssim 
    \frac{1}{Z_\cF}\sum_{\bfi}\bE\left[|\Delta I|\bm1(\cG\in \Omega)|Q-Y_\ell(Q)|^{2p-1-k}\prod_{i=1}^{k} (|G_{s_iw_i}|^2+|G_{s'_iw_i'}|^2) \right]\\
    &\lesssim 
    \frac{1}{Z_{\widehat \cF}}\sum_{\widehat\bfi}\frac{1}{(Nd)^2}\sum_{b_1,c_1, b_2, c_2}\bE\left[\left|I(\widehat\cF, \cG)A_{b_1c_1}A_{b_1c_2}- I(\widehat \cF', \cG)\right|\bm1(\cG\in \Omega)|Q-Y_\ell(Q)|^{2p-1-k}\Phi^k \right],
\end{split}\end{align}
where $Z_\cF=Z_{\widehat \cF}(Nd)^{2+k}$.
For the indicator function $|I(\widehat\cF, \cG)A_{b_1c_1}A_{b_1c_2}- I(\widehat \cF', \cG)|$ in the last line of \eqref{e:relaxbc2}, it equals one if there exists $v\in \cB_\ell(c_1, \cG)\cup \cB_\ell(c_2, \cG)$, such that $v$ does not have a radius $\fR$ tree neighborhood; or $c_1, c_2$ are within distance $3\fR$ to each other, or other core edges. By the same argument as in \Cref{lem:configuration}, we have
\begin{align}\label{e:indicator}
     \frac{1}{(Nd)^2}\sum_{b_1,c_1, b_2, c_2}|I(\widehat\cF, \cG)A_{b_1c_1}A_{b_1c_2}- I(\widehat \cF', \cG)|\bm1(\cG\in \Omega)
     \lesssim \frac{1}{N^{1-2\fc}}.
\end{align}
By plugging \eqref{e:indicator} into \eqref{e:relaxbc2}, and averaging over the indices $\widehat \bfi$, we conclude that
\begin{align*}
    \frac{1}{Z_\cF}\sum_{\bfi}\bE\left[|\Delta I|\bm1(\cG\in \Omega)|R_\bfi| \right]\lesssim \frac{1}{N^{1-2\fc}}\bE[{\bf1}(\cG\in \Omega) |Q-Y_\ell(Q)|^{2p-1-k}\Phi^k]\lesssim \bE[\Psi_p].
\end{align*}

Next, we study \eqref{e:relaxbc}. We define $U(b,c), V(b',c')$ as follows such that 
\begin{align}\label{e:bfi}
    R_{\bfi}=U(b_1,c_1)V(b_2,c_2)G^{(b_1 b_2)}_{c_1 c_2}R'_{\widehat\bfi},
\end{align}
where $R'_{\widehat\bfi}$ only depends on the indices $\widehat \bfi$. In particular it does not depends on indices $b_1, c_1, b_2, c_2$ and vertices in special edges.

For each special edge $(u_i, v_i)$, there are at least two factors in the form $(d-1)^{2\ell}G_{s s'}$ with $s\in \cK$ and $s'\in \{u_i, v_i\}$, and possibly some factors in the form $G_{u_i u_i},G_{u_i v_i}, 1/G_{v_i v_i}$. We divide the set $\qq{k}=\bV_0\cup\bV_1\cup \bV_2$:

\begin{enumerate}
    \item If all the factors $(d-1)^{2\ell}G_{s s'}$ for $s'\in \{u_i, v_i\}$ satisfies $s\not\in \{b_2, c_2\}$, we put $i\in \bV_1$, and all the Green's function terms associated with $(u_i, v_i)$ in $\widehat U(b_1, c_1)$.
    \item If $i\notin \bV_1$, and all the factors $(d-1)^{2\ell}G_{s s'}$ for $s'\in \{u_i, v_i\}$ satisfies $s\not\in \{b_1, c_1\}$, we put $i\in \bV_2$, and all the Green's function terms associated with $(u_i, v_i)$ in $\widehat U(b_2, c_2)$.
    \item For the remaining indices $i\in \bV_0$, there are some factors $(d-1)^{2\ell}G_{s s'}$ with $s\in \{b_1, c_1\}, s'\in \{u_i, v_i\}$, we put them in $\widehat U'(b_1,c_1)$; and some factors $(d-1)^{2\ell}G_{s s'}$ with $s\in \{b_2, c_2\}, s'\in \{u_i, v_i\}$ we put them in $\widehat V'(b_2,c_2)$. We also put all the factors  $G_{u_i u_i},G_{u_i v_i}, 1/G_{v_i v_i}$ in $\widehat U'(b_1,c_1)$.
\end{enumerate}
For terms involving only vertices $b_1,c_1$, i.e. $(d-1)^{2\ell}(G_{c_1c_1}^{(b_1)}-Q)$ and $(d-1)^{2\ell}(G_{b_1c_1}-P_{b_1c_1})$, we put them in $\widehat U(b_1,c_1)$; and terms involving only vertices $b_2,c_2$, we put them in $\widehat V(b_2,c_2)$. In this way all the factors involving the indices $b_1, c_1, b_2, c_2$ and vertices in special edges are collected in $U(b_1,c_1)=\widehat U(b_1,c_1) \widehat U'(b_1,c_1)$, and $V(b_1,c_1)=\widehat V(b_1,c_1) \widehat V'(b_1,c_1)$. We collect remaining terms in $R'_{\widehat \bfi}$, and this leads to \eqref{e:bfi}. 

Thanks to the Ward identity, and \Cref{thm:prevthm} we have 
\begin{align}\begin{split}\label{e:ward}
    &\sum_{u_i\sim v_i: i\in \bV_1} |\widehat U(b_1,c_1)|\lesssim (Nd \Phi)^{|\bV_1|},
    \quad 
    \sum_{u_i\sim v_i: i\in \bV_2} |\widehat V(b_2, c_2)|\lesssim (Nd \Phi)^{|\bV_2|},\\
    &\sum_{u_i\sim v_i: i\in \bV_0} |\widehat U'(b_1,c_1)|^2\lesssim (Nd \Phi)^{|\bV_0|},
    \quad 
    \sum_{u_i\sim v_i: i\in \bV_0} |\widehat V'(b_2, c_2)|^2\lesssim (Nd\Phi)^{|\bV_0|}.
\end{split}\end{align}

With the above notations, we can rewrite the righthand side of \eqref{e:relaxbc} as 
\begin{align}\begin{split}\label{e:sumbound1}
&\phantom{{}={}}\frac{1}{Z_\cF}\sum_{\bfi}\bE\left[\bm1(\cG\in \Omega)I(\widehat\cF, \cG)A_{b_1c_1}A_{b_2c_2}\prod_{i=1}^k A_{u_i v_i} R_\bfi \right]\\
  &= \frac{1}{Z_{\widehat\cF}}\sum_{\widehat\bfi}\bE\left[\frac{\bm1(\cG\in \Omega)I(\widehat\cF, \cG)}{(Nd)^{k+2}} \sum_{u_i\sim v_i:i\in \bV_0} R_{\widehat \bfi}\sum_{b_1\sim c_1\atop b_2\sim c_2}\left(\sum_{u_i\sim v_i:i\in \bV_1} U(b_1,c_1)\right)G_{c_1 c_2}^{(b_1b_2)}\left(\sum_{u_i\sim v_i:i\in \bV_2}  V(b_2,c_2)\right) \right].
\end{split}\end{align}
We can first sum over the special edges and $b_1, c_1, b_2, c_2$ using \eqref{e:normbound} and \eqref{e:ward}
\begin{align}\begin{split}\label{e:sumbound2}
&\phantom{{}={}}\frac{\bm1(\cG\in \Omega) }{(Nd)^{k+2}}\sum_{u_i\sim v_i: i\in \bV_0} \sum_{b_1\sim c_1\atop b_2\sim c_2}\left(\sum_{u_i\sim v_i: i\in \bV_1}U(b_1,c_1)\right)G_{c_1 c_2}^{(b_1b_2)}\left(\sum_{u_i\sim v_i: i\in \bV_2}V(b_2,c_2)\right)\\
 &\leq    \frac{\bm1(\cG\in \Omega) }{(Nd)^{k+2}}\sum_{u_i\sim v_i: i\in \bV_0}\frac{1}{\eta}\sqrt{\sum_{b_1\sim c_1}\left|\sum_{u_i\sim v_i: i\in \bV_1}U(b_1,c_1)\right|^2 \sum_{b_2\sim c_2} \left|\sum_{u_i\sim v_i: i\in \bV_2}V(b_2,c_2)\right|^2}\\
  &    \lesssim\frac{\bm1(\cG\in \Omega) }{(Nd)^{k+2}} \frac{1}{\eta}(Nd \Phi)^{|\bV_1|+|\bV_2|} \sum_{u_i\sim v_i: i\in \bV_0}  \left(\sum_{b_1\sim c_1}|\widehat U(b_1,c_1)|^2+\sum_{b_1\sim c_1}|\widehat U(b_1,c_1)|^2\right)\\
    &\lesssim \frac{\bm1(\cG\in \Omega) }{(Nd)^{k+2}}  \frac{1}{\eta}(Nd \Phi)^{|\bV_0|+|\bV_1|+|\bV_2|} = \frac{\bm1(\cG\in \Omega) }{\eta (Nd)^{2}}  \Phi^k.
\end{split}\end{align}
Then we can plug \eqref{e:sumbound2} into \eqref{e:sumbound1} to conclude
\begin{align}\begin{split}\label{e:sumbound3}
   &\phantom{{}={}}\frac{1}{Z_\cF}\sum_{\bfi}\bE\left[\bm1(\cG\in \Omega)I(\widehat\cF, \cG)A_{b_1c_1}A_{b_2c_2}\prod_{i=1}^k A_{u_i v_i} R_\bfi \right]\\
   &\lesssim 
    \frac{1}{N^2\eta}\frac{1}{Z_{\widehat \cF}}\sum_{\widehat \bfi}\bE\left[\bm1(\cG\in \Omega)I(\widehat\cF, \cG) |R_{\widehat \bfi}| \Phi^k \right]\\
    &\lesssim \frac{1}{N^2\eta}\frac{1}{Z_{\widehat \cF}}\sum_{\widehat \bfi}\bE\left[\bm1(\cG\in \Omega)I(\widehat\cF, \cG) |Q-Y_\ell(Q)|^{2p-1-k}|1-Y'_\ell(Q)|^k \Phi^k \right]\\
    &\lesssim \frac{1}{N^2\eta}\bE\left[\bm1(\cG\in \Omega)|Q-Y_\ell(Q)|^{2p-1-k}|1-Y'_\ell(Q)|^k\Phi^k \right]\\
    &\lesssim\bE\left[\bm1(\cG\in \Omega)(|Q-Y_\ell(Q)|+\Phi)^{2p-2}\Phi\frac{|1-Y'_\ell(Q)|\Phi}{N\eta} \right]=\OO(\bE[\Psi_p]).
\end{split}\end{align}
The two estimates \eqref{e:sumbound1} and \eqref{e:sumbound3} together lead to \eqref{e:refined_bound}  under assumption \Cref{i2:small}.
\end{proof}

\section{Proof of the Iteration Scheme}\label{s:proof_main}
In this section we prove \Cref{p:add_indicator_function} and \Cref{p:iteration}.
\subsection{Proof of \Cref{p:add_indicator_function}}
The following proposition is crucial for proving \Cref{p:add_indicator_function}. It states that if an index $\al\in \qq{\mu}$ appears exactly once in the expression, then its expectation over the randomness of the choice of the resampling data $\bfS$ is small. 
\begin{proposition}
\label{p:upbb}
Fix $z\in {\mathbf D}$, a $d$-regular graph $\cG\in \Omega(z)$, and an edge $(i,o)\in \cG$. 
We denote $T_{\bf S}$ the local resampling  with resampling data ${\bf S}=\{(l_\al, a_\al), (b_\al, c_\al)\}_{\al\in\qq{\mu}}$ around  $o$, and $\widetilde G=T_{\bf S}G$.
 \begin{align}\label{e:single_index_sum}
        \bE_\bfS[G_{c_\al c_\al}^{(b_\al)}-Q], 
        \quad 
        \bE_\bfS[G_{b_\al s}]
        \quad 
        \bE_\bfS[G_{c_\al s}]=\OO\left(\frac{1}{N^{1-\fc}}\right),
    \end{align}
    and 
    \begin{align}\begin{split}\label{e:Gccbb}
    &\bE_\bfS[G_{c_\al c}^{(b_\al b)}]=\frac{d}{\sqrt{d-1}}\bE_\bfS\left[ (G_{c_\al c_\al}^{(b_\al)}-Q)G_{b_\al c}^{(b)}\right]+\OO\left(\frac{1}{N^{1-\fc}}+\bE_\bfS\left[\sum_{v\neq v'\sim b_\al}(|G_{vv'}^{(b_\al)}|^2+|G_{b_\al c}^{(b)}|^2)\right]\right).
    \end{split}\end{align}
    For any vector $w\in \bR^N$, and a term $U^{(u,v)}$, which is a product of $G_{vv}, G_{vv}, 1/G_{uu}$, the following holds 
    \begin{align}\label{e:Wtermexp}
      \quad  \frac{1}{N^2} \sum_{s, u,v} w_s \bE_{\bfS}[A_{uv}G_{\sfL_\al u}G_{sv}U^{(u,v)}], \frac{1}{N^2} \sum_{s, u,v} w_s \bE_{\bfS}[A_{uv}G_{\sfL_\al u}G_{su}U^{(u,v)}] =\OO\left( \frac{\|w\|_2}{N^{5/2-\fc}\Im[z]}\right),
    \end{align}
    for any $\sfL\in \{l,a,b,c\}$.
\end{proposition}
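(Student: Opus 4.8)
The common mechanism is that, conditionally on $\cG$, the oriented edge $(b_\al,c_\al)$ is uniform over $\vec E(\cG^{(\bT)})$, so for any quantity $F$ depending on the graph and on this one edge,
\[
\bE_\bfS[F(b_\al,c_\al)]=\frac{1}{2|E(\cG^{(\bT)})|}\sum_{(b,c)\in\vec E(\cG^{(\bT)})}F(b,c).
\]
Since $|\bT|\lesssim(d-1)^\ell\le N^{\fc/64}$ and $\cG$ is $d$-regular, $2|E(\cG^{(\bT)})|=Nd(1+\OO(N^{-1+\fc}))$ and this sum differs from $\tfrac1{Nd}\sum_{(b,c)\in\vec E(\cG)}F(b,c)$ in only $\OO((d-1)^\ell)$ terms; on $\Omega(z)$ every Green's function entry occurring is $\OO(1)$ by Theorem \ref{thm:prevthm}, so whenever $\|F\|_\infty=\OO(1)$ on $\Omega$ we get
\[
\bE_\bfS[F(b_\al,c_\al)]=\frac1{Nd}\sum_{(b,c)\in\vec E(\cG)}F(b,c)+\OO(N^{-1+\fc}).
\]

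For \eqref{e:single_index_sum}, apply this with $F(b,c)=G^{(b)}_{cc}-Q$: by the definition \eqref{e:Qsum} of $Q$, $\tfrac1{Nd}\sum_{(b,c)\in\vec E(\cG)}(G^{(b)}_{cc}-Q)=Q-Q=0$. For $F(b,c)=G_{bs}$ (resp.\ $G_{cs}$), $\tfrac1{Nd}\sum_{(b,c)\in\vec E(\cG)}G_{bs}=\tfrac1N(G\mathbf 1)_s$, and since $\cG$ is $d$-regular $\mathbf 1$ is an eigenvector of $H$, whence $(G\mathbf 1)_s=(\tfrac d{\sqrt{d-1}}-z)^{-1}=\OO(1)$ because $\dist(z,\tfrac d{\sqrt{d-1}})\gtrsim1$ on $\mathbf D$ (here one uses $d\ge 3$ and $\fa$ small); this gives $\OO(1/N)$. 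The claim for $G_{c_\al s}$ follows likewise since $\vec E(\cG^{(\bT)})$ is invariant under reversing orientation.

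For \eqref{e:Gccbb} the plan is to extract a single diagonal factor. Removing $b_\al$ from $\cG^{(b)}$ by the Schur complement,
\[
G^{(b_\al b)}_{c_\al c}=G^{(b)}_{c_\al c}-\frac{G^{(b)}_{c_\al b_\al}}{G^{(b)}_{b_\al b_\al}}G^{(b)}_{b_\al c},
\]
and $\bE_\bfS[G^{(b)}_{c_\al c}]=\OO(N^{-1+\fc})$ by the same row-sum/eigenvector computation as above. For the remaining term use the off-diagonal Schur identity $G^{(b)}_{c_\al b_\al}=-\tfrac{G^{(b)}_{b_\al b_\al}}{\sqrt{d-1}}\sum_{w\sim b_\al,\,w\ne b}G^{(bb_\al)}_{wc_\al}$ and split off the summand $w=c_\al$:
\[
\frac{G^{(b)}_{c_\al b_\al}}{G^{(b)}_{b_\al b_\al}}=-\frac1{\sqrt{d-1}}G^{(bb_\al)}_{c_\al c_\al}-\frac1{\sqrt{d-1}}\sum_{w\sim b_\al,\,w\ne b,\,c_\al}G^{(bb_\al)}_{wc_\al}.
\]
Replacing $G^{(bb_\al)}_{c_\al c_\al}$ by $G^{(b_\al)}_{c_\al c_\al}$ (an $\OO(\varepsilon)$ change, with the few non-tree-like configurations of $(b_\al,b)$ giving an $N^{-1+\fc}$ error), writing $G^{(b_\al)}_{c_\al c_\al}=(G^{(b_\al)}_{c_\al c_\al}-Q)+Q$ and discarding the $Q$-part via $\bE_\bfS[G^{(b)}_{b_\al c}]=\OO(N^{-1+\fc})$, yields the main term of \eqref{e:Gccbb} up to an explicit $\poly(d)$ constant. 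Each leftover summand $G^{(bb_\al)}_{wc_\al}$ ($w\ne c_\al$) is the Green's function of two distinct neighbours $w,c_\al$ of $b_\al$ with their common neighbour $b_\al$ deleted: by Cauchy--Schwarz its square is controlled by $|G^{(b_\al)}_{vv'}|^2$ and $|G^{(b)}_{b_\al c}|^2$, exactly the error quantities appearing in \eqref{e:Gccbb}, and these, once multiplied by $\Pi_p$ and averaged, are $\OO(\bE[\Psi_p])$ by Proposition \ref{lem:deletedalmostrandom} and the Ward identity.

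Finally, for \eqref{e:Wtermexp} with $\sfL\in\{b,c\}$: since $U^{(u,v)}$ and $G_{sv}$ do not depend on $\bfS$, the left side equals $\tfrac1{N^2}\sum_{u,v}A_{uv}\,\bE_\bfS[G_{\sfL_\al u}]\,U^{(u,v)}(G w)_v$; by \eqref{e:single_index_sum}, $|\bE_\bfS[G_{\sfL_\al u}]|\lesssim N^{-1+\fc}$ uniformly in $u$, so $\big\|(\bE_\bfS[G_{\sfL_\al u}])_u\big\|_2\lesssim N^{-1/2+\fc}$, while $\|Gw\|_2\le\|w\|_2/\Im z$ (using $G^*G=\Im[G]/\Im z$); a Cauchy--Schwarz in the edge sum with $\|A\|=d$ then gives $\OO(\|w\|_2/(N^{5/2-\fc}\Im z))$. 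The cases $\sfL\in\{l,a\}$, where $\sfL_\al\in\bT$ is $\bfS$-independent, are handled by a variant of the same argument, using in addition Lemma \ref{lem:boundaryreduction} and the exponential decay \eqref{e:Pijbound} on the tree $\cT$. The main obstacle across all three parts is the bookkeeping: arranging the Schur-complement expansions so that every term either carries an explicit gain $N^{-\fc}$ or reduces to one of the two canonical shapes handled above — a diagonal factor $G^{(b_\al)}_{c_\al c_\al}-Q$ available for the next iteration, or a deleted-common-neighbour square governed by Proposition \ref{lem:deletedalmostrandom} — leaving no residual terms.
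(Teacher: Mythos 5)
Your proposal takes essentially the same route as the paper for all three parts, with one genuinely different and arguably cleaner derivation, and one shared soft spot.

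For \eqref{e:single_index_sum} your argument is identical to the paper's (uniformity of $(b_\al,c_\al)$ over $\vec E(\cG^{(\bT)})$, plus the exact row-sum identity $\sum_x G_{xs}=(d/\sqrt{d-1}-z)^{-1}$).

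For \eqref{e:Gccbb} you take a different algebraic path: you eliminate $b_\al$ from $G^{(b)}$ first, then apply the off-diagonal Schur identity at $b_\al$ and split off $w=c_\al$. This produces the diagonal factor pointwise, without the conditional averaging of $c_\al$ over the neighbors of $b_\al$ that the paper uses. Your constant $1/\sqrt{d-1}$ differs from the paper's $d/\sqrt{d-1}$; checking the paper's derivation, the line ``$\bE_\bfS[G_{c_\al c}^{(b_\al b)}]=\bE_\bfS[(z\sqrt{d-1}+\tfrac1{\sqrt{d-1}}\sum_{v,v'\sim b_\al}G_{vv'}^{(b_\al)})G_{b_\al c}^{(b)}]+\cdots$'' equates $\bE_\bfS[G_{c_\al c}^{(b_\al b)}]$ with (the expectation of) $\sum_{v\sim b_\al}G_{vc}^{(b_\al b)}$, which is $d$ times as large as the conditional average over $c_\al$; so the paper appears to have dropped a factor $1/d$, and your constant is the correct one. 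This is harmless for the rest of the paper, since \eqref{e:Gccbb} is only used with $\OO(\poly(\ell))$ coefficients, but it is a real bookkeeping improvement. One small imprecision on your side: ``$\bE_\bfS[G^{(b)}_{c_\al c}]=\OO(N^{-1+\fc})$ by the same row-sum computation'' needs the extra step $\sum_v G^{(b)}_{vc}=\sum_v G_{vc}-\tfrac{(\sum_v G_{vb})G_{bc}}{G_{bb}}=\OO(1)$, since $\mathbf 1$ is no longer an eigenvector of $H^{(b)}$; also the final sentence of your \eqref{e:Gccbb} paragraph (about $\Pi_p$ and $\bE[\Psi_p]$) belongs to the use of this proposition in Lemma \ref{lem:task2}, not to its proof — the statement of \eqref{e:Gccbb} is purely a $\bE_\bfS$ identity.

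For \eqref{e:Wtermexp} with $\sfL\in\{b,c\}$ your argument is correct and matches the paper. For $\sfL\in\{l,a\}$ you wave at ``a variant using Lemma \ref{lem:boundaryreduction}'' without carrying it out, and this is a genuine gap: $\sfL_\al$ is then $\bfS$-deterministic, so $\bE_\bfS$ provides no smallness, and a plain Cauchy--Schwarz after summing over $s$ only yields $\OO(\|w\|_2/(N^2(\Im z)^{3/2}))$, which is strictly weaker than the stated $\OO(\|w\|_2/(N^{5/2-\fc}\Im z))$ when $\eta\ll 1$; Lemma \ref{lem:boundaryreduction} would only help after summing over $\al\in\sfA_i$, whereas \eqref{e:Wtermexp} is stated for a single $\al$. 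You should note, however, that the paper's own proof of this display replaces $\bE_\bfS[G_{\sfL_\al u}]$ by $\tfrac{1}{N}\sum_{x}G_{xu}$, which also presupposes that $\sfL_\al$ is nearly uniform over $\qq{N}$, i.e.\ $\sfL\in\{b,c\}$; so this ambiguity for $\sfL\in\{l,a\}$ is a shared issue, and in the one place \Cref{p:upbb}\eqref{e:Wtermexp} is actually invoked (case (2) of the proof of \Cref{p:iteration}) only the $\{b,c\}$ case is needed.
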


\begin{proof}[Proof of \Cref{p:upbb}]
To prove \eqref{e:single_index_sum} and \eqref{e:Wtermexp}, we first notice that the Green's function $G$ satisfies 
    \begin{align}\label{e:Ghao}
        \sum_x G_{xs}=\sum_s G_{xs}=\frac{1}{d/\sqrt{d-1} -z}\lesssim 1,
    \end{align}
    for $z\in \bf D$ from \eqref{e:D}. We recall from \Cref{s:local_resampling}, the directed edge $(b_\al, c_\al)$ is uniform randomly selected in $\cG^{(\bT)}$, where $\bT$ is the vertex set of $\cB_\ell(o,\cG)$. Thus
    \begin{align*}
        \bE_\bfS[G_{c_\al c_\al}^{(b_\al)}-Q]
        =\bE_\bfS[G_{c_\al c_\al}^{(b_\al)}]-Q
        =\frac{1}{Nd-\OO(|\bT|)}\sum_{u\sim v}G_{vv}^{(u)} -Q+\OO\left(\frac{|\bT|}{Nd}\right)
        =\OO\left(\frac{1}{N^{1-\fc}}\right),
    \end{align*}
    where the sum is over all pairs of adjacent vertices $u\sim v$, and we used the definition \eqref{e:Qsum} of $Q$.
    Moreover, the expectation $\bE_\bfS[\cdot]$ over $b_\al$ or $c_\al$ is uniform over $\qq{N}\setminus \bX$ where $\bX=\bT\cup\{a_\al\}_{\al\in \qq{\mu}}$,
    \begin{align}\label{e:Gbs}
        \bE_\bfS[G_{b_\al s}]
        =\frac{1}{N-\OO(|\bX|)}\sum_v G_{vs}+\OO\left(\frac{|\bX|}{N}\right)
          =\OO\left(\frac{1}{N^{1-\fc}}\right).
    \end{align}
where we used \eqref{e:Ghao}. The same estimate holds for $\bE_\bfS[G_{c_\al s}]$. 

For \eqref{e:Gccbb}, we first notice that by the same argument as for \eqref{e:Gbs} 
\begin{align}\label{e:Gbs2}
    \bE_\bfS[G_{b_\al c}^{(b)}]=\OO\left(\frac{1}{N^{1-\fc}}\right).
\end{align}
Moreover, for any vertex $u\in \qq{N}$, the Schur complement formula gives
\begin{align*}
    -\frac{G_{uu}}{\sqrt{d-1}}\sum_{v:v\sim u}G_{vc}^{(u b)}
    =G_{uc}^{(b)},\quad -\frac{1}{G_{uu}}=z+\frac{1}{d-1}\sum_{v,v'\sim u}G_{vv'}^{(u)}.
\end{align*}
By taking $(u,v)=(b_\al, c_\al)$ in the above expression, and taking expectation, we get
\begin{align*}
  \bE_\bfS[G_{c_\al c}^{(b_\al b)}]
  &=\bE_\bfS\left[\left(z\sqrt{d-1}+\frac{1}{\sqrt{d-1}}\sum_{v,v'\sim b_\al} G_{vv'}^{(b_\al)}\right)G_{b_\al c}^{(b)}\right]+\OO\left(\frac{1}{N^{1-\fc}}\right)\\
  &=\frac{d}{\sqrt{d-1}}\bE_\bfS\left[ (G_{c_\al c_\al}^{(b_\al)}-Q)G_{b_\al c}^{(b)}\right]+\cE^{(b,c)}+\OO\left(\frac{1}{N^{1-\fc}}\right),
\end{align*}
where we used \eqref{e:Gbs2}, and the error term $\cE^{(b,c)}$ is given by
\begin{align*}
    \cE^{(b,c)}
    =\frac{1}{\sqrt{d-1}}\bE_\bfS\left[\sum_{v\neq v'\sim b_\al}G_{vv'}^{(b_\al)} G_{b_\al c}^{(b)}\right].
\end{align*}
This gives \eqref{e:Gccbb}.

    In the following we prove \eqref{e:Wtermexp}. We will only prove the first statement, the second one can be proven in the same way. We have 
    \begin{align*}
       &\phantom{{}={}}\frac{1}{N} \sum_{ u,v} w_s \bE_{\bfS}[A_{uv}G_{\sfL_\al u}G_{sv}U^{(u,v)}]\\
        &=\frac{\OO(1)}{N^2}\sum_{x\in \qq{N}}\sum_{u,v}w_s A_{uv}G_{x u}G_{sv}U^{(u,v)}+\OO\left( \frac{1}{N^2}\sum_{x\in \bX}\sum_{u,v} |w_s|A_{uv}|G_{x u}G_{sv}U^{(u,v)}|\right)\\
        &= \frac{\OO(1)}{N^2}\sum_{u,v}w_s G_{sv} A_{uv}U^{(u,v)} +\OO\left(\frac{|w_s|N^\fc}{N^2\Im[z]}\right),
    \end{align*}
    where in the last line we used \eqref{e:Ghao} and the Ward identity.
If we further average over index $s$, using that $\|G\|_{\spec}\leq 1/\Im[z]$, we conclude that
\begin{align*}
    &\phantom{{}={}}\frac{\OO(1)}{N^3}\sum_{s,v}w_s G_{sv} \sum_u A_{uv}U^{(u,v)}+\OO\left(\sum_s\frac{|w_s|N^\fc}{N^3\Im[z]}\right)\\
    &\lesssim \frac{1}{N^3\Im[z]}\|w\|_2 \sqrt{\sum_{v}   \left|\sum_u U^{(u,v)}A_{uv}\right|^2}+\frac{N^\fc}{N^3\Im[z]} (\sqrt N\|w\|_2)
    \lesssim \frac{\|w\|_2 }{N^{5/2-\fc}\Im[z]}.
\end{align*}
This gives \eqref{e:Wtermexp}.

\end{proof}

\begin{proof}[Proof of \Cref{p:add_indicator_function}]
Fix an embedding of $\cF$ in to $\cG$, such that $I(\cF, \cG)=1$. We pick a switching edge from last step, and denote it as $(i,o)$. We will do a local resampling around the vertex $o$. We split the lefthand side of \eqref{e:maint} into two terms
\begin{align}\begin{split}\label{e:maint0}
&\phantom{{}={}}\bE\left[I(\cF,\cG){\bm1(\cG\in \Omega)}R_\bfi(\cG)\right] \\ 
&=\bE\left[I(\cF,\cG)I(\cF,\wt\cG){\bm1(\cG\in \Omega)}\bm1(\tcG\in \oOmega)R_\bfi(\cG)\right]  +\bE\left[I(\cF,\cG)\bE_\bfS[1-I(\cF,\wt\cG)\bm1(\tcG\in \oOmega)]{\bm1(\cG\in \Omega)}R_\bfi(\cG)\right] \\
&=\bE\left[I(\cF,\cG)I(\cF,\wt\cG){\bm1(\cG\in \Omega)\bm1(\tcG\in \oOmega)}R_\bfi(\cG)\right]  +\OO\left(N^{-1+2\fc}\bE\left[I(\cF,\cG){\bm1(\cG\in \Omega)}|R_\bfi(\cG)|\right] \right),
\end{split}\end{align}
where in the third line we used \Cref{lem:wellbehavedswitch}, that $\bE_\bfS[1-I(\cF,\tcG)\bm1(\tcG\in \oOmega) ]\leq N^{-1+2\fc}$. After averaging over $\bfi$, thanks to \Cref{p:small_Ri} the second term on the righthand side of \eqref{e:maint0} is bounded as
\begin{align*}
    \frac{1}{Z_\cF}\sum_{\bfi}N^{-1+2\fc}\bE\left[I(\cF,\cG){\bm1(\cG\in \Omega)}|R_\bfi(\cG)|\right]=\OO(\bE[\Psi_p]).
\end{align*}

Next we analyze the first term in the last line of \eqref{e:maint0}.
\begin{align}\begin{split}\label{e:maint1}
&\phantom{{}={}}\bE\left[I(\cF,\cG)I(\cF,\wt\cG)\bm1(\cG\in \Omega) \bm1(\tcG\in \oOmega) R_\bfi(\cG)\right]\\
&=
\bE\left[I(\cF,\cG)I(\cF,\wt\cG){\bm1(\cG,\tcG\in \Omega)}R_\bfi(\cG)\right]  +\bE\left[I(\cF,\cG)I(\cF,\wt\cG){\bm1(\cG\in\Omega)\bm1(\tcG\in \oOmega\setminus \Omega)} R_\bfi(\cG)\right].
\end{split}\end{align}
For $\cG\in \Omega$, we can bound the second term on the righthand side of \eqref{e:maint1} by the same argument as for \eqref{e:rough_bound}, and then using \eqref{e:ffbbcopy}
\begin{align}\begin{split}\label{e:maint2}
&\phantom{{}={}}\frac{1}{Z_\cF}\sum_{\bfi}\bE\left[I(\cF,\cG)I(\cF,\wt\cG){\bm1(\cG\in\Omega)\bm1(\tcG\in \oOmega\setminus \Omega)} |R_\bfi(\cG)|\right]\\
&\lesssim\bE\left[(|Q-Y_\ell(Q)|+\Phi)^{2p-1}{\bm1(\cG\in\Omega)\bm1(\tcG\in \oOmega\setminus \Omega)}\right]\lesssim \bE[\Psi_p]
\end{split}\end{align}

The main term is the first term on the righthand side of \eqref{e:maint1}. Recall that $R_\bfi(\cG)=(G_{oo}^{(i)}-Y_\ell(Q))R_\bfi'(\cG)$, we have
\begin{align}\begin{split}\label{e:towrite}
    &\phantom{{}={}}\frac{1}{Z_\cF}\sum_{\bfi}\bE\left[I(\cF,\cG)I(\cF,\wt\cG)\bm1(\cG,\tcG\in \Omega) R_\bfi(\cG)\right]\\
    &= \frac{1}{Z_\cF}\sum_{\bfi}\bE\left[I(\cF,\cG)I(\cF,\wt\cG)\bm1(\cG,\tcG\in \Omega)G_{oo}^{(i)} R'_\bfi(\cG)\right]\\
    &-\frac{1}{Z_\cF}\sum_{\bfi}\bE\left[I(\cF,\cG)I(\cF,\wt\cG)\bm1(\cG,\tcG\in \Omega)Y_\ell(Q) R'_\bfi(\cG)\right].
\end{split}\end{align}
For the first term on the righthand side of \eqref{e:towrite}, thanks to \Cref{lem:exchangeablepair}, $\cG$ and $\tcG=T_{\bf S}(\cG)$ forms an exchangeable pair, we have
\begin{align}\begin{split}\label{e:towrite2}
    &\phantom{{}={}}\frac{1}{Z_\cF}\sum_{\bfi}\bE\left[I(\cF,\cG)I(\cF,\wt\cG)\bm1(\cG,\tcG\in \Omega)G_{oo}^{(i)} R'_\bfi(\cG)\right]\\
    &=\frac{1}{Z_\cF}\sum_{\bfi}\bE\left[I(\cF,\cG)I(\cF,\wt\cG)\bm1(\cG,\tcG\in \Omega)\wt G_{oo}^{(i)} R'_\bfi(\wt\cG)\right],
\end{split}\end{align}
and 
\begin{align}\begin{split}\label{e:towrite3}
    &\phantom{{}={}}\frac{1}{Z_\cF}\sum_{\bfi}\bE\left[I(\cF,\cG)I(\cF,\wt\cG)\bm1(\cG,\tcG\in \Omega)Y_\ell(Q) R'_\bfi(\cG)\right]\\
    &=\frac{1}{Z_\cF}\sum_{\bfi}\bE\left[I(\cF,\cG)I(\cF,\wt\cG)\bm1(\cG,\tcG\in \Omega)Y_\ell(\wt Q) R'_\bfi(\wt \cG)\right]\\
    &=\frac{1}{Z_\cF}\sum_{\bfi}\bE\left[I(\cF,\cG)I(\cF,\wt\cG)\bm1(\cG,\tcG\in \Omega)Y_\ell( Q) R'_\bfi(\wt \cG)\right]\\
    &+\frac{1}{Z_\cF}\sum_{\bfi}\bE\left[I(\cF,\cG)I(\cF,\wt\cG)\bm1(\cG,\tcG\in \Omega)(Y_\ell(Q)-Y_\ell(\widetilde Q)) R'_\bfi(\cG)\right].
\end{split}\end{align}
For the last term on the righthand side of \eqref{e:towrite3}, we can bound it as
\begin{align}\begin{split}\label{e:towrite4}
    &\phantom{{}={}}\left|\frac{1}{Z_\cF}\sum_{\bfi}\bE\left[I(\cF,\cG)I(\cF,\wt\cG)\bm1(\cG,\tcG\in \Omega)(Y_\ell(Q)-Y_\ell(\widetilde Q)) R'_\bfi(\cG)\right]\right|\\
    &\lesssim
    \frac{1}{Z_\cF}\sum_{\bfi}\bE\left[\bm1(\cG,\tcG\in \Omega)|\widetilde Q-Q| |R'_\bfi(\cG)|\right]\lesssim \bE[\Psi_p],
\end{split}\end{align}
where in the second line we used that $|Y'|\lesssim 1$ from \eqref{eq:Yprime}; in the last line we used that $|\widetilde Q-Q| \lesssim \Phi$ from \Cref{lem:Qlemma} and \eqref{e:rough_bound}.
The estimates \eqref{e:towrite}, \eqref{e:towrite2} and \eqref{e:towrite3} and \eqref{e:towrite4} together finishes the proof of \Cref{p:add_indicator_function}.
\end{proof}

\subsection{Proof of \Cref{p:iteration}}
 We denote $T_{\bf S}$ the local resampling  with resampling data ${\bf S}=\{(l_\al, a_\al), (b_\al, c_\al)\}_{\al\in\qq{\mu}}$ around  $o$, and $\widetilde G=T_{\bf S}G$.
We expand the forest $\cF$ by including the resampling data:
\begin{align*}
    &\widehat \cF=(\widehat \bfi, \widehat E)=\cF\bigcup \cT_\ell(o)\bigcup_{\al\in\qq{\mu}} \{(l_\al, a_\al), (b_\al, c_\al)\},\quad \cK^+=\widehat \cK=\cK\bigcup_{\al\in\qq{\mu}} \{(l_\al, a_\al), (b_\al, c_\al)\}\\
    &
      \cC^+=\widehat \cC=\cC\bigcup_{\al\in\qq{\mu}} \{(b_\al, c_\al)\},\quad
      (\cC^+)^\circ=\widehat \cC^\circ=\cC^+\setminus\{(i,o)\}, 
      \quad 
      P=P(\widehat \cF, z,\msc).
\end{align*}
By the same argument as in \eqref{e:maint0}, we can remove the indicator function $I(\cF, \wt \cG)$; and thanks to \Cref{lem:configuration}, we can further replace $\cF$ by $\widehat \cF$ as 
    \begin{align}\begin{split}\label{e:hatF}
    &\phantom{{}={}}\frac{1}{Z_\cF}\sum_{ \bfi}\bE\left[I(\cF,\cG)I(\cF,\wt\cG)\bm1(\cG,\tcG\in \Omega) (\wt G_{oo}^{(i)}-Y_\ell(Q)) R'_\bfi(\wt\cG)\right]\\
    &= \frac{1}{Z_{\widehat\cF}}\sum_{\widehat \bfi}\bE\left[I(\widehat\cF,\cG)\bm1(\cG,\tcG\in \Omega) (\wt G_{oo}^{(i)}-Y_\ell(Q)) R'_\bfi(\wt\cG)\right]+\OO(\bE[\Psi_p]).
 \end{split}\end{align}

In order to use our various lemmas from Section \ref{sec:expansions} that allow us to reduce $R_\bfi(\wt \cG)$ to terms of the unswitched graph $\cG$, we need to classify the factors of $R'_\bfi$ based on their dependence on $o,i$, where $(i,o)\in \cC^\circ$. We write
\begin{align}
    R'_\bfi=\prod_{j\in J}W_j,\quad W_0:=(d-1)^{2\ell}(\wt G_{oo}^{(i)}-Y_\ell(Q)) 
\end{align}
where $W_j$ is a factor of a generic function, as defined in Definition \ref{def:pgen}. Examining the definition, we see that factors are only of the form 
\begin{align*}
&\{(d-1)^{2\ell}(G_{cc}^{(b)}-Q)\}_{(b,c)\in \cC^\circ},\quad \{(d-1)^{2\ell}(G_{ss'}-P_{ss'})\}_{s,s'\in \{o,i\}},\quad 
\{(d-1)^{2\ell}G_{oc}^{(ib)}\}_{(i,o)\neq (b,c)\in \cC^\circ},\\
&\{(d-1)^{2\ell}(G_{ss'}-P_{ss'}) \}_{s\in \{o,i\}, s'\in \cK\setminus\{o,i\}},\quad
\{(d-1)^{2\ell} G_{cc'}^{(bb')}\}_{(i,o)\neq (b,c), (b',c')\in \cC^\circ},\\
&\{(d-1)^{2\ell}(G_{ss'}-P_{ss'})\}_{ s,s'\in \cK\setminus\{o,i\}},
 \quad
\{(d-1)^{2\ell}G_{ss'}\}_{ s\in \cK, s'\in \cV}, \quad
\{G_{uv}, G_{vv}, 1/G_{uu}\}_{(u,v)\in \cV},
\\
& (d-1)^{2\ell}(Q-\msc),\quad  Q-Y_\ell(Q),\quad  1-Y'_\ell(Q).
\end{align*}

We will next demonstrate the effect of the switch on each of the factors mentioned above, using the results from \Cref{sec:expansions}. Before that, we introduce some notations for the lower-order terms created when performing a switch. These terms are of two types:  $\cD_{r}^{S}$ for $r\geq 0$, which is a weighted sum of products involving at least $r$ factors of the form 
\begin{align}\label{def:DS}
   (d-1)^{2\ell}(G_{c_\al c_\al}^{(b_\al)}-Q), \quad (d-1)^{2\ell} G_{c_\al c_\beta}^{(b_\al b_\beta)},\quad  (d-1)^{2\ell}(Q-\msc). 
\end{align}
The total weight is $\OO(\poly(\ell))$. As established in \Cref{lem:diaglem} and \Cref{lem:offdiagswitch}, these factors are the additional terms generated by lower-order analysis when utilizing the Schur complement formulas.

The other type is $\cD_{r}^{W}$, which is a weighted sum of products involving at least $r$ factors of the form 
\begin{align}
    \label{def:DW}
   \{(d-1)^{2\ell}(G_{ss'}-P_{ss'})\}_{s,s'\in \{l_\alpha,a_\alpha,b_\alpha,c_\alpha\}_{\alpha\in \qq{\mu}}}.
\end{align}
The total weight is $\OO(\poly(\ell))$.
As established in \Cref{lem:generalQlemma} and \Cref{lem:Qlemma}, these factors are the additional terms generated by lower-order analysis when utilizing the Woodbury formula. The exact functions can change line to line.

The advantage of introducing these error terms is that, according to \Cref{thm:prevthm}, each of these factors is bounded by 
 $\varepsilon$, thus an increase of $r$ gives us a demonstrably smaller term. For example $(d-1)^{2\ell}(G_{c_\alpha c_\alpha}^{(b_\alpha)}-Q)(d-1)^{2\ell}G_{c_\alpha c_\beta}^{(b_\alpha b_\beta)}\in \cD^S_{2}$, and $|(d-1)^{2\ell}(G_{c_\alpha c_\alpha}^{(b_\alpha)}-Q)(d-1)^{2\ell}G_{c_\alpha c_\beta}^{(b_\alpha b_\beta)}|\leq ((d-1)^{2\ell}\varepsilon)^2$ for $\GG\in \Omega$.

For any $(b,c)\neq (b',c')\in \cC^\circ$, we introduce the following error functions (they are error terms in \Cref{sec:expansions}) that satisfy
\begin{align}\begin{split}\label{eq:cEerror1} 
   &|\cE_{1}|= \sum_{\al} |\wt G^{(\bT)}_{c_\al c_\al}-G^{(b_\al)}_{c_\al c_\al}|+\sum_{\al\neq \beta} |\wt G^{(\bT)}_{c_\al c_\beta}-G^{(b_\al b_\beta)}_{c_\al c_\beta}|\\
   & |\cE_{1}^{(b,c)} |= \sum_\al |\wt G_{c_\al c_\al}^{(\bT b)}-G^{(b_\al )}_{c_\al c_\al}|) +\sum_{\al \neq \beta}|\wt G_{c_\al c_\beta}^{(\bT b)}-G^{(b_\al b_\beta)}_{c_\al c_\beta}|+N^{-2}\\
&|\cE_{1}^{(b,c),(b',c')} |= \sum_\al (|\wt G_{c_\al c}^{(\bT b)}-G_{c_\al c}^{(b_\al b)}|+|\wt G^{(\bT bb')}_{cc'}-G^{( bb')}_{cc'}|)
    +(d-1)^\ell\sum_\al (|\wt G_{cc_\al}^{(\bT bb')}|^2+|\wt G_{c'c_\al}^{(\bT bb')}|^2) +N^{-2},\\
&|\cE_{2}|= N^{-2}, \quad |\cE_{3}|= \Phi.
\end{split}\end{align}
Thanks to \Cref{lem:task2}, the important property of the first three errors is that 
\begin{align*}
\bE[\bE_\bfS[\cE_{1}\Pi_p]],\bE[\bE_\bfS[\cE_{1}^{(b,c)}\Pi_p]],\bE[\bE_\bfS[\cE_{1}^{(b,c),(b',c')} \Pi_p]]=\OO(\bE[\Psi_p]).
\end{align*}
Consequently, these error terms are small enough that any term that incorporates these error terms as a factor is negligible. 

Through the lemmas in Section \ref{sec:expansions}, we see that the leading order term of a generic function on a switched graph replaces the indices of the core edge $(i,o)$ with an average over the edges $\{(b_\alpha,c_\alpha)\}_{\al \in \qq{\mu}}$ from the resampling data $\bfS$. The other, lower-order terms are demonstrably smaller. We must confirm this for each of the factors listed above.  
In the following, $\fc_j$ for $j\geq 1$ will represent constants bounded by $\OO(\poly(\ell))$ that can change line to line. Given the factor $W_j$, we will consider the corresponding term after the local resampling, denoted as $\wt W_j$. Then we approximate $\wt W_j=\wh W_j+\cE$, where $\wh W_j$ is a new function of the original Green's function and $\cE$ is an error term listed above in \eqref{eq:cEerror1}. Finally we write $\wh W_j$ as a leading order term, plus a higher order term of the form containing factors $\cD_{r}^S$ or $\cD_{r}^W$, depending on the expansion used. 
\begin{enumerate}

\item \label{i1}
By Lemma \ref{lem:diaglem}, $\wt W_0:=W_0=(d-1)^{2\ell}(\wt G_{oo}^{(i)}-Y_\ell(Q)$, can be written as 
\begin{align}\label{e:case1W0}
    &\wt W_{0}=\widehat W_0+\cE_1,\quad \widehat W_0:=\fc_1(d-1)^\ell\sum_{\al\in\sfA_i}(G^{(b_\alpha)}_{c_\alpha c_\alpha}-Q)+\fc_2(d-1)^\ell\sum_{\al\neq \beta\in\sfA_i} G^{(b_\alpha b_\beta)}_{c_\alpha c_\beta}
   +\cD_{2}^S.
\end{align}
where $\cD_2^S, \cE_1$ are as in \eqref{def:DS} and \eqref{eq:cEerror1}.

    \item \label{i2}
If $W_j=(d-1)^{2\ell}(G_{oo}^{(i)}-Q)$  we can write 
\begin{align*}
    (d-1)^{2\ell}(\wt G_{oo}^{(i)}-\wt Q)=(d-1)^{2\ell}(\wt G_{oo}^{(i)}-Y_\ell( Q))+(d-1)^{2\ell}(Y_\ell( Q)-\wt Q),
\end{align*}
then expand according to Lemma \ref{lem:diaglem}. This gives that $\wt W_j=\wh W_j+\cE_1$, where
\begin{align*}
\wh W_j&=\fc_1(d-1)^{\ell}\sum_{\al\in \sfA_i} ( G_{c_\al c_\al}^{(b_\al)}-Q) + \fc_2 (d-1)^{\ell}\sum_{\al\neq \beta\in \sfA_i}  G_{c_\al c_\beta}^{(b_\al b_\beta)}\\ 
&+\fc_3 (d-1)^{2\ell}(Y_\ell(Q)-Q) +\fc_4 
    (d-1)^{2\ell}(Q-\wt Q)+\cD_{2}^S.
\end{align*}

A similar expansion is possible for $W_j\in \{(d-1)^{2\ell}(G_{ss'}-P_{ss'})\}_{s,s'\in \{o,i\}}$,
\begin{align*}
   \wh W_j
    &={\fc_1} {(d-1)^\ell}\sum_{\al\in\sfA_i}(G^{(b_\alpha)}_{c_\alpha c_\alpha}-Q)
    +{\fc_2} {(d-1)^\ell}\sum_{\al\notin\sfA_i}(G^{(b_\alpha)}_{c_\alpha c_\alpha}-Q)
    \\
   &+{\fc_3} {(d-1)^\ell}\sum_{\al\neq \beta\in \sfA_i} G^{(b_\alpha b_\beta)}_{c_\alpha c_\beta}
   +{\fc_4} {(d-1)^\ell}\sum_{\al\neq \beta\in\sfA^\complement_i} G^{(b_\alpha b_\beta)}_{c_\alpha c_\beta}
   \\
   &+{\fc_5} {(d-1)^\ell}\sum_{\al\in\sfA_i\atop \beta\in \sfA^\complement_i} G^{(b_\alpha b_\beta)}_{c_\alpha c_\beta}+\fc_6 (d-1)^{2\ell}(Q-\msc)+\cD_2^S.
\end{align*}

\item \label{i3}
For $W_j\in\{(d-1)^{2\ell}G_{oc}^{(ib)}\}_{(i,o)\neq (b,c)\in \cC^\circ}$, by  \Cref{lem:offdiagswitch}, we have $\wt W_j=\wh W_j+\cE_{1}^{(b,c)}$, where
\begin{align}
  \wh W_j=\fc_1(d-1)^{3\ell/2}\sum_{\al\in \sfA_i} G_{c_\al c}^{(b_\al b)}+\frac{1}{(d-1)^{\ell}}\sum_{\al\in\qq{\mu}}\cD_{1}^S\times  (d-1)^{2\ell}G_{c_\al c}^{(b_\al b)},
\end{align}
where $\cD_1^S, \cE_1^{(b,c)}$ are as in \eqref{def:DS} and \eqref{eq:cEerror1}.
\item \label{i4}
For $W_j\in \{(d-1)^{2\ell} G_{cc}^{(b)}\}_{(b,c)\in \cC^\circ}$ or $\{(d-1)^{2\ell} G_{cc'}^{(bb')}\}_{(b,c)\neq  (b',c')\in \cC^\circ}$, by \Cref{lem:offdiagswitch}, we can replace it as $\widetilde W_j=\widehat W_j+\cE_{1}^{(b,c)}$ or $\widetilde W_j=\widehat W_j+\cE_{1}^{(b,c),(b',c')}$, where $\widehat W_j=W_j$, and $\cE_{1}^{(b,c)}, \cE_{1}^{(b,c),(b',c')}$ are as in \eqref{eq:cEerror1}.

\item \label{i5}
For $W_j\in\{(d-1)^{2\ell} (G_{ss'}-P_{ss'})\}_{s\in \{o,i\}, s'\in \cK\setminus\{o,i\}}$ or $\{(d-1)^{2\ell} G_{ss'}\}_{s\in \{o,i\}, s'\in \cV}$, by \Cref{lem:generalQlemma}, we have $\wt W_j=\widehat W_j+\cE_{2} $, where 
\begin{align*}
    \wh W_j&=\fc_1 (d-1)^{2\ell} G_{os'} +\fc_2(d-1)^{2\ell} G_{is'}+\\
    &+(d-1)^{3\ell/2}\left(\fc_3 \sum_{\al\in \sfA_i} G_{b_\al s'}+\fc_4 \sum_{\al\in \sfA_i} G_{c_\al s'}
    +\fc_5\sum_{\al\in\sfA^\complement_i} G_{b_\al s'}+ \fc_6\sum_{\al\in\sfA^\complement_i} G_{c_\al s'}\right)\\
    &+\frac{1}{(d-1)^{\ell}}\sum_{\al\in\qq{\mu}\atop \sfL\in\{l,a,b,c\}} \cD_{1}^{W}\times (d-1)^{2\ell} G_{\sfL_\al s'}\\
    &+\frac{1}{(d-1)^{2\ell}}\sum_{\al,\beta\in\qq{\mu}\atop \sfL, \sfL'\in\{l,a,b,c\}} (d-1)^{2\ell} G_{s\sfL_\al }\times \cD_{0}^{W}\times (d-1)^{2\ell} G_{\sfL_\beta s'},
\end{align*}
 where $\cD_0^W, \cD_1^W, \cE_2$ are as in \eqref{def:DW} and \eqref{eq:cEerror1}.

\item \label{i6}
For $W_j\in\{(d-1)^{2\ell}(G_{ss'}-P_{ss'})\}_{s,s'\in \cK\setminus \{o,i\}}$ or $\{(d-1)^{2\ell} G_{ss'}\}_{s\in \cK\setminus \{o,i\}, s'\in \cV}$, by \Cref{lem:generalQlemma}, we have $\wt W_j=\wh W_j+\cE_{2} $, where
\begin{align*}
    \wh W_j &= W_j+\frac{1}{(d-1)^{\ell}}\sum_{\al\in\qq{\mu}\atop \sfL\in\{l,a,b,c\}} \cD_{1}^{W}\times (d-1)^{2\ell} G_{\sfL_\al s'}\\
    &+\frac{1}{(d-1)^{2\ell}}\sum_{\al,\beta\in\qq{\mu}\atop \sfL, \sfL'\in\{l,a,b,c\}} (d-1)^{2\ell} G_{s\sfL_\al }\times \cD_{0}^{W}\times (d-1)^{2\ell} G_{\sfL_\beta s'}.
\end{align*}
\item \label{i7}
For $W_j\in\{G_{uv}, G_{vv}, 1/G_{uu}\}_{(u,v)\in \cV}$, by \Cref{lem:generalQlemma}, we have, for $w\in\{u,v\}$, 
\begin{align*}
     \wt W_j=\widehat W_j +\cD_{0}^{W}\times U^{(u,v)}+\cE_{2},\quad \widehat W_j:=W_j
\end{align*}
where $U^{(u,v)}$ consists of at least two Green's function factors in the form $(d-1)^{2\ell}G_{\sfL_\al u}, (d-1)^{2\ell}G_{\sfL_\al v}$, and some factors in the form $G_{vv}, G_{uv}, 1/G_{uu}$. 
Here $\cD_0^W, \cE_2$ are as in \eqref{def:DW} and \eqref{eq:cEerror1}.
\item \label{i8}
For $W_j=Q-Y_\ell(Q)$, by \Cref{lem:Qlemma}, then $\wt W_j=\wh W_j+\cE_{2}$ if $\wh W_j$ is a linear combination of terms in the form
\begin{align*}
    Q-Y_\ell(Q),\quad \frac{1}{N}\sum_{u,v}A_{uv}(1-Y'_\ell(Q)) \cD_{0}^{W} \times U^{(u,v)},
\end{align*}
where $U^{(u,v)}$ consists of at least two Green's function factors in the form $G_{\sfL_\al u}, G_{\sfL_\al v}$, and some factors in the form $G_{vv}, G_{uv}, 1/G_{uu}$; Here $\cD_0^W, \cE_2$ are as in \eqref{def:DW} and \eqref{eq:cEerror1}.

\item \label{i9}
For $W_j\in \{Q-\msc,1-Y'_\ell(Q)\}$, we have by Lemma \ref{lem:generalQlemma}
\[
\wt W_j=\widehat W_j+\cE_{3},\quad \widehat W_j:=W_j,
\]
where $\cE_3$ is as defined in \eqref{eq:cEerror1}.
\end{enumerate}
In all the above cases, we have that $|\widetilde W_j|, |\widehat W_j|\lesssim 1$ provided that $\cG, \widetilde \cG\in \Omega(z)$. In \Cref{i5} and \Cref{i6} with $s'\in \cV$, each term of $\widehat W_j$ contains a term $(d-1)^{2\ell}G_{\sfL_\al s'}$. In particular the number of terms in the form $(d-1)^{2\ell}G_{ss'}$ with $s\in \cK^+, s'\in \cV$ does not decrease. Thus the total number of Green's function factors associated with each special edge does not decrease. 
 In \Cref{i9}, each term $U^{(u,v)}$ contains at least two terms in the form $(d-1)^{2\ell}G_{s s'}$ for $s\in \cK^+, s'\in \{u,v\}$. 

Next we show that we can replace $\widetilde W_j$ by $\widehat W_j$, and the errors are negligible. Starting from \eqref{e:hatF}, we have
\begin{align}\begin{split}\label{e:replace}
    &\phantom{{}={}}\frac{1}{Z_{\widehat\cF}}\sum_{\widehat \bfi}\bE\left[I(\widehat\cF,\cG)\bm1(\cG,\tcG\in \Omega)(\wt G_{oo}^{(i)}-Y_\ell(Q)) R'_\bfi(\wt\cG)\right]\\
    &=\frac{1}{(d-1)^{2\ell}Z_{\widehat\cF}}\sum_{\widehat \bfi}\bE\left[I(\widehat\cF,\cG)\bm1(\cG,\tcG\in \Omega)\widehat W_0 \prod_j \widehat W_j\right]\\
    &+\frac{1}{(d-1)^{2\ell}Z_{\widehat\cF}}\sum_{\widehat \bfi}\bE\left[I(\widehat\cF,\cG)\bm1(\cG,\tcG\in \Omega) \sum_k\prod_{j<k} \widetilde W_j(\widetilde W_k-\widehat W_k)\prod_{j>k} \widehat W_j\right].
    \end{split}
\end{align}
To show that \eqref{e:replace} is negligible, we prove that  for each $k$,
\be\label{e:error_bound}
\frac{1}{(d-1)^{2\ell}Z_{\widehat\cF}}\sum_{\widehat \bfi}\bE\left[I(\widehat\cF,\cG)\bm1(\cG,\tcG\in \Omega) \left|\prod_{j<k} \widetilde W_j(\widetilde W_k-\widehat W_k)\prod_{j>k} \widehat W_j\right|\right]=\bE[\Psi_p].
\ee

If $W_k$ is as in \Cref{i1} or \Cref{i2}, we can first sum over the indices in $\widehat\bfi$ corresponding to the special edges. For any special edge $(u,v)\in \cV$, there are at least two factors of the form $(d-1)^{2\ell}\widetilde G_{sw}$ or $(d-1)^{2\ell} G_{sw}$  where $s\in \cK, w\in \{u,v\}$. The summation over $u,v$ in \eqref{e:error_bound}, by the Ward identity, we have
\begin{align*}
    \frac{(d-1)^{2\ell}}{N}\sum_{w\in\qq{N}}|G_{sw}|^2, \frac{(d-1)^{2\ell}}{N}\sum_{w\in\qq{N}}|\widetilde G_{sw}|^2\lesssim \Phi.
\end{align*}

After summing over the special edges, we then sum over the indices $\widehat \bfi\setminus\{o,i\}$. This leads to the overall bound of
\begin{align}\label{e:error_bound1}
    \frac{1}{N}\sum_{o,i}\bE\left(\bE_\bfS\left[I(\{o,i\},\cG)\bm1(\cG,\tcG\in \Omega)|\cE_1| \left(|Q-Y_\ell(Q)|+\Phi\right)^{2p-1}\right]\right), 
\end{align}
where $\cE_1$ is as defined in \eqref{eq:cEerror1}.
By \Cref{lem:task2}, when taking the expectation, we are left with $\OO(\bE[\Psi_p])$. If $W_k$ is as in \Cref{i3} or \Cref{i4}, a similar argument implies that the total error is negligible.

If $W_k$ is as in \Cref{i5}, \Cref{i6},  or \Cref{i7}, similarly we can first sum over the special edges, then the remaining indices in $\widehat \bfi$. We notice that if the term $W_k$ is the Green's function involving the special edges, then the summation has one less factor of $\Phi$. But in these cases we have a better bound $|\cE_2|= N^{-2}$ (recall from \eqref{eq:cEerror1}), and the final bound is given as 
\begin{align}\label{e:error_bound2}
    \frac{1}{N^2}\left[\bm1(\cG,\tcG\in \Omega) \left(|Q-Y_\ell(Q)|+\Phi\right)^{2p-2}\right]=\OO(\bE[\Psi_p]). 
\end{align}

If $W_k$ is as in \Cref{i8}, similarly we can first sum over the special edges, and then sum over the remaining indices in $\widehat \bfi$. In this case, we have one less copy of $Q-Y_\ell(Q)$ factor, but $|\cE_2|= N^{-2}$ (recall from \eqref{eq:cEerror1})
\begin{align}\label{e:error_bound4}
        \frac{1}{N^2}\left[\bm1(\cG,\tcG\in \Omega) \left(|Q-Y_\ell(Q)|+\Phi\right)^{2p-2}\right]=\OO(\bE[\Psi_p]).  
\end{align}

If $W_k$ is as in \Cref{i9}, similarly we can first sum over the special edges and then over the remaining indices in $\widehat \bfi$, together with $|\cE_3|=\Phi$ from \eqref{eq:cEerror1}
\begin{align}\label{e:error_bound3}
    \bE\left[\bm1(\cG,\tcG\in \Omega)\Phi \left(|Q-Y_\ell(Q)|+\Phi\right)^{2p-1}\right]=\OO(\bE[\Psi_p]). 
\end{align}

The claim \eqref{e:error_bound} follows from combining \eqref{e:error_bound1}, \eqref{e:error_bound2}, \eqref{e:error_bound3} and \eqref{e:error_bound4}.

We remark that in the above procedure, each time if we replace some $W_j=Q-Y_\ell(Q)$ by 
\begin{align}\label{e:special_term}
    \frac{1}{N}\sum_{u,v}A_{uv}(1-Y'_\ell(Q))\cD_{0}^{W}\times  U^{(u,v)}
\end{align}
by using \Cref{i8}, this gives a new special edge $(u,v)$. We denote the set of these new special edges $\{\widehat e_\al\}_{\al\in\qq{ \widehat \mu}}$, and denote
\begin{align*}
    & \cF^+=(\bfi^+, E^+)=\widehat \cF \bigcup_{\al\in\qq{ \widehat \mu}}\{\widehat e_\al\},
    \quad 
    \cV^+=\cV\cup \{\widehat e_\al\}_{\al\in\qq{ \widehat \mu}}.  
\end{align*}
Each special edge $(u,v)$ is associated with at least two factors in the form $(d-1)^{2\ell}G_{ss'}$ with $s\in \cK^+$ and $s'\in \{u,v\}$, and a factor $1-Y'_\ell(Q)$. Moreover the total number of special edges, plus the factors of the form $(Q-Y_\ell(Q))$ or $\overline{(Q-Y_\ell(Q))}$ is $2p-1$ at every step.

From the above discussion, we can rewrite the second line in \eqref{e:replace} as,
\begin{align}\label{e:hatRi}
    \frac{1}{(d-1)^{2\ell}Z_{\widehat\cF}}\sum_{\widehat \bfi}\bE\left[I(\widehat\cF,\cG)\bm1(\cG,\tcG\in \Omega) \prod_j \widehat W_j\right]=\frac{1}{(d-1)^{2\ell}Z_{\cF^+}}\sum_{ \bfi^+}\bE\left[I(\cF^+,\cG)\bm1(\cG,\tcG\in \Omega) \prod_j \widehat W_j\right],
\end{align}
where we replaced the indicator function $I(\widehat\cF,\cG)$ by $I(\cF^+,\cG)$, which include the special edges, and $Z_{\cF^+}=Z_{\widehat \cF}(Nd)^{\widehat \mu}$. Then we can further rewrite \eqref{e:hatRi} 
as  a weighted sum of terms with the total weight bounded by $\OO(\poly(\ell))$, in the form 
\begin{align}\begin{split}\label{e:oneterm}
&\frac{1}{(d-1)^{2\ell}Z_{\cF^+}}\sum_{\bfi^+}\bE\left[I(\cF^+,\cG)\bm1(\cG,\tcG\in \Omega) \widehat R_{\bfi^+}\right],\\
&\widehat R_{\bfi^+}=\frac{1}{(d-1)^{(k_1 +k_2/2+k_3/2)\ell}} \sum_{\bm\al, \bm\beta, \bm\gamma}   \prod_{j=1}^{k_1}  A_{\al_  j}  \prod_{  j=1}^{k_2}  B_{\beta_j} 
\prod_{j=1}^{k_3/2}  C_{\gamma_{2j-1} \gamma_{2j}}   D(\bm\theta),
\end{split}\end{align}
 Here the summation for $\bm\al, \bm\beta, \bm\gamma$ is over each $\al_j, \beta_j, \gamma_j$ in one of the sets $\qq{\mu}$, $\sfA_i=\{\al\in \qq{\mu}: \dist_\cT(i,o)=\ell+1\}$ and $\qq{\mu}\setminus \sfA_i$;
 and the factors in \eqref{e:oneterm} are given by
\begin{align*}
  &A_\al=  (d-1)^{2\ell} ( G_{c_\al c_\al}^{(b_\al)}-Q), \\
  &B_\beta\in \left\{(d-1)^{2\ell}  G_{c_\beta c}^{(b_\beta b)}, (d-1)^{\ell} G_{b_\beta s'},(d-1)^{\ell}G_{c_\beta s'}\right\},\quad (b,c)\in \cC^\circ,\quad s'\in \cK\cup\cV\setminus\{o,i\}\\
  &  C_{\gamma \gamma'}= (d-1)^{2\ell}  G_{c_\gamma c_{\gamma'}}^{(b_\gamma b_{\gamma'})}  \\
  &  D(\bm\theta) \text{ is a product of other terms},
\end{align*}
where $  D(\bm\theta)$ indicates that it  depends on $\{b_\theta, c_\theta\}_{\theta \in \bm\theta}$ for some ${\bm\theta}\subset\qq{\mu}$.

Next we show that if any index in $\bm\al, \bm\beta$ appears only once among $\bm\al, \bm\beta, \bm \gamma, \bm\theta$, then 
\begin{align}\label{e:single_index_bound}
    \left|\frac{1}{(d-1)^{2\ell}Z_{\cF^+}}\sum_{\bfi^+}\bE\left[I(\cF^+,\cG)\bm1(\cG,\tcG\in \Omega) \widehat R_{\bfi^+}\right]\right|\lesssim \bE[\Psi_p].
\end{align}
There are two cases
\begin{enumerate}
    \item Assume $\widehat R_{\bfi^+}$ contains one of the terms $(d-1)^{2\ell}( G_{c_\al c_\al}^{(b_\al)}-Q)$, $(d-1)^{2\ell}  G_{c_\al c}^{(b_\al b)}$, or $(d-1)^{2\ell}  G_{\sfL_\al s'}$, where $s'$ does not belong to a special edge, and the index $\al$ only appears once. Then we can first average over the resampling edge $\{(b_\al, c_\al)\}$. Thanks to \Cref{p:upbb}, this gives a factor $\OO(N^{-1+\fc})$. Next we can  sum over the special edges, and then sum over the remaining indices in $\bfi^+$, to conclude that
    \begin{align*}
         &\phantom{{}={}}\left|\frac{1}{(d-1)^{2\ell}Z_{\cF^+}}\sum_{\bfi^+}\bE\left[I(\cF^+,\cG)\bm1(\cG,\tcG\in \Omega) \widehat R_{\bfi^+}\right]\right|\\
         &\lesssim \frac{1}{N^{1-\fc}}\bE\left[\bm1(\cG\in \Omega) \left(|Q-Y_\ell(Q)|+\Phi\right)^{2p-1}\right]\lesssim \bE[\Psi_p].
    \end{align*}
    \item If $\widehat R_{\bfi^+}$ contains a term $(d-1)^{2\ell}  G_{\sfL_\al u}$, where $(u,v)$ is a special edge, and the index $\al$ only appears once. Then $\widehat R_{\bfi^+}$ contains at least one other Green's function term in the form $(d-1)^{2\ell}G_{ss'}$ with $s\in \cK^+$ and $s'\in \{u,v\}$. If there are more than one such terms, the same argument as in the first case leads to the desired bound. 
    Otherwise, we can first sum over special edges $\cV^+\setminus\{u,v\}$, and then sum over the remaining indices in $\bfi^+\setminus \{s,u,v\}$, to conclude that
    \begin{align}\begin{split}\label{l:single_index2}
         &\phantom{{}={}}\frac{1}{(d-1)^{2\ell}Z_{\cF^+}}\sum_{\bfi^+}\bE\left[I(\cF^+,\cG)\bm1(\cG,\tcG\in \Omega) \widehat R_{\bfi^+}\right]\\
         &= \frac{1}{N^2}\sum_{s, u,v}\bE\left[\bm1(\cG\in \Omega) I(s, \cG)A_{uv} (1-Y'_\ell(Q))(d-1)^{2\ell}   \bE_\bfS[G_{\sfL_\al u}](d-1)^{2\ell}G_{ss'}w_s U^{(u,v)}\right],
    \end{split}\end{align}
    where for $\cG\in \Omega$, $|U^{(u,v)}|\lesssim 1$ (which is a product of $G_{vv}, G_{uv}, 1/G_{uu}$), and the vector $w\in \bR^N$ satisfies
    \begin{align*}
        |w_s|\lesssim \left(|Q-Y_\ell(Q)|+\Phi\right)^{2p-2},\quad 1\leq s\leq N.
    \end{align*}
    Thanks to \Cref{p:upbb}, we can further upper bound \eqref{l:single_index2} as 
    \begin{align*}
|\eqref{l:single_index2}|\lesssim\bE\left[ \bm1(\cG\in \Omega)\frac{|1-Y'_\ell(Q)|}{N\eta}\Phi\left(|Q-Y_\ell(Q)|+\Phi\right)^{2p-2}\right]\lesssim \bE[\Phi_p],
    \end{align*}
    as desired.
\end{enumerate}

From the discussion above, we can focus on the case that each index in $\bm\al, \bm\beta$ appears at least twice among $\bm\al, \bm\beta, \bm \gamma, \bm\theta$. For the indices $\bm\gamma$, there are three cases:
\begin{enumerate}    
\item If $k_3/2\geq 2$, then for fixed $\bm\gamma, \bm\theta$, the total number of new indices for the summation of $\bm\al, \bm\beta$ is at most $(k_1+k_2)/2$.    Consequently, \eqref{e:oneterm} is a weighted sum of terms with total weights $\OO(1)$, in the form
\begin{align}\label{e:oneterm3}
\frac{1}{(d-1)^{{(k_1-k_3)}\ell/2}}  \prod_{j=1}^{k_1}  A_{\al_i}  \prod_{j=1}^{k_2}  B_{\beta_j} 
\prod_{j=1}^{k_3/2}  C_{\gamma_{2j-1} \gamma_{2j}}   D(\bm\theta)=: \frac{1}{(d-1)^{(k_1-k_3)\ell/2}} R_{\bfi^+},
\end{align}
Since $k_3/2\geq 2$, \Cref{p:small_Ri} implies that
\begin{align*}
  \frac{1}{(d-1)^{2\ell+(k_1-k_3)\ell/2}Z_{\cF^+}}\sum_{\bfi^+}\bE\left[I(\cF^+,\cG)\bm1(\cG,\tcG\in \Omega)  R_{\bfi^+}\right]\lesssim \bE[\Psi_p].
\end{align*}
    \item If $k_3/2\leq 1$ and each index in $\bm\gamma$ appears at least twice among $\bm\al, \bm\beta, \bm \gamma, \bm\theta$. 
Then for fixed $\bm\theta$, the total number of new indices for the summation of $\bm\al, \bm\beta,\bm\gamma$ is at most $(k_1+k_2+k_3)/2$. Consequently, \eqref{e:oneterm} is a weighted sum of terms with total weights $\OO(1)$, in the form
\begin{align}\label{e:oneterm2}
\frac{1}{(d-1)^{(k_1+k)\ell/2}}  \prod_{j=1}^{k_1}  A_{\al_j}  \prod_{j=1}^{k_2}  B_{\beta_j} 
\prod_{j=1}^{k_3/2}  C_{\gamma_{2j-1} \gamma_{2j}}   D(\bm\theta)=: \frac{1}{(d-1)^{(k_1+k)\ell/2}} R_{\bfi^+},
\end{align}
where the total number of new indices in $\bm\al, \bm\beta, \bm \gamma$ is $(k_1+k_2+k_3-k)/2$ with $k\geq 0$, meaning that \eqref{e:hatRi} is a weighted sum of terms (with total weights $\OO(\poly(\ell))$) in form
\begin{align}\label{e:R+term}
  \frac{1}{(d-1)^{2\ell+(k_1+k)\ell/2}Z_{\cF^+}}\sum_{\bfi^+}\bE\left[I(\cF^+,\cG)\bm1(\cG,\tcG\in \Omega)  R_{\bfi^+}\right].
\end{align}

    \item If $k_3/2\leq 1$ and $\widehat R_{\bfi^+}$ contains a term $(d-1)^{2\ell}G_{c_\nu c_{\nu'}}^{(b_\nu b_{\nu'})}$, where $\nu\neq \nu'\in \bm\gamma\cup \bm\theta$ and the index $\nu$ only appears once. 
    Then we can first average over the resampling edge $\{(b_\nu, c_\nu)\}$, using \eqref{e:Gccbb}. In this way, we can replace $G_{c_\nu c_{\nu'}}^{(b_\nu b_{\nu'})}$ by 
    \begin{align}\label{e:GGG}
    (d-1)^{2\ell}(G_{c_\nu c_\nu}^{(b_\nu)}-Q)G_{b_\nu c_{\nu'}}^{(b_{\nu'})}
    =(d-1)^{2\ell}(G_{c_\nu c_\nu}^{(b_\nu)}-Q)\left(G_{b_\nu c_{\nu'}}-\frac{G_{b_\nu b_{\nu'}}G_{b_{\nu'} c_{\nu'}}}{G_{b_{\nu'} b_{\nu'}}}\right),
    \end{align}
    and the error terms are bounded by $\OO(\bE[\Psi_p])$ thanks to \Cref{lem:deletedalmostrandom} and \Cref{p:small_Ri}. Condition on the event that $I(\cF^+,\cG)=1$, we can further expand the last term in \eqref{e:GGG}
    \begin{align}\label{e:GGG2}
        \frac{G_{b_\nu b_{\nu'}}G_{b_{\nu'} c_{\nu'}}}{G_{b_{\nu'} b_{\nu'}}}
        =\frac{G_{b_\nu b_{\nu'}}}{\md}\sum_{k=1}^{\fb}\frac{(-1)^k(G_{b_\nu' b_\nu'}-P_{b_\nu' b_\nu'})^k(\msc+(G_{b_{\nu'} c_{\nu'}}-P_{b_{\nu'} c_{\nu'}}))}{\md^k} +\OO(N^{-2}),
    \end{align}
    for $\fb\geq 1$ large enough. Thanks to \Cref{thm:prevthm}, the error in the above expansion can be made smaller then $\OO(N^{-2})$.
    The same as in the \eqref{e:oneterm3}, using \eqref{e:GGG} and \eqref{e:GGG2}, up to an error $\OO(\bE[\Psi_p])$, we can write \eqref{e:oneterm} as a weighted sum of terms with total weights $\OO(1)$, in the form
\begin{align}\label{e:oneterm4}
 \frac{1}{(d-1)^{(k_1-k_3)\ell/2}} (G_{c_\nu c_\nu}^{(b_\nu)}-Q) R'_{\bfi^+},
\end{align}
In particular, \eqref{e:hatRi} is a weighted sum of terms (with total weights $\OO(\poly(\ell))$) in form
\begin{align}\label{e:R+term2}
  \frac{1}{(d-1)^{(k_1-k_3+4)\ell/2}Z_{\cF^+}}\sum_{\bfi^+}\bE\left[I(\cF^+,\cG)\bm1(\cG,\tcG\in \Omega)  (G_{c_\nu c_\nu}^{(b_\nu)}-Q) R'_{\bfi^+}\right],
\end{align}
where $k_3/2\leq 1$.
\end{enumerate}

\begin{proof}[Proof of \Cref{p:iteration}] There are several cases, for \eqref{e:oneterm4} and \eqref{e:oneterm2} depending on the choice when we replace $\wt G_{oo}^{(i)}-Y_\ell(Q)$ 
by using \Cref{i1}, 

\begin{enumerate}
    \item \label{i:k0}For \eqref{e:oneterm4}, we can further replace $Q$ by $Y_\ell(Q)$, and the error is negligible
    \begin{align}\begin{split}\label{e:finaleq}
       &\phantom{{}={}}\frac{1}{(d-1)^{(k_1-k_3+4)\ell/2}Z_{\cF^+}}\sum_{\bfi^+}\bE\left[I(\cF^+,\cG)\bm1(\cG,\tcG\in \Omega)  (G_{c_\nu c_\nu}^{(b_\nu)}-Q)R_{\bfi^+}'\right]\\
       &=\frac{1}{(d-1)^{(k_1-k_3+4)\ell/2}Z_{\cF^+}}\sum_{\bfi^+}\bE\left[I(\cF^+,\cG)\bm1(\cG,\tcG\in \Omega)  (G_{c_\nu c_\nu}^{(b_\nu)}-Y_\ell(Q))R_{\bfi^+}'\right]\\
       &+\OO\left(\frac{1}{(d-1)^{\ell/2}Z_{\cF^+}}\sum_{\bfi^+}\bE\left[\bm1(\cG,\tcG\in \Omega)  |Y_\ell(Q)-Q||R_{\bfi^+}'|\right]\right),
    \end{split}\end{align}
    where we used that $k_1-k_3+4\geq 1$.
   
    For the righthand side of \eqref{e:finaleq}, by the same argument as in the proof of \Cref{p:add_indicator_function}, up to a negligible error $\OO(\bE[\Phi_p])$, we can replace $\bm1(\cG, \wt \cG\in\Omega)$ back to $\bm1(\cG\in\Omega)$.Then \eqref{e:rough_bound} gives that the last error term in \eqref{e:finaleq} is bounded by $\bE[\Phi_p]$.
    For the first term on the righthand side of \eqref{e:finaleq}, if in $R_{\bfi^+}$ the total number of special edges, $Q-Y_\ell(Q)$ and its complex conjugate is exactly $2p-1$, we get \eqref{e:case1} by noticing that $k_1-k_3+4\geq 1$. Otherwise if the total number is at least $2p$, thanks to \eqref{e:rough_bound} 
    \begin{align*}
        |\eqref{e:R+term}|\lesssim (d-1)^{-\ell}\bE[\bm1(\cG\in \Omega)(|Q-Y_\ell(Q)|+\Phi)^{2p}]\lesssim \bE[\Psi_p].
    \end{align*}
 \item \label{i:k1}If $k_1\geq 1$ in \eqref{e:oneterm2}, we can rewrite \eqref{e:oneterm2} as
    \begin{align*}
        \frac{1}{(d-1)^{(k_1+k)\ell/2}} R_\bfi^+=:
        \frac{1}{(d-1)^{(k_1+k)\ell/2}}  (d-1)^{2\ell} (G_{c_\al c_\al}^{(b_\al)}-Q)R_{\bfi^+}'.
    \end{align*} 
    Then we can process as in the previous case \Cref{i:k0}, which leads to \eqref{e:case1}. 
 
\item \label{i:ignore}If \eqref{e:oneterm2} contains two terms in the form $(d-1)^{2\ell}G_{c c'}^{(b b')}$ with $(b,c)\neq (b',c')\in (\cC^+)^\circ$; or one term in the form $(d-1)^{2\ell}G_{c c'}^{(b b')}$, and one Green's function term $(d-1)^{2\ell}(G_{ss'}-P_{ss'})$ with $s,s'\in \cK^+$ but in different connected components of $\cF^+$; or one term in the form $(d-1)^{2\ell}G_{c c'}^{(b b')}$ and $\cV^+\neq \emptyset$, then \Cref{p:small_Ri} implies that
\begin{align*}
     &\phantom{{}={}}\frac{1}{(d-1)^{2\ell}Z_{\cF^+}}\sum_{\bfi^+}\bE\left[I(\cF^+,\cG)\bm1(\cG,\tcG\in \Omega) \frac{1}{(d-1)^{(k_1+k)\ell/2}} R_{\bfi^+}\right]=\OO(\bE[\Psi_p]).
\end{align*}

\item \label{ii:offab} If $k_3=2$ in \eqref{e:oneterm2}, then \eqref{e:oneterm2} contains $(d-1)^{2\ell}G_{c_\beta c_{\beta'}}^{(b_{\beta} b_{\beta'})}$ where the indices $\beta, \beta'$ appears at least twice among $\bm\al, \bm\beta, \bm\gamma,\bm \theta$ in \eqref{e:oneterm2} (otherwise, we are in the case for \eqref{e:oneterm4}). If $k_1\geq 1$, this is covered in \Cref{i:k1}.  In the following we assume that $k_1=0$, $\cV^+=\emptyset$ and \eqref{e:oneterm2} does not contain other terms in the form $(d-1)^{2\ell}G^{bb'}_{cc'}$ with $(b,c)\neq (b',c')\in (\cC^+)^\circ$, or $(d-1)^{2\ell}(G_{ss'}-P_{ss'})$ with $s,s'\in \cK^+$ but in different connected components of $\cF^+$. Otherwise it has been covered in \Cref{i:k1} and \Cref{i:ignore}. 
Thus in \eqref{e:oneterm2},
\begin{align*}
    k_1=0, \quad k_2=0, \quad k_3=2.
\end{align*}
Moreover,  all the factors in \eqref{e:oneterm2} come from \Cref{i1}, \Cref{i2}, \Cref{i4}, $W_j$ in \Cref{i6},  $W_j$ in  \Cref{i8}, and \Cref{i9}. We recall that the indices $\beta, \beta'$ appear at least twice among $\bm\al, \bm\beta, \bm\gamma,\bm \theta$. It follows that $\sfD(\bm\theta)$ contains $(d-1)^{2\ell}(G_{c_\beta c_\beta}^{(b_\beta)}-Q)$ and $(d-1)^{2\ell}(G^{(b_{\beta'})}_{c_{\beta'} c_{\beta'}}-Q)$.
Thus $k=2$ in \eqref{e:oneterm2}, and by the same argument as in \Cref{i:k0}, \eqref{e:R+term} leads to \eqref{e:case1}.

\item Finally we discuss the case that $\wt W_0$ is replaced by $\cE\in \cD_r^S$ as described in \eqref{def:DS} with $r\geq 2$. 
If $\cE$ contains at least one term in the form $(d-1)^{2\ell}(G_{c_\al c_\al}^{(b_\al)}-Q)$, the same argument as in \Cref{i:k0}, \eqref{e:R+term} leads  to \eqref{e:case2}; 
If $\cE$ contains at least one term in the form $(d-1)^{2\ell}G_{c_\al c_\beta}^{(b_\al b_\beta)}$, by the same argument as in \Cref{ii:offab}, \eqref{e:R+term} leads to \eqref{e:case2}.

If $\cE$ contains at least one term in the form $(d-1)^{2\ell}(Q-Y_\ell(Q))$, then $|\cE|\lesssim (d-1)^{2\ell}|Q-Y_\ell(Q)| (d-1)^{2\ell}\varepsilon$. The total number of special edges, $Q-Y_\ell(Q)$ and its complex conjugate is at least $2p$. It follows 
\begin{align*}
    &\phantom{{}={}}\frac{1}{(d-1)^{2\ell}Z_{\cF^+}}\sum_{\bfi^+}\bE\left[I(\cF^+,\cG)\bm1(\cG,\tcG\in \Omega) \frac{1}{(d-1)^{(k_1+k)\ell/2}} R_{\bfi^+}\right]\\
    &\lesssim \bE[(d-1)^{2\ell}\bm1(\cG\in \Omega)\varepsilon (|Q-Y_\ell(Q)|+\Phi)^{2p} ] \lesssim \bE[\Psi_p].
\end{align*}
In the remaining cases $\cE$ contains at least two terms in the form $(d-1)^{2\ell}(Q-\msc)$, this leads to
\begin{align*}
    &\phantom{{}={}}\frac{1}{(d-1)^{2\ell}Z_{\cF^+}}\sum_{\bfi^+}\bE\left[I(\cF^+,\cG)\bm1(\cG,\tcG\in \Omega) \frac{1}{(d-1)^{(k_1+k)\ell/2}} R_{\bfi^+}\right]\\
    &\lesssim \bE[(d-1)^{2\ell}\bm1(\cG\in \Omega)|Q-\msc|^2(|Q-Y_\ell(Q)|+\Phi)^{2p-1} ] \lesssim \bE[\Psi_p].
\end{align*}

\end{enumerate}
The cases above enumerate all possibilities and complete the proof of \Cref{p:iteration}.

\end{proof}

\bibliography{ref}{}
\bibliographystyle{abbrv}

\end{document}